\colorlet{linkequation}{blue}
\definecolor{dgreen}{rgb}{0,0.5,0}
\definecolor{violet}{rgb}{0.5,0,0.5}
\definecolor{dred}{rgb}{0.7,0,0}
\definecolor{ddred}{rgb}{0.5,0,0}
\definecolor{dblue}{rgb}{0,0,0.5}
\definecolor{ddblue}{rgb}{0,0,0.3}
\definecolor{llgray}{rgb}{0.9,0.9,0.9}
\definecolor{lgray}{rgb}{0.7,0.7,0.7}
\newtheorem{defn}{Definition}[section]
\newtheorem{lemma}[defn]{Lemma}
\newtheorem{proposition}[defn]{Proposition}
\newtheorem{theorem}[defn]{Theorem}
\newtheorem{remark}[defn]{Remark}
\numberwithin{equation}{section}
\newcommand{\bq}{\begin{equation}}
\newcommand{\eq}{\end{equation}}
\newcommand{\R}{{ \mathbb{R}  }}
\newcommand{\bbr}{{ \mathbb{R}  }}
\newcommand{\calC}{{ \mathcal C  }}
\newcommand{\calP}{{ \mathcal P  }}
\newcommand{\bke}[1]{\left( #1 \right)}
\newcommand{\norm}[1]{\left\Vert #1 \right\Vert}
\newcommand{\abs}[1]{\left| #1 \right|}
\DeclareMathOperator*{\esssup}{ess \ sup}
\DeclareMathOperator{\loc}{loc}
   \def\MR#1{}
\begin{document}

\title[Existence of PME with drifts in a bounded domain]{Existence of weak solutions for Porous medium equation \\
   with a divergence type of drift term in a bounded domain}

\author[S. Hwang]{Sukjung Hwang}
\address{S. Hwang: Department of Mathematics Education, Chungbuk National University, Cheongju 28644, Republic of Korea}
\email{sukjungh@chnu.ac.kr}

\author[K. Kang]{Kyungkeun Kang}
\address{K. Kang: Department of Mathematics, Yonsei University,  Seoul 03722, Republic of Korea}
\email{kkang@yonsei.ac.kr}

\author[H.K. Kim]{Hwa Kil Kim}
\address{H.K. Kim: Department of Mathematics Education, Hannam University, Daejeon 34430, Republic of Korea}
\email{hwakil@hnu.kr}

\thanks{
S. Hwang's work is partially supported by funding for the academic research program of Chungbuk National University in 2022 and NRF-2022R1F1A1073199. K. Kang's work is partially supported by NRF-2019R1A2C1084685. H. Kim's work is partially supported by NRF-2021R1F1A1048231.
}
\date{}

\makeatletter
\@namedef{subjclassname@2020}{%
  \textup{2020} Mathematics Subject Classification}
\makeatother
\subjclass[2020]{35A01, 35K55, 35Q84, 92B05}

\keywords{Porous medium equation, weak solution, Wasserstein space, a bounded domain}



\begin{abstract}
We study porous medium equations with a divergence form of drift terms in a bounded domain with no-flux lateral boundary conditions. We establish $L^q$-weak solutions for $ 1\leq q < \infty$ in Wasserstein space under appropriate conditions on the drift, which is an extension of authors' previous works done in the whole space into the case of bounded domains.
Applying existence results to a certain Keller-Segel equation of consumption type, construction of $L^q$-weak solutions is also made, in case that the  equation of a biological organism is of porous medium type. 
\end{abstract}

\maketitle

{
  \hypersetup{linkcolor=black}
  \tableofcontents
}



\section{Introduction}

In this paper, we consider the porous medium equations with the divergence form of drifts in the form of 
\begin{equation}\label{E:Main}
\partial_t \rho =   \nabla \cdot( \nabla \rho^m  - V\rho) \qquad\ \text{ in } \ \Omega_{T}:= \Omega \times (0, T), 
\end{equation}
where $\Omega \Subset \bbr^d$, $d\geq 2$, is a bounded domain with smooth boundaries and $0 <T < \infty$. Here $V: \Omega_{T} \to \mathbb{R}^d$ is a given vector field, which will be more specific later. In this paper, we study the initial value problem of \eqref{E:Main} with the boundary conditions
\begin{equation}\label{E:Main-bc-ic}
 \left( \nabla \rho^m - V\rho \right)\cdot \textbf{n} = 0  \ \text{ on } \ \partial \Omega \times (0, T) \quad\ \text{ and } \quad\
   \rho(\cdot,0)=\rho_0  \ \text{ on } \ \Omega,
\end{equation}

where the vector $\textbf{n}$ is normal to the boundary $\partial \Omega$.

For given nonnegative initial data $\rho_0 \in L^{q}(\Omega)$ for $q\geq 1$, our main purpose is to establish the existence of nonnegative weak solutions of \eqref{E:Main}-\eqref{E:Main-bc-ic} when the drift belongs to proper function spaces, so-called {\it sub-scaling classes}, which also allows mass conservation property (see Definition \ref{D:Serrin}). 

This work is an extension to the case of bounded domains of previous results done by authors for the case of the whole space, i.e., $\Omega=\R^d$ (we refer \cite{HKK} for more details and references therein for related works). 

Although not a few minor modifications are required in comparison to the results in \cite{HKK} for whole space, it is worth making clear the noticeable differences for the case of bounded domains.
Firstly, moment estimates are required to control entropy and obtain estimates of speed for the whole space, and it is obviously straightforward in bounded domains. As a result, no need for  moment estimates allows us to construct $L^1$-weak solution newly in Theorem~\ref{T:weakSol}~(i), Theorem~\ref{T:weakSol_DivFree_1}, and  Theorem~\ref{T:weakSol_DivFree_1}.

Another good point for a bounded domain is the matter of the embedding of the drift vector field $V$, in particular for spatial variables. To be more precise, if $V$ is integrable in $L^{q_1}_xL^{q_2}_t(\Omega_T)$, then it is automatic that $L^{p_1}_xL^{p_2}_t(\Omega_T)$ for all $1\le p_i\le q_i, \, i=1,2$, and meanwhile it is in general not valid for $1\le p_1\le q_1$ for the whole space. In virtue of such advantage, the existence of weak solutions is ensured in a bit larger class of $V$ compared to the case of $\R^d$.  

On the other hand, since we look for solutions of mass conservation, we need to place a restriction on  $V$ relevant to the boundary condition \eqref{E:Main-bc-ic}. Roughly speaking, $V$ is an approximation of smooth vector fields with homogeneous normal flux (see more details in Definition~\ref{D:Serrin} and compare to \cite[Definition~1.1]{HKK}). Another shortcoming in bounded domains is the lack of uniqueness results since the way of proving it does not work out (compare to \cite[Section~2.2]{HKK}), although we expect that it is the case as in the whole space.

In the meantime, an improvement has been made regarding the compactness arguments so that we can cover the range $1 \leq q \leq m+1$ (see Proposition~\ref{P:AL1} for more precise statements), which is valid for the case of whole space as well. This new observation allows covering all $q\ge 1$ together with the result for the range $q \geq \max\{1, m-1\}$ proved in Proposition~\ref{P:AL2} and, as a result, the combination of both propositions enables us to enlarge the subscaling space of $V$. We reiterate that this additional compactness argument is applicable to the existence results in $\mathbb{R}^d$, and we give an improved version of statements in $\mathbb{R}^d$ in Appendix~\ref{Appendix:compact}.  

As an application, we establish  $L^q$-weak solution for a certain Keller-Segel equation of consumption type given in \eqref{PME-KS-10}-\eqref{PME-KS-bc} (see Section~\ref{SS:application}).
More specifically, with the aid of our existence results for \eqref{E:Main}-\eqref{E:Main-bc-ic},  $L^q$-weak solution with $1\leq q<\infty$ can be constructed in dimensions three and higher (see Theorem \ref{KS-thm}).

The notion of Wasserstein space and distance is reviewed in Section~\ref{SS:Wasserstein}, and the structures of $V$ are categorized as one of three different types: (i) $V \in \mathcal{S}_{m,q}^{(q_1, q_2)}$, or  (ii) $\nabla \cdot V \geq 0$, or (iii) $V \in \tilde{\mathcal{S}}_{m,q}^{(\tilde{q}_1, \tilde{q}_2)}$. The scaling invariant classes of $V$ are described in Definition~\ref{D:Serrin}, Remark~\ref{R:S-tildeS}, and \cite[Figs.~1, 2]{HKK}.

For a clear guide of our results, we deliver two tables: Table~\ref{Table1} for $L^q$-weak solutions satisfying an energy estimate, and Table~2 
for absolutely continuous $L^q$-weak solutions. Each type of solution is sorted by the structure of $V$ and the range of $m$ and $q$.

\begin{table}[hbt!]
\begin{center}
\caption{\footnotesize Guide of existence results of $L^q$-weak solutions in a bounded domain}
\smallskip
{\scriptsize
\begin{tabular}{| c  || c | c | c |}\hline
\rule[-8pt]{0pt}{22pt}
 \textbf{Structure of $V$} & {\textbf{Range of $m$ }} & {\textbf{Range of $q$}}  & \textbf{References}   \\ \hline \hline

\rule[-8pt]{0pt}{22pt}
\multirow{5}{*}{$V\in \mathcal{S}_{m,q}^{(q_1, q_2)}$}
& \multirow{3}{*}{$1<m\leq 2$}
& $q=1$
& Theorem~\ref{T:weakSol} (i), $\overline{\textbf{DE}}$ in Fig.~\ref{F:S:1m2}
 \\
\cline{3-4}

\rule[-8pt]{0pt}{22pt}

& 
& $q>1$
& Theorem~\ref{T:weakSol} (ii), Figs.~\ref{F:S:1m2}, \ref{F:S:1m2:more}
\\
\cline{2-4}

\rule[-8pt]{0pt}{22pt}
& $m>2$
& $q\geq m-1$
& Theorem~\ref{T:weakSol} (ii), Figs.~\ref{F:S:m2}, \ref{F:S:m2:more}
\\
\hline

\rule[-8pt]{0pt}{22pt}
\multirow{3}{*}{$\nabla \cdot V \geq 0$}
&\multirow{3}{*}{$m>1$}
& $q=1$
& Theorem~\ref{T:weakSol_DivFree_1}, Fig.~\ref{F:divfree:1}
\\
\cline{3-4}

\rule[-8pt]{0pt}{22pt}
& 
& $q>1$
& Theorem~\ref{T:weakSol_DivFree_q}, Fig.~\ref{F:divfree:q}
\\
\hline

\rule[-8pt]{0pt}{22pt}
\multirow{3}{*}{$ V \in \tilde{\mathcal{S}}_{m,q}^{(\tilde{q}_1, \tilde{q}_2)}$}
& \multirow{3}{*}{$m>1$}
& $q=1$
& Theorem~\ref{T:weakSol_tilde} (i), \cite[$\overline{\textbf{DE}}$ in Fig.~10 ]{HKK}
 \\
 \cline{3-4}
\rule[-8pt]{0pt}{22pt}
& 
& $q>1$
&Theorem~\ref{T:weakSol_tilde} (ii), \cite[Figs.~10, 19]{HKK}
\\
\hline

\end{tabular}
\label{Table1}
}
\end{center}
\end{table}

Some comments on Table~\ref{Table1} are prepared below.
\begin{itemize}
\item[(i)] For $L^1$-weak solution, we prescribe the initial data to satisfy $\rho_0 \in L^1 (\Omega)$ and $\int_{\Omega}\rho_0 \log \rho_0 \,dx < \infty$, because merely $\rho_0 \in L^1(\Omega)$ is not sufficient to obtain a priori estimate, Proposition~\ref{Lq-energy}~(i). We note that the $p$-moment estimate is crucially used to control the integrability of negative part of $\rho \log \rho$ when $\Omega=\R^d$, but it is automatic in case of bounded domains, since $\rho \log \rho$ is bounded below.

\item[(ii)] For the initial data $\rho_0 \in L^{q}(\Omega)$, $1<q<\infty$, we construct $L^{q}$-weak solutions in Theorem~\ref{T:weakSol}~(ii), Theorem~\ref{T:weakSol_DivFree_q}, and Theorem~\ref{T:weakSol_tilde}~(ii) which corresponds to \cite[Theorem~2.7, Theorem~2.11, and Theorem~2.15]{HKK}.

\item[(iii)] The notable difference is observed in Theorem~\ref{T:weakSol_DivFree_q} in case $\nabla \cdot V \geq 0$, compared to the whole space \cite[Section~2.1.3]{HKK}. Indeed, the range of $V$ is relaxed due to the improved compactness arguments and embeddings in both space and time axes. 

\end{itemize}

\begin{table}[hbt!]
\begin{center}
\caption{\footnotesize Guide of existence results of absolutely continuous $L^q$-weak solutions in a bounded domain}
\smallskip

{\scriptsize
\begin{tabular}{| c  || c | c | c |}\hline
\rule[-8pt]{0pt}{22pt}
 \textbf{Structure of $V$} &  {\textbf{Range of $m$ }} & {\textbf{Range of $q$}}  & \textbf{References}  \\ \hline \hline

\rule[-8pt]{0pt}{22pt}
\multirow{5}{*}{$V\in \mathcal{S}_{m,q}^{(q_1, q_2)}$}
&  \multirow{3}{*}{$1 < m \leq 2$}
&  $q=1$
& Theorem~\ref{T:ACweakSol} (i), $\overline{\textbf{DE}}$ in Fig.~\ref{F:S:1m2}
 \\
\cline{3-4}

\rule[-8pt]{0pt}{22pt}

& 
& $q>1$
& Theorem~\ref{T:ACweakSol} (ii), Figs.~\ref{F:S:1m2}, \ref{F:S:1m2:more}
\\
\cline{2-4}

\rule[-8pt]{0pt}{22pt}
& $m>2$
& $q\geq m-1$
& Theorem~\ref{T:ACweakSol} (ii), Figs.~\ref{F:S:m2}, \ref{F:S:m2:more}
\\
\hline

\rule[-8pt]{0pt}{22pt}
\multirow{3}{*}{$\nabla \cdot V \geq 0$}
& \multirow{3}{*}{$m>1$}
& $q=1$ 
& Theorem~\ref{T:ACweakSol_DivFree} (i), \cite[$\overline{\textbf{DE}}$ in Fig.~9 ]{HKK} 
\\
\cline{3-4}
\rule[-8pt]{0pt}{22pt}
&
& $q>1$
& Theorem~\ref{T:ACweakSol_DivFree} (ii), \cite[Figs.~9, 15, 16]{HKK}
 \\
\hline

\rule[-8pt]{0pt}{22pt}
\multirow{3}{*}{$ V \in \tilde{\mathcal{S}}_{m,q}^{(\tilde{q}_1, \tilde{q}_2)}$}
& \multirow{3}{*}{$m>1$}
& $q=1$ 
& Theorem~\ref{T:ACweakSol_tilde} (i), \cite[$\overline{\textbf{DE}}$ in Fig.~10 ]{HKK}
 \\
 \cline{3-4}
\rule[-8pt]{0pt}{22pt}
&
& $q>1$
&Theorem~\ref{T:ACweakSol_tilde} (ii), \cite[Figs. 10, 19]{HKK}
\\
\hline

\rule[-8pt]{0pt}{22pt}
& \multicolumn{2}{c|}{Embedding results}
& Theorem~\ref{T:ACweakSol}~(iii), Theorem~\ref{T:ACweakSol_DivFree}~(iii), Theorem~\ref{T:ACweakSol_tilde}~(iii) \\
\hline

\end{tabular}
\label{Table2}
}
\end{center}
\end{table}

Some comments on Table~\ref{Table2} are also prepared below.

\begin{itemize}
\item[(i)]
The existence results of absolutely continuous $L^q$-weak solutions for \eqref{E:Main}-\eqref{E:Main-bc-ic} agree mostly with the existence results of the same weak solutions in $\mathbb{R}^d\times (0, T)$ in \cite{HKK}. 
The references for $q=1$, Theorem~\ref{T:ACweakSol}~(i),
Theorem~\ref{T:ACweakSol_DivFree}~(i), and Theorem~\ref{T:ACweakSol_tilde}~(i) corresponds to Theorem~2.3, Theorem~2.4, and Theorem~2.5 in \cite{HKK}, respectively. For absolutely continuous $L^q$-weak solutions, Theorem~\ref{T:ACweakSol}~(ii) matches to \cite[Theorem~2.9]{HKK}, Theorem~\ref{T:ACweakSol_DivFree}~(ii) is to \cite[Theorem~2.13]{HKK}, and Theorem~\ref{T:ACweakSol_tilde}~(ii) is to \cite[Theorem~2.16]{HKK}. 

\item[(ii)] 
We remark, however, that there is a difference caused by finite size of a bounded domain compared to $\mathbb{R}^d$.
Indeed, the results of embeddings for spatial variables 
in Theorem~\ref{T:ACweakSol}~(iii), Theorem~\ref{T:ACweakSol_DivFree}~(iii), and Theorem~\ref{T:ACweakSol_tilde}~(iii), are newly extended compare to Theorem~2.3~(ii) and Theorem~2.9~(ii), Theorem~2.13~(ii), and Theorem~2.16~(ii) in \cite{HKK}. 
For example, a comparison of Fig.~\ref{F:S:m2} and \cite[Fig. 21]{HKK} shows that the embedding in space variables affects the range of $(q_1, q_2)$. 

\end{itemize}

Our paper is organized as follows:
In Section~\ref{S: Main}, we state all main results, and several remarks and figures are provided to help readers understand.
Some preliminaries are prepared in Section~\ref{S:Preliminaries}. Section~\ref{S:a priori} is devoted to giving a priori estimates for regular solutions (see Definition~\ref{D:regular-sol}).
In Section~\ref{splitting method}, the existence of regular solutions of PME is established by the splitting method.
Section~\ref{Exist-weak} is prepared for proofs of existence results stated in Section~\ref{S: Main}.
We present supplementary figures in Appendix~\ref{Appendix:fig} and comments on compactness arguments in Appendix~\ref{Appendix:compact}.



\section{Main results}\label{S: Main}

In this section, we describe our main results and make relevant remarks for each of them. First, the existence of weak solutions for \eqref{E:Main}-\eqref{E:Main-bc-ic} are categorized according to hypotheses of given initial data and drifts. As an application, we study coupled parabolic type Keller-Segel equations to improve previously known results by taking advantage of developed main results.

Before stating the main theorems, for convenience, we introduce notations.
\begin{itemize}
\item Let us denote by $\mathcal{P}(\Omega)$ the set of all Borel
probability measures on $\Omega$.
We refer Section~\ref{SS:Wasserstein} for definitions and related properties of the Wasserstein distance denoted by $W_p$, the Wasserstein space, and AC (absolutely continuous) curves.

\item  Let $A\subset \mathbb{R}$ be a measurable set. We denote by $\delta_A$ the indicator function of $A$. 

  \item For any $m>1$ and $q\geq 1$, let us define the following constant
\begin{equation}\label{lambda_q}
 \lambda_q := \min \left\{2, 1+\frac{d(q-1)+q}{d(m-1)+q} \right\},
\end{equation}
that can be rewritten as ${\lambda_q} = \left\{1+\frac{d(q-1)+q}{d(m-1)+q} \right\}\cdot \delta_{\{1 \leq q \leq m\}} + 2 \cdot \delta_{\{q > m\}}$.
The constant ${\lambda_q} \in (1, 2]$ in \eqref{lambda_q}  naturally comes from evaluating the speed estimates which are essential to play in Wasserstein spaces (refer Section~\ref{SS:Speed}). A straightforward observation gives that $ 1 + \frac{d(q-1)+q}{d(m-1)+q}=2$ if $q=m$.
Also, for any $q\geq 1$, note that ${\lambda_q} =2$ if $m=1$.

\item The letters $c$ and $C$ are used for generic constants. Also, the letter $\theta$ is a generic constant  which varies with arguments of interpolation.
Throughout the paper, we omit the dependence on $m$, $q$, $d$, $T$, and $\|\rho_0\|_{L^{1}(\Omega)}$, because we regard $m>1$, $q \geq 1$, $d\geq 2$, $T>0$ are given constants and $\|\rho_0\|_{L^{1}(\Omega)} = 1$
in $\mathcal{P}(\mathbb{R}^d)$. 

\item In figures, the notation $\mathcal{R}(\textbf{Ab\ldots Yz})$ is used to indicate a polygon with vertices $\textbf{A}, \textbf{b}, \ldots, \textbf{Y}, \textbf{z}$. Also the notation $\overline{\textbf{Ab\ldots Yz}}$ is used for the piecewise line segments connecting $\textbf{A}$, $\textbf{b}$, $\ldots$, $\textbf{Y}$, $\textbf{z}$.
\end{itemize}

Here we introduce the notion of weak solutions of \eqref{E:Main}-\eqref{E:Main-bc-ic}.
\begin{defn}\label{D:weak-sol}
Let $q\in [1, \infty)$ and $V$ be a measurable vector field.
We say that a nonnegative measurable function $\rho$ is
a \textbf{$L^q$-weak solution} of \eqref{E:Main}-\eqref{E:Main-bc-ic} with $\rho_0 \in L^{q}(\Omega)$ if the followings are satisfied:
\begin{itemize}
\item[(i)] It holds that
\[
\rho \in L^{\infty}\left(0, T; L^q (\Omega)\right)\cap L^{m} (\Omega_T),\quad \nabla \rho^m \in L^{1}(\Omega_T), \quad \nabla \rho^{\frac{m+q-1}{2}} \in L^{2}(\Omega_T), \quad   \text{and} \quad \rho V \in L^{1}(\Omega_T).
\]
\item[(ii)] For any function $\varphi \in {\mathcal{C}}^\infty_c (\overline{\Omega} \times [0,T))$, it holds that
\begin{equation}\label{KK-May7-40}
\iint_{\Omega_T} \left\{ \rho \varphi_t - \nabla \rho^m \cdot \nabla \varphi + \rho V \cdot \nabla \varphi \right\} \,dx dt = -\int_{\Omega} \rho_{0} (\cdot) \varphi(\cdot, 0) \,dx.
\end{equation}
\end{itemize}
\end{defn}

We remind the property of \emph{mass conservation} for nonnegative solutions of \eqref{E:Main}-\eqref{E:Main-bc-ic}, i.e.
$\|\rho(\cdot, t)\|_{L^{1} (\Omega)} = \|\rho_0\|_{L^{1}(\Omega)}$ for a.e. $t\in (0, T)$ (see e.g. \cite[Theorem~11.2]{Vaz07}). Without loss of generality, assuming that $\rho_0 \in \mathcal{P} (\Omega)$, that is, $\|\rho_0\|_{L^1 (\Omega)} = 1$, the mass conservation property in time implies that $\|\rho(\cdot, t)\|_{L^{1} (\Omega)} = 1$ a.e.  $t\in (0, T)$. If $\|\rho_0\|_{L^1(\Omega)} = c > 0$, then we replace $\rho$ by $\tilde{\rho} = \rho/c$ that does satisfy the equation with similar structure (see \cite[Remark~2.2]{HKK}).

Motivated by preserving $L^q$-norm of a weak solution of \eqref{E:Main}, we derive the corresponding scaling-invariant classes for $V$ and $\nabla V$ in $L^{q_1, q_2}_{x,t}$ spaces (refer \cite[Definition~1.1]{HKK}). In the following definition, we introduce, the (sub)scaling-invariant classes for $V$ and $\nabla V$ adapted to the conservation of mass and boundary conditions \eqref{E:Main-bc-ic}.

\begin{defn}\label{D:Serrin}
Let $m, q \geq 1$ and $q_{m,d} := \frac{d(m-1)}{q}$. For constants $q_1, q_2, \tilde{q}_1, \tilde{q}_2 > 0$, we define following spaces.
\begin{itemize}
\item[(i)]Define
\[
\mathcal{L}_{x,t}^{q_1, q_2}:= \overline{\{V : V \in C_c^\infty(\overline{\Omega} \times[0, T)) ~~\mbox{and}~~ V\cdot \textbf{n} = 0 ~ \mbox{on} ~ \partial \Omega\}}^{L_{x,t}^{q_1, q_2}},
\]
where $\textbf{n}$ is the outward unit normal vector to the boundary of $\Omega$.
That is, the vector field $ V \in \mathcal{L}_{x,t}^{q_1, q_2}$ means that there exists a sequence $V_n  \in C_c^\infty(\overline{\Omega} \times[0, T)) $ such that
$ V\cdot \textbf{n} = 0$ on $\partial \Omega$ and
\[
\|V_n-V \|_{L_{x,t}^{q_1, q_2}} \rightarrow 0 \quad \mbox{as} ~~ n \rightarrow \infty.
\]

\item[(ii)] The \textit{scaling invariant classes} of $V$ and $\nabla  V$ are defined as
\begin{equation}\label{Serrin}
 \mathcal{S}_{m,q}^{(q_1, q_2)}:= \left\{ V \in \mathcal{L}^{q_1, q_2}_{x,t} : \ \|V\|_{\mathcal{L}^{q_1, q_2}_{x,t}} < \infty \ \text{ where } \ \frac{d}{q_1} + \frac{2+q_{m,d}}{q_2} = 1 + q_{m,d} \right\},
\end{equation}
and, for $\tilde{q}^{\ast}_1 = \frac{d \tilde{q}_1}{d-\tilde{q}_1}$ where $\tilde{q}_1 \in (1, d)$, 
\begin{equation}\label{Serrin-grad}
  \tilde{\mathcal{S}}_{m, q}^{(\tilde{q}_1, \tilde{q}_2)}:= 
  \left\{ V \in \mathcal{L}_{x,t}^{\tilde{q}^{\ast}_1, \tilde{q}_2} : 
  \ \|\nabla V\|_{\mathcal{L}^{\tilde{q}_1, \tilde{q}_2}_{x,t}} < \infty  \ \text{ where }  \ \frac{d}{\tilde{q}_1} + \frac{2+q_{m,d}}{\tilde{q}_2} = 2 + q_{m,d}\right\}. 
\end{equation}
Moreover, let us name $\|V\|_{\mathcal{S}_{m,q}^{(q_1, q_2)}}$ and $\|V\|_{\tilde{\mathcal{S}}_{m,q}^{(\tilde{q}_1, \tilde{q}_2)}}$, as the  \textit{scaling invariant norms} corresponding to each spaces.

\item[(iii)] The \textit{sub-scaling classes} are defined as
\begin{equation}\label{subSerrin}
 \mathfrak{S}_{m,q}^{(q_1, q_2)}:= \left\{ V \in  \mathcal{L}^{q_1, q_2}_{x,t} : \ \|V\|_{\mathcal{L}^{q_1, q_2}_{x,t}} < \infty \ \text{ where } \ \frac{d}{q_1} + \frac{2+q_{m,d}}{q_2} \leq 1 + q_{m,d} \right\},
\end{equation}
and, for $\tilde{q}^{\ast}_1 = \frac{d \tilde{q}_1}{d-\tilde{q}_1}$ where $\tilde{q}_1 \in (1, d)$, 
\begin{equation}\label{subSerrin-grad}
  \tilde{\mathfrak{S}}_{m, q}^{(\tilde{q}_1, \tilde{q}_2)}:= \left\{ V \in \mathcal{L}_{x,t}^{\tilde{q}^{\ast}_1, \tilde{q}_2} : 
  \ \|\nabla V\|_{\mathcal{L}^{\tilde{q}_1, \tilde{q}_2}_{x,t}} < \infty  \text{ where }  \frac{d}{\tilde{q}_1} + \frac{2+q_{m,d}}{\tilde{q}_2} \leq 2 + q_{m,d} \right\}.
\end{equation}
Let us name $\|V\|_{\mathfrak{S}_{m,q}^{(q_1, q_2)}}$ and $\|V\|_{\tilde{\mathfrak{S}}_{m,q}^{(\tilde{q}_1, \tilde{q}_2)}}$, as the  \textit{sub-scaling norms} corresponding to each spaces.

\end{itemize}
\end{defn}

\begin{remark}\label{R:S-tildeS}
\begin{enumerate}
\item[(i)] We remark that the existence results are given under certain assumptions on $V$ in sub-scaling classes. The critical case matters the most and the strict sub-scaling case (the case of strict inequality in \eqref{subSerrin} or \eqref{subSerrin-grad}) is simpler, and therefore, proofs throughout the paper concern only the case of scaling invariant classes.

\item[(ii)] The range of a pair $(q_1, q_2)$ satisfying \eqref{Serrin} is plotted in \cite[Fig. 1]{HKK}. As $m \to 1$ or $q\to \infty$, the constant $q_{m,d}$ becomes $0$, and the conditions in \eqref{Serrin} correspond to 
\begin{equation}\label{linear-Serrin}
V \in L^{q_1, q_2}_{x,t} \quad \text{ where } \quad \frac{d}{q_1} + \frac{2}{q_2} \leq 1,
\end{equation}
which is critical conditions concerning the continuity of \eqref{E:Main} (see, for example, \cite{CHKK17, KZ18, HZ21}). Therefore, the scaling invariant conditions \eqref{Serrin} are more comprehensive for showing correlations of the nonlinear factor $m$ constructing $L^q$-weak solutions, which also includes \eqref{linear-Serrin} as a special case when $m=1$ or $q=\infty$.  
    
\item[(iii)] The equation \eqref{E:Main} is defined on a bounded domain in both spatial and temporal directions; hence, we are able to compute the inequality between two scaling invariant norms. For $1 \leq q \leq q^{L}$, $q_1^L \geq q_1$, and $q_2^L \geq q_2$, it holds that $\mathcal{S}_{m,q}^{(q_1, q_2)} \supseteq \mathcal{S}_{m,q^L}^{(q_1^L, q_2^L)}$ by H\"{o}lder inequalities.


\item[(iv)] Fig. 2 in \cite{HKK} shows the graphs of $(\tilde{q}_1, \tilde{q}_2)$ satisfying 
$\tilde{\mathcal{S}}_{m,q}^{(\tilde{q}_1, \tilde{q}_2)}$ in \eqref{Serrin-grad}. In fact, by using embedding property for $\tilde{q_1} \in (1, d)$, one can suppress the norm of $V$ by the norm of $\nabla V$;  that is,
    $\|V\|_{\mathcal{S}_{m,q}^{(\tilde{q}^{\ast}_{1},  \tilde{q}_2)}} \lesssim  \|V\|_{\tilde{\mathcal{S}}_{m,q}^{(\tilde{q}_1, \tilde{q}_2)}} $
   provided
    \begin{equation}\label{tilde-q2}
    \begin{cases}
    \frac{2+q_{m,d}}{1+{q_{m,d}}} < \tilde{q}_2 \leq \infty, & \text{ if } 1 < m < 1+ \frac{q (d-2)}{d}  \vspace{1 mm}\\
    \frac{2+q_{m,d}}{1+{q_{m,d}}} < \tilde{q}_2 < \frac{2+q_{m,d}}{2-d+q_{m,d}}, & \text{ otherwise}.
    \end{cases}
    \end{equation}
In \cite[Fig. 2]{HKK}, the intersection of lines and shaded region is where \eqref{tilde-q2} holds.
\end{enumerate}
\end{remark}

\subsection{Existence results}\label{SS:Existence}

In this section, we present all existence results and related remarks composed of three subsections depending on the structure of the drift $V$. Results in each subsection are categorized by the type of weak solutions, the initial data, and the range of $m$ and $q$. 

\subsubsection{Existence for case: $V$ in sub-scaling classes}\label{SS:energy-sol}

Here, we assume that $V$ belongs to the sub-scaling class $\mathfrak{S}_{m,q}^{(q_1,q_2)}$ that includes the scaling invariant class $\mathcal{S}_{m,q}^{(q_1,q_2)}$. We begin with  $L^q$-weak solutions for $1 \leq q < \infty$ satisfying energy inequality, and later we introduce absolutely continuous $L^q$-weak solutions. 

\begin{theorem}\label{T:weakSol}
Let $m > 1$ and $q\geq 1$.
\begin{itemize}
\item[(i)] Let $1 <m \leq 2$. Assume that $\rho_0 \in  \mathcal{P}(\Omega)$ and $\int_{\Omega} \rho_0 \log \rho_0 \,dx < \infty$ and
\begin{equation}\label{T:weakSol:V_1}
V \in \mathfrak{S}_{m,1}^{(q_1,q_2)} \ \text{ for } \
    \begin{cases}
         2 \leq q_2 \leq  \frac{m}{m-1},  & \text{ if } d > 2 \vspace{1 mm}\\
         2 \leq q_2 <  \frac{m}{m-1},  & \text{ if } d = 2.
    \end{cases}
\end{equation}
Then, there exists a nonnegative $L^1$-weak solution of \eqref{E:Main}-\eqref{E:Main-bc-ic} in Definition~\ref{D:weak-sol}
that holds
 \begin{equation}\label{T:weakSol:E_1}
 \esssup_{0 \leq t \leq T} \int_{\Omega} \rho \abs{\log \rho  }(\cdot, t) \,dx
 + \iint_{\Omega_T} \abs{\nabla \rho^{\frac{m}{2}}}^2 \,dx\,dt
\leq C,
\end{equation}
with $C = C ( \|V\|_{\mathfrak{S}_{m,1}^{(q_1, q_2)}}, \int_{\Omega} \rho_0 \log \rho_0 \,dx)$.

\item[(ii)] Let $q>1$ and $q \geq m-1$.
Assume that $\rho_0 \in  \mathcal{P}(\Omega) \cap L^{q}(\bbr^d)$ and
\begin{equation}\label{T:weakSol:V_q}
V \in \mathfrak{S}_{m,q}^{(q_1,q_2)} \ \text{ for } \
    \begin{cases}
         2 \leq q_2 \leq  \frac{q+m-1}{m-1},  & \text{ if } d > 2 \vspace{1 mm}\\
         2 \leq q_2 <  \frac{q+m-1}{m-1},  & \text{ if } d = 2.
    \end{cases}
\end{equation}
Then, there exists a nonnegative $L^q$-weak solution of \eqref{E:Main}-\eqref{E:Main-bc-ic} in Definition~\ref{D:weak-sol}
that holds
 \begin{equation}\label{T:weakSol:E_q}
 \esssup_{0 \leq t \leq T} \int_{\Omega} \rho^q (\cdot, t) \,dx
 + \iint_{\Omega_T} \abs{\nabla \rho^{\frac{q+m-1}{2}}}^2 \,dx\,dt
\leq C,
\end{equation}
with $C = C ( \|V\|_{\mathfrak{S}_{m,q}^{(q_1, q_2)}}, \|\rho_{0}\|_{L^{q} (\Omega)})$.
\end{itemize}
\end{theorem}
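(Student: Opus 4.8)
The plan is to run the standard three-step scheme of regularization, uniform a priori bounds, and compactness, leaning on the regular-solution construction of Section~\ref{splitting method}, the a priori estimates of Section~\ref{S:a priori} (Proposition~\ref{Lq-energy}), and the compactness results Proposition~\ref{P:AL1} and Proposition~\ref{P:AL2}. First I would approximate the data. Since $V\in\mathfrak{S}_{m,q}^{(q_1,q_2)}\subset\mathcal{L}_{x,t}^{q_1,q_2}$, the very definition of $\mathcal{L}_{x,t}^{q_1,q_2}$ supplies a sequence $V_n\in C_c^\infty(\overline{\Omega}\times[0,T))$ with $V_n\cdot\mathbf{n}=0$ on $\partial\Omega$ and $\|V_n-V\|_{L_{x,t}^{q_1,q_2}}\to 0$; at the same time I would mollify $\rho_0$ into smooth, strictly positive data $\rho_{0,n}$ of unit mass with $\int_\Omega\rho_{0,n}\log\rho_{0,n}\,dx$ (resp. $\|\rho_{0,n}\|_{L^q(\Omega)}$) bounded. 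For each fixed $n$ the splitting method of Section~\ref{splitting method} produces a nonnegative regular solution $\rho_n$ of \eqref{E:Main}--\eqref{E:Main-bc-ic} associated with $(V_n,\rho_{0,n})$, for which the formal manipulations below are licit.

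The second step is to prove the bounds \eqref{T:weakSol:E_q} and \eqref{T:weakSol:E_1} uniformly in $n$. For part (ii) I would test the equation against $\rho_n^{q-1}$; integrating by parts, the no-flux condition \eqref{E:Main-bc-ic} together with $V_n\cdot\mathbf{n}=0$ annihilates every boundary term and leaves the dissipation $\tfrac{4m(q-1)}{(q+m-1)^2}\iint_{\Omega_T}|\nabla\rho_n^{(q+m-1)/2}|^2$ balanced against the drift contribution $\tfrac{q-1}{q}\iint_{\Omega_T}\nabla(\rho_n^q)\cdot V_n$. Using $\nabla\rho_n^q=\tfrac{2q}{q+m-1}\rho_n^{(q-m+1)/2}\nabla\rho_n^{(q+m-1)/2}$ and Young's inequality reduces matters to absorbing $\iint_{\Omega_T}\rho_n^{q-m+1}|V_n|^2$; a Gagliardo--Nirenberg--Sobolev interpolation of $\rho_n^{(q+m-1)/2}$ (exploiting the conserved mass $\|\rho_n\|_{L^1(\Omega)}=1$) followed by H\"older against $\|V_n\|_{L_{x,t}^{q_1,q_2}}$ bounds it by a small multiple of the dissipation plus an $L^1_t$ factor, the interpolation exponents summing correctly precisely because of the scaling identity $\tfrac{d}{q_1}+\tfrac{2+q_{m,d}}{q_2}=1+q_{m,d}$; the admissible range of $q_2$ in \eqref{T:weakSol:V_q} is exactly where these exponents are legitimate. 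Gr\"onwall's inequality then closes \eqref{T:weakSol:E_q} with a constant independent of $n$. Part (i) is the $q=1$ analogue obtained by testing against $\log\rho_n$: this yields dissipation $\tfrac{4}{m}\iint_{\Omega_T}|\nabla\rho_n^{m/2}|^2$ and drift $\iint_{\Omega_T}\nabla\rho_n\cdot V_n$, absorbed just as above. The one genuinely new feature of the bounded domain is that the elementary inequality $s\log s\ge -1/e$ gives $\int_\Omega\rho_n\log\rho_n\,dx\ge -|\Omega|/e$, so the negative part of the entropy is controlled for free and no moment estimate is needed, in contrast to the whole-space argument of \cite{HKK}.

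Finally I would extract limits and pass to the limit in the weak formulation \eqref{KK-May7-40}. The uniform bounds give $\rho_n\rightharpoonup\rho$ weakly-$*$ in $L^\infty(0,T;L^q(\Omega))$ and $\nabla\rho_n^{(q+m-1)/2}$ bounded in $L^2(\Omega_T)$; reading the equation as $\partial_t\rho_n=\nabla\cdot(\nabla\rho_n^m-V_n\rho_n)$ controls $\partial_t\rho_n$ in a negative-order space, so the Aubin--Lions--Simon type statements of Proposition~\ref{P:AL1} (valid for $1\le q\le m+1$) and Proposition~\ref{P:AL2} (valid for $q\ge\max\{1,m-1\}$) upgrade weak to strong convergence of $\rho_n$ in a suitable $L^p(\Omega_T)$, hence of $\rho_n^m$ and $\rho_n^{(q+m-1)/2}$. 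This lets me identify the weak limits of $\nabla\rho_n^m$ and of the product $\rho_n V_n$ and thereby pass every term of \eqref{KK-May7-40} to the limit; the uniform estimates then descend to $\rho$ by weak lower semicontinuity, giving \eqref{T:weakSol:E_1}, \eqref{T:weakSol:E_q}. I expect the main obstacle to be exactly this nonlinear passage to the limit: arranging strong convergence of $\rho_n$ strong enough to treat $\nabla\rho_n^m$ and simultaneously the product $\rho_nV_n$, in which $V_n$ converges only in $L_{x,t}^{q_1,q_2}$, which is precisely what the improved compactness of Proposition~\ref{P:AL1} across the full range $1\le q\le m+1$, combined with Proposition~\ref{P:AL2}, is designed to deliver.
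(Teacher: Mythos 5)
Your proposal is correct and follows essentially the same route as the paper: approximate $(V,\rho_0)$ as in Definition~\ref{D:Serrin}, construct regular solutions via the splitting scheme of Section~\ref{splitting method}, derive the uniform bounds exactly as in Proposition~\ref{P:Lq-energy} (testing with $\log\rho_n$, resp. $\rho_n^{q-1}$, and closing via the scaling identity, interpolation with conserved mass, Young, and Gr\"onwall), then pass to the limit using the compactness of Proposition~\ref{P:AL1} (for $q=1$ and $1<m\le 2$) and Proposition~\ref{P:AL2} (reduced to $q=m-1$ when $m>2$), identifying $\nabla\rho^m$ and $\rho V$ just as the paper does and concluding by lower semicontinuity. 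The only cosmetic deviations—applying Young before H\"older in the drift estimate, and using $s\log s\ge -1/e$ in place of the paper's bound $\int_\Omega|\min\{\rho\log\rho,0\}|\,dx\le c(1+|\Omega|^2)$—are equivalent on a bounded domain.
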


We make a few remarks regarding Theorem~\ref{T:weakSol}.
\begin{remark}\label{R:T:energy}
\begin{enumerate}
\item[(i)] Compared to the existence results of \eqref{E:Main} on $\mathbb{R}^d \times [0, T]$ (see \cite[Section~2.1.1]{HKK}), the existence of $L^1$-weak solution in Theorem~\ref{T:weakSol} (i) is new because $p$-th moment estimates are trivial on a bounded domain. The valid range for $(q_1, q_2)$ of \eqref{T:weakSol:V_1} is the line $\overline{\textbf{ED}}$ on Fig.~\ref{F:S:1m2}.

\item[(ii)] Because of energy estimates, there are competing inequalities $q>1$ and $q\geq m-1$ which are sorted into two cases either $q>1$ \& $1<m\leq 2$ or $q\geq m-1$ \& $m>2$. 
The range of $(q_1,q_2)$ satisfying \eqref{T:weakSol:V_q} is illustrated as  $\mathcal{R}(\textbf{abDE})$ in Figs.~\ref{F:S:1m2}, \ref{F:S:1m2:more} or $\mathcal{R}(\textbf{abD})$ in Figs.~\ref{F:S:m2}, \ref{F:S:m2:more} (Figs.~\ref{F:S:1m2:more}, \ref{F:S:m2:more} are in Appendix~\ref{Appendix:fig}).
\end{enumerate}
\end{remark}

Now, we establish the absolutely continuous $L^q$-weak solutions of \eqref{E:Main}-\eqref{E:Main-bc-ic}. Compared to results on the unbounded domain (see \cite[Theorem~2.9]{HKK}), the difference appears when we apply the embedding in the spatial variable. 

\begin{theorem}\label{T:ACweakSol}
Let $m>1$ and $q\geq 1$. Also let $\lambda_q$ be given in \eqref{lambda_q}.
Suppose that $\rho_0 \in
\mathcal{P}(\Omega) \cap L^{q} (\Omega)$.
\begin{itemize}
\item[(i)] Let $1 < m \leq 2$. Suppose that $\rho_0 \in \mathcal{P}(\Omega)$ and $\int_{\Omega} \rho_0 \log \rho_0 \,dx < \infty$ and $V \in \mathfrak{S}_{m,1}^{(q_1,q_2)} $ satisfies \eqref{T:weakSol:V_1}.
Then, there exists a nonnegative $L^1$-weak solution of \eqref{E:Main}-\eqref{E:Main-bc-ic} in Definition~\ref{D:weak-sol} such that  $\rho \in AC(0,T; \mathcal{P} (\Omega))$ with $\rho(\cdot, 0)=\rho_0$.
Furthermore, $\rho$ satisfies
\begin{equation}\label{T:ACweakSol:E_1}
\esssup_{0\leq t \leq T} \int_{\mathbb{R}^d\times \{t\}} \rho \abs{\log \rho} (\cdot, t)  \,dx
+ \iint_{\Omega_T} \{ \abs{\nabla \rho^{\frac m2}}^2 + \left(\abs{\frac{\nabla
\rho^m}{\rho}}^{\lambda_1}+|V|^{\lambda_1}\right ) \rho  \}\,dx\,dt
 \leq  C,
\end{equation}
and
\begin{equation}\label{T:ACweakSol:W_1} 
 W_{\lambda_1}(\rho(t),\rho(s))\leq C (t-s)^{\frac{\lambda_1 -1}{\lambda_1}},\qquad
\forall ~~0\leq s\leq t\leq T,
\end{equation}
where the constant $C= C ( \|V\|_{\mathfrak{S}_{m,1}^{(q_1,q_2)}},\,
\int_{\Omega} \rho_0 \log \rho_0 \,dx )$.

  \item [(ii)]  Let $q>1$ and $q\geq m-1$. Assume that
\begin{equation}\label{T:ACweakSol:V_q}
V \in \mathfrak{S}_{m,q}^{(q_1,q_2)} \ \text{ for } \
    \begin{cases}
        q_1 \leq \frac{2m}{m-1}, \  2 \leq q_2 \leq  \frac{q+m-1}{m-1} \,\delta_{\{  1 \leq q \leq m\}} + \frac{2m-1}{m-1}\,\delta_{ \{q > m \} },  & \text{ if } d > 2, \vspace{1 mm}\\
        q_1 \leq \frac{2m}{m-1}, \ 2 \leq q_2 <  \frac{q+m-1}{m-1}\,\delta_{\{1 \leq q \leq m\}} + \frac{2m-1}{m-1} \,\delta_{ \{q > m \} },  & \text{ if } d = 2.
    \end{cases}
\end{equation}
Then, there exists a nonnegative $L^q$-weak solution of \eqref{E:Main}-\eqref{E:Main-bc-ic} in Definition~\ref{D:weak-sol} such that  $\rho \in AC(0,T; \mathcal{P} (\Omega))$ with $\rho(\cdot, 0)=\rho_0$.
Furthermore, $\rho$ satisfies
\begin{equation}\label{T:ACweakSol:E_q}
 \esssup_{0 \leq t \leq T} \int_{\mathbb{R}^d}  \rho^q  \,dx
 + \iint_{\Omega_T} \{ \left| \nabla \rho^{\frac{q+m-1}{2}}\right |^2 + \left(\abs{\frac{\nabla \rho^m}{\rho}}^{\lambda_q}+|V|^{\lambda_q}\right ) \rho \} \,dx\,dt \leq C,
\end{equation}
and
\begin{equation}\label{T:ACweakSol:W_q} 
 W_{\lambda_q} (\rho(t),\rho(s))\leq C (t-s)^{\frac{\lambda_q -1}{\lambda_q}},\qquad
\forall ~~0\leq s\leq t\leq T,
\end{equation}
where $C = C (\|V\|_{\mathfrak{S}_{m,q}^{(q_1, q_2)}}, \, \|\rho_0\|_{L^{q} (\Omega)} )$.

\item[(iii)] (Embedding) Let $q>1$ and $q\geq m-1$. Further assume that $V$ satisfies \eqref{T:weakSol:V_q}. For a pair of constants $(q_1, q_2)$ satisfying \eqref{T:weakSol:V_q}, there exists a pair of constants $(q_1^\ast, q_2^\ast)$ where $V$ belongs $\mathfrak{S}_{m, q^\ast}^{(q_1^{\ast}, q_2^\ast)}$ satisfying \eqref{T:ACweakSol:V_q} for some $q^\ast \in [1, q]$. Furthermore, the same conclusions hold as in (ii) except that $\lambda_q$ is replaced by $\lambda_{q^\ast}$.
\end{itemize}
\end{theorem}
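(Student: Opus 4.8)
The plan is to use the bounded-domain embedding of Remark~\ref{R:S-tildeS}~(iii) to move the drift out of the (wider) energy class \eqref{T:weakSol:V_q} into the (narrower) absolutely continuous class \eqref{T:ACweakSol:V_q} at a possibly smaller level $q^\ast\le q$. By Remark~\ref{R:S-tildeS}~(i) it suffices to treat the scaling-invariant case, so I assume $(q_1,q_2)$ lies on the level-$q$ line $\frac{d}{q_1}+\frac{2+q_{m,d}}{q_2}=1+q_{m,d}$ with $q_{m,d}=\frac{d(m-1)}{q}$. Rewriting this as $\frac{d}{q_1}=\left(1-\frac{2}{q_2}\right)+q_{m,d}\frac{q_2-1}{q_2}$ shows that $q_1$ is strictly decreasing in $q_2$ along the line, so the maximal admissible $q_1$ occurs at $q_2=2$, where $q_1=\frac{2q}{m-1}$; I also write $q_1^{(s)}(\cdot)$ for the $q_1$-value of the level-$s$ scaling line regarded as a function of $q_2$.

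For $1\le q\le m$ the upper bounds on $q_2$ in \eqref{T:weakSol:V_q} and \eqref{T:ACweakSol:V_q} coincide, and the only additional requirement in \eqref{T:ACweakSol:V_q} is $q_1\le\frac{2m}{m-1}$. Since the largest value of $q_1$ on the line is $\frac{2q}{m-1}\le\frac{2m}{m-1}$, this holds automatically, so $(q_1,q_2)$ already satisfies \eqref{T:ACweakSol:V_q} at level $q$. Taking $q^\ast=q$ and invoking Theorem~\ref{T:ACweakSol}~(ii) then gives the assertion.

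The genuine case is $q>m$, where I take $q^\ast=m\in(1,q]$ and cap
\[
q_1^\ast=\min\Big\{q_1,\tfrac{2m}{m-1}\Big\},\qquad q_2^\ast=\min\Big\{q_2,\tfrac{2m-1}{m-1}\Big\}.
\]
Then $q_1^\ast\le q_1$ and $2\le q_2^\ast\le q_2$, so finiteness of $\|V\|_{\mathcal{L}^{q_1^\ast,q_2^\ast}_{x,t}}$ follows from $\|V\|_{\mathcal{L}^{q_1,q_2}_{x,t}}<\infty$ by H\"older's inequality on the finite-measure set $\Omega_T$, and the two caps are exactly the requirements of \eqref{T:ACweakSol:V_q} at level $m$. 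What remains is the sub-scaling inequality at level $m$, which (after subtracting off the level-$m$ scaling identity) is equivalent to $q_1^\ast\ge q_1^{(m)}\!\big(q_2^\ast\big)$. Since $q_1^{(m)}$ is decreasing with maximum $q_1^{(m)}(2)=\frac{2m}{m-1}$, every case in which the first cap is active is immediate; in the remaining cases $q_2^\ast=\frac{2m-1}{m-1}$, where $q_1^{(m)}\!\big(\tfrac{2m-1}{m-1}\big)=\frac{d(2m-1)}{1+d(m-1)}$, and monotonicity reduces the claim to the single comparison $q_1\ge\frac{d(2m-1)}{1+d(m-1)}$ for all line points with $q_2\ge\frac{2m-1}{m-1}$. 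Evaluating the level-$q$ line at its endpoint $q_2=\frac{q+m-1}{m-1}$ and simplifying, I find
\[
q_1^{(q)}\!\Big(\tfrac{q+m-1}{m-1}\Big)-q_1^{(m)}\!\Big(\tfrac{2m-1}{m-1}\Big)\ \ \text{has the sign of}\ \ (m-1)(q-m)(d-2)\ge0 ,
\]
so by monotonicity in $q_2$ the comparison holds for every $d\ge2$; for $d=2$ it is strict off the (excluded) endpoint, which is exactly the strict sub-scaling setting that lets one keep \eqref{T:ACweakSol:V_q} strict. Hence $V\in\mathfrak{S}_{m,m}^{(q_1^\ast,q_2^\ast)}$ with $(q_1^\ast,q_2^\ast)$ obeying \eqref{T:ACweakSol:V_q}.

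Finally, since $q^\ast\in[1,q]$, $q^\ast\ge m-1$, and $\rho_0\in L^{q}(\Omega)\subseteq L^{q^\ast}(\Omega)$ on the bounded domain, Theorem~\ref{T:ACweakSol}~(ii) applied at level $q^\ast$ produces an absolutely continuous weak solution satisfying \eqref{T:ACweakSol:E_q} and \eqref{T:ACweakSol:W_q} with $\lambda_q$ replaced by $\lambda_{q^\ast}$, which is the desired conclusion (and, since $\lambda_s=2$ for all $s\ge m$, there is in fact no loss in the Wasserstein exponent when $q>m$). I expect the main obstacle to be precisely the sub-scaling verification after capping: one must check that forcing the pair to meet both caps $q_1\le\frac{2m}{m-1}$ and $q_2\le\frac{2m-1}{m-1}$ does not eject it from the level-$m$ sub-scaling region, and this is exactly what the endpoint sign computation $(m-1)(q-m)(d-2)\ge0$ secures; the remaining ingredients—monotonicity along the scaling line and the finite-measure embedding—are routine.
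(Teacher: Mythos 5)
Your proposal proves only part~(iii) of the theorem and treats parts~(i) and~(ii) as available black boxes: you explicitly ``invoke Theorem~\ref{T:ACweakSol}~(ii)'' at level $q^\ast$ to conclude. Since (i) and (ii) are themselves part of the statement, this leaves the core of the theorem unproved. The paper's proof of (i)--(ii) is where the real analytic work sits: one approximates $V$ by smooth fields $V_n$ with $V_n\cdot\mathbf{n}=0$ and $\rho_0$ by smooth data, builds regular solutions $\rho_n\in AC(0,T;\mathcal{P}_2(\Omega))$ via the splitting scheme (Proposition~\ref{proposition : regular existence}), derives the uniform entropy/energy \emph{plus speed} estimates by combining Proposition~\ref{P:Lq-energy} with Proposition~\ref{P:Energy-speed} and Lemma~\ref{P:W-p}, rewrites the equation as a continuity equation with velocity $w_n=-\nabla\rho_n^m/\rho_n+V_n$ so that Lemma~\ref{representation of AC curves} yields $W_p(\rho_n(s),\rho_n(t))\leq C(t-s)^{(p-1)/p}$, extracts a narrowly convergent limit curve by the refined Ascoli lemma (Lemma~\ref{Lemma : Arzela-Ascoli}), passes the Wasserstein estimate to the limit by lower semicontinuity of $W_p$, and identifies the limit as a weak solution through the compactness of Propositions~\ref{P:AL1}/\ref{P:AL2} (which is where the condition \eqref{V_compact_m+1} enters). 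None of this machinery can be recovered from your embedding argument, so as a proof of the full statement there is a genuine gap.

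That said, your part~(iii) argument is correct and is a concrete, fully verified version of what the paper handles via Proposition~\ref{P:Energy-embedding}~(i) and the picture in Remark~\ref{R:T:ACweakSol}~(iii) (Fig.~\ref{F:S:m2}), which the paper states without detailed proof. Your computations check out: the maximal $q_1$ on the level-$q$ scaling line is $\frac{2q}{m-1}$ at $q_2=2$ (so for $1\leq q\leq m$ the cap $q_1\leq\frac{2m}{m-1}$ is automatic and $q^\ast=q$ works), the threshold $q_1^{(m)}\bigl(\frac{2m-1}{m-1}\bigr)=\frac{d(2m-1)}{1+d(m-1)}$ is right, and the endpoint comparison does reduce to the sign of $(m-1)(d-2)(q-m)\geq 0$. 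Two remarks: the paper's embedding moves the point onto a level-$q^\ast$ scaling line for a varying $q^\ast\in[1,q]$, whereas your fixed choice $q^\ast=m$ with the caps $q_1^\ast=\min\{q_1,\frac{2m}{m-1}\}$, $q_2^\ast=\min\{q_2,\frac{2m-1}{m-1}\}$ lands merely in the sub-scaling region $\mathfrak{S}_{m,m}^{(q_1^\ast,q_2^\ast)}$ --- which suffices, and your observation that $\lambda_{q^\ast}=\lambda_m=2=\lambda_q$ for $q>m$, so no Wasserstein exponent is lost, is a nice point the paper does not make explicit. Finally, for $d=2$ your stated cap $q_2^\ast=\frac{2m-1}{m-1}$ violates the strict inequality in \eqref{T:ACweakSol:V_q}; your parenthetical fix (the comparison is strict off the excluded endpoint, so one may lower $q_2^\ast$ slightly while remaining sub-scaling by continuity of $q_1^{(m)}$) is valid but should be written out rather than gestured at.
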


Here are remarks about Theorem~\ref{T:ACweakSol}.

\begin{remark}\label{R:T:ACweakSol}
\begin{itemize}
\item[(i)] To establish an absolutely continuous solution, both energy estimates (\eqref{V-L1-energy} or \eqref{V-Lq-energy} for $1\leq q \leq m$)  and speed estimates (\eqref{V:speed} for $q>m$) are required. Compared to \cite[Theorem~2.9]{HKK}, the results above are same except part (iii) which is more general because now it is possible to apply embedding in both space and time variables. 

\item[(ii)] In Figs.~\ref{F:S:1m2}, \ref{F:S:1m2:more}, dark shaded region $\mathcal{R}(\textbf{CDEFG})$ illustrates the pairs $(q_1, q_2)$ satisfying \eqref{T:ACweakSol:V_q} in case $1 < m \leq 2$ and $q\geq 1$. In case $m>2$ and $q\geq m-1$, dark shaded region $\mathcal{R}(\textbf{CDEG})$ in Figs.~\ref{F:S:m2}, \ref{F:S:m2:more} shows the range of $(q_1, q_2)$ satisfying \eqref{T:ACweakSol:V_q}.

\begin{figure}
\centering

\begin{tikzpicture}[domain=0:16]



\fill[fill= lgray]
(0,2) -- (1.2, 0) -- (2.35, 1.1) -- (1.45, 2);


\fill[fill= gray]
(0.4, 1.3)--(0.8, 0.63) -- (1.85,0.63) -- (2.35, 1.1) -- (1.45, 2)--(0.4, 2);

\draw[->] (0,0) node[left] {\scriptsize $0$}
-- (5,0) node[right] {\scriptsize $\frac{1}{q_1}$};
\draw[->] (0,0) -- (0,4.5) node[left] { \scriptsize $\frac{1}{q_2}$};

\draw (0,4) node{\scriptsize $+$} node[left]{\scriptsize $1$} ;
\draw (4,0) node{\scriptsize $+$} ;

\draw[very thin] (0, 2) -- (1.45, 2) ;
\draw(1.45, 2) node{\scriptsize $\bullet$} node[right] {\scriptsize \textbf{E}};

\draw[very thin] (0, 2) -- (1.45, 2);

\draw[very thin]  (1.2, 0) -- (2.35, 1.1);

\draw (2.35, 1.1) node{\scriptsize $\bullet$} node[right] {\scriptsize \textbf{D}};

\draw[thick] (0.8, 0.63) circle(0.05) node[left] {\scriptsize \textbf{B}};
\draw[very thin]
 (0.8, 0.63)
-- (1.85, 0.63) node{\scriptsize $\bullet$} node[right]{\scriptsize \textbf{C}} ;

\draw[thick] (0.4,1.3) circle(0.05) node[below] {\scriptsize \textbf{A}};
\draw[very thin]
(0.4, 1.3)--(0.4, 2) node{\scriptsize $\bullet$} node[above] {\scriptsize \textbf{F}};

\draw[thick] (0,2) circle(0.05) node[left] {\scriptsize \textbf{a}};
\draw[thick] (1.2,0) circle(0.05) node[below] {\scriptsize \textbf{b}};
\draw[dashed] (0,2) -- (1.2, 0);
\draw (0,3.4) node {\scriptsize $\times$} node[left] {\scriptsize $\frac{1}{p_1}$}
-- (3.5, 0) node {\scriptsize $\times$} node[below] {\scriptsize $\frac{1+d(m-1)}{d}$};
\draw (3.9, 0.3) node{\scriptsize $\mathcal{S}_{m,1}^{(q_1, q_2)}$};

\draw (0,2.4) node {\scriptsize $\times$} node[left] {\scriptsize $\frac{m+d(m-1)}{2m+d(m-1)}$}
    -- (2.5, 0) node {\scriptsize $\times$} node[below] {\scriptsize \textbf{c}};
\draw (2.7, 0.3) node{\scriptsize $\mathcal{S}_{m,m}^{(q_1, q_2)}$};

\draw[thin, dotted] (0.4, 0) node{\scriptsize $*$} node[below] {\scriptsize $\frac{m-1}{2m}$} --(0.4, 2);

\draw[thin, dotted] (0, 1.3) node{\scriptsize $*$}  -- (0.4, 1.3);
\draw[thin, dotted] (0, 0.63) node{\scriptsize $*$} node[left]{\scriptsize $\frac{m-1}{2m-1}$}  -- (0.8, 0.63);
\draw[thin, dotted] (0.8, 0) node{\scriptsize $*$}  -- (0.8, 0.63);

\draw[thin, dotted] (1.85, 0) node{\scriptsize $*$}  -- (1.85, 0.63);

\draw[thin, dotted] (2.35, 0) node{\scriptsize $*$}  -- (2.35, 1.1);
\draw[thin, dotted] (0, 1.1) node{\scriptsize $*$}  -- (2.35, 1.1);

\draw[thin, dotted] (1.45, 0) node{\scriptsize $*$}  -- (1.45, 2);

\draw (6, 4.3) node[right] {\scriptsize $\overline{\textbf{ab}} = \mathcal{S}_{m, \infty}^{(q_1, q_2)}$, $\textbf{a} = (0, \frac 12)$, $\textbf{b} = (\frac 1d, 0)$ };

\draw (6, 3.5) node[right] {\scriptsize $\mathcal{R} (\textbf{ABCDEF})$: scaling invariant class of \eqref{T:ACweakSol:V_q}. };
\draw (6, 3) node[right] {\scriptsize $\mathcal{R} (\textbf{bCB})$: scaling invariant class of \eqref{T:weakSol:V_q}. };

\draw (6, 2.3) node[right] {\scriptsize $\textbf{A} = (\frac{m-1}{2m}, \frac{d+m(2-d)}{4m})$, $\textbf{B} = (\frac{1}{d(2m-1)}, \frac{m-1}{2m-1})$ };
\draw (6, 1.7) node[right] {\scriptsize $\textbf{C} = (\frac{1+d(m-1)}{d(2m-1)}, \frac{m-1}{2m-1})$, $\textbf{c}= (\frac{m+d(m-1)}{md},0)$ };
\draw (6, 1.2) node[right] {\scriptsize $\textbf{D} = (\frac{(2-m)+d(m-1)}{md}, \frac{m-1}{m})$};
\draw (6, 0.7) node[right] {\scriptsize $\textbf{E} = (\frac{m-1}{2}, \frac 12)$, $\textbf{F} = (\frac{m-1}{2m}, \frac 12)$};

\end{tikzpicture}
\caption{\footnotesize  Theorem~\ref{T:ACweakSol} for $1<m\leq 2$, $q>1$, $2 < d \leq \max\{2, \frac{2m}{(2m-1)(m-1)}\}$.}
\label{F:S:1m2}
\end{figure}

\item[(iii)] (Embedding Figure) In Fig.~\ref{F:S:m2}, we illustrate the strategy to obtain an absolutely continuous $L^q$-weak solution as long as $V$ holds \eqref{T:weakSol:V_q} by using embedding arguments. Starting from  $(q_1, q_2)$ satisfying \eqref{T:ACweakSol:V_q}, we are able to find $q^\ast \in [1, q]$ and a pair $(q_1^\ast, q_2^\ast)$ where $V \in \mathcal{S}_{m,q^\ast}^{(q_1^\ast, q_2^\ast)}$ satisfying \eqref{T:ACweakSol:V_q} and $\|V\|_{\mathcal{S}_{m,q^\ast}^{(q_1^\ast, q_2^\ast)}} \leq c\|V\|_{\mathcal{S}_{m,q}^{(q_1, q_2)}}$ by Remark~\ref{R:S-tildeS}~(iii) (cf. \cite[Theorem~2.9, Fig. 21]{HKK}).

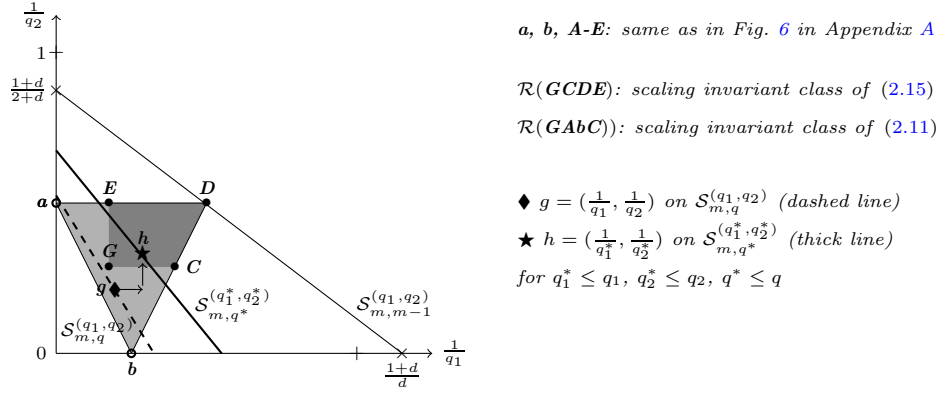
\begin{figure}
\centering 

\begin{tikzpicture}[domain=0:16]



\fill[fill= lgray]
(0, 2) -- (1,0) -- (2,2);


\fill[fill= gray]
(0.7, 2) -- (0.7,1.15) -- (1.58, 1.15) --(2,2);

\draw[->] (0,0) node[left] {\scriptsize $0$} -- (5,0) node[right] {\scriptsize $\frac{1}{q_1}$};
\draw[->] (0,0) -- (0,4.5) node[left] { \scriptsize $\frac{1}{q_2}$};

\draw (0,4) node{\scriptsize $+$} node[left] {\scriptsize $1$} ;
\draw (4,0) node{\scriptsize $+$};

\draw
(0, 2) node[left] {\scriptsize \textbf{a}} -- (2, 2) ;

\draw[very thin] (0,2)  node[left] {\scriptsize $\textbf{a}$}
 -- (1, 0)   node[below] {\scriptsize $\textbf{b}$};
\draw[thick] (0,2) circle(0.05);
\draw[thick] (1,0) circle(0.05);

\draw(2, 2) node{\scriptsize $\bullet$} node[above] {\scriptsize \textbf{D}};

\draw(0.7,2) node{\scriptsize $\bullet$} node[above]{\scriptsize \textbf{E}};

\draw (1.58, 1.15) node{\scriptsize $\bullet$} node[right]{\scriptsize \textbf{C}};

\draw(0.7, 1.15) node{\scriptsize $\bullet$} node[above]{\scriptsize \textbf{G}};

\draw[very thin] (1, 0)  -- (2, 2);

\draw (0.78,0.85) node{\scriptsize $\blacklozenge$} node[left]{\scriptsize {$\textbf{g}$}};
\draw[thick, dashed] (0, 2.1)--(1.3, 0);
\draw (0.5, 0.3) node{\scriptsize { $\mathcal{S}_{m, q}^{(q_1,q_2)}$}} ;


\draw[ ->] (0.78,0.85) -- (1.15, 0.85)  ;
\draw[ ->] (1.15, 0.9) -- (1.15, 1.2) ;

\draw (1.15, 1.33) node{\scriptsize $\bigstar$} node[above]{\scriptsize $\textbf{h}$ };
\draw[thick] (0, 2.7) -- (2.2, 0) ;

\draw (2.3, 0.65) node{\scriptsize { $\mathcal{S}_{m, q^\ast}^{(q_1^\ast,q_2^\ast)}$}} ;

\draw[thin] (0,3.5) node {\scriptsize $\times$} node[left] {\scriptsize $\frac{1+d}{2+d}$}
-- (4.6, 0) node {\scriptsize $\times$} node[below] {\scriptsize $\frac{1+d}{d}$} ;
\draw (4.5, 0.65) node {\scriptsize $\mathcal{S}_{m,m-1}^{(q_1, q_2)}$} ;


\draw (6, 4.3) node[right] {\scriptsize $\textbf{a, b, A-E}$: same as in Fig.~\ref{F:S:m2:more} in Appendix~\ref{Appendix:fig} };
\draw (6, 3.5) node[right] {\scriptsize $\mathcal{R}(\textbf{GCDE})$: scaling invariant class of \eqref{T:ACweakSol:V_q} };
\draw (6, 3) node[right] {\scriptsize $\mathcal{R}(\textbf{GAbC}))$: scaling invariant class of \eqref{T:weakSol:V_q} };
\draw (6,2) node[right] {\scriptsize $\blacklozenge$  $ g = (\frac{1}{q_1}, \frac{1}{q_2})$ on $\mathcal{S}_{m, q}^{(q_1, q_2)}$ (dashed line)} ;
\draw (6,1.5) node[right] {\scriptsize $\bigstar$  $ h = (\frac{1}{q_1^\ast}, \frac{1}{q^{\ast}_2})$ on $\mathcal{S}_{m, q^\ast}^{(q_1^\ast, q_2^\ast)}$ (thick line)} ;
\draw (6,1) node[right] {\scriptsize for $q^{\ast}_1 \leq q_1$, $q^{\ast}_2 \leq q_2$, $q^\ast \leq q$} ;

\end{tikzpicture}
\caption{\footnotesize Theorem~\ref{T:ACweakSol} for $m>2$, $q\geq m-1$, $ 2 < d \leq \frac{2m}{m-1}$.}
\label{F:S:m2}
\end{figure}

\item[(iv)] To sum up the existence results in this section, we deliver Table~\ref{Table3}. 

\begin{table}[hbt!]
\begin{center}
\caption{\footnotesize Guide of Theorem~\ref{T:weakSol} and Theorem~\ref{T:ACweakSol} in Section~\ref{SS:energy-sol}.}
\smallskip
{\scriptsize
\begin{tabular}{| c  || c | c | c |}\hline
\rule[-8pt]{0pt}{22pt}
 \textbf{Structure of $V$} & \textbf{Range of $m$} & \textbf{Range of $q$} & \textbf{References}   \\ \hline \hline

\rule[-8pt]{0pt}{22pt}
\multirow{9}{*}{$V\in \mathfrak{S}_{m,q}^{(q_1, q_2)}$}
& {$1<m\leq 2$}
& \multirow{3}{*}{$q=1$}
& Theorem~\ref{T:weakSol} (i), Theorem~\ref{T:ACweakSol} (i)
 \\
\cline{2-2} \cline{4-4}

\rule[-8pt]{0pt}{22pt}

& {$ m>2$}
& 
& open
\\
\cline{2-4}

\rule[-8pt]{0pt}{22pt}
& $1 < m \leq 2$
& $1 < q < \infty$
& \multirow{3}{*}{Theorem~\ref{T:weakSol} (ii), Theorem~\ref{T:ACweakSol} (ii)}
\\
\cline{2-3}

\rule[-8pt]{0pt}{22pt}

&\multirow{3}{*}{$m>2$}
& $m-1 \leq q < \infty$
& 
\\
\cline{3-4}

\rule[-8pt]{0pt}{22pt}
& 
& $1 < q < m-1$
& open
\\
\hline

\end{tabular}
\label{Table3}
}
\end{center}
\end{table}

\end{itemize}
\end{remark}


\subsubsection{Existence for case: $\nabla\cdot V \geq 0$.}\label{SS:divergence-free}

Here we assume that the divergence of $V$ is non-negative, including the divergence-free case. Compared to the earlier subsection, theorems in this subsection work for a less restricted range for $V$. Let us begin with the $L^1$-weak solution.

\begin{theorem}\label{T:weakSol_DivFree_1}
Let $m>1$ and  $\nabla \cdot V \geq 0$. Furthermore, assume that $\rho_0 \in  \mathcal{P} (\Omega)$, $\int_{\Omega} \rho_0 \log \rho_0 \,dx< \infty$, and
\begin{equation}\label{T:weakSol_DivFree:V_1}
V\in \mathfrak{S}_{m,1}^{(q_1,q_2)} \quad \text{for} \quad
\begin{array}{l l}
\begin{cases}
 q_2^{1,l} \leq q_2 \leq \infty,
 & \text{if } 1 < m \leq m^\ast, \vspace{1 mm} \\
q_2^{1,l}\leq q_2 \leq q_{2}^{1,u} ,
& \text{if }  m > m^\ast, \vspace{1 mm} \\
\end{cases}
\text{ and } d>2 ,\vspace{2 mm} \\
\begin{cases}
 q_2^{1,l} \leq q_2 \leq \infty,
 & \text{if }  1<m<m^\ast, \vspace{1 mm} \\
  q_2^{1,l}\leq q_2 < \infty,
 & \text{if }  m = m^\ast, \vspace{1 mm} \\
q_2^{1,l}\leq q_2 < q_{2}^{1,u},
& \text{if }  m > m^\ast, \vspace{1 mm}
\end{cases}
 \text{ and } d=2,
\end{array}
\end{equation}
where
\begin{equation}\label{m_ast}
q_{2}^{1, l}:= \frac{2+d(m-1)}{1+d(m-1)}, \quad  q_2^{1,u}:= \left(\frac{md+1}{md+2} - \frac{1}{m}\right)^{-1},\quad m^\ast := \frac{\sqrt{d^2 + 6d +1} + (d-1)}{2d}.
\end{equation}
Then, there exists
  a nonnegative $L^1$-weak solution of \eqref{E:Main}-\eqref{E:Main-bc-ic} in Definition~\ref{D:weak-sol} that holds \eqref{T:weakSol:E_1}
where $C = C(\int_{\Omega} \rho_0 \log \rho_0 \,dx)$.
\end{theorem}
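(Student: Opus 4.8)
The plan is to follow the three-step scheme used for the other existence results: produce regular approximate solutions, derive uniform a priori estimates, and pass to the limit by compactness. For the first step I would approximate $\rho_0$ by smooth, strictly positive probability densities and approximate $V$ by smooth fields $V_n \in C_c^\infty(\overline{\Omega}\times[0,T))$ with $V_n \cdot \textbf{n} = 0$, which are dense in $\mathcal{L}^{q_1,q_2}_{x,t}$ by the very definition of that space; one must be careful here to keep track of the sign of the divergence, either by arranging $\nabla\cdot V_n \ge 0$ in the mollification or by noting that only $\nabla\cdot V \ge 0$ enters in the limit. The regular solutions $\rho_n$ are then furnished by the splitting method of Section~\ref{splitting method}.

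The heart of the matter is the entropy estimate, and this is where the hypothesis $\nabla \cdot V \ge 0$ pays off. Testing the equation with $\log \rho_n + 1$ and integrating by parts, the no-flux boundary condition in \eqref{E:Main-bc-ic} kills the boundary contributions and one obtains
\begin{equation*}
\frac{d}{dt}\int_\Omega \rho_n \log \rho_n \,dx + \frac{4}{m}\int_\Omega \bigl|\nabla \rho_n^{\frac m2}\bigr|^2 \,dx = -\int_\Omega (\nabla\cdot V_n)\,\rho_n \,dx \le 0,
\end{equation*}
the sign coming from $\nabla\cdot V_n \ge 0$ and $\rho_n \ge 0$, with the boundary term $\int_{\partial\Omega}\rho_n (V_n\cdot\textbf{n})$ vanishing. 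Thus, unlike in the generic sub-scaling case, the drift is \emph{favorable} in the entropy balance and needs no interpolation to be absorbed; integrating in time bounds the dissipation and the entropy by $\int_\Omega \rho_0 \log\rho_0\,dx$ alone, matching the stated constant. On a bounded domain the negative part of $\rho_n\log\rho_n$ is controlled for free, since $-s\log s \le e^{-1}$ on $(0,1]$ and the mass is normalized, so $\esssup_t \int_\Omega \rho_n|\log\rho_n| \le C$ with no $p$-moment estimate. This produces \eqref{T:weakSol:E_1} at the approximate level, and the remaining integrability requirements of Definition~\ref{D:weak-sol}~(i) (namely $\rho\in L^m(\Omega_T)$ and $\nabla\rho^m\in L^1(\Omega_T)$) follow from the entropy dissipation together with Sobolev embedding.

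Even with the favorable entropy estimate, the role of the sub-scaling hypothesis is to make the weak formulation meaningful, i.e.\ to guarantee $\rho V \in L^1(\Omega_T)$ uniformly. From the bound on $\nabla \rho_n^{m/2}$ in $L^2(\Omega_T)$ together with the mass bound, parabolic Gagliardo--Nirenberg--Sobolev inequalities on the bounded domain control $\rho_n$ in a scale of spaces $L^{q_2'}_t L^{q_1'}_x$; the requirement that the Hölder-dual pair $(q_1',q_2')$ lands in this scale is exactly the sub-scaling relation \eqref{subSerrin}, whereupon Hölder against $V_n \in L^{q_2}_t L^{q_1}_x$ gives a uniform bound on $\rho_n V_n$ in $L^1(\Omega_T)$. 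The admissible range for $q_2$ in \eqref{T:weakSol_DivFree:V_1}, including the upper threshold $q_2^{1,u}$ and the critical exponent $m^\ast$ in \eqref{m_ast}, is precisely the output of this interpolation: $m^\ast$ is the root of $d m^2 + (1-d)m - 2 = 0$ at which $q_2^{1,u}\to\infty$, and the finite-measure embedding $L^{q_1}_x \hookrightarrow L^{p_1}_x$ for $p_1\le q_1$ is what enlarges this range relative to the case $\Omega=\R^d$.

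Finally, compactness and the limit. The uniform bounds give weak-$\ast$ convergence of $\rho_n$ in $L^\infty_t(L^1)$ with bounded entropy and weak convergence of $\nabla \rho_n^{m/2}$ in $L^2(\Omega_T)$, while the equation provides the time-regularity of $\partial_t \rho_n$ needed for an Aubin--Lions argument, yielding strong convergence of $\rho_n$. Here the improved compactness Proposition~\ref{P:AL1}, valid for $1 \le q \le m+1$, is essential: it covers $q=1$ for \emph{every} $m>1$, whereas Proposition~\ref{P:AL2} would require $q \ge m-1$ and hence fail for $m>2$. The main obstacle is then passing to the limit in the two nonlinear terms: the strong convergence of $\rho_n$ identifies the weak limit of $\nabla \rho_n^m$ as $\nabla \rho^m$, and lets one pass to the limit in $\rho_n V_n$ using $V_n \to V$ in $\mathcal{L}^{q_1,q_2}_{x,t}$ (where the sub-scaling integrability is again what forces the product to converge). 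Lower semicontinuity of the norms upgrades \eqref{T:weakSol:E_1} to the limit, and verifying \eqref{KK-May7-40} for all test functions completes the proof.
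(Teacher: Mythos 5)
Your proposal is correct and follows essentially the same route as the paper: regular solutions via the splitting method, the entropy estimate of Proposition~\ref{P:Lq-energy}~(iv) in which the drift term $-\int_\Omega (\nabla\cdot V_n)\rho_n\,dx\le 0$ is favorable so the constant depends only on $\int_\Omega\rho_0\log\rho_0\,dx$, the identification of $m^\ast$ as the root of $dm^2+(1-d)m-2=0$ where $q_2^{1,u}\to\infty$, and compactness through Proposition~\ref{P:AL1} with $q=1$ (which, as you note, is what covers all $m>1$, since Proposition~\ref{P:AL2} would need $q\ge m-1$). One small imprecision: the restriction \eqref{T:weakSol_DivFree:V_1} is not there merely to make $\rho V\in L^1(\Omega_T)$ uniformly (a weaker range would suffice for that), but is exactly equivalent to \eqref{V_compact_m+1} with $q=1$, i.e.\ the condition guaranteeing $V\rho\in L^{\gamma_1}_{x,t}$ with $\gamma_1=\frac{md+2}{md+1}>1$ needed for the bound on $\partial_t\rho_n$ in the Aubin--Lions step --- a role your final compactness paragraph does correctly capture.
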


\begin{remark}
\begin{itemize}
\item[(i)] The constant $m^\ast$ is determined by the intersection of the valid regions where both energy estimates and compactness arguments hold. In Fig.~\ref{F:divfree:1}, $m^\ast$ is determined by the point $\textbf{C}$, the intersection of the right-end point of $\mathcal{S}_{m^\ast, 1}^{(q_1, q_2)}$ and the line $l$ from \eqref{V_compact_m+1}. 
The $m^\ast$ in \eqref{m_ast} decreases to $1$ as $d\to \infty$. 
\item[(ii)] Fig.~\ref{F:divfree:1} is for \eqref{T:weakSol_DivFree:V_1} when $q=1$, $d>2$ and $m>m^\ast$. When $d=2$, $\overline{\textbf{CD}}$ is excluded from $\mathcal{R}(\textbf{ABCDE})$. 

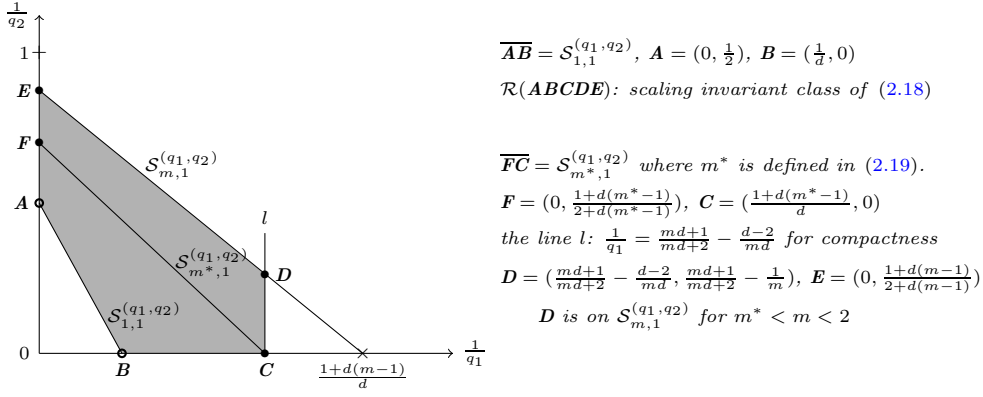
\begin{figure}
\centering
\begin{tikzpicture}[domain=0:16]


\fill[fill= lgray]
(0, 2) -- (1.1,0) -- (3, 0) -- (3, 1.05) -- (0, 3.5) -- (0, 2);

\draw[->] (0,0) node[left] {\scriptsize $0$}
-- (5.5,0) node[right] {\scriptsize $\frac{1}{q_1}$};
\draw[->] (0,0) -- (0,4.5) node[left] { \scriptsize $\frac{1}{q_2}$};

\draw (0,4) node{\scriptsize $+$} node[left]{\scriptsize $1$} ;

\draw[thick] (0, 2) circle(0.05) node[left] {\scriptsize \textbf{A}};
\draw (1.4, 0.5) node{\scriptsize $\mathcal{S}_{1,1}^{(q_1, q_2)}$};

\draw (0,2) -- (1.1, 0)  node[below] {\scriptsize $\textbf{B}$};
\draw[thick] (1.1, 0) circle(0.05) ;


\draw (0,2.8) node {\scriptsize $\bullet$}  node[left]{\scriptsize \textbf{F}}
    -- (3, 0) ;
\draw[thick] (3, 0) node {\scriptsize $\bullet$} node[below] {\scriptsize \textbf{C}};
\draw (2.3, 1.2) node{\scriptsize $\mathcal{S}_{m^\ast,1}^{(q_1, q_2)}$};

\draw (0,3.5) node {\scriptsize $\bullet$} node[left] {\scriptsize \textbf{E} }
    -- (4.3, 0) node{\scriptsize $\times$} node[below] {\scriptsize $\frac{1+d(m-1)}{d}$} ;
\draw (1.9, 2.5) node{\scriptsize $\mathcal{S}_{m,1}^{(q_1, q_2)}$};
\draw[thick] (3,1.05) node {\scriptsize $\bullet$} node[right]{\scriptsize \textbf{D}};
\draw (3, 0) -- (3, 1.6) node[above]{\scriptsize $l$};



\draw (6, 4) node[right] {\scriptsize $\overline{\textbf{AB}} = \mathcal{S}_{1, 1}^{(q_1, q_2)}$, $\textbf{A} = (0, \frac 12)$, $\textbf{B} = (\frac 1d, 0)$ };

\draw (6, 3.5) node[right] {\scriptsize $\mathcal{R} (\textbf{ABCDE})$: scaling invariant class of \eqref{T:weakSol_DivFree:V_1}};

\draw (6, 2.5) node[right] {\scriptsize $\overline{\textbf{FC}} = \mathcal{S}_{m^\ast, 1}^{(q_1, q_2)}$ where $m^\ast$ is defined in \eqref{m_ast}. };
\draw (6, 2) node[right] {\scriptsize $\textbf{F} = (0, \frac{1+d(m^\ast -1)}{2+d(m^\ast -1)})$, $\textbf{C} = (\frac{1+d(m^\ast -1)}{d}, 0)$ };
\draw (6, 1.5) node[right]{\scriptsize the line $l$: $\frac{1}{q_1} = \frac{md+1}{md+2}-\frac{d-2}{md}$ for compactness};
\draw (6, 1) node[right] {\scriptsize $\textbf{D} = (\frac{md+1}{md+2} - \frac{d-2}{md}, \frac{md+1}{md+2} - \frac{1}{m})$, $\textbf{E}= (0, \frac{1+d(m-1)}{2 + d(m-1)})$};
\draw (6.5, 0.5) node[right]{\scriptsize $\textbf{D}$ is on $\mathcal{S}_{m, 1}^{(q_1, q_2)}$ for $m^\ast <m < 2$};
\end{tikzpicture}
\caption{\footnotesize Theorem~\ref{T:weakSol_DivFree_1} for $ m> m^\ast$, $q=1$, $d>2$.}
\label{F:divfree:1}
\end{figure}
\end{itemize}
\end{remark}

Now we introduce $L^q$-weak solutions satisfying an energy estimate in two cases: either $m$ close to $1$ or not. 

\begin{theorem}\label{T:weakSol_DivFree_q}
Let $m>1$, $q>1$, and $q_{m,d}= \frac{d(m-1)}{q}$. Furthermore, assume that $\nabla \cdot V \geq 0$ and $\rho_0 \in  \mathcal{P} (\Omega) \cap L^{q} (\Omega)$. 
\begin{itemize}
\item[(i)] Let $1 < m \leq m^{\ast}$ for $m^\ast$ in \eqref{m_ast}. Suppose that, for $q_2^{q,l}:= \frac{2+q_{m,d}}{1+q_{m,d}}$,
 \begin{equation}\label{T:weakSol_DivFree:V_q1}
V\in \mathfrak{S}_{m,q}^{(q_1,q_2)} \quad \text{for} \quad
\begin{cases}
q_2^{q,l} \leq q_2 \leq \infty, &\text{if } d>2, \vspace{1 mm} \\
 q_2^{q,l} \leq q_2 \leq \infty,
 & \text{if }  1<m<m^\ast \text{ and } d=2, \vspace{1 mm} \\
  q_2^{q,l}\leq q_2 < \infty,
 & \text{if }  m = m^\ast \text{ and } d=2 .
\end{cases}
\end{equation}
Then, there exists
  a nonnegative $L^q$-weak solution of \eqref{E:Main}-\eqref{E:Main-bc-ic} in Definition~\ref{D:weak-sol} that holds \eqref{T:weakSol:E_q}
where $C = C( \|\rho_0\|_{L^{q}(\Omega)})$.

\item[(ii)] Let $m>m^\ast$. Suppose that
\begin{equation}\label{T:weakSol_DivFree:V_q2}
V\in \mathfrak{S}_{m,q}^{(q_1,q_2)} \quad \text{for} \quad
\begin{array}{l l}
\begin{cases}
 q_2^{q,l} \leq q_2 \leq q_{2}^{q,u},
 & \text{if }  1<q\leq q^\ast  , \vspace{1 mm} \\
q_2^{q,l}\leq q_2 \leq \infty,
& \text{if }   q > q^\ast, \vspace{1 mm} \\
\end{cases}
\text{ and } d>2,\vspace{2 mm} \\
\begin{cases}
 q_2^{q,l} \leq q_2 < q_{2}^{q,u},
 & \text{if }   1<q<q^\ast, \vspace{1 mm} \\
  q_2^{q,l}\leq q_2 < \infty,
 & \text{if }   q=q^\ast, \vspace{1 mm} \\
q_2^{q,l}\leq q_2 \leq \infty,
& \text{if }  q > q^\ast, \vspace{1 mm}
\end{cases}
 \text{ and } d=2,
\end{array}
\end{equation}
where
\begin{equation}\label{q_ast}
q_2^{q,l}:= \frac{2+q_{m,d}}{1+q_{m,d}}, \ q_{2}^{q,u}:= \left(\frac{1+q_{m,d}}{2+q_{m,d}} - \frac{d(md+1)}{md+2} +\frac{d-2}{m}\right)^{-1}, \  q^\ast :=\left[ \frac{1}{m-1} \left(\frac{md+1}{md+2} - \frac{m+d-2}{md}\right)\right]^{-1}.
\end{equation}
Then, there exists
  a nonnegative $L^q$-weak solution of \eqref{E:Main}-\eqref{E:Main-bc-ic} in Definition~\ref{D:weak-sol} that holds \eqref{T:weakSol:E_q}
where $C = C( \|\rho_0\|_{L^{q}(\Omega)})$.
\end{itemize}
\end{theorem}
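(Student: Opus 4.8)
The plan is to follow the three-step scheme used throughout the paper — construct regularized solutions, extract uniform a priori bounds, and pass to the limit by compactness — and to treat (i) and (ii) in parallel, since they share the same skeleton and differ only in the admissible range of $q_2$. First I would fix a regularizing parameter $\ep>0$ and a smooth admissible approximation $V_\ep \in C_c^\infty(\overline{\Omega}\times[0,T))$ with $V_\ep\cdot\textbf{n}=0$ on $\partial\Omega$ and $\nabla\cdot V_\ep\geq 0$; the latter is preserved under mollification by a nonnegative kernel, and convergence $V_\ep\to V$ in $\mathcal{L}_{x,t}^{q_1,q_2}$ comes from the very definition of the class. The splitting-method construction of Section~\ref{splitting method} then furnishes, for each $\ep$, a nonnegative regular solution $\rho_\ep$ of the approximate problem.

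The heart of the matter, and what makes the $\nabla\cdot V\geq 0$ case substantially cleaner than Theorem~\ref{T:weakSol}, is the energy estimate. Testing the approximate equation by $\rho_\ep^{q-1}$ and integrating by parts with the no-flux condition, the diffusion term yields a favorable positive multiple of $\iint_{\Omega_T}|\nabla\rho_\ep^{(q+m-1)/2}|^2$, while the drift contribution is
\[
(q-1)\iint_{\Omega_T} V_\ep\cdot\nabla\rho_\ep^{q}\,dx\,dt
 = -(q-1)\iint_{\Omega_T}(\nabla\cdot V_\ep)\,\rho_\ep^{q}\,dx\,dt \leq 0,
\]
because $q>1$ and $\nabla\cdot V_\ep\geq 0$ (the boundary term drops since $V_\ep\cdot\textbf{n}=0$). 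Thus the drift is absorbed at no cost, and one obtains \eqref{T:weakSol:E_q} with a constant $C=C(\|\rho_0\|_{L^{q}(\Omega)})$ independent of $\|V\|$ — exactly why no scaling-invariant smallness of $V$ enters the hypotheses and why the admissible class is larger here than in Theorem~\ref{T:weakSol}.

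Armed with the uniform bounds $\rho_\ep\in L^\infty(0,T;L^{q}(\Omega))$ and $\nabla\rho_\ep^{(q+m-1)/2}\in L^2(\Omega_T)$, I would extract a weakly convergent subsequence and then upgrade to strong convergence of $\rho_\ep$ (and of $\rho_\ep^{m}$) via the compactness results Proposition~\ref{P:AL1} (valid for $1\leq q\leq m+1$) and Proposition~\ref{P:AL2} (valid for $q\geq\max\{1,m-1\}$), whose overlap covers all $q>1$; this secures passage to the limit in the nonlinear diffusion term. For the drift, the lower bound $q_2\geq q_2^{q,l}=\frac{2+q_{m,d}}{1+q_{m,d}}$ is precisely the $1/q_1=0$ endpoint of the scaling line and, combined with the integrability of $\rho_\ep$ read off from interpolating $L^\infty(0,T;L^{q}(\Omega))$ against the Sobolev embedding of $\rho_\ep^{(q+m-1)/2}$, it makes $\rho_\ep V_\ep\to\rho V$ in $L^1(\Omega_T)$; the weak formulation \eqref{KK-May7-40} then persists in the limit. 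The remaining thresholds are pure exponent bookkeeping: the upper bound $q_2^{q,u}$ and the critical values $m^\ast$, $q^\ast$ in \eqref{m_ast}, \eqref{q_ast} mark where the compactness-constraint line $l$ crosses the right end of the scaling segment, producing the dichotomy between (i) $m\leq m^\ast$ (no upper truncation, $q_2$ up to $\infty$) and (ii) $m>m^\ast$ (the line $l$ caps $q_2$ at $q_2^{q,u}$ until $q$ exceeds $q^\ast$), while the $d=2$ borderline forcing strict inequalities reflects the failure of the endpoint Sobolev embedding.

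The step I expect to be the main obstacle is the drift compactness, not the energy estimate, which is essentially free here. Concretely, one must verify that the energy bounds give enough space–time integrability of $\rho_\ep$ to render $\rho_\ep V_\ep$ equi-integrable and strongly convergent for \emph{every} admissible pair $(q_1,q_2)$ in the stated region; this is exactly the computation that produces $m^\ast$, $q^\ast$, and $q_2^{q,u}$. I would carry it out by choosing the interpolation exponent so that the resulting Lebesgue space is dual to $L_{x,t}^{q_1,q_2}$ along the scaling line and then reading off the constraints, handling the strict endpoints and the $d=2$ anomaly with the refined compactness argument recorded in Appendix~\ref{Appendix:compact}.
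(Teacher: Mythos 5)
Your skeleton (smooth approximation, energy estimate made free by the sign of $\nabla\cdot V$, then compactness and limit passage) is the paper's skeleton, and your energy step is exactly Proposition~\ref{P:Lq-energy}~(iv): testing with $q\rho^{q-1}$ gives the drift contribution $\mathcal{J}_q=-(q-1)\int_\Omega \rho^q\,\nabla\cdot V\,dx\le 0$, so \eqref{T:weakSol:E_q} holds for \emph{every} pair $(q_1,q_2)$ with $C=C(\|\rho_0\|_{L^q(\Omega)})$, independent of $V$. The genuine gap is in your compactness step. You propose to apply Propositions~\ref{P:AL1} and~\ref{P:AL2} at the level $q$ and assert their overlap ``covers all $q>1$''; but both propositions require, in addition to the energy estimate, the drift windows \eqref{V_compact_m+1} resp.\ \eqref{V_compact_m-1} \emph{at level $q$}, and the hypotheses \eqref{T:weakSol_DivFree:V_q1}--\eqref{T:weakSol_DivFree:V_q2} do not imply these. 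Concretely, take $d=3$, $m=1.1$ (so $m\le m^\ast\approx 1.215$, and case~(i) admits the entire scaling line), $q=2$, and the admissible endpoint $\tfrac{1}{q_1}=0$, $q_2=q_2^{q,l}$. Then $\gamma_1=\tfrac{d(q+m-1)+2q}{md+q}\approx 1.943$, and the left constraint of \eqref{V_compact_m+1} demands $\tfrac{1}{q_1}\ge \tfrac{1}{\gamma_1}-\tfrac1q\approx 0.015>0$, while \eqref{V_compact_m-1} demands $\tfrac{1}{q_1}\ge \tfrac{q}{d(q+m-1)+2q}\approx 0.194>0$: both fail, so neither compactness proposition is applicable at level $q$ at a point the theorem covers. (Similarly, at the endpoint $q_2=\infty$ the window \eqref{V_compact_m+1} at level $q$ requires $\gamma_1\ge q+m-1$, which generically fails.)

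The missing idea---and the paper's actual route---is to run the compactness argument only at level $q=1$. Because $q>1$ and $\Omega$ is bounded, $\int_\Omega \rho_{0,n}\log\rho_{0,n}\,dx$ is uniformly controlled by $\|\rho_0\|_{L^q(\Omega)}$, so Proposition~\ref{P:Lq-energy}~(iv) also yields a uniform $L\log L$ plus $\nabla\rho_n^{m/2}$ bound for the approximations; then the bounded-domain embedding $\mathcal{S}_{m,q}^{(q_1,q_2)}\subset \mathcal{S}_{m,1}^{(\tilde q_1,\tilde q_2)}$ (H\"older in both $x$ and $t$, cf.\ Remark~\ref{R:S-tildeS}~(iii)) is used to replace $(q_1,q_2)$ by a pair on the $q=1$ scaling line satisfying \eqref{V_compact_m+1} with $q=1$, i.e.\ \eqref{T:weakSol_DivFree:V_1}. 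This is exactly where $m^\ast$, $q^\ast$ and $q_2^{q,u}$ come from: your instinct that they record where the line $l$ cuts the right end of a scaling segment is correct, but $l$ is the $q=1$ compactness constraint, so the verification must be performed \emph{after} the embedding, not at level $q$ (for $m\le m^\ast$ every level-$q$ pair embeds; for $m>m^\ast$ this forces the cap $q_2\le q_2^{q,u}$ when $q\le q^\ast$). The limit is then obtained first as an $L^1$-weak solution via Proposition~\ref{P:AL1} with $q=1$, exactly as in the proof of Theorem~\ref{T:weakSol_DivFree_1}, and the uniform $L^q$ energy bound for $\rho_n$ passes to the limit by lower semicontinuity along the strong convergence, yielding \eqref{T:weakSol:E_q}. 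With your compactness step replaced by this reduction, the rest of your outline goes through.
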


We make a few remarks about restrictions on $V$ in the above theorem.
\begin{remark}\label{R:T:weakSol_DivFree}
\begin{itemize}

\item[(i)] We plot the range of $(q_1, q_2)$ satisfying \eqref{T:weakSol_DivFree:V_q2} in Fig.~\ref{F:divfree:q}. 
The line $l$ is the restriction from \eqref{V_compact_m+1} and \eqref{V_compact_m-1} with $q=1$ for carrying compactness arguments. 

\item[(ii)] The constant $q^\ast$ is determined by computing the intersection of the right-end point of $S_{m,q^\ast}^{(q_1, q_2)}$ and the line $l$. The $q^\ast$ tends to $m$ as $d$ to $\infty$, and $q^\ast \to \infty$ as $m \to \infty$. 

\begin{figure}

\centering
\begin{tikzpicture}[domain=0:16]


\fill[fill= lgray]
(0, 2) -- (1.1,0) -- (3, 0) -- (3, 1.05) -- (0, 3.5) -- (0, 2);

\draw[->] (0,0) node[left] {\scriptsize $0$}
-- (5.5,0) node[right] {\scriptsize $\frac{1}{q_1}$};
\draw[->] (0,0) -- (0,4.5) node[left] { \scriptsize $\frac{1}{q_2}$};

\draw (0,4) node{\scriptsize $+$} node[left]{\scriptsize $1$} ;

\draw[thick] (0, 2) circle(0.05) node[left] {\scriptsize \textbf{A}};
\draw (1.4, 0.4) node{\scriptsize $\mathcal{S}_{m,\infty}^{(q_1, q_2)}$};

\draw (0,2) -- (1.1, 0)  node[below] {\scriptsize $\textbf{B}$};
\draw[thick] (1.1, 0) circle(0.05) ;


\draw[thick] (0,2.7) node {\scriptsize $\bullet$}  node[left]{\scriptsize \textbf{F}}
    -- (3, 0) ;
\draw[thick] (3, 0) node {\scriptsize $\bullet$} node[below] {\scriptsize \textbf{C}};
\draw (2.2, 1.2) node{\scriptsize $\mathcal{S}_{m,q^\ast}^{(q_1, q_2)}$};

\draw (0,3.5) node {\scriptsize $\bullet$} node[left] {\scriptsize \textbf{E} }
    -- (4.3, 0) node{\scriptsize $\times$} node[below] {\scriptsize $\frac{1+d(m-1)}{d}$} ;
\draw (2, 2.5) node{\scriptsize $\mathcal{S}_{m,1}^{(q_1, q_2)}$};
\draw[thick] (3, 1.05) node {\scriptsize $\bullet$} node[right]{\scriptsize \textbf{D}};


\draw[thick] (3, 0) -- (3, 1.6) node[above]{\scriptsize $l$};

\draw (6, 4) node[right] {\scriptsize $\overline{\textbf{AB}} = \mathcal{S}_{m, \infty}^{(q_1, q_2)}$, $\textbf{A} = (0, \frac 12)$, $\textbf{B} = (\frac 1d, 0)$ };

\draw (6, 3.5) node[right] {\scriptsize $\mathcal{R} (\textbf{ABCDE})$: scaling invariant class of \eqref{T:weakSol_DivFree:V_1}};

\draw (6, 2.5) node[right] {\scriptsize $\overline{\textbf{FC}} = \mathcal{S}_{m, q^\ast}^{(q_1, q_2)}$, $\textbf{F} = (0, \frac{1+q^{\ast}_{m,d}}{2+q^{\ast}_{m,d}})$, $\textbf{C} = (\frac{1+q^{\ast}_{m,d}}{d}, 0)$ };
\draw (6.5, 2) node[right] {\scriptsize $q^\ast$ is defined in \eqref{q_ast} and $q^\ast_{m,d}=\frac{d(m-1)}{q^\ast}$};
\draw (6,1.5) node[right]{\scriptsize the line $l$: $\frac{1}{q_1} = \frac{md+1}{md+2}-\frac{d-2}{md}$ for compactness};
\draw (6, 1) node[right] {\scriptsize $\textbf{D} = (\frac{md+1}{md+2} - \frac{d-2}{md}, \frac{md+1}{md+2} - \frac{1}{m})$, $\textbf{E}= (0, \frac{1+d(m-1)}{2 + d(m-1)})$};
\end{tikzpicture}
\caption{\footnotesize Theorem~\ref{T:weakSol_DivFree_q} for $ m> m^\ast$, $q>1$, $d>2$.}
\label{F:divfree:q}
\end{figure}

\item[(iii)] Compared to parallel results in \cite{HKK}, conditions on $V$ are less restricted because of improved compactness arguments (both Proposition~\ref{P:AL1} and Proposition~\ref{P:AL2}). Compactness arguments are applicable for unbounded domain results (see Appendix~\ref{Appendix:compact}). 
\end{itemize}
\end{remark}

Now we construct absolutely continuous $L^q$-weak solutions for $q\geq 1$ with the aid of speed estimate in Lemma~\ref{P:W-p}. The following theorem is parallel to \cite[Theorem~2.13, Figs.~8, 9]{HKK}, corresponding results on $\mathbb{R}^d \times (0, T)$. 

\begin{theorem}\label{T:ACweakSol_DivFree}
Let $m>1$ and $q\geq 1$. Furthermore, assume that $\nabla \cdot V \geq 0$.

 \begin{itemize}
 \item[(i)] Suppose that $\rho_0 \in  \mathcal{P}(\Omega)$ and $\int_{\Omega} \rho_0 \log \rho_0 \,dx < \infty$, and
 \begin{equation} \label{T:ACweakSol_DivFree:V_1}
V\in \mathfrak{S}_{m,1}^{(q_1, q_2)}  \ \text{ for } \
\begin{cases}
\lambda_1 \leq q_2 \leq \frac{\lambda_1 m}{m-1}, & \text{ if }  d>2, \vspace{1 mm}\\
\lambda_1 \leq q_2 < \frac{\lambda_1 m}{m-1}, & \text{ if }  d=2.
\end{cases}
\end{equation}
Then, there exists a nonnegative $L^1$-weak solution of \eqref{E:Main}-\eqref{E:Main-bc-ic} in Definition~\ref{D:weak-sol} such that
 $\rho \in AC(0,T; \mathcal{P}(\Omega))$ with $\rho(\cdot, 0)=\rho_0$.
Furthermore, $\rho$
satisfies \eqref{T:weakSol:E_1}, \eqref{T:ACweakSol:E_1}, and \eqref{T:ACweakSol:W_1} with
$C=C(\left\| V\right\|_{\mathfrak{S}_{m,1}^{(q_1, q_2)}},\, \int_{\Omega} \rho_0 \log \rho_0 \,dx )$.

 \item[(ii)] Let $q > 1$. Suppose that $\rho_0 \in  \mathcal{P}(\Omega) \cap L^{q} (\Omega)$ and
\begin{equation}\label{T:ACweakSol_DivFree:V_q}
V\in \mathfrak{S}_{m,q}^{(q_1, q_2)}  \ \text{ for } \
\begin{cases}
\lambda_q \leq q_2 \leq \frac{\lambda_q (q+m-1)}{q+m-2}, & \text{ if }  1 < q\leq m, \text{ and } d>2 \vspace{1 mm}\\
q_1 \leq \frac{2m}{m-1},\  2 \leq q_2 \leq \frac{2m-1}{m-1}, & \text{ if }  q> m,\text{ and } d>2 \vspace{1 mm}\\
\lambda_q \leq q_2 < \frac{\lambda_q (q+m-1)}{q+m-2}, & \text{ if }  1 < q\leq m, \text{ and } d=2 \vspace{1 mm}\\
q_1 \leq \frac{2m}{m-1},\  2 \leq q_2 < \frac{2m-1}{m-1}, & \text{
if }   q> m, \text{ and } d=2.
\end{cases}
\end{equation}
Then, there exists a nonnegative $L^q$-weak solution of \eqref{E:Main}-\eqref{E:Main-bc-ic} in Definition~\ref{D:weak-sol} such that
 $\rho \in AC(0,T; \mathcal{P} (\Omega))$ with $\rho(\cdot, 0)=\rho_0$.
Furthermore, $\rho$
satisfies \eqref{T:weakSol:E_q}, \eqref{T:ACweakSol:E_q}, and \eqref{T:ACweakSol:W_q}  with
$C=C( \left\| V\right\|_{\mathfrak{S}_{m,q}^{(q_1, q_2)}},\, \|\rho_0\|_{L^{q}(\Omega)} )$.

\item[(iii)] (Embedding) Let $q > 1$. Suppose that $\rho_0 \in  \mathcal{P}(\Omega) \cap L^{q} (\Omega)$ and
\begin{equation}\label{T:ACweakSol_DivFree:V_q:Embedding}
V\in \mathfrak{S}_{m,q}^{(q_1, q_2)} \ \text{ for } \
 \begin{cases}
\frac{2+q_{m,d}}{1+q_{m,d}} < q_2 \leq \frac{\lambda_1 m}{m-1}, & \text{ if } 1 < q \leq \frac{md}{d-1}, \text{ and } d>2, \vspace{1 mm}\\
\frac{2+q_{m,d}}{1+q_{m,d}} < q_2 < \frac{\lambda_1 m}{m-1}, & \text{ if } 1 < q \leq \frac{md}{d-1},\text{ and } d=2 \vspace{1 mm}\\
\frac{2+q_{m,d}}{1+q_{m,d}} < q_2 \leq \infty, & \text{ if } q >
\frac{md}{d-1}, \text{ and } d\geq 2.
\end{cases}
\end{equation}
For given $(q_1, q_2)$ in \eqref{T:ACweakSol_DivFree:V_q:Embedding}, there
exists constants $q_{2}^{\ast} = q_{2}^{\ast}(q_1) \in [2, q_2]$
such that $ V \in \mathfrak{S}_{m, q^\ast}^{(q_1, q_2^{\ast})}$
satisfying \eqref{T:ACweakSol_DivFree:V_q}.
 Furthermore, the same conclusion
holds as in (ii) except that $\lambda_q$ is replaced by $\lambda_{q^\ast}$.
\end{itemize}
\end{theorem}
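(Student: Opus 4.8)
The plan is to reduce part (iii) to the construction of part (ii) by means of the bounded--domain embedding recorded in Remark~\ref{R:S-tildeS}~(iii). The governing observation is that on $\Omega_T$ lowering exponents only enlarges a sub-scaling class: by H\"older's inequality, whenever $q^\ast\le q$ and $q_2^\ast\le q_2$ (with $q_1$ held fixed and the sub-scaling inequality at level $q^\ast$ satisfied) one has $V\in\mathfrak{S}_{m,q}^{(q_1,q_2)}\Rightarrow V\in\mathfrak{S}_{m,q^\ast}^{(q_1,q_2^\ast)}$ together with $\|V\|_{\mathfrak{S}_{m,q^\ast}^{(q_1,q_2^\ast)}}\lesssim\|V\|_{\mathfrak{S}_{m,q}^{(q_1,q_2)}}$, the constant depending only on $|\Omega|$ and $T$. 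Thus a single hypothesis $V\in\mathfrak{S}_{m,q}^{(q_1,q_2)}$ feeds the construction of part (ii) at two different levels: at level $q$ it supplies the $L^q$ energy bound, so that the solution remains a genuine $L^q$-weak solution satisfying \eqref{T:weakSol:E_q} and the first two terms of \eqref{T:ACweakSol:E_q}; while at the embedded level $q^\ast$ it supplies the speed estimate whose natural Wasserstein exponent is $\lambda_{q^\ast}$. Passing the two uniform a priori bounds to the limit exactly as in part (ii) then yields a solution obeying \eqref{T:ACweakSol:E_q} and \eqref{T:ACweakSol:W_q} with $\lambda_q$ replaced by $\lambda_{q^\ast}$.

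The substance of the proof is therefore to produce, for each $(q_1,q_2)$ obeying \eqref{T:ACweakSol_DivFree:V_q:Embedding}, a pair $q^\ast\in[1,q]$ and $q_2^\ast\in[2,q_2]$ (same $q_1$) for which $(q_1,q_2^\ast)$ satisfies the AC condition \eqref{T:ACweakSol_DivFree:V_q} at exponent $q^\ast$. I would recast this as a covering statement in the $(1/q_1,1/q_2)$--plane: the region cut out by \eqref{T:ACweakSol_DivFree:V_q:Embedding} must be contained in the union over $q^\ast\in[1,q]$ of the AC regions \eqref{T:ACweakSol_DivFree:V_q}. The governing monotonicities are that $\lambda_{q^\ast}$ is nondecreasing in $q^\ast$, equal to $\lambda_1=\frac{2+d(m-1)}{1+d(m-1)}=q_2^{q,l}|_{q^\ast=1}$ as $q^\ast\downarrow1$ and reaching $2$ at $q^\ast=m$, while the $q^\ast$-scaling line recedes upward and to the right as $q^\ast$ decreases (because $q_{m,d}^\ast=d(m-1)/q^\ast$ grows), enlarging the half-plane below it. These two facts are what let a point feasible for \eqref{T:ACweakSol_DivFree:V_q:Embedding}, after lowering $q_2$ to some $q_2^\ast$, be captured by an AC window at a suitably chosen smaller $q^\ast$.

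Concretely I would split along the two regimes in the definition of $\lambda_{q^\ast}$. For $1<q^\ast\le m$ one has $\lambda_{q^\ast}<2$ and the window is $\big[\lambda_{q^\ast},\,\tfrac{\lambda_{q^\ast}(q^\ast+m-1)}{q^\ast+m-2}\big]$; here the binding requirement is the upper endpoint together with the sub-scaling inequality at $q^\ast$, and this is precisely where the source upper bound $q_2\le\frac{\lambda_1 m}{m-1}$ (in force for $1<q\le\frac{md}{d-1}$) is used. For $q^\ast>m$ one has $\lambda_{q^\ast}=2$ and the window degenerates to $2\le q_2^\ast\le\frac{2m-1}{m-1}$ together with the spatial constraint $q_1\le\frac{2m}{m-1}$; this regime absorbs the part of \eqref{T:ACweakSol_DivFree:V_q:Embedding} with large $q$, where the finite upper bound on $q_2$ is removed ($q>\frac{md}{d-1}$). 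The distinction $d=2$ versus $d>2$ only toggles strict versus non-strict upper inequalities on $q_2$ and is inherited unchanged, requiring no new argument; the low endpoint $q^\ast=1$, should it be reached, is handled by part (i) in place of (ii), the conclusion then being the $L^1$ log--entropy estimate \eqref{T:ACweakSol:E_1}.

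The main obstacle I anticipate is precisely this covering verification: one must confirm that the monotone family of AC windows sweeps out the whole source region without gaps, with special care at the junction $q^\ast=m$ (where the two regimes meet and $\lambda_{q^\ast}=2$), at the borderline $q=\frac{md}{d-1}$ (at which the cutoff $\frac{\lambda_1 m}{m-1}$ disappears), and at the interplay between lowering $q_2^\ast$ and keeping the point below the receding $q^\ast$-scaling line. Each such check reduces to an elementary but delicate comparison of the explicit rational expressions for $\lambda_{q^\ast}$, $q_2^{q,l}$ and the upper cutoffs; the geometry is organized by Fig.~\ref{F:divfree:q} and follows the template already used for Theorem~\ref{T:ACweakSol}~(iii).
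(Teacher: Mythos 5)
Your treatment of part (iii) follows essentially the paper's own route: the paper disposes of (iii) in one line by invoking Proposition~\ref{P:Energy-embedding}~(iii), which is precisely the covering-by-embedding you describe, and your structural observations are sound --- $\lambda_{q^\ast}$ increases from $\lambda_1=\frac{2+d(m-1)}{1+d(m-1)}$ at $q^\ast=1$ to $2$ at $q^\ast=m$, the sub-scaling region enlarges as $q^\ast$ decreases, and you correctly isolate the tension between lowering $q_2^\ast$ (which worsens the sub-scaling inequality) and lowering $q^\ast$ (which restores it). However, two genuine gaps remain. First, the statement under proof comprises parts (i), (ii) and (iii), and your proposal explicitly presupposes (ii) as a black box while it is simultaneously the engine of your reduction. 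The paper proves (i) and (ii) by verifying that \eqref{T:ACweakSol_DivFree:V_1} is the intersection of \eqref{T:weakSol_DivFree:V_1} with \eqref{V-Lq-divfree} at $q=1$, and that \eqref{T:ACweakSol_DivFree:V_q} implies \eqref{T:weakSol_DivFree:V_q1}, \eqref{T:weakSol_DivFree:V_q2} and \eqref{V-Lq-divfree}, and then running the absolutely-continuous limit machinery of Theorem~\ref{T:ACweakSol}: the speed estimate of Lemma~\ref{P:W-p} / Proposition~\ref{P:Energy-speed}~(iv), the metric-derivative bound via Lemma~\ref{representation of AC curves}, the Wasserstein Arzel\`a--Ascoli compactness of Lemma~\ref{Lemma : Arzela-Ascoli}, and lower semicontinuity of $W_p$ under narrow convergence to obtain \eqref{T:ACweakSol:W_q}. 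None of this appears in your proposal.

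Second, and more substantively, your framing misidentifies the binding constraint in the divergence-free setting: you write that the hypothesis at level $q$ is needed to ``supply the $L^q$ energy bound,'' but by Proposition~\ref{P:Lq-energy}~(iv) the energy estimate \eqref{Lq-energy} holds \emph{unconditionally} when $\nabla\cdot V\geq 0$, for every $(q_1,q_2)$. The restrictions on $(q_1,q_2)$ in Theorems~\ref{T:weakSol_DivFree_1} and \ref{T:weakSol_DivFree_q} instead encode the \emph{compactness} requirement \eqref{V_compact_m+1} of Proposition~\ref{P:AL1} (the line $l$ in Fig.~\ref{F:divfree:q}), which is what lets the approximating regular solutions $\rho_n$ converge strongly so that the nonlinear terms pass to the limit. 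Your proposal never secures compactness; granting (ii) hides this, but once (ii) must itself be proved the check that \eqref{T:ACweakSol_DivFree:V_q} implies the compactness conditions is unavoidable. Finally, the covering verification that you yourself call ``the substance of the proof'' --- including the junctions at $q^\ast=m$ and $q=\frac{md}{d-1}$ --- is only sketched, not executed; the paper packages exactly these computations into Proposition~\ref{P:Energy-embedding}~(iii), so without carrying them out your argument for (iii) reduces to restating that proposition rather than proving it.
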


\subsubsection{Existence for case: $\nabla V$ in sub-scaling classes }\label{SS:tilde Serrin}

Now we assume that $\nabla V$ is satisfying sub-scaling classes.
The first result is concerned with $L^q$-weak solutions with an energy inequality.

\begin{theorem}\label{T:weakSol_tilde}
Let $m>1$ and $q\geq 1$.

\begin{itemize}
\item[(i)] Suppose that $\rho_0 \in  \mathcal{P}(\Omega)$ and $\int_{\Omega} \rho_0 \log \rho_0 \,dx < \infty$, and
 \begin{equation}\label{T:weakSol_tilde:V_1}
V\in \tilde{\mathfrak{S}}_{m,1}^{(\tilde{q}_1, \tilde{q}_2)}  \ \text{ for } \
\begin{cases}
 \frac{2+d(m-1)}{1+d(m-1)} < \tilde{q}_2 \leq \frac{m}{m-1},    &\text{ if } d \geq 2, \vspace{2 mm}\\
  \frac{2+d(m-1)}{1+d(m-1)} < \tilde{q}_2 < \frac{m}{m-1},   & \text{ if } d= 2.
 \end{cases}
\end{equation}
Then, there exists a nonnegative $L^1$-weak solution of \eqref{E:Main}-\eqref{E:Main-bc-ic} in Definition~\ref{D:weak-sol} satisfying \eqref{T:weakSol:E_1} with 
$C = C(\left\| V\right\|_{\tilde{\mathfrak{S}}_{m,q}^{(\tilde{q}_1, \tilde{q}_2)}}, \int_{\Omega} \rho_0 \log \rho_0 \,dx )$.

\item[(ii)] Let $q >1$. Assume that $\rho_0 \in  \mathcal{P} (\Omega) \cap L^{q} (\Omega)$ .
Suppose that
 \begin{equation}\label{T:weakSol_tilde:V_q}
V\in \tilde{\mathfrak{S}}_{m,q}^{(\tilde{q}_1, \tilde{q}_2)}  \ \text{ for } \
\begin{cases}
 \frac{2+q_{m,d}}{1+q_{m,d}} < \tilde{q}_2 \leq \frac{q+m-1}{m-1},   & \text{ if } d> 2, \vspace{1 mm} \\
\frac{2+q_{m,d}}{1+ q_{m,d}} < \tilde{q}_2 < \frac{q+m-1}{m-1},   & \text{ if } d= 2.
 \end{cases}
\end{equation}
Then, there exists a nonnegative $L^q$-weak solution of \eqref{E:Main}-\eqref{E:Main-bc-ic} in Definition~\ref{D:weak-sol} satisfying \eqref{T:weakSol:E_q} with
$C = C(\left\| V\right\|_{\tilde{\mathfrak{S}}_{m,q}^{(\tilde{q}_1, \tilde{q}_2)}}, \|\rho_0\|_{L^{q}(\Omega)} )$.
\end{itemize}

\end{theorem}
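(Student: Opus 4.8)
The plan is to run the same three-step scheme used for Theorem~\ref{T:weakSol} — derive a closed a priori estimate for regular solutions, construct regular approximations by the splitting method of Section~\ref{splitting method}, and pass to the limit using the compactness arguments of Proposition~\ref{P:AL1} and Proposition~\ref{P:AL2} — the only genuinely new ingredient being the way the drift term is absorbed in the energy estimate. Since $V$ is now controlled through $\nabla V \in \mathcal{L}^{\tilde q_1, \tilde q_2}_{x,t}$ rather than through $V$ itself, I would exploit the divergence structure of \eqref{E:Main} by integrating by parts so as to move the derivative off $\rho$ and onto $V$.

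Concretely, for part (ii) ($q>1$), testing \eqref{E:Main} against $\rho^{q-1}$ and using the no-flux boundary condition produces the diffusion term $\iint_{\Omega_T}|\nabla\rho^{\frac{q+m-1}{2}}|^2$ together with the drift contribution $\frac{q-1}{q}\iint_{\Omega_T} V\cdot\nabla\rho^{q}$. Because $V\cdot\textbf{n}=0$ on $\partial\Omega$, a further integration by parts rewrites this as $-\frac{q-1}{q}\iint_{\Omega_T}\rho^{q}\,\nabla\cdot V$, and H\"older's inequality bounds it by $\|\nabla V\|_{\mathcal{L}^{\tilde q_1,\tilde q_2}_{x,t}}$ times a space-time norm of $\rho^{q}$. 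The spatial factor $\|\rho\|_{L^{q\tilde q_1'}_x}^{q}$ is then interpolated through the energy itself: the Gagliardo--Nirenberg/Sobolev embedding applied to $\nabla\rho^{\frac{q+m-1}{2}}\in L^2_x$, combined with the $L^\infty_tL^q_x$ bound on $\rho$, yields exactly the integrability needed, and the remaining time integral is closed by H\"older against $\nabla V\in L^{\tilde q_2}_t$. The scaling identity $\frac{d}{\tilde q_1}+\frac{2+q_{m,d}}{\tilde q_2}=2+q_{m,d}$ is precisely what makes these exponents compatible, while the lower bound $\tilde q_2>\frac{2+q_{m,d}}{1+q_{m,d}}$ guarantees a strictly positive leftover time exponent; the resulting term is absorbed into the diffusion by Young's inequality together with a Gr\"onwall step, giving \eqref{T:weakSol:E_q}. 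Part (i) is identical with $\rho^{q}$ replaced by the entropy density $\rho\log\rho$: testing against $\log\rho+1$ produces $\iint_{\Omega_T}|\nabla\rho^{\frac m2}|^2$ and a drift term $\iint_{\Omega_T} V\cdot\nabla\rho=-\iint_{\Omega_T}\rho\,\nabla\cdot V$, controlled in the same manner, which yields \eqref{T:weakSol:E_1}. Here one uses that $\rho\log\rho$ is bounded below on the bounded domain $\Omega$, so no moment estimate is required, and this integration-by-parts route incurs no $m\le 2$ restriction, covering all $m>1$.

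For the overlapping regime $\tilde q_2\ge 2$ there is a shortcut: by the sub-scaling analogue of Remark~\ref{R:S-tildeS}~(iv), the Sobolev embedding $W^{1,\tilde q_1}_x\hookrightarrow L^{\tilde q_1^{\ast}}_x$ gives $\|V\|_{\mathfrak{S}_{m,q}^{(\tilde q_1^{\ast},\tilde q_2)}}\lesssim\|V\|_{\tilde{\mathfrak{S}}_{m,q}^{(\tilde q_1,\tilde q_2)}}$, and since $(\tilde q_1^{\ast},\tilde q_2)$ then meets the hypotheses \eqref{T:weakSol:V_q} (respectively \eqref{T:weakSol:V_1}) of Theorem~\ref{T:weakSol}, the conclusion follows directly from that theorem. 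What this shortcut cannot reach is the sharper lower bound, since $\frac{2+q_{m,d}}{1+q_{m,d}}<2$; hence the direct integration-by-parts estimate above is what actually covers the full range claimed in \eqref{T:weakSol_tilde:V_q} and \eqref{T:weakSol_tilde:V_1}.

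I expect the main obstacle to be closing the a priori estimate uniformly down to the sharp exponent $\tilde q_2>\frac{2+q_{m,d}}{1+q_{m,d}}$: one must verify that the interpolation used for $\|\rho^{q}\|_{L^{\tilde q_1'}_x}$ produces a time exponent that H\"older-pairs with $\nabla V\in L^{\tilde q_2}_t$ while leaving enough room to absorb the drift into the dissipation, rather than merely bounding it. A secondary technical point is justifying the two integrations by parts at the level of the regular approximate solutions, so that the boundary terms vanish by $V_n\cdot\textbf{n}=0$, and then checking that the product $\rho V$ passes to the limit under the compactness of Proposition~\ref{P:AL1} and Proposition~\ref{P:AL2}; the latter is where the strong $L^{m}(\Omega_T)$ convergence of $\rho$ must be combined with the weak convergence of $V$ (equivalently, of $\nabla V$ together with the embedding into $\mathcal{L}^{\tilde q_1^{\ast},\tilde q_2}_{x,t}$).
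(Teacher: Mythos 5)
Your proposal is correct and follows essentially the same route as the paper's proof: the a priori bound you describe is exactly Proposition~\ref{P:Lq-energy}~(iii) (integrate by parts using $V\cdot\textbf{n}=0$ to get $-(q-1)\int_\Omega \rho^q\,\nabla\cdot V\,dx$, then H\"older against $\|\nabla V\|_{L^{\tilde q_1}_x}$, the interpolation of Lemma~\ref{P:L_r1r2}, Young and Gr\"onwall), the approximations come from the splitting method of Section~\ref{splitting method}, and the limit is passed through the compactness of Proposition~\ref{P:AL1} after embedding $V$ into a scaling-invariant class, just as in the paper's Section~6.3.

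One misdiagnosis is worth correcting: the strict lower bound $\tilde q_2>\frac{2+q_{m,d}}{1+q_{m,d}}$ is \emph{not} what makes the energy estimate close --- Proposition~\ref{P:Lq-energy}~(iii) closes on the larger range $1\le\tilde q_2\le\frac{q+m-1}{m-1}$ of \eqref{V-tilde-Lq}, so your anticipated ``main obstacle'' is a phantom. The strict bound is precisely condition \eqref{tilde-q2} (equivalently $\tilde q_1<d$), which is what allows the Sobolev suppression $\|V\|_{\mathcal{S}_{m,q}^{(\tilde q_1^{\ast},\tilde q_2)}}\lesssim\|\nabla V\|_{\mathcal{L}^{\tilde q_1,\tilde q_2}_{x,t}}$; the paper then uses the bounded-domain H\"older reduction of Remark~\ref{R:S-tildeS}~(iii) to place $V$ in $\mathcal{S}_{m,1}^{(q_1,q_2)}$ with a pair satisfying the compactness condition \eqref{V_compact_m+1} for $q=1$, after which Proposition~\ref{P:AL1} and the convergence of $\rho_n V_n$ run exactly as in Theorem~\ref{T:weakSol}~(i). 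In other words, the binding constraint lives entirely in what you called the ``secondary technical point'' (compactness and the limit of the drift term), and verifying that \eqref{T:weakSol_tilde:V_1}--\eqref{T:weakSol_tilde:V_q} imply \eqref{V_compact_m+1} after the embedding is the exponent-chasing your sketch still owes; your $\tilde q_2\ge 2$ shortcut through Theorem~\ref{T:weakSol} is correct but redundant, since the direct estimate covers the whole range anyway.
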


Here is a remark about Theorem~\ref{T:weakSol_tilde}.
\begin{remark}
The existence results in Theorem~\ref{T:weakSol_tilde} (ii) is the same as a result in an unbounded domain \cite[Theorem~2.15]{HKK}. However, Theorem~\ref{T:weakSol_tilde} (i) is new because $p$-moment estimate is trivial in the bounded domain.  We refer \cite[Figure~10]{HKK} for all $m>1$ and $q>1$, where the line $\overline{\textbf{DE}}$ is for \eqref{T:weakSol_tilde:V_1} and the region $\mathcal{R}(\textbf{AbDE})$ is for \eqref{T:weakSol_tilde:V_q}. 
\end{remark}

The second result is the existence of a $L^q$-weak solution as an absolutely continuous curve in Wasserstein space that is the same as \cite[Theorem~2.16]{HKK}.

\begin{theorem}\label{T:ACweakSol_tilde}
Let $m>1$ and $q \geq 1$.

  \begin{itemize}
  \item[(i)] Assume that $\rho_0 \in  \mathcal{P}(\Omega)$, and $\int_{\Omega} \rho_0 \log \rho_0 \,dx < \infty$. Furthermore, suppose that $V \in \tilde{\mathfrak{S}}_{m,1}^{(\tilde{q}_1, \tilde{q}_2)}$ satisfies \eqref{T:weakSol_tilde:V_1}.
Then, there exists a nonnegative $L^1$-weak solution of \eqref{E:Main}-\eqref{E:Main-bc-ic} in Definition~\ref{D:weak-sol} such that  $\rho \in AC(0,T; \mathcal{P}(\Omega))$ with $\rho(\cdot, 0)=\rho_0$.
Furthermore, $\rho$ satisfies \eqref{T:weakSol:E_1},
\eqref{T:ACweakSol:E_1} and \eqref{T:ACweakSol:W_1} with $C=C
(\left\| V\right\|_{\tilde{\mathfrak{S}}_{m,1}^{(\tilde{q}_1,
\tilde{q}_2)}}, \int_{\Omega} \rho_0 \log \rho_0 \,dx )$.

 \item[(ii)] Let $q>1$. Assume that $\rho_0 \in  \mathcal{P} (\Omega)\cap L^{q} (\Omega)$. Furthermore suppose that 
 \begin{equation}\label{T:ACweakSol_tilde:V_q}
V\in \tilde{\mathfrak{S}}_{m,q}^{(\tilde{q}_1, \tilde{q}_2)}  \
\text{ for } \
\begin{cases}
 \frac{2+q_{m,d}}{1+q_{m,d}} < \tilde{q}_2 \leq \frac{q+m-1}{m-1} \,\delta_{\{1 < q \leq m\}} + \frac{2m-1}{m-1}\,\delta_{ \{q > m \} },   & \text{ if } d> 2, \vspace{1 mm}\\
 \frac{2+q_{m,2}}{1+q_{m,2}} < \tilde{q}_2 < \frac{q+m-1}{m-1} \,\delta_{\{1 < q \leq m\}} + \frac{2m-1}{m-1}\,\delta_{ \{q > m \} },   & \text{ if } d= 2.
 \end{cases}
\end{equation}
Then,
there exists a nonnegative $L^q$-weak solution of \eqref{E:Main}-\eqref{E:Main-bc-ic} in Definition~\ref{D:weak-sol} such that  $\rho \in AC(0,T; \mathcal{P}(\Omega))$ with $\rho(\cdot, 0)=\rho_0$.
Furthermore, $\rho$ satisfies \eqref{T:weakSol:E_q}, \eqref{T:ACweakSol:E_q}, and \eqref{T:ACweakSol:W_q} with $C=C
(\left\| V\right\|_{\tilde{\mathfrak{S}}_{m,q}^{(\tilde{q}_1,
\tilde{q}_2)}}, \|\rho_0\|_{L^{q}(\Omega)} )$.

\item[(iii)] (Embedding) Let $q>1$. Suppose that 
\begin{equation}\label{T:ACweakSol_tilde:V_q:Embedding}
V\in \tilde{\mathfrak{S}}_{m,q}^{(\tilde{q}_1, \tilde{q}_2)}  \
\text{ for } \
\begin{cases}
\frac{2m-1}{m-1} < \tilde{q}_2 \leq \frac{q+m-1}{m-1},   & \text{ if } d> 2, \vspace{1 mm}\\
\frac{2m-1}{m-1} \leq \tilde{q}_2 < \frac{q+m-1}{m-1},  & \text{ if
} d= 2 .
\end{cases}
\end{equation}
For given $(\tilde{q}_1, \tilde{q}_2)$ in
\eqref{T:ACweakSol_tilde:V_q:Embedding}, there exists a constant
$\tilde{q}_{2}^{\ast} = \tilde{q}_{2}^{\ast}(\tilde{q}_1) \in
(\frac{2+q^{\ast}_{m,d}}{1+q^{\ast}_{m,d}}, \tilde{q}_2]$ such that
$ V \in \tilde{\mathcal{S}}_{m, q^\ast}^{(\tilde{q}_1,
\tilde{q}_2^{\ast})}$ satisfying \eqref{T:ACweakSol_tilde:V_q}.
Furthermore,
the same conclusion holds as in Theorem~\ref{T:ACweakSol_tilde} (ii) except
that $\lambda_q$ is replaced by $ \lambda_{q^{\ast}}$.
\end{itemize}
\end{theorem}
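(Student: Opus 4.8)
The plan is to obtain the absolutely continuous solution by upgrading the $L^q$-weak solution already furnished by Theorem~\ref{T:weakSol_tilde}, rather than re-running the whole construction. Theorem~\ref{T:weakSol_tilde} produces, as a limit of the regular solutions built in Section~\ref{splitting method}, a nonnegative $\rho$ satisfying the energy bound \eqref{T:weakSol:E_q} (resp.\ \eqref{T:weakSol:E_1} when $q=1$); what remains is the Wasserstein regularity $\rho\in AC(0,T;\mathcal P(\Omega))$ together with the refined bounds \eqref{T:ACweakSol:E_q} and \eqref{T:ACweakSol:W_q}. I would first rewrite \eqref{E:Main} in continuity form $\partial_t\rho+\nabla\cdot(\rho\, v)=0$ with velocity $v:=V-\nabla\rho^m/\rho$, so that, by the metric characterization of absolutely continuous curves (Lemma~\ref{P:W-p}), everything reduces to the uniform bound $\iint_{\Omega_T}|v|^{\lambda_q}\rho\,dx\,dt\le C$, which is precisely the new integral appearing in \eqref{T:ACweakSol:E_q}. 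Once this holds, the metric derivative obeys $|\rho'|(t)^{\lambda_q}\lesssim\int_\Omega|v|^{\lambda_q}\rho\,dx$, and Hölder in time yields the modulus \eqref{T:ACweakSol:W_q}.

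The bound on $v$ splits into two pieces. The diffusion piece $\iint_{\Omega_T}|\nabla\rho^m/\rho|^{\lambda_q}\rho$ is controlled exactly as in the whole-space argument: interpolating between $\rho\in L^\infty(0,T;L^q(\Omega))$ and $\nabla\rho^{(q+m-1)/2}\in L^2(\Omega_T)$ (both from Proposition~\ref{Lq-energy}) via Gagliardo--Nirenberg, with the exponent $\lambda_q$ of \eqref{lambda_q} being exactly the largest one for which this interpolation closes. The drift piece $\iint_{\Omega_T}|V|^{\lambda_q}\rho$ is where $V\in\tilde{\mathfrak S}_{m,q}^{(\tilde q_1,\tilde q_2)}$ enters: using the Sobolev embedding of Remark~\ref{R:S-tildeS}~(iv), valid under \eqref{tilde-q2}, I would trade control of $\nabla V$ in $\mathcal L^{\tilde q_1,\tilde q_2}_{x,t}$ for control of $V$ itself in $\mathcal L^{\tilde q_1^{\ast},\tilde q_2}_{x,t}$ with $\tilde q_1^{\ast}=d\tilde q_1/(d-\tilde q_1)$; since $d/\tilde q_1=1+d/\tilde q_1^{\ast}$, the scaling identity \eqref{Serrin-grad} transforms precisely into \eqref{Serrin} for the pair $(\tilde q_1^{\ast},\tilde q_2)$. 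A Hölder estimate against the same interpolation class for $\rho$ then bounds $\iint|V|^{\lambda_q}\rho$ by $\|V\|_{\tilde{\mathfrak S}_{m,q}^{(\tilde q_1,\tilde q_2)}}^{\lambda_q}$ times the energy quantities, and the admissible window for $\tilde q_2$ in \eqref{T:ACweakSol_tilde:V_q} is exactly the set where both \eqref{tilde-q2} and this Hölder pairing are feasible. All estimates are first derived for the regular approximations, where every quantity is finite, and then passed to the limit by lower semicontinuity.

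For part (i) the scheme is identical with $\int_\Omega\rho^q$ replaced by the entropy $\int_\Omega\rho\log\rho$ and $\lambda_q$ by $\lambda_1=\tfrac{2+d(m-1)}{1+d(m-1)}$; here the bounded domain simplifies matters, since $\rho\log\rho$ is bounded below and no $p$-moment estimate is needed to control its negative part, so \eqref{T:ACweakSol:E_1} and \eqref{T:ACweakSol:W_1} follow from \eqref{T:weakSol:E_1} and the velocity bound. For part (iii) I would use the finiteness of $\Omega_T$: by the Hölder comparison of scaling norms in Remark~\ref{R:S-tildeS}~(iii), a field satisfying the weaker condition \eqref{T:ACweakSol_tilde:V_q:Embedding} at exponent $q$ also lies, for a suitable $q^{\ast}\in[1,q]$ and $\tilde q_2^{\ast}\le\tilde q_2$, in $\tilde{\mathcal S}_{m,q^{\ast}}^{(\tilde q_1,\tilde q_2^{\ast})}$ meeting \eqref{T:ACweakSol_tilde:V_q} at level $q^{\ast}$; applying part (ii) at $q^{\ast}$ then gives the same conclusions with $\lambda_q$ replaced by $\lambda_{q^{\ast}}$.

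The hardest step is the drift estimate in the second paragraph: one must track the three-way interplay between the Sobolev exponent $\tilde q_1^{\ast}$, the Gagliardo--Nirenberg interpolation of $\rho$ dictated by the energy estimate, and the scaling constraint \eqref{Serrin-grad}, and verify that the resulting Hölder pairing is admissible across the entire stated range of $\tilde q_2$ --- in particular down to the endpoint $\tilde q_2\to\frac{2+q_{m,d}}{1+q_{m,d}}$, where $\tilde q_1^{\ast}\to\infty$ and the time integrability of $V$ is weakest. Checking that the window in \eqref{T:ACweakSol_tilde:V_q} is neither too large (so the pairing fails) nor unnecessarily small is the crux, and is where the extra spatial derivative on $V$, together with the bounded-domain embeddings, buys the room that the plain $\mathfrak S$-class hypothesis of Theorem~\ref{T:ACweakSol} does not, allowing $\tilde q_2$ below $2$.
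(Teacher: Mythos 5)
Your proposal is correct and follows essentially the same route as the paper: check that \eqref{T:ACweakSol_tilde:V_q} implies both the energy hypotheses of Theorem~\ref{T:weakSol_tilde} and the speed hypotheses of Proposition~\ref{P:Energy-speed}~(iii) (the latter by trading $\nabla V$ for $V$ through the embedding \eqref{tilde-q2} and then invoking Lemma~\ref{P:W-p}), run the velocity bound on the regular approximations to get uniform $W_{\lambda_q}$-H\"{o}lder continuity, pass to the limit by Arzel\`{a}--Ascoli and lower semicontinuity of $W_p$, and reduce part (iii) to part (ii) via Proposition~\ref{P:Energy-embedding}~(ii). One minor slip: the metric characterization of absolutely continuous curves you need is Lemma~\ref{representation of AC curves}, not Lemma~\ref{P:W-p}, which is only the auxiliary integral estimate for the speed.
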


\subsection{An application}\label{SS:application}

As an application, we consider the following coupled system, which is a Keller-Segel  equations of consumption type.
\begin{equation}\label{PME-KS-10}
\rho_t -\Delta \rho^m =-\chi \nabla (\rho \nabla c),\qquad
c_t-\Delta c =-\rho c,\qquad \Omega\times (0,T),
\end{equation}
where $\Omega\subset\R^d$, $d\ge 2$, is a bounded convex domain with smooth boundaries.
Boundary conditions are of no-flux, namely
\begin{equation}\label{PME-KS-bc}
\frac{\partial \rho}{\partial \textbf{n}}=\frac{\partial c}{\partial \textbf{n}}=0, \qquad \partial\Omega\times (0,T),
\end{equation}
which satisfy \eqref{E:Main-bc-ic}.
We are interested in existence of weak solutions of \eqref{PME-KS-10}-\eqref{PME-KS-bc}, in case that non-negative initial data of $\rho$ and $c$ satisfy
\begin{equation}\label{PME-KS-ic}
\norm{\rho_0}_{L^{1}(\Omega)\cap L^{\infty}(\Omega)}+\norm{c_0}_{L^{\infty}(\Omega)\cap H^1(\Omega)}<\infty.
\end{equation}

It is known in \cite{WX15} that if $m>\frac{3d-2}{2d}$, $d\ge 3$, then there exists a bounded weak solution for \eqref{PME-KS-10}-\eqref{PME-KS-ic}. In the case that $d=3$, such a result was improved in \cite{W18} as $m>\frac{9}{8}$ even for the system coupled with fluid equations (see \cite{CHKK17} for H\"{o}lder continuous solutions in $\R^3$). 
In the case of $d=2$, it was proved in \cite{TW12} that a bounded weak solution exists, provided that $m>1$ (see also \cite{CKK14} for the whole space $\R^2$). One consequence of our main results is  existence of $L^q$-weak solutions for the case $1+\frac{d-2}{3d}\le m \le \frac{3d-2}{2d}$, $d\ge 3$.
We first define the notion of $L^q$-weak solutions of \eqref{PME-KS-10}-\eqref{PME-KS-ic}.
\begin{defn}
Let $1<m \le \frac{3d-2}{2d}$, $d\ge 3$ and $1<q<\infty$.
We say that $(\rho, c)$ is a weak solution of the system \eqref{PME-KS-10}-\eqref{PME-KS-ic}, if the followings are satisfied: 
\begin{itemize}
\item[(i)] $\rho$ and $c$ are non-negative functions defined in $\Omega=\Omega\times (0, T)$ such that
\[
\displaystyle\rho \in L^{\infty}(0, T; (L^1\cap L^{q})(\Omega)),\quad \nabla \rho^{\frac{m+l-1}{2}} \in L^2(\Omega_{T}),\quad 1\le l\le q,
\]
\[
\displaystyle c\in L^{\infty} (0,T; (L^{\infty}\cap H^1)(\Omega))\cap L^2(0, T; H^2(\Omega)).
\]
\item[(ii)] $(\rho, c)$ satisfies \eqref{PME-KS-10} in the sense of distributions, i.e.
\[
\displaystyle \int_0^T\int_{\Omega}(-\rho\varphi_t+\nabla \rho^{m}\cdot\nabla \varphi-\chi \rho \nabla c\cdot\nabla\varphi ) dx dt=\int_{\Omega} \rho_0 \varphi(\cdot, 0)dx, 
\]
\[
\displaystyle \int_0^T\int_{\Omega}(-c\varphi_t+\nabla c\cdot\nabla \varphi+ \rho c\varphi ) dx dt=\int_{\Omega} c_0 \varphi(\cdot, 0)dx
\]
for all test function $\varphi\in\calC^{\infty}_0 (\Omega\times [0,T))$.
\end{itemize}
\end{defn}

We call $(\rho, c)$ a {\it bounded weak solution}, if  $(\rho, c)$ is a $L^q$-weak solution for any $q<\infty$ and $\rho\in L^{\infty}(\Omega_T)$. For simplicity, we assume that $\chi = 1$.
We remind that a priori estimates (see e.g. \cite{W18}) show that
\begin{equation}\label{esti-KS100}
\frac{d}{dt} \bke{\int_{\Omega} \rho \ln \rho+\frac{1}{2}\int_{\Omega} \frac{|\nabla c|^2}{c}}+\frac{4}{m^2}\int_{\Omega} \abs{\nabla \rho^{\frac{m}{2}}}^2+N\bke{\int_{\Omega}\frac{|\nabla c|^4}{c^3}+\int_{\Omega}\frac{|D^2 c|^2}{c} }+\frac{1}{2}\int_{\Omega} \frac{\rho|\nabla c|^2}{c}
\le 0.
\end{equation}
Indeed, this is due to following estimates
\begin{equation}\label{esti-KS110}
\frac{d}{dt} \int_{\Omega} \rho \ln \rho+\frac{4}{m^2}\int_{\Omega} \abs{\nabla \rho^{\frac{m}{2}}}^2=-\int_{\Omega} \rho\Delta c
\end{equation}
\begin{equation}\label{esti-KS120}
\frac{1}{2}\frac{d}{dt}\int_{\Omega} \frac{|\nabla c|^2}{c}+\int_{\Omega} c\abs{D^2 \ln c}^2 +\frac{1}{2}\int_{\Omega} \frac{\rho|\nabla c|^2}{c}
-\frac{1}{2}\int_{\partial\Omega} \frac{1}{c}\frac{|\nabla c|^2}{\partial \nu}=\int_{\Omega} \rho\Delta c
\end{equation}
Adding \eqref{esti-KS110} and \eqref{esti-KS120}, it follows that
\begin{equation}\label{esti-KS130}
\frac{d}{dt} \bke{\int_{\Omega} \rho \ln \rho+\frac{1}{2}\int_{\Omega} \frac{|\nabla c|^2}{c}}+\frac{4}{m^2}\int_{\Omega} \abs{\nabla \rho^{\frac{m}{2}}}^2+\int_{\Omega} c\abs{D^2 \ln c}^2 +\frac{1}{2}\int_{\Omega} \frac{\rho|\nabla c|^2}{c}
-\frac{1}{2}\int_{\partial\Omega} \frac{1}{c}\frac{|\nabla c|^2}{\partial \nu}=0.
\end{equation}
The case of convex domain implies that the boundary term in \eqref{esti-KS130} becomes positive.
Recalling the inequality
\begin{equation}\label{esti-KS140}
\int_{\Omega}\frac{|\nabla c|^4}{c^3}+\int_{\Omega}\frac{|D^2 c|^2}{c}\le N\int_{\Omega} c\abs{D^2 \ln c}^2,
\end{equation}
we obtain \eqref{esti-KS100}.

This estimate implies that $\nabla c\in L^4 (\Omega_{T})$. With the aid of main results, we note that $L^q$-weak solution for the system \eqref{PME-KS-10} exists, provided that 
\[
\frac{d+2+q_{m,d}}{4} \le 1+q_{m,d}\quad \Longrightarrow \quad m \ge 1+\frac{q(d-2)}{3d}\quad \Longrightarrow \quad q\le \frac{3d(m-1)}{d-2}.
\]
If $\frac{2(2d-1)}{3d}\le m\le \frac{3d-2}{2d}$, it follows that 
if $1\le q\le \frac{3d(m-1)}{d-2}$, and thus, a $L^q$-weak solution exists. 
In case that $d=3$, if $15/13 \le m\le 7/6$, via the bootstrapping argument, bounded weak solutions exist, provided that initial data is sufficiently regular.
In general, since a $L^q$-weak solution exists for $q=\frac{3d(m-1)}{d-2}$, we note that 
$q_{m,d}=\frac{d-2}{3}$ and 
it follows from parabolic embedding that 
\begin{equation}\label{embdeeing-KS}
\rho \in L^{r_1, r_2}_{x,t}(\Omega_T),\qquad \frac{d}{r_1}+\frac{2+\frac{d-2}{3}}{r_2}=\frac{d-2}{3(m-1)}, \quad 
m-1+\frac{3d(m-1)}{d-2}\le  r_2\le \infty.
\end{equation}
In particular, choosing $(r_1, r_2)=(d, 2)$, we have 
\[
2+\frac{d-2}{6}=\frac{d-2}{3(m-1)}\quad\Longrightarrow \quad m=1+\frac{2(d-2)}{d+10}.
\]
On the other hand, since $m\le \frac{3d-2}{2d}$, only the case $d=3$ is available to satisfy
$1+\frac{2(d-2)}{d+10}\le \frac{3d-2}{2d}$. Making restriction as $d=3$, we consider the case
 $1+\frac{2(d-2)}{d+10}=\frac{15}{13} \le m\le  \frac{3d-2}{2d}=\frac{7}{6}$.
Due to the maximal regularity of the equation $c$ and spatial embedding, we obtain $\nabla c\in L^{q}_xL^2_t (\Omega_T)$ for any $q<\infty$. Applying main results to this case, we can obtain a $L^q$ weak solution and, in addition, by bootstrapping argument, it follows that $c\in (L^q_tW^{2, q}_x \cap W^{1,q}_t L^q_x)(\Omega_T)$, which in turns improves the regularity of $\rho$. Since  it is rather straightforward that $\rho$ becomes bounded, we skip its details.

In summary, we obtain the following:


\begin{theorem}\label{KS-thm}
Let $\Omega$ be a bounded convex domain in $\mathbb{R}^d$ for $d\geq 3$ with smooth boundary.
\begin{itemize}
\item[(i)]
Let $\frac{2(2d-1)}{3d}\le m\le \frac{3d-2}{2d}$.
Suppose that $\rho_0$ and $c_0$ are non-negative and for 
any $1\le q\le \frac{3d(m-1)}{d-2}$
\begin{equation}\label{esti-KS150}
\norm{\rho_0}_{L^{q}(\Omega)}+\norm{c_0}_{L^{\infty}(\Omega)\cap H^1(\Omega)}<\infty.
\end{equation}
Then, there exists a $L^q$-weak solution $(\rho. c)$ of
the system \eqref{PME-KS-10}-\eqref{PME-KS-bc}.

\item[(ii)]
In case that $d=3$, if $\frac{15}{13}\le m \le \frac{7}{6}$ and if $\rho_0$ and $c_0$ are non-negative and
\begin{equation}\label{esti-KS150}
\norm{\rho_0}_{L^{\infty}(\Omega)}+\norm{c_0}_{W^{2,\infty}(\Omega)}<\infty,
\end{equation}
then  there exists a bounded weak solution of
the system \eqref{PME-KS-10}-\eqref{PME-KS-bc}.
\end{itemize}

\end{theorem}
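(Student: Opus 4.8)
The plan is to read the first equation of \eqref{PME-KS-10} as a porous medium equation \eqref{E:Main} with divergence-form drift $V=\chi\na c=\na c$, and to build the coupled solution through a decoupled iteration (fixed-point) scheme on a regularized system. Given a nonnegative $\rho$, the concentration equation $c_t-\Del c=-\rho c$ with Neumann data is linear parabolic; I would solve it by standard theory, obtaining $0\le c\le\norm{c_0}_{L^\infty(\Om)}$ from the maximum principle (the absorption term $-\rho c$ has the favorable sign) together with the $H^1$/$H^2$ regularity recorded in \eqref{PME-KS-ic}. Since $\partial c/\partial\textbf{n}=0$ on $\partial\Om$, the induced drift obeys $V\cdot\textbf{n}=\na c\cdot\textbf{n}=0$, so it is admissible in the sense of Definition~\ref{D:Serrin}, and the no-flux condition \eqref{PME-KS-bc} is exactly \eqref{E:Main-bc-ic}. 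Conversely, given $c$ (hence the frozen drift $V=\na c$), one solves the $\rho$-equation by the existence theory of Section~\ref{SS:Existence}; the solution is recovered as a fixed point after passing to the limit.

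The engine is the entropy--dissipation estimate \eqref{esti-KS100}, which I would first establish at the regularized level. Here the convexity of $\Om$ is essential: it forces the boundary term $-\tfrac12\int_{\partial\Om}c^{-1}\partial_\nu\abs{\na c}^2$ in \eqref{esti-KS130} to carry the correct sign, after which \eqref{esti-KS140} yields uniform bounds on $\int_\Om\rho\abs{\log\rho}$, on $\na\rho^{m/2}$ in $L^2(\Omega_T)$, and on $\int_{\Omega_T}\bkp{\abs{\na c}^4/c^3+\abs{D^2c}^2/c}$. Combining the last bound with the uniform upper bound $c\le\norm{c_0}_{L^\infty}$ gives the decisive integrability $\na c\in L^4(\Omega_T)$ uniformly along the scheme, via $\abs{\na c}^4\le\norm{c_0}_{L^\infty}^3\,\abs{\na c}^4/c^3$.

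With $V=\na c\in L^4(\Omega_T)=L^{4,4}_{x,t}$ in hand, the next step verifies the sub-scaling membership $V\in\mathfrak{S}_{m,q}^{(q_1,q_2)}$ for an admissible pair. Taking $(q_1,q_2)=(4,4)$, the defining inequality \eqref{subSerrin} reads $\tfrac{d+2+q_{m,d}}{4}\le 1+q_{m,d}$, which is precisely $q\le\frac{3d(m-1)}{d-2}$; the hypothesis $\frac{2(2d-1)}{3d}\le m\le\frac{3d-2}{2d}$ makes this range nonempty and compatible with $q\ge1$ (and, using Remark~\ref{R:S-tildeS}~(iii), one may lower the exponents so that $q_2$ falls inside the exact interval required by Theorem~\ref{T:weakSol}). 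Invoking Theorem~\ref{T:weakSol} then produces an $L^q$-weak solution of the $\rho$-equation with the energy bound \eqref{T:weakSol:E_q}. Passing to the limit relies on the compactness inherent to that theory: the uniform energy estimates give strong $L^m(\Omega_T)$ compactness of $\rho$ and weak compactness of $\na\rho^{(m+q-1)/2}$, while the $c$-estimates give strong convergence of $\na c$ in $L^{p}(\Omega_T)$ for every $p<4$. The only genuinely nonlinear coupling, the term $\rho\,\na c$, then passes to the limit by pairing the weakly convergent factor against the strongly convergent one, yielding part (i).

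For part (ii) I would run a bootstrap in dimension $d=3$ at the endpoint $q=\frac{3d(m-1)}{d-2}$, so that $q_{m,d}=\frac{d-2}{3}$ and the parabolic embedding \eqref{embdeeing-KS} with $(r_1,r_2)=(d,2)$ pins $m=1+\frac{2(d-2)}{d+10}=\tfrac{15}{13}$; together with the standing constraint $m\le\frac{3d-2}{2d}=\tfrac76$ this singles out $\tfrac{15}{13}\le m\le\tfrac76$. From $\rho\in L^{d}_xL^2_t(\Omega_T)$ and maximal $L^p$-regularity for $c_t-\Del c=-\rho c$ one upgrades $\na c$ to $L^{q}_xL^2_t(\Omega_T)$ for every $q<\infty$; re-applying the existence theory raises the integrability of $\rho$, which improves $c$ to $(L^q_tW^{2,q}_x\cap W^{1,q}_tL^q_x)(\Omega_T)$, and iterating drives $\rho$ into $L^\infty(\Omega_T)$. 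I expect the main obstacle to be the self-consistency and compactness of the coupled scheme: one must ensure that the drift $V=\na c$ stays in the \emph{same} admissible sub-scaling class with a uniformly controlled norm all along the iteration, and that the product $\rho\,\na c$ is stable under the limit. The delicate point is that $\rho$ is controlled only weakly through the energy estimate while $\na c$ must be shown to converge strongly enough (via the $\abs{\na c}^4/c^3$ and $\abs{D^2c}^2/c$ bounds, keeping $c$ away from degeneracy where needed); closing the fixed point uniformly in the regularization parameter is where the real work lies.
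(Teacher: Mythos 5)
Your proposal follows essentially the same route as the paper: the entropy--dissipation estimate \eqref{esti-KS100} (with convexity of $\Omega$ killing the boundary term in \eqref{esti-KS130} and \eqref{esti-KS140} giving the dissipation), the resulting bound $\nabla c\in L^4(\Omega_T)$, the verification of the sub-scaling condition $\frac{d+2+q_{m,d}}{4}\le 1+q_{m,d}$ for $V=\nabla c$ with $(q_1,q_2)=(4,4)$ yielding exactly $q\le\frac{3d(m-1)}{d-2}$, the application of the main existence theorems, and for (ii) the same endpoint computation $q_{m,d}=\frac{d-2}{3}$, $(r_1,r_2)=(d,2)$ pinning $m=\frac{15}{13}$ at $d=3$ followed by maximal regularity and bootstrapping. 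The only difference is that you spell out the decoupled iteration/fixed-point and compactness scaffolding for the coupling (and the lowering of $(q_1,q_2)$ via Remark~\ref{R:S-tildeS}~(iii)), details the paper leaves implicit, so your write-up is, if anything, more complete than the paper's sketch.
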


\begin{remark}
When $\frac{2(2d-1)}{3d}\le m\le \frac{3d-2}{2d}$, $d\ge 4$ and $\frac{9}{8}\le m\le \frac{15}{13}$, $d=3$,
it is not clear whether or not $L^q$-weak solution exists for $\frac{3d(m-1)}{d-2}<q<\infty$, and thus we leave it as an open question. 
\end{remark}


\section{Preliminaries}\label{S:Preliminaries}

In this section, we introduce preliminaries that are used throughout the paper.

For a function $f:\Omega \times [0,T] \to \mathbb{R}$, $\Omega \subset \mathbb{R}^d, d \geq 2$ and constants $q_1, q_2 > 1$, we define
\[
\|f\|_{L^{q_1, q_2}_{x, t}} : = \left(\int_{0}^{T}\left[\int_{\Omega} \abs{f(x,t)}^{q_1} \,dx\right]^{q_2 / q_1} \,dt\right)^{\frac{1}{q_2}}.
\]
For simplicity, let $\|f\|_{L^{q}_{x,t}} = \|f\|_{L^{q, q}_{x,t}}$ for some $q>1$. Also we denote for $\Omega \subseteq \bbr^d$ and $\Omega_T \subseteq Q_T$ that
\begin{equation*}
\begin{gathered}
\|f(\cdot,t)\|_{\calC^{\alpha}(\Omega)} := \sup_{x,y \in \Omega, \, x \neq y} \frac{|f(x, t) - f(y,t)|}{|x-y|^\alpha}, \\
\|f\|_{\calC^{\alpha}(\Omega_T)} := \sup_{(x, t), (y,s) \in \Omega_T, \, (x,t) \neq (y,s)} \frac{|f(x, t) - f(y,s)|}{|x-y|^\alpha + |t-s|^{\alpha/2}}.
\end{gathered}
\end{equation*}

\subsection{Technical lemmas}

\begin{lemma}\label{T:pSobolev} \cite[Propositions~I.3.1 \& I.3.2]{DB93}
  Let $v \in L^{\infty}(0, T; L^{q}(\Omega)) \cap L^{p}(0, T; W^{1,p}(\Omega))$ for some $1 \leq p < d$, and $0 < q < \frac{dp}{d-p}$. Then there exists a constant $c=c(d,p,q, \abs{\Omega})$ such that
  \[
  \iint_{\Omega_{T}} |v(x,t)|^{\frac{p(d+q)}{d}}\,dx\,dt \leq c \left(\sup_{0\leq t \leq T}\int_{\Omega} |v(x,t)|^{q}\,dx \right)^{\frac{p}{d}} \iint_{\Omega_{T}} |\nabla v(x,t)|^{p}\,dx\,dt +  \frac{1}{|\Omega|^{\frac{p(d+q)}{d}-1}}\int_{0}^{T} \|v(\cdot, t)\|_{L^{1}(\Omega)}^{\frac{p(d+q)}{p}}\,dt.
  \]
\end{lemma}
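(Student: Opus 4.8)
The plan is to deduce the space--time bound from a single spatial inequality applied slice by slice in $t$. Abbreviate $s:=\frac{p(d+q)}{d}$, $\theta:=\frac{d}{d+q}$, and $p^{*}:=\frac{dp}{d-p}$ (finite since $p<d$). The exponents are forced by two bookkeeping identities: first, $\frac1s=\theta\big(\frac1p-\frac1d\big)+(1-\theta)\frac1q$, so that $\theta$ is the Gagliardo--Nirenberg interpolation weight between $W^{1,p}$ and $L^{q}$; second, $s\theta=p$ and $s(1-\theta)=\frac{pq}{d}$, which is exactly what makes the gradient enter to the power $p$ while leaving $\big(\|\cdot\|_{L^{q}}^{q}\big)^{p/d}$ as the companion factor. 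One checks directly that the hypothesis $q<p^{*}$ is equivalent to the chain $q<s<p^{*}$, so the interpolation below is admissible.

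First I would prove, for fixed $t$ and $w:=v(\cdot,t)\in W^{1,p}(\Omega)$, the purely spatial inequality
\[
\|w\|_{L^{s}(\Omega)}\le c\,\|\nabla w\|_{L^{p}(\Omega)}^{\theta}\,\|w\|_{L^{q}(\Omega)}^{1-\theta}+c\,|\Omega|^{\frac1s-1}\,\|w\|_{L^{1}(\Omega)}.
\]
Writing $w_{\Omega}:=|\Omega|^{-1}\int_{\Omega}w$, the Sobolev--Poincar\'e inequality on the smooth bounded domain $\Omega$ gives $\|w-w_{\Omega}\|_{L^{p^{*}}(\Omega)}\le c\|\nabla w\|_{L^{p}(\Omega)}$, and H\"older interpolation of $L^{s}$ between $L^{p^{*}}$ and $L^{q}$ (with weight $\theta$, by the first identity above) yields $\|w-w_{\Omega}\|_{L^{s}}\le c\|\nabla w\|_{L^{p}}^{\theta}\|w-w_{\Omega}\|_{L^{q}}^{1-\theta}$. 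Since $\|w-w_{\Omega}\|_{L^{q}}\le 2\|w\|_{L^{q}}$ (triangle inequality together with $\|w_{\Omega}\|_{L^{q}}\le\|w\|_{L^{q}}$ by Jensen) and $\|w_{\Omega}\|_{L^{s}}\le|\Omega|^{\frac1s-1}\|w\|_{L^{1}}$, decomposing $w=(w-w_{\Omega})+w_{\Omega}$ and applying the triangle inequality in $L^{s}$ produces the displayed inequality.

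With that in hand I would raise to the power $s$ (using $(a+b)^{s}\le 2^{s-1}(a^{s}+b^{s})$, valid since $s>1$) and invoke $s\theta=p$, $s(1-\theta)=\frac{pq}{d}$ to obtain, pointwise in $t$,
\[
\|v(\cdot,t)\|_{L^{s}}^{s}\le c\,\|\nabla v(\cdot,t)\|_{L^{p}}^{p}\,\Big(\int_{\Omega}|v(\cdot,t)|^{q}\,dx\Big)^{p/d}+c\,|\Omega|^{1-s}\,\|v(\cdot,t)\|_{L^{1}}^{s}.
\]
Integrating over $t\in(0,T)$, bounding $\big(\int_{\Omega}|v|^{q}\,dx\big)^{p/d}$ by $\big(\esssup_{t}\int_{\Omega}|v|^{q}\,dx\big)^{p/d}$ and pulling it out of the time integral, and recognizing $\int_{0}^{T}\|\nabla v\|_{L^{p}}^{p}\,dt=\iint_{\Omega_{T}}|\nabla v|^{p}\,dx\,dt$, gives exactly the asserted estimate, the second term carrying the weight $|\Omega|^{1-s}=|\Omega|^{1-\frac{p(d+q)}{d}}$.

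I expect the only genuine content to lie in the spatial inequality of the second paragraph: the Sobolev--Poincar\'e step (through which the dependence of $c$ on $\Omega$, and hence on $|\Omega|$, enters) combined with the exponent arithmetic verifying $q<s<p^{*}$ and $s\theta=p$. The time integration is then routine, its single delicate point being that $\|v(\cdot,t)\|_{L^{q}}$ appears to the precise power $pq/d$, so that the $L^{\infty}_{t}L^{q}_{x}$ bound factors out cleanly as the $\big(\esssup_{t}\int_{\Omega}|v|^{q}\big)^{p/d}$ prefactor.
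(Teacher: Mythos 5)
Your route---a slicewise Gagliardo--Nirenberg bound assembled from Sobolev--Poincar\'e plus H\"older interpolation, then integration in time---is the standard proof of this lemma; the paper itself gives no proof but quotes \cite[Propositions~I.3.1 \& I.3.2]{DB93}, whose argument is exactly this, and your exponent bookkeeping ($s\theta=p$, $s(1-\theta)=pq/d$, and $q<p^{*}\Leftrightarrow q<s<p^{*}$) checks out. One thing you should state rather than pass over: you did \emph{not} land on ``exactly the asserted estimate.'' Your second term carries $\|v(\cdot,t)\|_{L^{1}}^{s}$ with $s=\frac{p(d+q)}{d}$, whereas the statement as printed has the power $\frac{p(d+q)}{p}=d+q$. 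Testing the printed inequality on a constant $v\equiv\lambda$ (so $\nabla v=0$) gives $T|\Omega|\lambda^{s}\le T|\Omega|^{\,1-s}(\lambda|\Omega|)^{d+q}$, which fails for all sufficiently small $\lambda>0$ since $s<d+q$ when $p<d$; so the printed exponent is a typo, the correct power is $\frac{p(d+q)}{d}$ as in \cite{DB93}, and your derivation in fact produces the corrected (true) statement. It would strengthen the write-up to record this rather than claim exact agreement.

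The genuine gap is the range $0<q<1$, which the lemma explicitly permits and which the paper actually needs (in Lemma~\ref{P:L_r1r2} the slice exponent $\frac{2q}{q+m-1}$ drops below $1$ when $q<m-1$). For $q<1$ the quantity $\|\cdot\|_{L^{q}}$ is only a quasi-norm: your triangle inequality $\|w-w_{\Omega}\|_{L^{q}}\le2\|w\|_{L^{q}}$ and, more fatally, the Jensen step $\|w_{\Omega}\|_{L^{q}}\le\|w\|_{L^{q}}$ break down---for the concave map $x\mapsto x^{q}$ Jensen yields $|\Omega|^{-1}\int_{\Omega}|w|^{q}\,dx\le\bigl(|\Omega|^{-1}\int_{\Omega}|w|\,dx\bigr)^{q}$, i.e.\ precisely the wrong direction, so the average $w_{\Omega}$ can have larger $L^{q}$ quasi-norm than $w$ itself. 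The H\"older interpolation $\|f\|_{L^{s}}\le\|f\|_{L^{p^{*}}}^{\theta}\|f\|_{L^{q}}^{1-\theta}$ is unproblematic for $q<1$ (the H\"older exponents $p^{*}/(s\theta)=p^{*}/p$ and $q/(s(1-\theta))=d/p$ both exceed $1$), so the repair is to interpolate on $w$ itself rather than on $w-w_{\Omega}$: use your decomposition only in $L^{p^{*}}$, namely $\|w\|_{L^{p^{*}}}\le c\|\nabla w\|_{L^{p}}+|\Omega|^{\frac{1}{p^{*}}-1}\|w\|_{L^{1}}$, expand the $\theta$-power with $(a+b)^{p}\le 2^{p-1}(a^{p}+b^{p})$, and absorb the resulting cross term $\|w\|_{L^{1}}^{p}\bigl(\int_{\Omega}|w|^{q}\,dx\bigr)^{p/d}$ via $\int_{\Omega}|w|^{q}\,dx\le|\Omega|^{1-q}\|w\|_{L^{1}}^{q}$ (valid exactly when $q\le1$), recovering $c(|\Omega|)\,\|w\|_{L^{1}}^{s}$. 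With that modification for $q<1$, and your argument verbatim for $q\ge1$, the proof covers the full stated range.
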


Now we derive the following lemma which is useful to obtain a priori estimates.
 \begin{lemma}\label{P:L_r1r2}\cite[Lemma~3.4]{HKK}
 Let $m >1$ and $q \geq 1$. Suppose that
 \begin{equation}\label{rho-space}
 \rho \in L^{\infty}(0, T; L^{q}(\Omega)) \quad \text{and} \quad
 \rho^{\frac{q+m-1}{2}} \in L^{2}(0, T; W^{1,2}(\Omega)).
 \end{equation}
  Then $\rho\in L^{r_1, r_2}_{x, t} (\Omega \times [0, T])$ such that
 \begin{equation}\label{q-r1r2}
 \begin{gathered}
\frac{d}{r_1} + \frac{2+q_{m,d}}{r_2} =
\frac{d}{q}, \quad \text{for} \quad q_{m,d} : = \frac{d(m-1)}{q}, \\
 \text{for} \ \
 \begin{cases}
  q \leq r_1 \leq \frac{d(q+m-1)}{d-2},  \quad q+m-1 \leq r_2 \leq \infty, & \text{ if } \ d > 2, \\
  q \leq r_1 < \infty, \quad q+m-1 < r_2 \leq \infty, &\text{ if }\ d = 2, \\
   q \leq r_1 \leq \infty, \quad q+m-1 \leq r_2 \leq \infty, &\text{ if } \ d= 1.
  \end{cases}
\end{gathered}
\end{equation}
Moreover, there exists a constant $c=c(d)$ such that, for $(d-2)_{+} = \max \{0, d-2\}$,
 \begin{equation}\label{norm-q-r1r2}
\|\rho\|_{L^{r_1, r_2}_{x, t}} \leq c\left(\sup_{0 \leq t \leq T} \int_{\mathbb{R}^d} \rho^{q}(\cdot, t) \,dx\right)^{\frac{1}{r_1} \left[1-\frac{(r_1 - q) (d-2)_{+}}{d(m-1)+2q}\right]} \,
\left\|\nabla \rho^{\frac{q+m-1}{2}} \right\|_{L^{2}_{x,t}}^{\frac{2}{r_2}}
+ \frac{1}{|\Omega|^{1-\frac{1}{r_1}}} \left(\int_{0}^{T} \|\rho(\cdot, t)\|_{L^{1}(\Omega)}^{r_2}\,dt\right)^{\frac{1}{r_2}}.
\end{equation}
 \end{lemma}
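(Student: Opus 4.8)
The plan is to reduce everything to the ordinary Sobolev embedding for the \emph{energy variable} $v:=\rho^{\frac{q+m-1}{2}}$ and then to recover the full admissible range \eqref{q-r1r2} by interpolating in the mixed space--time Lebesgue scale. Writing $\alpha:=\frac{q+m-1}{2}$ so that $\rho=v^{1/\alpha}$, the hypothesis \eqref{rho-space} says precisely that $v\in L^2(0,T;W^{1,2}(\Omega))$ and, since $\int_{\Omega}\rho^q=\int_{\Omega}v^{2q/(q+m-1)}$, that $v\in L^{\infty}(0,T;L^{2q/(q+m-1)}(\Omega))$. I would produce two \emph{anchor} estimates, one at each endpoint of the segment \eqref{q-r1r2}, and then interpolate between them.

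The first anchor is the hypothesis itself, $\rho\in L^{\infty}(0,T;L^q(\Omega))$, i.e.\ the endpoint $(\tfrac{1}{r_1},\tfrac{1}{r_2})=(\tfrac1q,0)$. For the second, I would apply at a.e.\ fixed $t$ the Sobolev--Poincar\'e inequality on the bounded domain $\Omega$ to $v(\cdot,t)$; for $d>2$ this gives $\|v(\cdot,t)\|_{L^{2^\ast}(\Omega)}\lesssim \|\nabla v(\cdot,t)\|_{L^2(\Omega)}+|\Omega|^{-1}\|v(\cdot,t)\|_{L^1(\Omega)}$ with $2^\ast=\frac{2d}{d-2}$, the mean term accounting for the finite domain. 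Squaring, integrating in $t$, and translating back through $\rho=v^{1/\alpha}$ (note $2^\ast/\alpha=p^\ast$ and $2^\ast/p^\ast=\frac{2}{q+m-1}$) yields the second anchor at $(\tfrac{1}{r_1},\tfrac{1}{r_2})=(\tfrac{d-2}{d(q+m-1)},\tfrac{1}{q+m-1})$, namely $\|\rho\|_{L^{p^\ast,\,q+m-1}_{x,t}}\lesssim \bigl\|\nabla\rho^{\frac{q+m-1}{2}}\bigr\|_{L^2_{x,t}}^{2/(q+m-1)}+(\text{mean term})$, where $p^\ast:=\frac{d(q+m-1)}{d-2}$. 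The diagonal case $r_1=r_2$ of the result is exactly Lemma~\ref{T:pSobolev} applied to $v$, which also fixes the correct shape of the lower-order term.

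For a pair $(r_1,r_2)$ on the line \eqref{q-r1r2} I would take the interpolation weight $\gamma:=\frac{q+m-1}{r_2}\in[0,1]$ and use the multiplicative inequality for mixed norms,
\[
\|\rho\|_{L^{r_1,r_2}_{x,t}}\le \|\rho\|_{L^{q,\infty}_{x,t}}^{\,1-\gamma}\,\|\rho\|_{L^{p^\ast,\,q+m-1}_{x,t}}^{\,\gamma},
\]
which is legitimate precisely because $(\tfrac{1}{r_1},\tfrac{1}{r_2})=(1-\gamma)(\tfrac1q,0)+\gamma(\tfrac{d-2}{d(q+m-1)},\tfrac{1}{q+m-1})$, and a direct computation shows this is equivalent to the scaling identity $\frac{d}{r_1}+\frac{2+q_{m,d}}{r_2}=\frac{d}{q}$. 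Inserting the anchors, the gradient factor is raised to $\frac{2\gamma}{q+m-1}=\frac{2}{r_2}$, matching \eqref{norm-q-r1r2}, while the supremum factor is raised to $\frac{1-\gamma}{q}$; substituting $\gamma=\frac{q+m-1}{r_2}$ and eliminating $r_2$ through the line relation, one verifies the algebraic identity $\frac{1-\gamma}{q}=\frac{1}{r_1}\bigl[1-\frac{(r_1-q)(d-2)_+}{d(m-1)+2q}\bigr]$, exactly the exponent claimed in \eqref{norm-q-r1r2}.

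For $d=2$ the embedding $W^{1,2}\hookrightarrow L^r$ holds for every finite $r$ but fails at $r=\infty$, which forces the strict ranges $q\le r_1<\infty$ and $q+m-1<r_2\le\infty$; here $(d-2)_+=0$ collapses the supremum exponent to $\tfrac{1}{r_1}$, and $d=1$ is easier since $W^{1,2}\hookrightarrow L^{\infty}$. I expect the main term to fall out mechanically, as above, so the only genuine obstacle is the bookkeeping of the lower-order term: the Sobolev step must be taken in Sobolev--Poincar\'e form so that the correction is governed by the conserved mass $\|\rho\|_{L^1(\Omega)}$ rather than by $\|v\|_{L^1}$, and the exact power of $|\Omega|$ must be tracked through the interpolation to land on the stated normalization $\frac{1}{|\Omega|^{1-1/r_1}}\bigl(\int_0^T\|\rho\|_{L^1(\Omega)}^{r_2}\,dt\bigr)^{1/r_2}$, which is simply the $L^{r_1}$-norm of the spatial mean $|\Omega|^{-1}\int_{\Omega}\rho\,dx$.
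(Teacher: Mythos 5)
Your two-anchor scheme is, at bottom, the same argument that stands behind the cited \cite[Lemma~3.4]{HKK} in its bounded-domain form (cf.\ Lemma~\ref{T:pSobolev}): apply a Sobolev--Poincar\'e inequality at a.e.\ fixed time to the energy variable $v=\rho^{\frac{q+m-1}{2}}$, raise to the power $2$, integrate in $t$, and interpolate against $\rho\in L^{\infty}_t L^q_x$; packaging this as H\"older interpolation in $x$ followed by H\"older in $t$ between the endpoints $(\frac1q,0)$ and $(\frac{d-2}{d(q+m-1)},\frac{1}{q+m-1})$ is equivalent to the usual one-shot Gagliardo--Nirenberg step. Your exponent bookkeeping is correct: with $\gamma=\frac{q+m-1}{r_2}$ the scaling line \eqref{q-r1r2} forces $\gamma=\frac{d(q+m-1)(r_1-q)}{r_1\,(d(m-1)+2q)}$, which yields the gradient power $\frac{2}{r_2}$ and the supremum power $\frac{1}{r_1}\bigl[1-\frac{(r_1-q)(d-2)_+}{d(m-1)+2q}\bigr]$ exactly as in \eqref{norm-q-r1r2}, and the $d=2$ strictness ($r_1<\infty$, $r_2>q+m-1$) matches the failure of $W^{1,2}\hookrightarrow L^{\infty}$.

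The one genuine soft spot is the lower-order term, and your parenthetical fix is not quite right as stated. Sobolev--Poincar\'e applied to $v$ produces the correction $\|v(\cdot,t)\|_{L^1(\Omega)}=\|\rho(\cdot,t)\|_{L^{(q+m-1)/2}(\Omega)}^{(q+m-1)/2}$, \emph{not} the mass $\|\rho(\cdot,t)\|_{L^1(\Omega)}$; since $\frac{q+m-1}{2}\geq 1$, H\"older on the bounded domain runs in the wrong direction, so the correction is not ``governed by the conserved mass'' directly. The repair is standard but must be inserted: at fixed $t$, interpolate $\|\rho\|_{L^{(q+m-1)/2}_x}\leq \|\rho\|_{L^1_x}^{\theta}\|\rho\|_{L^{p^{\ast}}_x}^{1-\theta}$ and absorb the $L^{p^{\ast}}_x$ factor into the left-hand side by Young \emph{before} integrating in time and before the two-anchor interpolation. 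If instead you interpolate first, the bound has the form $A^{1-\gamma}(B+C)^{\gamma}\leq A^{1-\gamma}B^{\gamma}+A^{1-\gamma}C^{\gamma}$ with $A=\|\rho\|_{L^{q,\infty}_{x,t}}$, and the cross term $A^{1-\gamma}\bigl(\int_0^T\|\rho\|_{L^1}^{q+m-1}\,dt\bigr)^{\gamma/(q+m-1)}$ is structurally different from the pure mass term in \eqref{norm-q-r1r2} (it would still suffice for every application in this paper, e.g.\ \eqref{energy01}, but it does not prove the lemma as stated). With the absorption done slice-wise, the remaining mass contribution is exactly the $L^{r_1,r_2}_{x,t}$-norm of the spatial mean $|\Omega|^{-1}\|\rho(\cdot,t)\|_{L^1(\Omega)}$, which is the normalization $\frac{1}{|\Omega|^{1-1/r_1}}\bigl(\int_0^T\|\rho(\cdot,t)\|_{L^1(\Omega)}^{r_2}\,dt\bigr)^{1/r_2}$ you identified.
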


Now we introduce the Aubin-Lions lemma.
\begin{lemma}\label{AL}\cite[Proposition~III.1.3]{Show97}, \cite{Sim87}
Let $X_0$, $X$ and $X_1$ be Banach spaces with $X_0 \subset X \subset X_1$. Suppose that $X_0$ and $X_1$ are reflexive, and $X_0 \hookrightarrow X $ is compact, and $X \hookrightarrow X_1 $ is continuous. For $1 \leq p, q \leq \infty$, let us define
$W = \left\{ u \in L^{p}\left(0, T; X_0\right),\ u_{t} \in L^{q}\left(0, T; X_1\right) \right\}$.
If $p < \infty$, then the inclusion $W \hookrightarrow L^{p}(0,T; X)$ is compact. If $p=\infty$ and $q>1$, then the embedding of $W$ into $\calC(0,T;X)$ is compact.
\end{lemma}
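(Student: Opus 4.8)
The final statement is the Aubin--Lions--Simon compactness lemma. The plan is to isolate the single analytic ingredient that makes it work, an interpolation (Ehrling) inequality, and then run a translation/Fr\'echet--Kolmogorov argument in the time variable. The first step is to prove that, under the compact embedding $X_0 \hookrightarrow X$ and the continuous embedding $X \hookrightarrow X_1$, for every $\epsilon > 0$ there is a constant $C_\epsilon$ with
\[
\norm{u}_{X} \leq \epsilon \norm{u}_{X_0} + C_\epsilon \norm{u}_{X_1}, \qquad \forall\, u \in X_0.
\]
I would establish this by contradiction: if it failed for some $\epsilon_0$, one obtains a sequence $u_n$ normalized by $\norm{u_n}_X = 1$ with $\norm{u_n}_{X_0}$ bounded and $\norm{u_n}_{X_1} \to 0$; compactness of $X_0 \hookrightarrow X$ extracts a subsequence $u_n \to u$ in $X$, while continuity of $X \hookrightarrow X_1$ forces $u = 0$, contradicting $\norm{u}_X = 1$.

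Second, I would control time translations using the bound on $u_t$. For $u \in W$ and $h > 0$ one writes $u(t+h) - u(t) = \int_t^{t+h} u_t(s)\,ds$ in $X_1$, so that H\"older's inequality gives $\norm{u(t+h)-u(t)}_{X_1} \leq h^{1-1/q}\norm{u_t}_{L^q(t,t+h;X_1)}$. Integrating in $t$ and using the uniform bound $\norm{u_t}_{L^q(0,T;X_1)} \leq M$ on a bounded family $\{u_n\}\subset W$ yields $\sup_n \norm{u_n(\cdot+h) - u_n}_{L^p(0,T-h;X_1)} \to 0$ as $h \to 0$; when $p = \infty$ the same computation gives the equi-H\"older bound $\norm{u_n(t) - u_n(s)}_{X_1} \leq M|t-s|^{1-1/q}$, where the hypothesis $q > 1$ is exactly what makes the exponent positive.

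Third, for the case $p < \infty$ I would invoke the vector-valued Fr\'echet--Kolmogorov (Simon) criterion: a bounded family in $L^p(0,T;X)$ is relatively compact provided (a) the time averages $\int_{t_1}^{t_2} u\,dt$ lie in a relatively compact subset of $X$, and (b) the time translations are uniformly small in $L^p(0,T-h;X)$. For (a), the $L^p(0,T;X_0)$ bound gives $\norm{\int_{t_1}^{t_2} u_n\,dt}_{X_0} \leq (t_2-t_1)^{1-1/p} M$, so these integrals form a bounded subset of $X_0$, hence a relatively compact subset of $X$ by the compact embedding. For (b), I apply the Ehrling inequality to $u_n(\cdot+h)-u_n$: the $X_0$-contribution is bounded by $2\epsilon M$ uniformly in $n$ and $h$, and the $X_1$-contribution tends to $0$ by the second step, so choosing $\epsilon$ small and then $h$ small gives $\sup_n \norm{u_n(\cdot+h) - u_n}_{L^p(0,T-h;X)} \to 0$. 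Together these yield relative compactness of $\{u_n\}$ in $L^p(0,T;X)$. (The reflexivity hypothesis is what the alternative weak-compactness route of Aubin--Lions exploits; the Fr\'echet--Kolmogorov route above does not strictly require it.)

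Finally, for $p = \infty$ and $q > 1$ I would argue by Arzel\`a--Ascoli in $\calC([0,T];X)$. The equi-H\"older continuity in $X_1$ from the second step, together with the fact that $\{u_n(t)\}$ is bounded in $X_0$ (hence relatively compact in $X$, and a fortiori in $X_1$) for each fixed $t$, produces a subsequence converging in $\calC([0,T];X_1)$; the Ehrling inequality then upgrades this to a Cauchy sequence in $\calC([0,T];X)$, since $\norm{u_n(t)-u_m(t)}_X \leq 2\epsilon M + C_\epsilon \norm{u_n - u_m}_{\calC([0,T];X_1)}$ uniformly in $t$. The main obstacle is the Ehrling inequality and, more precisely, the bridging role it plays: the compact embedding supplies control of the $X$-norm only through the stronger $X_0$-norm, whereas the time derivative supplies control only in the weaker $X_1$-norm, and the entire argument hinges on interpolating between these two.
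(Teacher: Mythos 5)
The paper does not prove this lemma at all: it is quoted from Showalter \cite{Show97} and Simon \cite{Sim87}, so there is no in-house argument to compare against. What you have written is essentially a reconstruction of Simon's proof from \cite{Sim87}, and its architecture is sound: the Ehrling inequality by contradiction is correct; the compactness of the time averages $\int_{t_1}^{t_2}u_n\,dt$ in $X$ via the $L^p(0,T;X_0)$ bound and the compact embedding is correct; the splitting of the translation difference by Ehrling in step (b) is correct; and the Arzel\`a--Ascoli argument for $p=\infty$, $q>1$ is correct. Your parenthetical remark is also accurate: the Fr\'echet--Kolmogorov/Simon route does not use reflexivity, which is only needed in the classical Aubin--Lions weak-compactness argument (the Showalter reference); in this sense your proof actually covers the statement under weaker hypotheses than stated.

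One step needs repair, because the lemma as stated allows $q=1$ when $p<\infty$. There your translation estimate degenerates: H\"older gives $\norm{u(t+h)-u(t)}_{X_1}\le h^{1-1/q}\norm{u_t}_{L^q(t,t+h;X_1)}$ with exponent $1-1/q=0$, and ``integrating in $t$'' then only yields smallness in $L^1(0,T-h;X_1)$ (by Fubini, $\int_0^{T-h}\int_t^{t+h}\norm{u_t(s)}_{X_1}\,ds\,dt\le h\norm{u_t}_{L^1(0,T;X_1)}$), not in $L^p$, which is what your step (b) consumes; smallness in $L^1$ plus boundedness in $L^p$ interpolates only to $L^s$ for $s<p$. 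The fix is Simon's one-liner: setting $g_h(t):=\int_t^{t+h}\norm{u_t(s)}_{X_1}\,ds$, one has $\norm{g_h}_{L^\infty(0,T-h)}\le M$ and $\norm{g_h}_{L^1(0,T-h)}\le hM$, hence $\norm{g_h}_{L^p(0,T-h)}\le \norm{g_h}_{L^\infty}^{1-1/p}\norm{g_h}_{L^1}^{1/p}\le h^{1/p}M$, restoring the required uniform decay. With that line added (and, in the $p=\infty$ case, the routine observations that the bounds $\norm{u_n(t)}_{X_0}\le M$ hold off a single null set for all $n$ and that each $u_n$ has an absolutely continuous $X_1$-valued representative, so Arzel\`a--Ascoli is applied to genuine continuous curves), your proof is complete and coincides with the cited source.
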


\subsection{Wasserstein space}\label{SS:Wasserstein}
In this subsection, we introduce the Wasserstein space and its properties. For more detail, we refer \cite{ags:book, Santambrosio15, V}.
 Let us denote by $\mathcal{P}_p (\Omega)$ the set of all Borel probability measures on $\Omega$ with a finite $p$-th moment. That is,
 $\mathcal{P}_p (\Omega):= \{\mu \in \mathcal{P} (\Omega) : \int_\Omega |x|^p \,d \mu(x) < \infty \}$.
 We note that $\mathcal{P}_p (\Omega)=\mathcal{P} (\Omega)$ if $\Omega$ is a bounded set.
For $\mu,\nu\in\mathcal{P}_p (\Omega)$, we consider
\begin{equation}\label{Wasserstein dist}
W_p(\mu,\nu):=\left(\inf_{\gamma\in\Gamma(\mu,\nu)}\int_{\Omega\times \Omega}|x-y|^p\, d\gamma(x,y)\right)^{\frac{1}{p}},
\end{equation}
where $\Gamma(\mu,\nu)$ denotes the set of all Borel probability measures on $\Omega\times \Omega$ which has $\mu$ and
$\nu$ as marginals;
$$\gamma(A\times \Omega)= \mu(A) \quad \text{and} \quad \gamma(\Omega\times A)= \nu(A) $$
for every Borel set $A\subset \Omega.$
Equation (\ref{Wasserstein dist}) defines a distance on $\mathcal{P}_p (\Omega)$ which is called the {\it Wasserstein distance} and denoted by $W_p$.
 Equipped with the Wasserstein distance,  $\mathcal{P}_p (\Omega)$ is called the {\it Wasserstein space}.
We denote by $\Gamma_o(\mu,\nu)$ the set of all $\gamma$ which minimize the expression.

We say that a sequence of Borel probability measures $\{\mu_n\} \subset \mathcal{P}(\Omega)$ is narrowly convergent to $ \mu \in \mathcal{P}(\Omega)$ as
$n \rightarrow \infty$ if
\begin{equation}\label{D:narrowly convergent}
\lim_{n\rightarrow \infty} \int_{\Omega} \varphi(x) \,d\mu_n(x) =\int_{\Omega} \varphi(x) \,d\mu(x)
\end{equation}
for every function $ \varphi \in \calC_b (\Omega)$, the space of continuous and bounded real functions defined on $\Omega$. Then we recall
 \begin{equation*}
\lim_{n \rightarrow \infty} W_p(\mu_n, \mu)=0 \quad \Longleftrightarrow \quad
\begin{cases}
\mu_n ~\mbox{narrowly converges to } \mu ~\mbox{ in }  \mathcal{P}(\Omega)\\
\lim_{n\rightarrow \infty} \int_{\Omega} |x|^p\, d\mu_n(x) = \int_{\Omega} |x|^p\,d \mu(x).
\end{cases}
\end{equation*}
Hence, if $\Omega$ is compact then $\mathcal{P}_p(\Omega)$ is also a compact metric space.%


Now, we introduce the notion of absolutely continuous curve and its relation with the continuity equation.
\begin{defn}
Let $\sigma:[0,T]\mapsto \mathcal{P}_p (\Omega)$ be a curve.
We say that $\sigma$ is absolutely continuous and denote it by $\sigma \in AC(0, T;\mathcal{P}_p (\Omega))$, if there exists $l\in
L^1([0,T])$ such that
\begin{equation}\label{AC-curve}
W_p(\sigma(s),\sigma(t))\leq \int_s^t l(r)dr,\qquad \forall ~ 0\leq s\leq t\leq T.
\end{equation}
If $\sigma \in AC(0,T;\mathcal{P} (\Omega))$, then the limit
$$|\sigma'|(t):=\lim_{s\rightarrow t}\frac{W_p(\sigma(s),\sigma(t))}{|s-t|} ,$$
exists for $L^1$-a.e $t\in[0,T]$. Moreover, the function $|\sigma'|$ belongs to $L^1(0,T)$ and satisfies
\begin{equation*}
|\sigma'|(t)\leq l(t) \qquad \mbox{for} ~L^1\mbox{-a.e.}~t\in [0,T],
\end{equation*}
for any $l$ satisfying \eqref{AC-curve}.
We call $|\sigma'|$ by the metric derivative of $\sigma$.
\end{defn}

\begin{lemma}\label{representation of AC curves} \cite[Theorem 5.14]{Santambrosio15}
If a narrowly continuous curve $\sigma : [0,T] \mapsto \mathcal{P}_p (\Omega)$ satisfies the continuity equation
$$\partial_t\sigma +\nabla\cdot(V\sigma)=0,  $$
for some Borel vector field $V$ with $\|V(t)\|_{L^p(\sigma(t))}\in L^1(0,T)$, then $\sigma: [0,T]\mapsto \mathcal{P}_p (\Omega)$
is absolutely continuous and
$|\sigma'|(t)\leq \|V(t)\|_{L^p(\sigma(t))}$ for $L^1$-a.e $t\in [0,T]$.
\end{lemma}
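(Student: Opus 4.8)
The plan is to reduce everything to the single metric estimate
\begin{equation}\label{aim}
W_p(\sigma(s),\sigma(t)) \leq \int_s^t \|V(r)\|_{L^p(\sigma(r))}\,dr, \qquad 0\leq s\leq t\leq T,
\end{equation}
since \eqref{aim} exhibits $\sigma$ as an absolutely continuous curve with admissible function $l(r)=\|V(r)\|_{L^p(\sigma(r))}\in L^1(0,T)$ in the sense of \eqref{AC-curve}, and the pointwise bound on $|\sigma'|$ then follows by dividing \eqref{aim} by $t-s$ and letting $s\to t$ at every Lebesgue point of the integrable map $r\mapsto \|V(r)\|_{L^p(\sigma(r))}$.

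To prove \eqref{aim} I would argue by spatial regularization. Let $\eta_\epsilon$ be a standard mollifier, set $\sigma_\epsilon(r):=\sigma(r)*\eta_\epsilon$, and define the smoothed velocity $V_\epsilon$ through the relation $V_\epsilon\,\sigma_\epsilon := (V\sigma)*\eta_\epsilon$; then $\sigma_\epsilon$ is a smooth positive solution of $\partial_r\sigma_\epsilon+\nabla\cdot(V_\epsilon\sigma_\epsilon)=0$ with $V_\epsilon$ Lipschitz in space and integrable in time. Writing $V_\epsilon(y)$ as the $\sigma$-weighted average $\sigma_\epsilon(y)^{-1}\int V(x)\eta_\epsilon(y-x)\,d\sigma(x)$ and applying Jensen's inequality to the convex function $|\cdot|^p$ yields the key control $\|V_\epsilon(r)\|_{L^p(\sigma_\epsilon(r))}\le \|V(r)\|_{L^p(\sigma(r))}$. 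Since $V_\epsilon$ is regular, the Cauchy--Lipschitz flow $X_\epsilon(r,s,\cdot)$ of $\dot X=V_\epsilon(r,X)$ exists and satisfies the transport identity $\sigma_\epsilon(r)=(X_\epsilon(r,s,\cdot))_\#\,\sigma_\epsilon(s)$. The plan $(\mathrm{id},X_\epsilon(t,s,\cdot))_\#\,\sigma_\epsilon(s)$ is admissible for $W_p(\sigma_\epsilon(s),\sigma_\epsilon(t))$, and using $X_\epsilon(t,s,x)-x=\int_s^t V_\epsilon(r,X_\epsilon(r,s,x))\,dr$ we get
\begin{equation*}
W_p(\sigma_\epsilon(s),\sigma_\epsilon(t)) \leq \left(\int_\Omega \Big|\int_s^t V_\epsilon(r,X_\epsilon(r,s,x))\,dr\Big|^p\,d\sigma_\epsilon(s)(x)\right)^{1/p} \leq \int_s^t \|V_\epsilon(r)\|_{L^p(\sigma_\epsilon(r))}\,dr,
\end{equation*}
where the last step is Minkowski's integral inequality combined with the transport identity. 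Together with the Jensen bound this gives \eqref{aim} for $\sigma_\epsilon$.

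Finally I would let $\epsilon\to 0$. Since $\Omega$ is bounded we have $\mathcal{P}_p(\Omega)=\mathcal{P}(\Omega)$ and convergence in $W_p$ is equivalent to narrow convergence; as $\sigma_\epsilon(r)\to\sigma(r)$ narrowly, the left-hand side of the regularized estimate converges to $W_p(\sigma(s),\sigma(t))$, while the right-hand integrand is dominated by the fixed $L^1(0,T)$ function $\|V(r)\|_{L^p(\sigma(r))}$, so dominated convergence delivers \eqref{aim}. The step I expect to be the main obstacle is the regularization on the bounded domain: one must verify that $\sigma_\epsilon$ genuinely solves the mollified continuity equation and that the associated flow $X_\epsilon$ does not transport mass out of (a fixed neighborhood of) $\Omega$, which is precisely where the no-flux structure $V\cdot \textbf{n}=0$ of the admissible fields enters; by contrast, the convexity estimate and the narrow-convergence passage to the limit are routine.
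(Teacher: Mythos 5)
The paper does not prove this lemma at all---it is quoted directly from Santambrogio \cite{Santambrosio15}*{Theorem 5.14} (the same result as Ambrosio--Gigli--Savar\'e, Theorem 8.3.1)---and your argument reconstructs essentially the proof given in that source: reduce to the metric bound $W_p(\sigma(s),\sigma(t))\leq \int_s^t \|V(r)\|_{L^p(\sigma(r))}\,dr$, regularize with $V_\epsilon \sigma_\epsilon=(V\sigma)*\eta_\epsilon$, control $\|V_\epsilon\|_{L^p(\sigma_\epsilon)}$ by Jensen, estimate $W_p$ through the Cauchy--Lipschitz flow and Minkowski's integral inequality, and pass to the limit by narrow lower semicontinuity of $W_p$. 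Your proposal is correct, and you rightly flag the only domain-specific point (handled implicitly by the cited source and by this paper's setting): the trivial extension of $\sigma$ to $\mathbb{R}^d$ solves the continuity equation globally precisely because the weak formulation here tests against $\varphi\in \mathcal{C}^\infty_c(\overline{\Omega}\times[0,T))$ together with $V\cdot\mathbf{n}=0$ on $\partial\Omega$, after which the whole mollification argument runs on $\mathbb{R}^d$ and the limit measures remain supported in $\overline{\Omega}$.
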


\begin{lemma}\label{Lemma : Arzela-Ascoli} \cite[Proposition 3.3.1]{ags:book}
Let $K  \subset \mathcal{P}_p(\Omega)$ be a sequentially compact set w.r.t the narrow topology.
Let $\sigma_n : [0, T] \rightarrow \mathcal{P} (\Omega)$ be curves such that
\begin{equation*}\label{equi-continuity}
\begin{aligned}
\sigma_n(t) \in K, \quad \forall ~ n &\in \mathbb{N}, ~ t \in [0,T],\\
W_p(\sigma_n(s),\sigma_n(t))&\leq \omega(s,t), \qquad \forall ~
0\leq s\leq t\leq T, ~ n \in \mathbb{N},
\end{aligned}
\end{equation*}
for a continuous function $\omega : [0, T] \times [0, T] \rightarrow [0, \infty)$ such that
$$\omega(t,t)=0, ~~ \forall ~ t \in [0,T].$$
Then there exists a subsequnece $\sigma_{n_k}$ and a limit curve $\sigma : [0,T] \rightarrow {P}_p(\Omega)$ such that
\begin{equation*}\label{eq1 : Lemma : Arzela-Ascoli}
\sigma_{n_k}(t) \mbox{ narrowly converges to} ~~\sigma(t), \qquad
\text{for all} ~~t\in[0, T].
\end{equation*}
\end{lemma}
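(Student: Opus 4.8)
The plan is to run a refined Arzel\`a--Ascoli argument adapted to curves valued in the Wasserstein space, as in the metric setting of \cite{ags:book}. The two tools that replace the usual $\varepsilon$-$\delta$ estimates are the sequential compactness of $K$ with respect to the narrow topology and the lower semicontinuity of $W_p$ under narrow convergence, namely $W_p(\mu,\nu)\le \liminf_n W_p(\mu_n,\nu_n)$ whenever $\mu_n\to\mu$ and $\nu_n\to\nu$ narrowly. Recall also that since $\Omega$ is bounded, $|x|^p$ is bounded and continuous on $\overline{\Omega}$, so narrow convergence of a sequence in $\mathcal{P}(\Omega)$ forces convergence of the $p$-th moments and therefore coincides with convergence in $W_p$; I will use the two notions interchangeably below.

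First I would fix a countable dense set $D=\{t_j\}_{j\in\mathbb{N}}\subset[0,T]$ and perform the standard diagonal extraction. For each fixed $t_j$ the sequence $\{\sigma_n(t_j)\}_n$ lies in $K$, which is sequentially compact for the narrow topology, so one extracts nested subsequences and passes to the diagonal subsequence $\sigma_{n_k}$ for which $\sigma_{n_k}(t_j)$ narrowly converges to a limit $\sigma(t_j)\in K$, for every $t_j\in D$ simultaneously. The equicontinuity hypothesis then transfers to the limit on $D$: for $t_i,t_j\in D$, lower semicontinuity of $W_p$ gives
\[
W_p(\sigma(t_i),\sigma(t_j))\le \liminf_{k\to\infty} W_p(\sigma_{n_k}(t_i),\sigma_{n_k}(t_j))\le \omega(t_i,t_j),
\]
so $t\mapsto\sigma(t)$ is uniformly continuous on $D$ with modulus controlled by $\omega$.

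Next I would extend $\sigma$ to all of $[0,T]$. For $t\notin D$ choose $t_j\to t$ with $t_j\in D$; by the displayed estimate together with the continuity of $\omega$ and $\omega(t,t)=0$, the sequence $\{\sigma(t_j)\}$ is $W_p$-Cauchy, and since it lies in the sequentially compact set $K$ it has a narrow limit in $K$, which one checks is independent of the approximating sequence. This defines $\sigma(t)\in K$ and preserves the bound $W_p(\sigma(s),\sigma(t))\le\omega(s,t)$ for all $s,t$. Finally, to upgrade convergence from $D$ to every $t\in[0,T]$, I would insert a nearby $t_j\in D$ and use the triangle inequality
\[
W_p(\sigma_{n_k}(t),\sigma(t))\le W_p(\sigma_{n_k}(t),\sigma_{n_k}(t_j))+W_p(\sigma_{n_k}(t_j),\sigma(t_j))+W_p(\sigma(t_j),\sigma(t)),
\]
where the first term is $\le\omega(t,t_j)$, the third term is small by continuity of the limit, and the middle term tends to $0$ as $k\to\infty$ since $\sigma_{n_k}(t_j)\to\sigma(t_j)$. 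Letting first $k\to\infty$ and then $t_j\to t$ yields $\sigma_{n_k}(t)\to\sigma(t)$ in $W_p$, hence narrowly, for every $t$.

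The main obstacle is precisely the passage from the dense set $D$ to arbitrary $t\in[0,T]$: one must reconcile the narrow topology, in which the compactness of $K$ and the extracted convergence live, with the metric estimates recorded through $W_p$, in which the modulus $\omega$ is phrased. Lower semicontinuity of $W_p$ under narrow convergence is exactly what bridges the two, while the hypothesis that $\omega$ is continuous with $\omega(t,t)=0$ is what guarantees that the limiting curve is well defined and that the diagonal sequence converges at every time, not merely on $D$.
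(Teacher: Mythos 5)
Your proof is correct. One point of context: the paper itself gives no proof of this lemma --- it is imported verbatim, with the citation to Ambrosio--Gigli--Savar\'e, Proposition 3.3.1 --- so the relevant comparison is with the argument in that reference. Your scheme is the same refined Arzel\`a--Ascoli diagonal argument, but you exploit a genuine shortcut that is unavailable in the general AGS setting: since $\Omega$ is bounded, $|x|^p$ is bounded and continuous on $\overline{\Omega}$, so narrow convergence in $\mathcal{P}(\Omega)$ automatically carries convergence of the $p$-th moments and coincides with $W_p$-convergence (this is exactly the equivalence recorded in Section~\ref{SS:Wasserstein}). This metrizability is what lets you define $\sigma(t)$ for $t\notin D$ as the limit of a $W_p$-Cauchy sequence --- note your completeness step is legitimate precisely because the Cauchy sequence lies in the sequentially compact set $K$, hence has a narrowly (equivalently $W_p$-) convergent subsequence, and a Cauchy sequence with a convergent subsequence converges --- and it makes the final three-term triangle inequality a genuinely metric estimate. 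In AGS the distance is only assumed lower semicontinuous with respect to the underlying convergence, so there the value of the limit curve off the dense set must be produced by a further compactness extraction at each time, with the equicontinuity bound and lower semicontinuity used to show independence of the subsequence; your version trades that extra care for the boundedness of $\Omega$, which is exactly the situation of this paper. Two cosmetic remarks: the hypothesis bounds $W_p(\sigma_n(s),\sigma_n(t))$ only for $s\leq t$, so when you approximate $t$ by $t_j$ from either side you should order the arguments accordingly (harmless, since the roles are symmetric), and you should note explicitly that the extension of $\sigma$ off $D$ agrees with the already-defined values on $D$, which is immediate from the modulus $\omega$.
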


 \subsection{Flows on $\mathcal{P}_p(\Omega)$ generated by vector fields}
For a given $T>0$, let $V\in L^1(0,T; W^{1,\infty}(\Omega;\mathbb{R}^d))$ be such that $V\cdot \mathbf{n}=0$ on $\partial \Omega$ where $\mathbf{n}$ is the outward unit normal to the
boundary of $\Omega$. For any $s, ~t\in [0,T]$, let
$\psi:[0,T]\times [0,T]\times\Omega\mapsto \Omega$ be the flow map
of the vector field $V$.
 More precisely, $\psi$ solves the following ODE
 \begin{equation}\label{ODE}
\begin{cases}
\frac{d}{dt}\psi(t;s,x)= V(\psi(t;s,x),t),  &\text{for } s, t\in[0,T] \vspace{1 mm}\\
\psi(s;s,x)=x,  &\text{for }  x\in\Omega.
\end{cases}
\end{equation}
Using the flow map $\psi$, we define a flow $\Psi: [0,T]\times [0,T]\times \mathcal{P}_p(\Omega)\mapsto \mathcal{P}_p(\Omega)$ through the push forward operation as follows
\begin{equation}\label{Flow on Wasserstein}
\Psi(t;s,\mu):={\psi}(t;s,\cdot)_\# \mu, \qquad \forall ~ \mu \in \mathcal{P}_p(\Omega).
\end{equation}

In this subsection, we remind two basic results on the flow map $\psi$.
\begin{lemma}\label{Lemma : estimation 1: ODE} \cite[Lemma~3.1]{KK-SIMA}
Let $s\in [0,T]$ and $\psi$ be defined as in \eqref{ODE}. Then, for any $t\in [s,T]$, the map $\psi(t;s,\cdot):\Omega\mapsto \Omega$ satisfies
\begin{equation*}\label{estimation 1: ODE}
e^{-\int_s^t \mbox{Lip}(V(\tau)) \,d\tau}|x-y|\leq |\psi(t;s,x)-\psi(t;s,y)|\leq e^{\int_s^t \mbox{Lip}(V(\tau)) \,d\tau}|x-y| ,
\qquad \forall ~ x,y \in\Omega,
\end{equation*}
where
\[\mbox{Lip}(V(\tau)) := \sup_{x,y \in \Omega , \, x \neq y} \frac{|V(x, \tau) - V(y,\tau)|}{|x-y|}.\]
\end{lemma}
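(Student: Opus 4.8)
The plan is to reduce the two-sided bound to a Gr\"onwall argument for the squared Euclidean distance between two trajectories of the flow. Fix $x,y\in\Omega$ and, writing $X(t):=\psi(t;s,x)$ and $Y(t):=\psi(t;s,y)$, set $g(t):=|X(t)-Y(t)|^2$ for $t\in[s,T]$. If $x=y$, uniqueness of the flow forces $X\equiv Y$, so the inequality holds trivially; hence assume $x\neq y$ from now on.

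First I would differentiate $g$ in $t$. Since $V\in L^1(0,T;W^{1,\infty}(\Omega;\mathbb{R}^d))$, the curves $t\mapsto X(t),Y(t)$ are absolutely continuous and solve \eqref{ODE} in the Carath\'eodory sense, so for a.e.\ $t\in[s,T]$
\[
\frac{d}{dt}g(t)=2\,(X(t)-Y(t))\cdot\bigl(V(X(t),t)-V(Y(t),t)\bigr).
\]
By the Cauchy--Schwarz inequality together with the definition of $\mathrm{Lip}(V(t))$ one has $\bigl|V(X(t),t)-V(Y(t),t)\bigr|\le \mathrm{Lip}(V(t))\,|X(t)-Y(t)|$, and therefore
\[
\Bigl|\frac{d}{dt}g(t)\Bigr|\le 2\,\mathrm{Lip}(V(t))\,g(t)\qquad\text{for a.e.\ }t\in[s,T].
\]

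Next I would integrate this differential inequality. Because $x\neq y$ and the flow is injective (again by uniqueness of solutions to \eqref{ODE}, forward and backward in time), we have $g(t)>0$ for all $t$, so $t\mapsto\log g(t)$ is absolutely continuous and satisfies $\bigl|\tfrac{d}{dt}\log g(t)\bigr|\le 2\,\mathrm{Lip}(V(t))$ a.e. Integrating from $s$ to $t$ and using $g(s)=|x-y|^2$ yields
\[
-2\int_s^t \mathrm{Lip}(V(\tau))\,d\tau\le \log\frac{g(t)}{|x-y|^2}\le 2\int_s^t \mathrm{Lip}(V(\tau))\,d\tau.
\]
Exponentiating and taking square roots gives exactly the claimed two-sided estimate for $|X(t)-Y(t)|=\sqrt{g(t)}$. (Equivalently, one may apply the integral form of Gr\"onwall's lemma directly to $g$ and to $1/g$, avoiding the logarithm.)

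The main point to be careful about, rather than a genuine obstacle, is justifying the manipulations at the assumed level of regularity: since $V$ is only $L^1$ in time, \eqref{ODE} must be read in the Carath\'eodory sense, the chain rule above holds only for almost every $t$, and the spatial bound $V(t)\in W^{1,\infty}(\Omega)$ is what simultaneously guarantees existence, uniqueness and injectivity of the flow (so that $g>0$ is preserved) and controls $|V(X,t)-V(Y,t)|$ by $\mathrm{Lip}(V(t))$. Finally, the hypothesis $V\cdot\mathbf{n}=0$ on $\partial\Omega$ keeps $\psi(t;s,\cdot)$ valued in $\Omega$, so that $\mathrm{Lip}(V(t))$, a supremum taken over $\Omega$, is the correct constant to apply along the trajectories.
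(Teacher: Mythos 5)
Your proof is correct: the paper quotes this lemma from \cite{KK-SIMA} without reproducing a proof, and your Gr\"onwall argument for $g(t)=|\psi(t;s,x)-\psi(t;s,y)|^2$, with the two-sided bound obtained by integrating $\bigl|\frac{d}{dt}\log g\bigr|\le 2\,\mathrm{Lip}(V(t))$, is exactly the standard argument behind the cited result, and you correctly flag the Carath\'eodory framework, the injectivity of the flow ensuring $g>0$, and the role of $V\cdot\mathbf{n}=0$ in keeping trajectories in $\Omega$. (As a minor simplification, the positivity of $g$ is not actually needed: applying Gr\"onwall to $h(t)=g(t)e^{2\int_s^t \mathrm{Lip}(V(\tau))\,d\tau}$, which is nondecreasing, yields the lower bound even without invoking injectivity.)
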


\begin{lemma}\label{Lemma : Lipschitz of Jacobian}
Let $s\in [0,T]$ and $\psi$ be defined as in \eqref{ODE}. For any $t\in [s,T]$, let $J_{s,t}$ be the Jacobian corresponding to the map
 $\psi(t;s,\cdot):\Omega\mapsto \Omega$ . That is,
\begin{equation}\label{Jacobian}
\int_{\Omega} \zeta(y) dy:= \int_{\Omega} \zeta(\psi(t;s,x))
J_{s,t}(x)\,dx,\qquad \forall ~\zeta\in C(\Omega) .
\end{equation}
Then, the Jacobian  $ J_{s,t}$ is given as
\begin{equation}\label{Jacobian - formular}
J_{s,t}(x)=e^{\int_s^t \nabla \cdot V (\psi(\tau;s,x),\tau) \,d\tau}, \qquad \forall ~ x\in \Omega.
\end{equation}
\end{lemma}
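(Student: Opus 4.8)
The plan is to identify the quantity $J_{s,t}(x)$ defined through the change-of-variables relation \eqref{Jacobian} with the determinant of the spatial derivative of the flow map, and then to derive and solve a scalar ODE for that determinant via the classical Jacobi (Liouville) formula.

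First I would observe that, since $V\in L^1(0,T;W^{1,\infty}(\Omega;\mathbb{R}^d))$, the map $x\mapsto\psi(t;s,x)$ is a bi-Lipschitz diffeomorphism of $\Omega$ onto itself, the quantitative two-sided bound being exactly Lemma~\ref{Lemma : estimation 1: ODE}; in particular it is differentiable in $x$ for a.e.\ $t$. Performing the change of variables $y=\psi(t;s,x)$ in the left-hand integral of \eqref{Jacobian} shows that $J_{s,t}(x)=\lvert\det D_x\psi(t;s,x)\rvert$. Since $\psi(s;s,\cdot)=\mathrm{id}$ forces $\det D_x\psi(s;s,x)=1$, and the determinant depends continuously on $t$ and never vanishes (the flow being invertible), the absolute value may be dropped, so that $J_{s,t}(x)=\det D_x\psi(t;s,x)$.

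Next I would set $M(t):=D_x\psi(t;s,x)$ and differentiate the defining ODE \eqref{ODE} with respect to the initial spatial variable. The chain rule yields the linear matrix equation
\[
M'(t)=\big(DV\big)(\psi(t;s,x),t)\,M(t),\qquad M(s)=I,
\]
where $DV$ denotes the spatial Jacobian matrix of $V$. Applying Jacobi's formula $\frac{d}{dt}\det M=\det M\cdot\operatorname{tr}\!\big(M^{-1}M'\big)$ together with the identity $\operatorname{tr}(DV)=\nabla\cdot V$ then reduces everything to the scalar linear ODE
\[
\frac{d}{dt}\det M(t)=(\nabla\cdot V)(\psi(t;s,x),t)\,\det M(t),\qquad \det M(s)=1.
\]
Integrating this elementary equation gives $\det M(t)=\exp\!\big(\int_s^t(\nabla\cdot V)(\psi(\tau;s,x),\tau)\,d\tau\big)$, which, combined with the identification from the first step, is precisely \eqref{Jacobian - formular}.

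The hard part will be the regularity justification underlying the first two steps: that the flow admits a spatial derivative satisfying the variational equation above, and that the chain-rule differentiation under the flow is legitimate. This is standard under the hypothesis $V\in L^1(0,T;W^{1,\infty})$, since then $DV$ is bounded along trajectories and the variational ODE has a unique absolutely continuous solution by Gr\"onwall's inequality; it is, however, the only place where more than a formal manipulation is needed, as everything following the matrix ODE is a one-line integration.
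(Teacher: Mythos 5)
Your proposal is correct and takes essentially the same route as the paper: the paper's proof simply asserts the differential identity $\frac{d}{dt}J_{s,t}(x)\big|_{t=\tau}=\nabla\cdot V(\psi(\tau;s,x),\tau)\,J_{s,\tau}(x)$ and integrates it from $J_{s,s}\equiv 1$, which is exactly the scalar ODE you obtain via the variational equation and Jacobi's formula. Your write-up just makes explicit the identification $J_{s,t}=\det D_x\psi(t;s,\cdot)$, the positivity argument for dropping the absolute value, and the Liouville computation that the paper leaves implicit.
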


\begin{proof}
Let $x\in \Omega$ be given. Then, we have
\begin{equation*}\label{eq6 : Lipschitz of Jacobian}
\begin{aligned}
\frac{d}{dt} J_{s,t}(x)\big |_{t=\tau}& =\nabla \cdot V(\psi(\tau;s,x), \tau)J_{s,\tau}(x).
\end{aligned}
\end{equation*}
Since $J_{s,s}\equiv 1$, we have
\begin{equation}\label{eq7 : Lipschitz of Jacobian}
\log J_{s,t}(x) = \int_s^t \nabla \cdot V( \psi(\tau;s,x),\tau)
\,d\tau.
\end{equation}
This completes the proof.
\end{proof}

\begin{remark}
We note that $\psi(t;s,\cdot) \circ \psi(s;t,\cdot) = Id $, that is,
$$ \psi(t;s, \psi(s; t, x)) = x,  \quad \forall ~ x\in \Omega. $$
Exploiting this, we have
\begin{equation*}
J_{s,t}(\psi(s;t,x )) = \frac{1}{J_{t,s}(x)}.
\end{equation*}
\end{remark}

 \begin{lemma}\label{Corollary-4:Lipschitz}\cite[Lemma~3.3]{KK-SIMA}
Let $s\in [0,T]$ and $\Psi$ be  in \eqref{Flow on Wasserstein}. For any $t\in [s,T]$ and $\mu,\nu\in \mathcal{P}_p (\Omega)$,
we have
\begin{equation}\label{eq1:Corollary-4:Lipschitz}
e^{-\int_s^t \mbox{Lip}(V(\tau)) \,d\tau}W_p(\mu,\nu)\leq W_p(\Psi(t;s,\mu),\Psi(t;s,\nu))\leq e^{\int_s^t \mbox{Lip}(V(\tau)) \,d\tau}W_p(\mu,\nu).
\end{equation}
\end{lemma}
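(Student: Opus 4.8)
The plan is to transcribe the pointwise bi-Lipschitz estimate for the flow map $\psi$ from Lemma~\ref{Lemma : estimation 1: ODE} into a statement about Wasserstein distances by transporting optimal plans through $\psi$. The key observation is that if $\gamma$ is a coupling of $\mu$ and $\nu$, then pushing it forward by $\psi(t;s,\cdot)\times\psi(t;s,\cdot)$ produces a coupling of $\Psi(t;s,\mu)$ and $\Psi(t;s,\nu)$, and the cost of this coupling is controlled by the flow estimate.

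For the upper bound, fix $t\in[s,T]$ and let $\gamma\in\Gamma_o(\mu,\nu)$ realize $W_p(\mu,\nu)$. Set $\gamma_t:=(\psi(t;s,\cdot)\times\psi(t;s,\cdot))_\#\gamma$. Since the marginals of $\gamma$ are $\mu$ and $\nu$, the marginals of $\gamma_t$ are $\psi(t;s,\cdot)_\#\mu=\Psi(t;s,\mu)$ and $\psi(t;s,\cdot)_\#\nu=\Psi(t;s,\nu)$, so $\gamma_t\in\Gamma(\Psi(t;s,\mu),\Psi(t;s,\nu))$ is admissible. Using the definition of $W_p$ as an infimum, the change-of-variables formula for push-forwards, and the upper estimate of Lemma~\ref{Lemma : estimation 1: ODE},
\begin{align*}
W_p^p(\Psi(t;s,\mu),\Psi(t;s,\nu))
&\leq \int_{\Omega\times\Omega}|x'-y'|^p\,d\gamma_t(x',y')
= \int_{\Omega\times\Omega}|\psi(t;s,x)-\psi(t;s,y)|^p\,d\gamma(x,y)\\
&\leq e^{p\int_s^t \mbox{Lip}(V(\tau))\,d\tau}\int_{\Omega\times\Omega}|x-y|^p\,d\gamma(x,y)
= e^{p\int_s^t \mbox{Lip}(V(\tau))\,d\tau}\,W_p^p(\mu,\nu).
\end{align*}
Taking $p$-th roots yields the right-hand inequality of \eqref{eq1:Corollary-4:Lipschitz}.

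For the lower bound, I would argue symmetrically using the inverse flow. Let $\tilde\gamma\in\Gamma_o(\Psi(t;s,\mu),\Psi(t;s,\nu))$ and set $\tilde\gamma_s:=(\psi(s;t,\cdot)\times\psi(s;t,\cdot))_\#\tilde\gamma$. By the composition identity $\psi(s;t,\cdot)\circ\psi(t;s,\cdot)=\mathrm{Id}$ recorded in the Remark above, one checks that the marginals of $\tilde\gamma_s$ are exactly $\mu$ and $\nu$, so $\tilde\gamma_s\in\Gamma(\mu,\nu)$. Rewriting the lower estimate of Lemma~\ref{Lemma : estimation 1: ODE} with $x=\psi(s;t,u)$ and $y=\psi(s;t,v)$ gives $|\psi(s;t,u)-\psi(s;t,v)|\leq e^{\int_s^t\mbox{Lip}(V(\tau))\,d\tau}|u-v|$, whence
\begin{align*}
W_p^p(\mu,\nu)
&\leq \int_{\Omega\times\Omega}|\psi(s;t,u)-\psi(s;t,v)|^p\,d\tilde\gamma(u,v)
\leq e^{p\int_s^t\mbox{Lip}(V(\tau))\,d\tau}\,W_p^p(\Psi(t;s,\mu),\Psi(t;s,\nu)),
\end{align*}
and rearranging produces the left-hand inequality.

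The only point requiring genuine care is the marginal computation for the two push-forward couplings $\gamma_t$ and $\tilde\gamma_s$, which is precisely where the invertibility identity $\psi(s;t,\cdot)\circ\psi(t;s,\cdot)=\mathrm{Id}$ is used; everything else is a direct application of the pointwise flow bounds. Since $\Omega$ has compact closure we have $\mathcal{P}_p(\Omega)=\mathcal{P}(\Omega)$ and optimal plans exist, so no subtlety arises there. I therefore do not anticipate a real obstacle: the statement is essentially a transport-plan restatement of Lemma~\ref{Lemma : estimation 1: ODE}.
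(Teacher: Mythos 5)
Your proof is correct and coincides with the standard argument behind the cited result \cite{KK-SIMA}*{Lemma 3.3}, which the paper invokes without reproving: push an optimal coupling forward through $\psi(t;s,\cdot)\times\psi(t;s,\cdot)$ for the upper bound, and pull an optimal coupling of the image measures back through the inverse flow $\psi(s;t,\cdot)$ for the lower bound, applying in both cases the pointwise bi-Lipschitz estimate of Lemma~\ref{Lemma : estimation 1: ODE}. Your marginal verifications and the use of the identity $\psi(s;t,\cdot)\circ\psi(t;s,\cdot)=\mathrm{Id}$ are exactly the points the cited proof relies on, so nothing is missing.
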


\begin{lemma}\label{Lemma : density relation on the flow}\cite[Lemma~3.4]{KK-SIMA}
Let $\psi$ and $\Psi$ be defined as in \eqref{ODE} and \eqref{Flow on Wasserstein}, respectively. If $\mu \in \mathcal{P}_p^{ac}(\Omega)$ then
$\Psi(t;s,\mu)\in \mathcal{P}_p^{ac}(\Omega)$. Here, $ \mathcal{P}_p^{ac}(\Omega)$ is the set of all probability measures in $\mathcal{P}_p(\Omega)$ which are absolutely
continuous with respect to the {\it Lebesgue measure} in $\Omega$. Furthermore, suppose $\mu=\varrho \,dx$ and $\Psi(t;s,\mu)=\rho \,dx$ then
\begin{equation}\label{Density relation}
\rho(\psi(t;s,x))J_{s,t}(x)=\varrho(x), \qquad a.e \quad x\in \Omega,
\end{equation}
where $J_{s,t}$ is the Jacobian of the map $\psi(t;s,\cdot)$ as in \eqref{Jacobian}.
We also have
\begin{equation}\label{Entropy relation} \int_{\Omega} \rho \log
\rho \,dx = \int_{\Omega}\varrho \log \varrho \,dx -
\int_{\Omega}\varrho \log J_{s,t}  \,dx.
\end{equation}
Moreover, if $\varrho \in L^{q}(\Omega)$ for $q\in[1,\infty]$, then
${\rho}\in L^{q}(\Omega)$ and we have
\begin{equation}\label{L^p relation}
\|{\rho}\|_{L^{q}(\Omega)} \leq
\|\varrho\|_{L^{q}(\Omega)}e^{\frac{q-1}{q}\int_s^t\|\nabla\cdot
V\|_{L^\infty_x} \,d\tau},
\end{equation}
where $\frac{q-1}{q}=1$ if $q=\infty$.
\end{lemma}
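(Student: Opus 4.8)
The plan is to derive every claim from a single change of variables along the flow map $\psi(t;s,\cdot)$, combined with the explicit Jacobian formula \eqref{Jacobian - formular} of Lemma~\ref{Lemma : Lipschitz of Jacobian}. The crucial structural fact, supplied by Lemma~\ref{Lemma : estimation 1: ODE}, is that $\psi(t;s,\cdot)$ is bi-Lipschitz, hence a Lipschitz homeomorphism of $\Omega$ onto itself (it preserves $\Omega$ because $V\cdot\mathbf{n}=0$ on $\partial\Omega$); and since $\nabla\cdot V\in L^1(0,T;L^\infty)$, the Jacobian $J_{s,t}$ in \eqref{Jacobian - formular} is everywhere strictly positive and bounded above and below. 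The starting identity is the definition of the push-forward \eqref{Flow on Wasserstein}: for $\zeta\in C(\Omega)$,
\[
\int_\Omega \zeta(y)\,d(\Psi(t;s,\mu))(y) = \int_\Omega \zeta(\psi(t;s,x))\,d\mu(x) = \int_\Omega \zeta(\psi(t;s,x))\,\varrho(x)\,dx.
\]

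For the density relation and absolute continuity, I would define $\rho$ directly through \eqref{Density relation}, which is legitimate because $\psi(t;s,\cdot)$ is invertible with inverse $\psi(s;t,\cdot)$ (see the Remark after Lemma~\ref{Lemma : Lipschitz of Jacobian}): explicitly $\rho(y):=\varrho(\psi(s;t,y))\,J_{s,t}(\psi(s;t,y))^{-1}$, an $L^1(\Omega)$ function since $J_{s,t}$ is bounded below. Applying the change of variables \eqref{Jacobian} to the test function $y\mapsto \zeta(y)\rho(y)$ and then inserting $\rho(\psi(t;s,x))J_{s,t}(x)=\varrho(x)$ turns $\int_\Omega \zeta\rho\,dy$ into $\int_\Omega \zeta(\psi(t;s,x))\varrho(x)\,dx$, which matches the displayed push-forward identity; since $\zeta$ is arbitrary this identifies $\Psi(t;s,\mu)=\rho\,dy$, giving both $\Psi(t;s,\mu)\in\mathcal{P}_p^{ac}(\Omega)$ and \eqref{Density relation}.

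The entropy relation \eqref{Entropy relation} and the $L^q$ bound \eqref{L^p relation} then follow from the same substitution $y=\psi(t;s,x)$, $dy=J_{s,t}(x)\,dx$, after inserting the pointwise identity $\rho(\psi(t;s,x))=\varrho(x)/J_{s,t}(x)$. For the entropy, $\int_\Omega \rho\log\rho\,dy=\int_\Omega \varrho\log(\varrho/J_{s,t})\,dx$, and splitting the logarithm gives \eqref{Entropy relation}. For $1\le q<\infty$ one finds $\|\rho\|_{L^q}^q=\int_\Omega \varrho^q J_{s,t}^{1-q}\,dx$; since $q\ge1$, the pointwise bound $J_{s,t}^{1-q}\le \exp\big((q-1)\int_s^t\|\nabla\cdot V(\tau)\|_{L^\infty_x}\,d\tau\big)$ coming from \eqref{Jacobian - formular}, together with taking $q$-th roots, yields \eqref{L^p relation}, while the case $q=\infty$ is the direct bound $\|\rho\|_{L^\infty}\le\|\varrho\|_{L^\infty}\,\esssup J_{s,t}^{-1}$.

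The main obstacle is not any individual estimate but securing the regularity of the flow that makes all these substitutions rigorous: one must verify that $\psi(t;s,\cdot)$ is a genuine bi-Lipschitz homeomorphism of $\Omega$ with an a.e.\ defined Jacobian obeying \eqref{Jacobian - formular}, and that the no-flux condition $V\cdot\mathbf{n}=0$ indeed keeps the trajectories inside $\overline{\Omega}$ so that the transformed integrals are again over $\Omega$. This is precisely where the hypotheses $V\in L^1(0,T;W^{1,\infty}(\Omega))$ and $V\cdot\mathbf{n}=0$ enter; once the flow is known to be a well-behaved diffeomorphism, the rest reduces to the routine substitutions sketched above.
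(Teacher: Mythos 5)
Your proposal is correct, and it is essentially the canonical proof of this lemma: the paper itself gives no argument here but defers to \cite[Lemma~3.4]{KK-SIMA}, where the density is likewise constructed via the inverse flow and the Jacobian formula \eqref{Jacobian - formular}, and \eqref{Density relation}, \eqref{Entropy relation}, and \eqref{L^p relation} are read off from the change of variables $y=\psi(t;s,x)$, $dy=J_{s,t}(x)\,dx$, exactly as you do. Your closing caveat is also rightly placed: the bi-Lipschitz invertibility of $\psi(t;s,\cdot)$, the invariance of $\Omega$ under the no-flux condition $V\cdot\mathbf{n}=0$, and the two-sided bounds on $J_{s,t}$ are precisely what Lemma~\ref{Lemma : estimation 1: ODE}, Lemma~\ref{Lemma : Lipschitz of Jacobian}, and the hypothesis $V\in L^1(0,T;W^{1,\infty}(\Omega))$ supply, so no gap remains.
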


\begin{lemma}\label{Lemma : Holder regularity on the flow}\cite[Lemma~3.18]{HKK}
Let $\psi$ and $\Psi$ be defined as in \eqref{ODE} and \eqref{Flow on
Wasserstein}, respectively. Suppose that $V \in  L^1(0,T;W^{1,\infty}(\Omega))$.
\begin{itemize}
\item[(i)] If $V \in L^1(0,T;  \calC^{1, \alpha}(\Omega)) $ and $\varrho\in
\mathcal{P}_p(\Omega) \cap \calC^\alpha (\Omega)$ for some $\alpha \in (0,1)$,
then $\rho:=\Psi(t;s,\varrho)$ is also H\"{o}lder continuous. More
precisely, we have $\| \rho\|_{\calC^\alpha (\Omega)} < C $ where $C=C(\| \varrho\|_{\calC^\alpha (\Omega)},\, \int_s^t \|\nabla V \|_{\calC^\alpha (\Omega)} d\tau )$.
\item[(ii)] If $V \in L^1(0,T;  W^{2, \infty}(\Omega)) $, then, for any  $a>0,~ q\geq 1$, we have
\begin{equation}\label{eq5 : Sobolev}
\begin{aligned}
 \|\nabla \rho^a\|_{L^q (\Omega)}&\leq e^{(a+2)\int_s^t \|\nabla V\|_{L^\infty_x}\, d\tau}\left \{ \|\nabla \varrho^a\|_{L^q (\Omega)} + \| \varrho^{a}\|_{L^p_x}
 \times \left ( a\int_s^t \| \nabla^2 V\|_{L^\infty_x} \,d\tau\right ) \right \}.
\end{aligned}
\end{equation}
\end{itemize}
\end{lemma}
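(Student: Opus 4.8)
The plan is to exploit the explicit Lagrangian representation of $\rho$ supplied by the density relation \eqref{Density relation} and the Jacobian formula \eqref{Jacobian - formular}, and to transport the regularity of the initial density $\varrho$ along the flow $\psi$ using the bi-Lipschitz bounds of Lemma~\ref{Lemma : estimation 1: ODE} together with their infinitesimal counterpart for the spatial Jacobian $D_x\psi$. Throughout write $g(x):=\int_s^t \nabla\cdot V(\psi(\tau;s,x),\tau)\,d\tau$, so that by \eqref{Jacobian - formular} one has $J_{s,t}(x)=e^{g(x)}$ and $\rho(\psi(t;s,x))=\varrho(x)e^{-g(x)}$. Since $V\in W^{1,\infty}$, $g$ is bounded, $|g|\le \int_s^t\|\nabla V(\tau)\|_{L^\infty}\,d\tau$, and $x\mapsto\psi(\tau;s,x)$ is bi-Lipschitz with constant $e^{\int_s^t\mathrm{Lip}(V)}$; these facts, plus the regularity hypotheses on $V$, are the only inputs.

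For (i), I would fix $y_1,y_2\in\Omega$ and set $x_i:=\psi(s;t,y_i)$, so that $y_i=\psi(t;s,x_i)$ and $\rho(y_i)=\varrho(x_i)e^{-g(x_i)}$. Then
\begin{equation*}
\rho(y_1)-\rho(y_2)=\bigl(\varrho(x_1)-\varrho(x_2)\bigr)e^{-g(x_1)}+\varrho(x_2)\bigl(e^{-g(x_1)}-e^{-g(x_2)}\bigr).
\end{equation*}
The first summand is bounded by $\|\varrho\|_{\calC^\alpha}\,|x_1-x_2|^\alpha\,e^{\int_s^t\|\nabla V\|_{L^\infty}}$. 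For the second, $e^{-\cdot}$ is Lipschitz on the bounded range of $g$, while
\begin{equation*}
|g(x_1)-g(x_2)|\le\int_s^t\|\nabla V(\tau)\|_{\calC^\alpha}\,|\psi(\tau;s,x_1)-\psi(\tau;s,x_2)|^\alpha\,d\tau\le e^{\alpha\int_s^t\mathrm{Lip}(V)}\Bigl(\int_s^t\|\nabla V\|_{\calC^\alpha}\,d\tau\Bigr)|x_1-x_2|^\alpha,
\end{equation*}
using that $\nabla\cdot V$ inherits the $\calC^\alpha$ seminorm of $\nabla V$ and Lemma~\ref{Lemma : estimation 1: ODE}. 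Finally $|x_1-x_2|^\alpha\le e^{\alpha\int_s^t\mathrm{Lip}(V)}|y_1-y_2|^\alpha$ by Lemma~\ref{Lemma : estimation 1: ODE} applied to $\psi(s;t,\cdot)$; collecting the three bounds gives $\|\rho\|_{\calC^\alpha(\Omega)}\le C$ with $C$ depending only on $\|\varrho\|_{\calC^\alpha}$ and $\int_s^t\|\nabla V\|_{\calC^\alpha}\,d\tau$.

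For (ii), I would differentiate the Lagrangian density directly. Setting $u(x):=\rho^a(\psi(t;s,x))=\varrho^a(x)\,e^{-a g(x)}$, the product rule gives
\begin{equation*}
\nabla_x u=e^{-ag}\nabla\varrho^a-a\,\varrho^a\,e^{-ag}\,\nabla_x g,\qquad \nabla_x g(x)=\int_s^t \bigl(D_x\psi(\tau;s,x)\bigr)^{T}(\nabla\,\nabla\cdot V)(\psi(\tau;s,x),\tau)\,d\tau,
\end{equation*}
so the second summand carries the $\nabla^2 V$ contribution, with $\|D_x\psi(\tau;s,\cdot)\|\le e^{\int_s^t\|\nabla V\|_{L^\infty}}$ from Gr\"onwall applied to the variational equation $\tfrac{d}{dt}D_x\psi=(\nabla V)(\psi)\,D_x\psi$, $D_x\psi(s;s,\cdot)=I$. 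To pass from $u$ to $\nabla\rho^a$ I would use the chain rule $(\nabla\rho^a)\circ\psi=(D_x\psi)^{-T}\nabla_x u$ and change variables via $dy=J_{s,t}\,dx$ from \eqref{Jacobian}, obtaining
\begin{equation*}
\|\nabla\rho^a\|_{L^q(\Omega)}^q=\int_\Omega\bigl|(D_x\psi)^{-T}\nabla_x u\bigr|^q\,J_{s,t}\,dx.
\end{equation*}
Bounding $\|(D_x\psi)^{-1}\|\le e^{\int_s^t\|\nabla V\|_{L^\infty}}$, $J_{s,t}^{1/q}\le e^{\int_s^t\|\nabla V\|_{L^\infty}}$ and $e^{-ag}\le e^{a\int_s^t\|\nabla V\|_{L^\infty}}$, then splitting into the two summands of $\nabla_x u$, produces the right-hand side of \eqref{eq5 : Sobolev}: the $\|\nabla\varrho^a\|_{L^q}$ term and the $\|\varrho^a\|\,a\int_s^t\|\nabla^2 V\|_{L^\infty}$ term.

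The main obstacle is the precise exponent bookkeeping in (ii). Naively the $\nabla^2 V$ summand accrues \emph{four} exponential factors — from $e^{-ag}$, from the inverse chain-rule Jacobian $(D_x\psi(t))^{-1}$, from the change-of-variables factor $J_{s,t}^{1/q}$, and from the forward Jacobian $D_x\psi(\tau)$ inside $\nabla_x g$ — which would give $e^{(a+3)\int\|\nabla V\|}$ rather than the asserted $e^{(a+2)\int\|\nabla V\|}$. The missing power is recovered from the cocycle identity $D_x\psi(t;s,x)^{-1}D_x\psi(\tau;s,x)=D\psi(\tau;t,\cdot)|_{\psi(t;s,x)}$, whose norm is at most $e^{\int_\tau^t\|\nabla V\|}\le e^{\int_s^t\|\nabla V\|}$: the inverse Jacobian from the chain rule partially cancels the forward Jacobian generated by differentiating $g$, so that both summands end up carrying the common factor $e^{(a+2)\int_s^t\|\nabla V\|_{L^\infty}}$. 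Part (i) presents only the milder issue of verifying that composition with $\psi$ does not degrade the H\"older exponent, which is exactly the content of the bi-Lipschitz bound of Lemma~\ref{Lemma : estimation 1: ODE}; boundedness of $\Omega$ guarantees that all the $L^\infty$ and $\calC^\alpha$ norms of $\nabla V$, $\nabla\cdot V$ and $\nabla^2 V$ are finite and enter solely through their time-integrals.
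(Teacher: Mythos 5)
Your proof is correct and follows essentially the same route as the source: the paper does not reproduce a proof (it quotes \cite{HKK}, Lemma~3.18), but the intended argument is exactly your Lagrangian computation through the density relation \eqref{Density relation} and the Jacobian formula \eqref{Jacobian - formular}, combined with the bi-Lipschitz bounds of Lemma~\ref{Lemma : estimation 1: ODE}, and your cocycle identity $D_x\psi(\tau;s,x)\,(D_x\psi(t;s,x))^{-1}=D\psi(\tau;t,\cdot)|_{\psi(t;s,x)}$ is the right way to resolve the exponent bookkeeping down to $e^{(a+2)\int_s^t\|\nabla V\|_{L^\infty}\,d\tau}$. Incidentally, your derivation confirms that the factor $\|\varrho^a\|_{L^p_x}$ in \eqref{eq5 : Sobolev} should read $\|\varrho^a\|_{L^q(\Omega)}$ (a typo in the statement), and that the suppressed constants depend only on dimensional conventions for $|\nabla\cdot V|\le \|\nabla V\|_{L^\infty}$ and on $\abs{\Omega}$.
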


\section{A priori estimates}\label{S:a priori}

Here we provide a priori estimates of a regular solution of \eqref{E:Main}-\eqref{E:Main-bc-ic} given as following definition.
\begin{defn}\label{D:regular-sol}
Let $q\in [1, \infty)$ and $V$ be a measurable vector field.
We say that a nonnegative measurable function $\rho$ is
a \textbf{regular solution} of \eqref{E:Main}-\eqref{E:Main-bc-ic} with $\rho_0 \in \calC^{\alpha}(\overline{\Omega})$ for some $\alpha \in (0,1)$ if the followings are satisfied:
\begin{itemize}
\item[(i)] It holds that
\[
\rho \in L^{\infty}(0,T;\mathcal{C}^{\alpha}(\overline{\Omega})), \quad \nabla \rho^m \in L^1(\Omega_T), \quad \nabla \rho^{\frac{m+q-1}{2}} \in L^{2} (\Omega_T), \quad  \text{and} \quad  V \in L^{1}_{\loc}(\Omega_{T}).
\]
\item[(ii)] For any $\varphi \in {\mathcal{C}}^\infty (\overline{\Omega} \times [0,T))$, it holds that
\[
\iint_{\Omega_{T}} \left\{  \rho \varphi_t - \nabla \rho^m \cdot \nabla \varphi + \rho V\cdot \nabla \varphi \right\} \,dx\,dt = \int_{\Omega} \rho_{0} \varphi(\cdot, 0) \,dx.
\]
\end{itemize}
\end{defn}


\subsection{Energy estimates}

In the following proposition, we deliver a priori estimates considering four cases separately.



\begin{proposition}\label{P:Lq-energy}
Let $m> 1$ and $q\geq 1$. Suppose that $\rho$ is a regular solution of \eqref{E:Main}-\eqref{E:Main-bc-ic} with $\rho_{0}\in  L^{q}(\Omega) \cap \calC^{\alpha}(\overline{\Omega})$.
\begin{itemize}

\item[(i)] Let $1 < m \leq 2$. Assume $\int_{\Omega} \rho_0 \log \rho_0 \,dx < \infty$ and
\begin{equation}\label{V-L1-energy}
V \in \mathcal{S}_{m,1}^{(q_1,q_2)}\cap \calC^{\infty} (\overline{\Omega}_{T}) \quad  \text{for} \quad
\begin{cases}
2 \leq q_2 \leq \frac{m}{m-1},  & \text{ if } d> 2, \vspace{1 mm}\\
2\leq q_2 < \frac{m}{m-1},  & \text{ if } d= 2.
\end{cases}
\end{equation}
Then the following estimate holds:
\begin{equation}\label{L1-energy}
 \sup_{0 \leq t \leq T}  \int_{\Omega \times \{t\}} \rho \abs{\log \rho} (\cdot, t)\,dx
 +  \frac{2}{m}\iint_{\Omega_T} \left|\nabla \rho^{\frac{m}{2}}\right|^2 \,dx\,dt \le C ,
\end{equation}
where  $C = C (\|V\|_{\mathcal{S}_{m,1}^{(q_1, q_2)}}, \, \int_{\Omega} \rho_0 \log \rho_0 \,dx)$.

\item[(ii)] Let $q>1$ and $q \geq m-1$. Assume that
\begin{equation}\label{V-Lq-energy}
V \in \mathcal{S}_{m,q}^{(q_1,q_2)}\cap \calC^{\infty} (\overline{\Omega}_{T}) \quad  \text{for} \quad
\begin{cases}
2 \leq q_2 \leq \frac{q+m-1}{m-1},  & \text{ if } d> 2, \vspace{1 mm}\\
2\leq q_2 < \frac{q+m-1}{m-1},  & \text{ if } d= 2.
\end{cases}
\end{equation}
Then the following estimate holds:
\begin{equation}\label{Lq-energy}
 \sup_{0 \leq t \leq T}  \int_{\Omega \times \{t\}} \rho^{q} (\cdot, t)\,dx
 + \frac{2 q m (q-1)}{(q+m-1)^2} \iint_{\Omega_T} \left|\nabla \rho^{\frac{q+m-1}{2}}\right|^2 \,dx\,dt \le C ,
\end{equation}
where  $C = C (\|V\|_{\mathcal{S}_{m,q}^{(q_1, q_2)}}, \, \|\rho_{0}\|_{L^{q} (\Omega)})$.

\item[(iii)]  Assume that
 \begin{equation}\label{V-tilde-Lq}
V\in \tilde{\mathcal{S}}_{m,q}^{(\tilde{q}_1, \tilde{q}_2)} \cap \calC^{\infty} (\overline{\Omega}_{T}) \quad  \text{for} \quad
\begin{cases}
1 \leq \tilde{q}_2 \leq \frac{q+m-1}{m-1},   & \text{ if } d> 2, \vspace{1 mm}\\
1\leq \tilde{q}_2 < \frac{q+m-1}{m-1},  & \text{ if } d= 2 .
\end{cases}
\end{equation}
$\bullet$ In case $q=1$, assume $\int_{\Omega} \rho_0 \log \rho_0 \,dx < \infty$. Then \eqref{L1-energy} holds with $C = C (\|V\|_{\tilde{\mathcal{S}}_{m,1}^{(\tilde{q}_1, \tilde{q}_2)}}, \, \int_{\Omega} \rho_0 \log \rho_0 \,dx )$.\\
$\bullet$ In case $q>1$, the estimate \eqref{Lq-energy} holds with
$C = C (\|V\|_{\tilde{\mathcal{S}}_{m,q}^{(\tilde{q}_1, \tilde{q}_2)}}, \, \|\rho_{0}\|_{L^{q} (\Omega)})$.

\item[(iv)]  Assume that $V\in\calC^{\infty} (\overline{\Omega}_{T})$ and $\nabla\cdot V \geq 0$.\\
$\bullet$ In case $q=1$, assume $\int_{\Omega} \rho_0 \log \rho_0 \,dx < \infty$. Then \eqref{L1-energy} holds with $C = C (\int_{\Omega} \rho_0 \log \rho_0 \,dx )$.\\
$\bullet$ In case $q>1$, the estimate \eqref{Lq-energy} holds with $C = C ( \|\rho_{0}\|_{L^{q} (\Omega)})$.
\end{itemize}

\end{proposition}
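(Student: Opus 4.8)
The plan is to obtain all four estimates from one energy identity, produced by testing \eqref{E:Main} against $q\rho^{q-1}$ when $q>1$ and against $\log\rho$ when $q=1$. Since $\rho$ is only a regular (H\"older) solution, I would first truncate and mollify these multipliers (e.g. use $\log(\rho+\delta)$ for $q=1$) and recover the identity in the limit, using $\nabla\rho^{\frac{m+q-1}{2}}\in L^2(\Omega_T)$. One integration by parts, together with the combined no-flux condition $(\nabla\rho^m-V\rho)\cdot\mathbf{n}=0$ from \eqref{E:Main-bc-ic}, annihilates the boundary term and yields for $q>1$
\[
\frac{d}{dt}\int_\Omega \rho^q\,dx+\frac{4qm(q-1)}{(q+m-1)^2}\int_\Omega \big|\nabla\rho^{\frac{q+m-1}{2}}\big|^2\,dx=(q-1)\int_\Omega \nabla\rho^q\cdot V\,dx,
\]
and the analogue for $q=1$ with $\frac{d}{dt}\int_\Omega\rho\log\rho$, dissipation $\frac4m\int_\Omega|\nabla\rho^{\frac m2}|^2$, and right-hand side $\int_\Omega\nabla\rho\cdot V$.

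For parts (i) and (ii) I would rewrite the drift (up to an explicit positive constant) as $\int_\Omega \rho^{\frac{q-m+1}{2}}\,V\cdot\nabla\rho^{\frac{q+m-1}{2}}$ and apply Young's inequality, absorbing $\eta\int|\nabla\rho^{\frac{q+m-1}{2}}|^2$ into the dissipation so that the surviving coefficient is exactly $\frac{2qm(q-1)}{(q+m-1)^2}$ as in \eqref{Lq-energy}, and leaving the term $C_\eta\iint_{\Omega_T}\rho^{q-m+1}|V|^2$. Here the hypothesis $q\ge m-1$ is used so that $\rho^{q-m+1}$ has a nonnegative exponent (for $q=1$ this forces $m\le2$, the restriction in (i)). I would then bound this term by H\"older in $(x,t)$, pairing $|V|^2\in L^{q_1/2,q_2/2}_{x,t}$ (whose norm is $\|V\|_{\mathcal S_{m,q}^{(q_1,q_2)}}^2$) against $\rho^{q-m+1}\in L^{b_1,b_2}_{x,t}$ with $b_i=\frac{q_i}{q_i-2}$. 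The decisive algebraic check is that the exponents $s_i:=(q-m+1)b_i$ satisfy $\frac{d}{s_1}+\frac{2+q_{m,d}}{s_2}=\frac dq$, which is precisely the admissibility line of Lemma~\ref{P:L_r1r2}; substituting shows this is equivalent to the scaling identity $\frac{d}{q_1}+\frac{2+q_{m,d}}{q_2}=1+q_{m,d}$ defining \eqref{Serrin}. Lemma~\ref{P:L_r1r2} then gives $\|\rho\|_{L^{s_1,s_2}}^{q-m+1}\le C\,A^{\alpha}B^{\gamma}+C$, where $A:=\sup_t\int_\Omega\rho^q$, $B:=\|\nabla\rho^{\frac{q+m-1}{2}}\|_{L^2_{x,t}}^2$, $\gamma=1-\frac2{q_2}$, and the additive constant is harmless since $\|\rho(t)\|_{L^1}=1$.

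To close, integrating in time and taking the supremum gives $A+c_0B\le\|\rho_0\|_{L^q}^q+C\|V\|^2 A^{\alpha}B^{\gamma}+C$. As $q_2<\infty$ we have $\gamma<1$, so one Young inequality absorbs $B$ and leaves $C(\|V\|)A^{\alpha/(1-\gamma)}$; the crux is that $\frac{\alpha}{1-\gamma}<1$ strictly. I would verify this by the explicit computation which, after invoking the scaling identity, reduces the numerator minus the denominator of $\frac{\alpha}{1-\gamma}$ to $-(m-1)\big(1+\frac{d}{q_1}\big)<0$ (for $d>2$; the case $d=2$ is handled similarly using the strict $q_2$-constraint). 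It is precisely the nonlinearity $m>1$ that supplies this margin, so that a second Young inequality absorbs $A$ and produces \eqref{Lq-energy} with $C=C(\|V\|_{\mathcal S_{m,q}^{(q_1,q_2)}},\|\rho_0\|_{L^q})$, \emph{without} any smallness of $V$. For $q=1$ the scheme is shorter because $A=\|\rho\|_{L^1}=1$; I would additionally use that on the bounded domain $\rho\log\rho\ge-|\Omega|/e$ is bounded below, so controlling $\sup_t\int_\Omega\rho\log\rho$ from above already yields the full $\sup_t\int_\Omega\rho|\log\rho|$ in \eqref{L1-energy}, which is exactly why no moment estimate is needed.

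Part (iii) I would not prove afresh but reduce to (i)/(ii): the identity $\frac{d}{\tilde q_1^\ast}+\frac{2+q_{m,d}}{\tilde q_2}=1+q_{m,d}$ (with $\tilde q_1^\ast=\frac{d\tilde q_1}{d-\tilde q_1}$) shows via the Sobolev embedding $W^{1,\tilde q_1}_x\hookrightarrow L^{\tilde q_1^\ast}_x$ of Remark~\ref{R:S-tildeS}(iv) that $V\in\tilde{\mathcal S}_{m,q}^{(\tilde q_1,\tilde q_2)}$ belongs to $\mathcal S_{m,q}^{(\tilde q_1^\ast,\tilde q_2)}$, with the admissible $\tilde q_2$-range in \eqref{V-tilde-Lq} mapping into the $q_2$-range of (i)/(ii). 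Part (iv) is the simplest: with $\nabla\cdot V\ge0$ and $V\cdot\mathbf n=0$ I integrate the drift once more to get $(q-1)\int_\Omega\nabla\rho^q\cdot V=-(q-1)\int_\Omega\rho^q\,\nabla\cdot V\le0$ (and $\int_\Omega\nabla\rho\cdot V=-\int_\Omega\rho\,\nabla\cdot V\le0$ for $q=1$), so the drift has a favorable sign and is simply discarded, yielding $C$ independent of $V$. The main obstacle is the closing of (i)--(iii) in the scaling-invariant case, i.e. simultaneously verifying that the H\"older exponents land exactly on the admissibility line of Lemma~\ref{P:L_r1r2} and that the self-improving exponent obeys $\frac{\alpha}{1-\gamma}<1$, the margin coming solely from $m-1>0$; a secondary technical point is the rigorous justification of the energy identity for merely regular solutions through truncation and mollification of $\rho^{q-1}$ and $\log\rho$.
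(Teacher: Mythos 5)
Your parts (i), (ii) and (iv) are sound, but part (iii) contains a genuine gap: the reduction of (iii) to (i)/(ii) via the Sobolev embedding $\tilde{\mathcal{S}}_{m,q}^{(\tilde q_1,\tilde q_2)}\subset \mathcal{S}_{m,q}^{(\tilde q_1^{\ast},\tilde q_2)}$ cannot reproduce the stated range of hypotheses. The embedding preserves the temporal exponent, while (i)/(ii) --- both in your argument (the space--time H\"older pairing of $|V|^2\in L^{q_1/2,q_2/2}_{x,t}$ against $\rho^{q-m+1}$) and in the paper's --- require $q_2\geq 2$. But \eqref{V-tilde-Lq} allows $\tilde q_2$ all the way down to $1$; even on the scaling-invariant line, where $\tilde q_1<d$ in \eqref{Serrin-grad} forces $\tilde q_2>\frac{2+q_{m,d}}{1+q_{m,d}}$, the window $\bigl(\frac{2+q_{m,d}}{1+q_{m,d}},\,2\bigr)$ is nonempty because $q_{m,d}>0$, and H\"older on the bounded domain only lowers exponents, never raises them, so no re-indexing rescues this. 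Moreover your reduction imports the structural restrictions of (i)/(ii) --- $m\leq 2$ when $q=1$, and $q\geq m-1$ when $q>1$ --- neither of which appears in (iii). The paper's proof avoids all of this with one more integration by parts, using $V\cdot\mathbf{n}=0$ on $\partial\Omega$, to rewrite the drift as
\begin{equation*}
\mathcal{J}_1=-\int_{\Omega}\rho\,\nabla\cdot V\,dx,\qquad \mathcal{J}_q=-(q-1)\int_{\Omega}\rho^q\,\nabla\cdot V\,dx,
\end{equation*}
after which only H\"older on $\|\nabla V\|_{L^{\tilde q_1}_x}$ against $\|\rho\|^{q}_{L^{q\tilde q_1/(\tilde q_1-1)}_x}$, interpolation via Lemma~\ref{P:L_r1r2}, and Young with exponent $\tilde q_2\geq 1$ are needed; no sign condition on $q-m+1$ and no constraint $\tilde q_2\geq 2$ ever enters. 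This second integration by parts is the missing idea, and it is exactly what buys the extended range of (iii).

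For (i) and (ii) your route is correct but genuinely different from the paper's. You apply Young pointwise to get the dissipation plus $C_\eta\iint_{\Omega_T}\rho^{q-m+1}|V|^2$, then a global space--time H\"older and Lemma~\ref{P:L_r1r2} over all of $\Omega_T$, closing by a double-Young absorption; your exponent checks are right --- the exponents $s_i=(q-m+1)q_i/(q_i-2)$ do land on the admissibility line $\frac{d}{s_1}+\frac{2+q_{m,d}}{s_2}=\frac dq$ given the scaling identity, and the difference of numerator and denominator of $\frac{\alpha}{1-\gamma}$ does reduce to $-(m-1)(q_1+d)<0$ (for $d=2$ one finds $\frac{\alpha}{1-\gamma}=1+\frac{q_{m,2}(1-q_2)}{2}<1$ using $q_2\geq 2$), so the margin indeed comes solely from $m>1$. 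The paper instead works slice-wise in time: spatial H\"older, Gagliardo--Nirenberg interpolation and Young at each fixed $t$ produce the differential inequality $\frac{d}{dt}\int_{\Omega}\rho^q\,dx\leq c\,\|V(t)\|_{L^{q_1}_x}^{q_2}\bigl(\int_{\Omega}\rho^q\,dx+1\bigr)+c$, closed by Gr\"onwall since $\|V(t)\|_{L^{q_1}_x}^{q_2}\in L^1(0,T)$. The Gr\"onwall route dispenses with two things your absorption needs: the strict inequality $\frac{\alpha}{1-\gamma}<1$, and the a priori finiteness of $A=\sup_t\int_{\Omega}\rho^q\,dx$ and $B=\|\nabla\rho^{\frac{q+m-1}{2}}\|_{L^2_{x,t}}^2$ before absorbing --- the latter is available here from Definition~\ref{D:regular-sol}, but you should state it explicitly, since absorbing a possibly infinite quantity is vacuous. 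Your part (iv) coincides with the paper's argument, and your extra care in justifying the testing by $\log(\rho+\delta)$ is legitimate (the paper tests directly).
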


\begin{proof}
When $q=1$, we obtain the following estimate by testing $\log \rho$ to \eqref{E:Main}:
\begin{equation}\label{Elog-0}
\frac{d}{dt} \int_{\Omega} \rho \log \rho \,dx +  \frac{4}{m}\int_{\Omega} \abs{ \nabla \rho^{\frac{m}{2}}}^2\,dx =  \underbrace{\int_{\Omega} V \cdot \nabla \rho \,dx}_{ =: \mathcal{J}_1},
\end{equation}
because of the followings:
\[\begin{gathered}
\int_{\Omega} \rho_t \log \rho \,dx =  \frac{d}{dt} \int_{\Omega} (\rho \log\rho - \rho) \,dx = \frac{d}{dt} \int_{\Omega} \rho \log\rho \,dx, \\
\int_{\Omega} \nabla \cdot (\nabla \rho^m - V\rho) \log \rho \,dx 
=  - \int_{\Omega} \left(\nabla \rho^m \nabla \log \rho - V \rho \nabla \log \rho \right) \,dx,
\end{gathered}\]
because of the mass conservation property and the boundary condition \eqref{E:Main-bc-ic}.

Now let $q > 1$. By testing $\varphi = q \rho^{q-1}$ to \eqref{E:Main}, we have
\begin{equation}\label{apriori2-eqn}
  \frac{d}{dt} \int_{\Omega} \rho^{q}\,dx
+ \frac{4m q (q-1)}{(q+m-1)^2} \int_{\Omega} \abs{\nabla\rho^{\frac{q+m-1}{2}}}^2\,dx
 =  \underbrace{(q-1) \int_{\Omega} V \cdot \nabla \rho^{q}\,dx}_{=: \mathcal{J}_q},
\end{equation}
because of the followings: 
\[\begin{gathered}
\int_{\Omega} q \rho^{q-1} \rho_t\,dx =  \frac{d}{dt} \int_{\Omega} \rho^q \,dx, \\	
q\int_{\Omega} \nabla \cdot (\nabla \rho^m - V\rho) \rho^{q-1} \,dx
= - q\int_{\Omega} \left(\nabla \rho^m \nabla \rho^{q-1} - V \rho \nabla \rho^{q-1} \right) \,dx,
\end{gathered}\]
because of the boundary conditions \eqref{E:Main-bc-ic}.

\smallskip

(i) With the estimate \eqref{Elog-0}, we observe that, for  $1 - \frac m2 \geq 0$ (equivalent to $1 < m \leq 2$),
\begin{equation}\label{energy-V-0}
\mathcal{J}_{1} = \frac{2}{m} \int_{\Omega} V \rho^{1-\frac{m}{2}} \nabla \rho^{\frac m 2} \,dx \leq c  \|V\|_{L^{q_1}_{x}}
  \left\|\nabla \rho^{\frac{m}{2}}\right\|_{L^{2}_{x}}
 \|\rho\|_{L_{x}^{\frac{q_1 m }{q_1 -2}}}^{\frac{m}{2}},
\end{equation}
by taking H\"{o}lder inequality with $\frac{1}{q_1} + \frac{1}{2} + \frac{q_1 -2 }{2q_1} = 1$ for $q_1 > 2$. When $m=2$, we follow \eqref{energy-V2}. Moreover, we denote same settings in \eqref{energy00.4} and \eqref{energy00.5} for $q=1$. Then the same computation in \eqref{energy01} with $\varepsilon = \frac{2}{m}$ provides the following estimate:
\begin{equation}
\frac{d}{dt} \int_{\Omega} \rho \log \rho \,dx +  \frac{2}{m}\int_{\Omega} \abs{ \nabla \rho^{\frac{m}{2}}}^2\,dx
\leq c \|V\|_{L^{q_1}_{x}}^{q_2} \left(\|\rho\|_{L_{x}^{1}} + 1\right) + c
\end{equation}
for $q_2$ in \eqref{energy02} for $q=1$. The estimate \eqref{Lq-energy} is obtained by taking the integration in terms of $t\in [0, T]$ and applying that the negative part of $\int_{\Omega} \rho \log \rho \,dx < \infty$ because $\Omega$ is bounded and
$\|\rho(\cdot, t)\|_{L^{1}_{x}} = 1$ because of the mass conservation property.

(ii) In case $q>1$ and $q \geq m-1$, we observe that
\begin{equation}\label{energy-V-1}
\mathcal{J}_{q} = \frac{2(q-1)}{q+m-1}\int_{\Omega} V \rho^{\frac{q-m+1}{2}} \nabla \rho^{\frac{q+m-1}{2}}\,dx.
\end{equation}

When $q=m-1$ (in particular, $m=2$ if $q=1$), we have
\begin{equation}\label{energy-V2}
\mathcal{J}_q \leq c  \int_{\mathbb{R}^d} |V| \abs{\nabla \rho^{\frac{q+m-1}{2}}}\,dx
\leq c \|V\|_{L^{2}_{x}}
  \left\|\nabla \rho^{\frac{m+q-1}{2}}\right\|_{L^{2}_{x}}
  \leq \varepsilon  \left\|\nabla \rho^{\frac{m+q-1}{2}}\right\|_{L^{2}_{x}}^{2}
  + c(\varepsilon ) \|V\|_{L^{2}_{x}}^{2}.
\end{equation}
Now, let $q > m-1$ for all $q\geq 1$. Then by H\"{o}lder inequality with $\frac{1}{q_1} + \frac{1}{2} + \frac{q_1 -2 }{2q_1} = 1$ for $q_1 > 2$, we have
\begin{equation*}\label{energy00}
\mathcal{J}_q \leq c \int_{\Omega} |V| \rho^{\frac{q-m+1}{2}} \abs{\nabla \rho^{\frac{q+m-1}{2}}}\,dx
\leq  c  \|V\|_{L^{q_1}_{x}}
  \left\|\nabla \rho^{\frac{m+q-1}{2}}\right\|_{L^{2}_{x}}
 \|\rho\|_{L_{x}^{\frac{q_1 (q-m+1)}{q_1 -2}}}^{\frac{q-m+1}{2}}.
\end{equation*}
For simplicity, let
\begin{equation}\label{energy00.4}
    r_1 := \frac{q_1 (q-m+1)}{q_1 -2}, \quad \text{and} \quad \theta := \frac{q}{r_1} \left( 1 - \frac{(r_1 - q)(d-2)}{2q + d(m-1)}\right).
\end{equation}
Then by \eqref{q-r1r2} in Lemma~\ref{P:L_r1r2}, we see that
\begin{equation}\label{energy00.5}
\begin{cases}
     2<\frac{d(q+m-1)}{q+(d-1)(m-1)}\leq q_1 \leq \frac{2q}{m-1}, & \text{ if } d>2, \vspace{1 mm}\\
     2< q_1 \leq \frac{2q}{m-1}, & \text{ if } d = 2.
\end{cases}
\end{equation}
Then by modifying interpolation inequalities in Lemma~\ref{P:L_r1r2} for $\Omega \Subset \bbr^d$, we deduce
\begin{equation}\label{energy01}
\begin{aligned}
\mathcal{J}_q
&\leq c     \|V\|_{L^{q_1}_{x}}
  \left\|\nabla \rho^{\frac{m+q-1}{2}}\right\|_{L^{2}_{x}}
 \left(\left\|\nabla \rho^{\frac{m+q-1}{2}}\right\|_{L^{2}_{x}}^{\frac{(1-\theta)(q-m+1)}{q+m-1}}
 \|\rho\|_{L_{x}^{q}}^{\frac{\theta(q-m+1)}{2}} + \|\rho\|_{L_{x}^{1}}^{\frac{q+m-1}{2}}\right) \\
&\leq \varepsilon \left\|\nabla \rho^{\frac{m+q-1}{2}}\right\|_{L^{2}_{x}}^{2}
 +c(\varepsilon) \left( \|V\|_{L^{q_1}_{x}}^{q_2}\|\rho\|_{L_{x}^{q}}^{q_2 \frac{\theta(q-m+1)}{2}} + c\|V\|_{L^{q_1}_{x}}^{2}\right) \\
&\leq \varepsilon \left\|\nabla \rho^{\frac{m+q-1}{2}}\right\|_{L^{2}_{x}}^{2}
 +c(\varepsilon) \|V\|_{L^{q_1}_{x}}^{q_2} \left(\|\rho\|_{L_{x}^{q}}^{q_2 \frac{\theta(q-m+1)}{2}} + 1\right) + c(\varepsilon) \\
 &\leq \varepsilon \left\|\nabla \rho^{\frac{m+q-1}{2}}\right\|_{L^{2}_{x}}^{2}
 +c(\varepsilon) \|V\|_{L^{q_1}_{x}}^{q_2} \left(\|\rho\|_{L_{x}^{q}}^{q} + 1\right) + c(\varepsilon)
\end{aligned}
\end{equation}
by taking Young's inequality (note that $q_2 \geq 2$). Due to $V\in \mathcal{S}_{m,q}^{(q_1,q_2)}$, it gives
\begin{equation}\label{energy02}
 \frac{2(q+m-1)}{(q+m-1)-(1-\theta)(q-m+1)} = \frac{q_1 (2+q_{m,d})}{q_1 (1+q_{m,d}) - d} =  q_2.
\end{equation}
The conditions \eqref{energy00.5} corrresponds to \eqref{V-Lq-energy}.

For $q>1$, we combine \eqref{apriori2-eqn} and \eqref{energy01} with $\varepsilon = \frac{4mq(q-1)}{(q+m-1)^2}$ that yields
\begin{equation}\label{energy03}
    \frac{d}{dt} \int_{\Omega} \rho^{q}\,dx \leq c \|V\|_{L^{q_1}_{x}}^{q_2} (\|\rho\|_{L_{x}^{q}}^{q} + 1) + c \quad \Longrightarrow \quad
    \sup_{0\leq t \leq T} \int_{\Omega \times \{t\}} \rho^{q}\,dx
    \leq C \int_{\Omega} \rho_{0}^{q}\,dx + C,
\end{equation}
where $C= C(\|V\|_{\mathcal{S}_{m,q}^{(q_1, q_2)}})$ by Gr\"{o}nwall's inequality.
When $q=1$, it is obvious because of the mass conservation property.
Now we take the integration in terms of $t$ after combining estimates \eqref{apriori2-eqn} and \eqref{energy01}  which yields
\begin{equation}\label{energy05}
\begin{aligned}
\sup_{0\leq t\leq T} \int_{\Omega \times \{t\}} \rho^{q}\,dx
&+ \frac{2m q (q-1)}{(q+m-1)^2} \iint_{\Omega_T} \abs{\nabla\rho^{\frac{q+m-1}{2}}}^2\,dx\,dt \\
&\leq \int_{\Omega} \rho_{0}^{q}\,dx
+ c \|V\|_{\mathcal{S}_{m,q}^{(q_1, q_2)}}\sup_{0\leq t \leq T}\|\rho (\cdot, t)\|_{L_{x}^{q}(\Omega)}^{q_2 \frac{\theta(q-m+1)}{2}}.
\end{aligned}
\end{equation}
Therefore, we obtain \eqref{Lq-energy} by substituting \eqref{energy03} into \eqref{energy05}.
\smallskip

(iii) In this case, we take the integration by parts that gives, due to $V\cdot \textbf{n} = 0$ on $\partial \Omega$,
\begin{equation}\label{energy-V-2}
\mathcal{J}_{1} =  - \int_{\Omega} \rho \, \nabla \cdot V \,dx,
\quad \text{and} \quad
\mathcal{J}_{q} = -(q-1) \int_{\Omega} \rho^q \, \nabla \cdot V \,dx.
\end{equation}
By taking H\"{o}lder inequality with $\frac{1}{\tilde{q}_1} + \frac{\tilde{q}_1 -1}{\tilde{q}_1} = 1$ for $\tilde{q}_1 > 1$, we have
 \begin{equation*}
 \mathcal{J}_1 , \, \mathcal{J}_{q} \leq c \|\nabla  V\|_{L_{x}^{\tilde{q}_1}} \|\rho\|_{L^{\frac{q \tilde{q}_1}{\tilde{q}_1 -1}}_{x}}^{q}.
 \end{equation*}
 For simplicity, let
 \begin{equation*}
 \tilde{r}_1 := \frac{q\tilde{q}_1}{\tilde{q}_1 - 1} \quad \text{ and } \quad \theta := \frac{q}{\tilde{r}_1} \left( 1 - \frac{(\tilde{r}_1 - q)(d-2)}{2q+d(m-1)} \right).
 \end{equation*}
Then by \eqref{q-r1r2} in Lemma~\ref{P:L_r1r2} for $\tilde{r}_1$ instead of $r_1$, it gives
 \begin{equation}\label{energy06}
 \begin{cases}
  1<\frac{d+q_{m,d}}{2+q_{m,d}}\leq \tilde{q}_1 \leq \infty, & \text{ if } d>2, \vspace{1 mm}\\
  1< \tilde{q}_1 \leq \infty, & \text{ if } d= 2.
  \end{cases}
 \end{equation}

 Then by Lemma~\ref{P:L_r1r2}, we deduce
 \begin{equation}\label{energy07}
 \begin{aligned}
 \mathcal{J}_1 , \, \mathcal{J}_{q}
 &\leq c \|\nabla V\|_{L_{x}^{\tilde{q}_1}} \left(\|\rho\|_{L^{q}_{x}}^{q\theta}
 \left\|\nabla \rho^{\frac{q+m-1}{2}}\right\|_{L^{2}_{x}}^{\frac{2q(1-\theta)}{q+m-1}} + \|\rho\|^{q}_{L^1_x} \right)\\
 &\leq \varepsilon \left\|\nabla \rho^{\frac{q+m-1}{2}}\right\|_{L^{2}_{x}}^{2}
 + c(\varepsilon) \|\nabla V\|_{L_{x}^{\tilde{q}_1}}^{\tilde{q}_2} \left( \|\rho\|_{L^{q}_{x}}^{q\theta \tilde{q}_2} + 1 \right) \\
  &\leq \varepsilon \left\|\nabla \rho^{\frac{q+m-1}{2}}\right\|_{L^{2}_{x}}^{2}
 + c(\varepsilon) \|\nabla V\|_{L_{x}^{\tilde{q}_1}}^{\tilde{q}_2} \left( \|\rho\|_{L^{q}_{x}}^{q} + 1 \right),
 \end{aligned}
 \end{equation}
 by taking Young's inequality. Because of $V \in \tilde{\mathcal{S}}_{m,q}^{(\tilde{q}_1, \tilde{q}_2)}$, it gives
 \begin{equation*}
  \frac{q+m-1}{(q+m-1)-(1-\theta)q}  = \frac{\tilde{q}_1 (2+q_{m,d})}{\tilde{q}_1 (2+q_{m,d}) - d} =  \tilde{q}_2.
 \end{equation*}
 Also, we observe that \eqref{energy06} corresponds to \eqref{V-tilde-Lq}.

Then we combine \eqref{apriori2-eqn} and \eqref{energy07} with $\varepsilon = \frac{4mq(q-1)}{(q-m+1)^2}$ which yields
\begin{equation}\label{energy08.5}
    \sup_{0 \leq t \leq T} \int_{\Omega} \rho^q (\cdot, t)\,dx
    \leq c \|\nabla V\|_{L_{x}^{\tilde{q}_1}}^{\tilde{q}_2} \left( \|\rho\|_{L^{q}_{x}}^{q} + 1 \right)
    \quad \Longrightarrow \quad
    \sup_{0\leq t \leq T} \int_{\Omega \times \{t\}} \rho^{q}\,dx
    \leq C \int_{\Omega} \rho_{0}^{q}\,dx + C
\end{equation}
where $C=C(\|\nabla \cdot V\|_{\tilde{\mathcal{S}}_{m,q}^{(\tilde{q}_1, \tilde{q}_2)}})$ by applying Gr\"{o}nwall's inequality. Now, the estimate \eqref{Lq-energy} follows by applying \eqref{energy08.5} to the estimate obtained by taking integration in terms of $t$ to the combination of \eqref{apriori2-eqn} and \eqref{energy07}.

(iv) In \eqref{energy-V-2}, we observe that $\mathcal{J} \leq 0$ for all $q\geq 1$ when $\nabla \cdot V \geq 0$. Hence we obtain \eqref{Lq-energy} with $C$ depending only the domain and initial data.
\end{proof}

Under the same hypothesis in Proposition~\ref{P:Lq-energy}, we prove the following proposition that gives a priori estimate for constructing of $L^r$-weak solution for $1 \leq r \leq q$ when  $\rho_{0} \in \left(L^{1}\cap L^{q}\right)(\Omega)$ for $q>1$. 
\begin{proposition}\label{P:Lq-energy-r}
Let $m > 1$ and $q > 1$. Suppose that $\rho$ is a regular solution of \eqref{E:Main}-\eqref{E:Main-bc-ic} with $\rho_{0}\in L^{q}(\Omega) \cap \calC^{\alpha}(\Omega)$.
\begin{itemize}

\item[(i)] Let $q>1$ and $q \geq m-1$. Further, assume that $V$ satisfies \eqref{V-Lq-energy}. Then, for any $ \max\{1, m-1\} \leq r \leq q$, it holds
\begin{equation}\label{Lq-energy-r}
 \sup_{0 \leq t \leq T}  \int_{\Omega \times \{t\}} \rho^{r} (\cdot, t)\,dx
 + \iint_{\Omega_T} \left|\nabla \rho^{\frac{r+m-1}{2}}\right|^2 \,dx\,dt \le C ,
\end{equation}
where  $C = C (\|V\|_{\mathcal{S}_{m,q}^{(q_1, q_2)}}, \, \|\rho_{0}\|_{L^{q} (\Omega)})$.

\item[(ii)]  Let $q>1$ and $q \geq m-1$. If $V$ satisfies \eqref{V-tilde-Lq}, then the estimate \eqref{Lq-energy-r} holds with
$C = C (\|V\|_{\tilde{\mathcal{S}}_{m,q}^{(\tilde{q}_1, \tilde{q}_2)}},\,  \|\rho_{0}\|_{L^{q} (\Omega)})$ for any $1 \leq r \leq q$.

\item[(iii)] Let $q>1$ and $q \geq m-1$. If $V\in \calC^{\infty}(\overline{\Omega}_{T})$ and  $\nabla\cdot V \geq 0$, then the estimate \eqref{Lq-energy-r} holds with
$C = C (\|\rho_{0}\|_{L^{q} (\Omega)})$ for any $1\leq r\leq q$.
\end{itemize}
\end{proposition}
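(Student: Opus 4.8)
The plan is to reduce each part to the already-established Proposition~\ref{P:Lq-energy}, exploiting that $\Omega_T$ has finite measure in both variables. First I would record the two facts that are ``free'' on a bounded domain and that let the hypotheses of Proposition~\ref{P:Lq-energy} be met at the lower level $r$. Since $r\le q$, H\"older's inequality gives $\|\rho_0\|_{L^r(\Omega)}\le |\Omega|^{\frac1r-\frac1q}\|\rho_0\|_{L^q(\Omega)}$, so $\rho_0\in L^r(\Omega)$ with the advertised dependence; moreover $\int_\Omega \rho_0\log\rho_0\,dx<\infty$ is automatic from $\rho_0\in L^q(\Omega)$, $q>1$, because $s\mapsto s\log s$ is bounded below and dominated by $s^q$ for large $s$. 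In particular the whole estimate \eqref{Lq-energy-r} reduces to bounding the dissipation term, since $\sup_t\|\rho\|_{L^r}^r\le |\Omega|^{1-\frac rq}\sup_t\|\rho\|_{L^q}^r$ is controlled the moment one has the level-$q$ energy bound from Proposition~\ref{P:Lq-energy}.

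For part (i), fix $r\in[\max\{1,m-1\},q]$. The key step is to relocate $V$ from the level-$q$ class into the level-$r$ class. Because $\Omega$ and $[0,T]$ both have finite measure, $V\in L^{q_1,q_2}_{x,t}(\Omega_T)$ implies $V\in L^{p_1,p_2}_{x,t}(\Omega_T)$ for every $p_1\le q_1$, $p_2\le q_2$, with norm controlled by $\|V\|_{L^{q_1,q_2}_{x,t}}$; this is the monotonicity recorded in Remark~\ref{R:S-tildeS}(iii). I would therefore select a pair $(q_1^r,q_2^r)$ lying on the level-$r$ scaling line $\frac{d}{q_1^r}+\frac{2+r_{m,d}}{q_2^r}=1+r_{m,d}$, with $r_{m,d}=\frac{d(m-1)}{r}$, dominated by $(q_1,q_2)$ in the sense $q_1^r\le q_1$, $q_2^r\le q_2$, and lying in the admissible window $2\le q_2^r\le \frac{r+m-1}{m-1}$ of \eqref{V-Lq-energy}. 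Then $V\in\mathcal{S}_{m,r}^{(q_1^r,q_2^r)}$ with $\|V\|_{\mathcal{S}_{m,r}^{(q_1^r,q_2^r)}}\le c\,\|V\|_{\mathcal{S}_{m,q}^{(q_1,q_2)}}$, and Proposition~\ref{P:Lq-energy}(ii), or (i) when $r=1$ and $1<m\le2$, applied at level $r$ yields \eqref{Lq-energy-r} with the stated constant.

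The main obstacle is precisely the existence of such a dominated admissible pair for every $r$ in the range: as $r$ decreases the level-$r$ scaling line moves up and to the right in the $(1/q_1,1/q_2)$-plane, while the admissible window $2\le q_2^r\le\frac{r+m-1}{m-1}$ narrows as its upper end decreases, so one must check that the quadrant $\{q_1^r\le q_1,\ q_2^r\le q_2\}$ based at $(1/q_1,1/q_2)$ still meets the admissible segment of the level-$r$ line. This is the geometric comparison of two scaling lines illustrated in Fig.~\ref{F:S:m2} (the passage from the point \textbf{g} to \textbf{h}), and verifying it is a short but careful computation with the explicit intercepts. An alternative that sidesteps choosing new exponents is to keep the level-$q$ energy bound of Proposition~\ref{P:Lq-energy}(ii) as a reservoir: one integrates the level-$r$ identity \eqref{apriori2-eqn} (with $r$ in place of $q$), estimates the drift by Cauchy--Schwarz as $\mathcal J_r\le \varepsilon\iint|\nabla\rho^{\frac{r+m-1}{2}}|^2+c_\varepsilon\iint|V|^2\rho^{\,r-m+1}$, and controls the last integral by H\"older using $V\in L^{q_1,q_2}_{x,t}$ together with the space--time bounds on $\rho$ furnished by Lemma~\ref{P:L_r1r2} at level $q$; the exponents close by the same scaling arithmetic, so the two routes are equivalent.

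Parts (ii) and (iii) follow the same template but are simpler. For (ii) I would repeat the relocation argument with the gradient class, moving $V$ from $\tilde{\mathcal{S}}_{m,q}^{(\tilde q_1,\tilde q_2)}$ into $\tilde{\mathcal{S}}_{m,r}^{(\tilde q_1^r,\tilde q_2^r)}$ and invoking Proposition~\ref{P:Lq-energy}(iii); here the full range $1\le r\le q$ is available because the $\tilde{\mathcal{S}}$-estimate integrates the drift by parts as $\mathcal J_r=-(r-1)\int_\Omega \rho^r\,\nabla\cdot V$ and never needs $r\ge m-1$. For (iii) no relocation is needed at all: when $\nabla\cdot V\ge0$ the same integration by parts gives $\mathcal J_r=-(r-1)\int_\Omega \rho^r\,\nabla\cdot V\le0$, so integrating the level-$r$ identity in time immediately bounds $\iint_{\Omega_T}|\nabla\rho^{\frac{r+m-1}{2}}|^2$ by a multiple of $\|\rho_0\|_{L^r(\Omega)}^r\le C\|\rho_0\|_{L^q(\Omega)}^r$, with the supremum term handled as above and the case $r=1$ covered by mass conservation together with Proposition~\ref{P:Lq-energy}(iv). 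Thus all three parts reduce, modulo the exponent bookkeeping of part (i), to estimates already in hand.
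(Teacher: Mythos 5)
Your reduction is correct, and it is essentially the argument the paper intends: the paper prints no separate proof of this proposition, relying (as the surrounding text and Remark~\ref{R:S-tildeS}~(iii) indicate) on the bounded-domain embeddings in space and time together with Proposition~\ref{P:Lq-energy} applied at level $r$, plus the sign of $\mathcal{J}_r$ in case (iii) — exactly your template, including the observation that $\int_\Omega \rho_0\log\rho_0\,dx<\infty$ is automatic from $\rho_0\in L^q(\Omega)$, $q>1$, on a bounded domain. The one step you flag but do not verify — that the quadrant of dominated exponents meets the admissible level-$r$ segment — does hold, and closes in one line: keep $q_1$ and set $q_2^r=\min\bigl\{q_2,\tfrac{r+m-1}{m-1}\bigr\}$ (the window $2\le q_2^r\le\tfrac{r+m-1}{m-1}$ is nonempty precisely because $r\ge m-1$, which is why part (i) needs that restriction). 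Writing $a=q_{m,d}=\tfrac{d(m-1)}{q}$ and $b=\tfrac{d(m-1)}{r}\ge a$, membership of $(q_1,q_2^r)$ in the level-$r$ sub-scaling class reduces, after subtracting the level-$q$ scaling identity, to
\[
\frac{2+b}{q_2^r}-\frac{2+a}{q_2}\;\le\; b-a,
\]
which is immediate when $q_2^r=q_2\ge 1$, and when $q_2^r=\tfrac{r+m-1}{m-1}$, $q_2\le\tfrac{q+m-1}{m-1}$ follows from monotonicity of
\[
s\;\longmapsto\;\frac{\bigl(2+\tfrac{d(m-1)}{s}\bigr)(m-1)}{s+m-1}-\frac{d(m-1)}{s}\;=\;\frac{(m-1)(2-d)}{s+m-1},
\]
which is nondecreasing for $d\ge2$; the identical computation with $1+q_{m,d}$ replaced by $2+q_{m,d}$ handles the gradient class in part (ii), where the window extends down to $r=1$ as you say. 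Your alternative route for (i) also closes as claimed: with $V$ on the level-$q$ scaling line, the term $\iint_{\Omega_T}|V|^2\rho^{\,r-m+1}$ is estimated by H\"older against $\rho$ in $L^{s_1,s_2}_{x,t}$ with $\frac{d}{s_1}+\frac{2+a}{s_2}=\frac{d-a}{r-m+1}$, and $\frac{d-a}{r-m+1}\ge\frac{d}{q}$ is equivalent to $r\le q$ (with the degenerate case $r=m-1$ giving just $\|V\|_{L^2_{x,t}}^2<\infty$), so Lemma~\ref{P:L_r1r2} at level $q$, together with lowering exponents on the bounded domain, applies and the level-$r$ identity \eqref{apriori2-eqn} absorbs the gradient term.
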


\subsection{Estimates of speed}
\label{SS:Speed}

Note that \eqref{E:Main} can be written as
\begin{equation}\label{E:Main:w}
\partial_t \rho + \nabla \cdot \left( w \rho\right) = 0, \quad \text{where} \quad w:= - \frac{\nabla \rho^m}{\rho} + V .
\end{equation}
Due to Lemma~\ref{representation of AC curves}, \eqref{E:Main:w} can be seen as a curve in $\calP_{p}(\bbr^d)$ whose speed at time $t$ is limited by $\|w(\cdot, t)\|_{L^p (\rho(t))}$. In the following lemma, we deliver the estimate of  $\|w\|_{L^p(0,T;L^p(\rho(t)))}$.

Recalling $\lambda_q$ in \eqref{lambda_q}, we observe that the constant $1 + \frac{d(q-1)+q}{d(m-1)+q}$ coincides with $2$ when $q=m$ but exceed $2$ for $q>m$. Because of difficulties of obtaining a speed estimate for $p>2$, we restrict $\lambda_q = 2$ for $q > m$. Therefore, when $\rho_0 \in L^{q}_{\Omega}$ for $q>m$, of course $\rho_0 \in L^{m}_{\Omega}$, we combine an energy estimate with the speed estimate at $q=m$.

The following lemma is an auxiliary estimate for the speed $w$ in \eqref{E:Main:w}.
\begin{lemma}\label{P:W-p}\cite[Lemma~4.2]{HKK}
Let $m>1$ and $1 \leq q \leq m$. Suppose that $\rho: \Omega_T \mapsto \bbr$ is a nonnegative measurable function satisfying
 \begin{equation*}\label{rho-space-speed}
 \rho \in L^{\infty}(0, T; L^{q}(\Omega)) \quad \text{and} \quad
 \rho^{\frac{q+m-1}{2}} \in L^{2}(0, T; W^{1,2}(\Omega)).
 \end{equation*}
 Furthermore, assume that
\begin{equation}\label{V:speed}
V\in \mathcal{S}_{m,q}^{(q_1, q_2)} \ \text{ for } \
\begin{cases}
 \lambda_q \leq q_2 \leq \frac{\lambda_q (q+m-1)}{q+m-2}, & \text{ if } \ d>2,  \vspace{1 mm}\\
 \lambda_q \leq q_2 < \frac{\lambda_q (q+m-1)}{q+m-2}, & \text{ if } \ d=2.
 \end{cases}
\end{equation}
 Then, for any $\varepsilon >0$, there exists constant $c=c(\varepsilon)$ such that
\begin{equation}\label{W-p}
\iint_{\Omega_T} \abs{\frac{\nabla \rho^m}{\rho}}^{\lambda_q} \rho \,dx\,dt
\leq \varepsilon \iint_{\Omega_T} \abs{ \nabla \rho^{\frac{q+m-1}{2}}}^2 \,dx\,dt
  +cT\left( \sup_{0\leq t \leq T}\int_{\Omega} \rho^q \,dx \right)^{\frac{2\theta}{d(1-\theta)}},
\end{equation}
\begin{equation}\label{V-p}
\iint_{\Omega_T} |V|^{\lambda_q}\rho \,dx\,dt
\leq \varepsilon \iint_{\Omega_T} \abs{ \nabla \rho^{\frac{q+m-1}{2}}}^2 \,dx\,dt
 + c \|\rho\|_{L^{q,\infty}_{x,t}}^{\beta\frac{ q q_2 (q_1 - \lambda_q)}{q_1 \lambda_q}} \|V\|_{\mathcal{S}_{m,q}^{(q_1,q_2)}}^{q_2},
\end{equation}
where $\theta=\frac{d \lambda_q (m-q)}{(2-\lambda_q)[(d+2)q+(m-2)d]}$ and $\beta = 1 - \frac{(d-2)(q_1(1-q)+q \lambda_q )}{(q_1 - \lambda_q)(d(m-1)+2q)}$.
\end{lemma}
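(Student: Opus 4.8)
The argument follows the strategy of \cite[Lemma~4.2]{HKK}, with the boundedness of $\Omega$ providing simplifications. The plan is to treat \eqref{W-p} and \eqref{V-p} separately, after first recording the pointwise identity
\[
\frac{\nabla \rho^m}{\rho} = \frac{2m}{q+m-1}\, \rho^{\frac{m-q-1}{2}}\, \nabla \rho^{\frac{q+m-1}{2}},
\]
obtained from $\nabla \rho^m = m\rho^{m-1}\nabla \rho$ and $\nabla \rho = \frac{2}{q+m-1}\rho^{-\frac{q+m-3}{2}}\nabla \rho^{\frac{q+m-1}{2}}$. Writing $u := \rho^{\frac{q+m-1}{2}}$, this yields
\[
\Big| \frac{\nabla \rho^m}{\rho}\Big|^{\lambda_q}\rho = \Big(\frac{2m}{q+m-1}\Big)^{\lambda_q} \rho^{\frac{(m-q-1)\lambda_q}{2}+1}\,|\nabla u|^{\lambda_q},
\]
so both estimates reduce to interpolating powers of $\rho$ against the single gradient quantity $|\nabla u|$ controlled by the hypothesis $u \in L^2(0,T;W^{1,2}(\Omega))$.

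For \eqref{W-p}, the endpoint $q=m$ is degenerate: there $\lambda_q=2$ and the exponent $\frac{(m-q-1)\lambda_q}{2}+1$ vanishes, so the integrand is a constant multiple of $|\nabla \rho^{\frac{q+m-1}{2}}|^2$ and is controlled directly by the energy bound without any absorption; hence the substantive case is $1\le q<m$, where $2-\lambda_q=\frac{d(m-q)}{d(m-1)+q}>0$. There I would apply Hölder in $(x,t)$ with exponents $\frac{2}{\lambda_q}$ and $\frac{2}{2-\lambda_q}$ to split off $\big(\iint_{\Omega_T}|\nabla u|^2\big)^{\lambda_q/2}$ from a power $\iint_{\Omega_T}\rho^{b}$ with $b=\frac{2+(m-q-1)\lambda_q}{2-\lambda_q}$, and then use Young's inequality to absorb the gradient factor into $\varepsilon\iint_{\Omega_T}|\nabla u|^2$. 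The remaining $\iint_{\Omega_T}\rho^{b}$ I would control with Lemma~\ref{P:L_r1r2} (equivalently Lemma~\ref{T:pSobolev} applied to $u$): interpolating between $L^\infty(0,T;L^q)$ and $\|\nabla u\|_{L^2_{x,t}}$ produces a further gradient factor, absorbed by Young once more, leaving precisely $cT\big(\sup_t\int_\Omega\rho^q\big)^{\frac{2\theta}{d(1-\theta)}}$ with $\theta$ the Gagliardo--Nirenberg exponent displayed in the statement. The decisive bounded-domain simplification is that the lower-order terms in Lemma~\ref{P:L_r1r2} and Lemma~\ref{T:pSobolev} involve only $\|\rho(\cdot,t)\|_{L^1(\Omega)}=1$ (mass conservation), so they reduce to constants depending on $|\Omega|$ and $T$; this is exactly where the $p$-moment estimates needed on $\R^d$ are dispensed with.

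For \eqref{V-p}, the plan is to estimate $\iint_{\Omega_T}|V|^{\lambda_q}\rho$ by Hölder in both variables, pairing $|V|^{\lambda_q}$ against $\rho$ with the conjugate exponents $r_1=\frac{q_1}{q_1-\lambda_q}$, $r_2=\frac{q_2}{q_2-\lambda_q}$, which gives $\|V\|_{L^{q_1,q_2}_{x,t}}^{\lambda_q}\,\|\rho\|_{L^{r_1,r_2}_{x,t}}$. I would then invoke \eqref{norm-q-r1r2} in Lemma~\ref{P:L_r1r2} to bound $\|\rho\|_{L^{r_1,r_2}_{x,t}}$ by $c\big(\sup_t\int_\Omega\rho^q\big)^{\beta/r_1}\|\nabla u\|_{L^2_{x,t}}^{2/r_2}$ plus a constant, where $\beta=1-\frac{(d-2)(q_1(1-q)+q\lambda_q)}{(q_1-\lambda_q)(d(m-1)+2q)}$ is exactly the $\sup$-exponent furnished by that formula after substituting $r_1=\frac{q_1}{q_1-\lambda_q}$. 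A final Young's inequality with exponent $\frac{r_2}{r_2-1}=\frac{q_2}{\lambda_q}$ absorbs the gradient into $\varepsilon\iint_{\Omega_T}|\nabla u|^2$ and raises the other factor to the power $\frac{q_2}{\lambda_q}$, producing $\|V\|^{q_2}$ and the stated exponent $\beta\frac{qq_2(q_1-\lambda_q)}{q_1\lambda_q}$ on $\|\rho\|_{L^{q,\infty}_{x,t}}$.

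The main obstacle in both parts is the exponent bookkeeping: one must verify that the conjugate pair $(r_1,r_2)$ lands in the admissible range of Lemma~\ref{P:L_r1r2} and that the scaling identity $\frac{d}{r_1}+\frac{2+q_{m,d}}{r_2}=\frac{d}{q}$ holds. The latter is equivalent, via $\frac{d}{r_i}=\frac{d}{q_i}(q_i-\lambda_q)/\cdots$ and the definition $\frac{d}{q_1}+\frac{2+q_{m,d}}{q_2}=1+q_{m,d}$ of $\mathcal{S}_{m,q}^{(q_1,q_2)}$, to the computation
\[
d+2+q_{m,d}-\lambda_q(1+q_{m,d})=\frac{d}{q},
\]
which I would check directly using $\lambda_q=\frac{d(m+q-2)+2q}{d(m-1)+q}$. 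Confirming that the interpolation powers collapse to the exact constants $\frac{2\theta}{d(1-\theta)}$ and $\beta\frac{qq_2(q_1-\lambda_q)}{q_1\lambda_q}$, and that the endpoint constraints $q_2=\lambda_q$ and $q_2=\frac{\lambda_q(q+m-1)}{q+m-2}$ of \eqref{V:speed} correspond to the borderline admissible $(r_1,r_2)$ in Lemma~\ref{P:L_r1r2} (with strict inequality forced when $d=2$), is the delicate but routine computation once the scaling is in place.
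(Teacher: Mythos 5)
Your treatment of \eqref{V-p} is correct and is essentially the intended argument (the paper itself gives no proof here, citing \cite[Lemma~4.2]{HKK}): the mixed-norm H\"older with $r_1=\frac{q_1}{q_1-\lambda_q}$, $r_2=\frac{q_2}{q_2-\lambda_q}$, your scaling check $d+2+q_{m,d}-\lambda_q(1+q_{m,d})=\frac dq$ (which I verified), the identification of $\beta$ with the sup-exponent of \eqref{norm-q-r1r2} at $r_1=\frac{q_1}{q_1-\lambda_q}$, and the endpoint correspondences ($q_2=\lambda_q\Leftrightarrow r_2=\infty$ and $q_2=\frac{\lambda_q(q+m-1)}{q+m-2}\Leftrightarrow r_2=q+m-1$, with strictness at $d=2$) all check out. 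Crucially, in \eqref{V-p} the gradient enters with power $\frac1{r_2}\le\frac{1}{q+m-1}<1$ of $\iint_{\Omega_T}|\nabla\rho^{\frac{q+m-1}{2}}|^2$, so the final Young step is genuinely subcritical and the absorption is legitimate.

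The gap is in \eqref{W-p}, at exactly the step you declared ``delicate but routine.'' With your own exponents, $b=\frac{2+(m-q-1)\lambda_q}{2-\lambda_q}$ simplifies, using $\lambda_q=\frac{d(m+q-2)+2q}{d(m-1)+q}$, $2-\lambda_q=\frac{d(m-q)}{d(m-1)+q}$ and $m(m-1)-q(q-1)=(m-q)(m+q-1)$, to $b=(q+m-1)+\frac{2q}{d}$: precisely the critical parabolic exponent of \eqref{E:rho:q-1}. Consequently Lemma~\ref{P:L_r1r2} returns the gradient at total power exactly two, $\iint_{\Omega_T}\rho^{b}\le c\,\bigl(\sup_t\int_\Omega\rho^q\bigr)^{2/d}\iint_{\Omega_T}\bigl|\nabla\rho^{\frac{q+m-1}{2}}\bigr|^2+cT(\cdots)$, so after your first Young you are left with $c(\varepsilon)\bigl(\sup_t\int_\Omega\rho^q\bigr)^{2/d}\iint_{\Omega_T}|\nabla u|^2$, a full-power gradient term whose coefficient is not small; ``Young once more'' cannot trade this for $\varepsilon\iint_{\Omega_T}|\nabla u|^2$, since Young needs the gradient power strictly below two (keeping the product form instead gives power $\frac{\lambda_q}{2}+\frac{2-\lambda_q}{2}=1$, the same obstruction). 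The criticality is intrinsic, not an artifact of your splitting: for $q=1$ a concentrating profile $\rho_\mu(x)=\mu^{d}\phi(\mu x)$ makes $\iint|\nabla\rho^m/\rho|^{\lambda_1}\rho$ and $\iint|\nabla\rho^{m/2}|^2$ scale identically (both like $\mu^{d(m-1)+2}$) while the mass stays one, so no chain of H\"older/Gagliardo--Nirenberg/Young can produce an $\varepsilon$-absorption with a gradient-free remainder at $p=\lambda_q$. Had you carried out the promised verification you would have seen the warning sign in the statement itself: substituting the formula for $\lambda_q$ into the displayed $\theta$ gives $\theta\equiv1$ for $1\le q<m$, so the exponent $\frac{2\theta}{d(1-\theta)}$ is degenerate as printed. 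What your method actually proves at the critical exponent is the multiplicative bound $\iint_{\Omega_T}|\nabla\rho^m/\rho|^{\lambda_q}\rho\le c\,\bigl(\sup_t\int_\Omega\rho^q\bigr)^{\frac{2-\lambda_q}{d}}\iint_{\Omega_T}\bigl|\nabla\rho^{\frac{q+m-1}{2}}\bigr|^2+cT(\cdots)$, which is what is needed downstream in Proposition~\ref{P:Energy-speed} (there the right-hand side is already bounded by Proposition~\ref{P:Lq-energy}); the clean $\varepsilon$-form is available only for subcritical speed exponents $p<\lambda_q$, where the analogue of $\theta$ is strictly below one. Your observation that $q=m$ is trivial, and that boundedness of $\Omega$ replaces the $p$-moment estimates of the whole-space setting, are both fine.
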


Here we obtain the following estimates by combining energy and auxiliary speed estimates.

\begin{proposition}\label{P:Energy-speed}
Let $m> 1$ and $q\geq 1$.  Suppose that $\rho$ is a weak solution of \eqref{E:Main}-\eqref{E:Main-bc-ic} with $\rho_{0}\in  L^{q}(\Omega) \cap \calC^{\alpha}(\overline{\Omega})$.

\begin{itemize}
\item[(i)] Let $1< m \leq 2$. Assume further that $\int_{\Omega} \rho_0 \log \rho_0 \,dx < \infty$ and
\begin{equation}\label{V-L1-energy-speed}
V \in \mathcal{S}_{m,q}^{(q_1,q_2)}\cap \calC^\infty(\overline{\Omega}_T)\quad \text{for}\quad
    \begin{cases}
        q_1 \leq \frac{2m}{m-1}, \  2 \leq q_2 \leq  \frac{m}{m-1},  & \text{if } d > 2, \vspace{1 mm}\\
        q_1 \leq \frac{2m}{m-1}, \ 2 \leq q_2 <  \frac{m}{m-1},  & \text{if } d = 2.
    \end{cases}
\end{equation}
Then the following estimate holds
\begin{equation}\label{L1-energy-speed}
 \sup_{0 \leq t \leq T} \int_{\Omega\times \{t\}} \rho \abs{\log \rho } (\cdot, t)  \,dx
 + \iint_{\Omega_T} \left(\abs{\frac{\nabla \rho^m}{\rho}}^{p_1}+|V|^{p_1}\right ) \rho  \,dx\,dt \le C,
\end{equation}
where $C=C (\abs{\Omega}, \, \|V\|_{\mathcal{S}_{m,1}^{(q_1, q_2)}}, \, \int_{\Omega} \rho_0 \log \rho_0 \,dx)$.

\item[(ii)] Let $q>1$ and $q\geq m-1$.  Assume that
\begin{equation}\label{V-Lq-energy-speed}
V \in \mathcal{S}_{m,q}^{(q_1,q_2)}\cap \calC^\infty(\overline{\Omega}_T)\quad \text{for}\quad
    \begin{cases}
        q_1 \leq \frac{2m}{m-1}, \  2 \leq q_2 \leq  \frac{q+m-1}{m-1} \,\delta_{\{1 < q \leq m\}} + \frac{2m-1}{m-1}\,\delta_{ \{q > m \} },  & \text{if } d > 2, \vspace{1 mm}\\
        q_1 \leq \frac{2m}{m-1}, \ 2 \leq q_2 <  \frac{q+m-1}{m-1}\,\delta_{\{1 < q \leq m\}} + \frac{2m-1}{m-1} \,\delta_{ \{q > m \} },  & \text{if } d = 2.
    \end{cases}
\end{equation}
Then the following estimate holds
\begin{equation}\label{Lq-energy-speed}
 \sup_{0 \leq t \leq T} \int_{\Omega\times \{t\}} \rho^{q} (\cdot , t) \,dx
 + \iint_{\Omega_T} \left(\abs{\frac{\nabla \rho^m}{\rho}}^{\lambda_q}+|V|^{\lambda_q}\right ) \rho  \,dx\,dt \le C,
\end{equation}
where $C=C (\abs{\Omega}, \, \|V\|_{\mathcal{S}_{m,q}^{(q_1, q_2)}}, \, \|\rho_0\|_{L^q(\Omega)} )$.

\item[(iii)] Assume that 
 \begin{equation}\label{V-tilde-Lq-energy-speed}
V\in \tilde{\mathcal{S}}_{m,q}^{(\tilde{q}_1, \tilde{q}_2)}\cap \calC^\infty(\overline{\Omega}_T) \quad \text{for}\quad
\begin{cases}
\frac{2+q_{m,d}}{1+q_{m,d}} < \tilde{q}_2 \leq \frac{q+m-1}{m-1} \,\delta_{\{1 \leq q \leq m\}} + \frac{2m-1}{m-1}\,\delta_{ \{q >  m \} },  & \text{ if } d> 2, \vspace{1 mm}\\
\frac{2+q_{m,2}}{1+q_{m,2}} < \tilde{q}_2 < \frac{q+m-1}{m-1} \,\delta_{\{1 \leq q \leq m\}} + \frac{2m-1}{m-1}\,\delta_{ \{q > m \} },  & \text{ if } d= 2.
\end{cases}
\end{equation}
$\bullet$ In case $q=1$, assume $\int_{\Omega} \rho_0 \log \rho_0 \,dx < \infty$. Then, \eqref{L1-energy-speed} holds with $ C =C  (\abs{\Omega}, \, \|V\|_{\tilde{\mathcal{S}}_{m,1}^{(\tilde{q}_1, \tilde{q}_2)}}, \, \int_{\Omega} \rho_0 \log \rho_0 \,dx ).$\\
$\bullet$ In case $q>1$, the estimate \eqref{Lq-energy-speed} holds with $ C =C  (\abs{\Omega}, \, \|V\|_{\tilde{\mathcal{S}}_{m,q}^{(\tilde{q}_1, \tilde{q}_2)}}, \, \|\rho_0\|_{L^q(\Omega)} ).$

\item[(iv)]  Let $\nabla\cdot V \geq 0$. Assume that
\begin{equation}\label{V-Lq-divfree}
V\in \mathcal{S}_{m,q}^{(q_1, q_2)}\cap \calC^\infty(\overline{\Omega}_T) \quad  \text{for} \quad
\begin{cases}
\lambda_q \leq q_2 \leq \frac{\lambda_q (q+m-1)}{q+m-2}, & \text{ if }  d>2, \ 1 \leq q\leq m, \vspace{1 mm}\\
q_1 \leq \frac{2m}{m-1},\  2 \leq q_2 \leq \frac{2m-1}{m-1}, & \text{ if }  d>2, \ q> m, \vspace{1 mm}\\
\lambda_q \leq q_2 < \frac{\lambda_q (q+m-1)}{q+m-2}, & \text{ if }  d=2, \ 1 \leq q\leq m, \vspace{1 mm}\\
q_1 \leq \frac{2m}{m-1},\  2 \leq q_2 < \frac{2m-1}{m-1}, & \text{ if }  d=2, \ q> m.
\end{cases}
\end{equation}
$\bullet$ In case $q=1$, assume $\int_{\Omega} \rho_0 \log \rho_0 \,dx < \infty$. Then, \eqref{L1-energy-speed} holds with $ C =C  (\abs{\Omega}, \, \|V\|_{\mathcal{S}_{m,1}^{(q_1, q_2)}}, \, \int_{\Omega} \rho_0 \log \rho_0 \,dx ).$\\
$\bullet$ In case $q>1$, the estimate \eqref{Lq-energy-speed} holds with $ C =C  (\abs{\Omega}, \, \|V\|_{\mathcal{S}_{m,q}^{(q_1, q_2)}}, \, \|\rho_0\|_{L^q(\Omega)} ).$

\end{itemize}
\end{proposition}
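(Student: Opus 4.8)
The plan is to establish each of the four parts by combining the energy estimate of Proposition~\ref{P:Lq-energy} with the auxiliary speed estimate of Lemma~\ref{P:W-p}; no genuinely new computation is required beyond reconciling the admissible ranges of $(q_1,q_2)$ furnished by the two ingredients. Written in the form \eqref{E:Main:w}, the speed of the curve is $w=-\nabla\rho^m/\rho+V$, so the two integrals on the left of \eqref{L1-energy-speed} and \eqref{Lq-energy-speed} are exactly $\iint_{\Omega_T}|\nabla\rho^m/\rho|^{\lambda_q}\rho$ and $\iint_{\Omega_T}|V|^{\lambda_q}\rho$, which are precisely the quantities bounded in \eqref{W-p} and \eqref{V-p}. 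The key structural feature is that the two estimates are \emph{decoupled}: Proposition~\ref{P:Lq-energy} already controls both $\sup_t\int_\Omega\rho^q$ and the gradient term $\iint_{\Omega_T}|\nabla\rho^{(q+m-1)/2}|^2$, while the right-hand sides of \eqref{W-p}, \eqref{V-p} involve only that same gradient term and powers of $\sup_t\int_\Omega\rho^q$. Consequently the combination is additive rather than requiring absorption.

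First I would treat part (ii) in the range $1<q\le m$, where $\lambda_q<2$ and Lemma~\ref{P:W-p} applies directly at exponent $q$. Proposition~\ref{P:Lq-energy}(ii) gives $\sup_t\int_\Omega\rho^q+\iint_{\Omega_T}|\nabla\rho^{(q+m-1)/2}|^2\le C$ under \eqref{V-Lq-energy}. Feeding this gradient bound into \eqref{W-p} and \eqref{V-p} (with a fixed $\varepsilon$, say $\varepsilon=1$, since their right-hand gradient term is already finite), the two speed integrals are bounded by $C$ plus finite powers of $\sup_t\int_\Omega\rho^q$. Adding the two estimates yields \eqref{Lq-energy-speed}. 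The stated window \eqref{V-Lq-energy-speed} for $1<q\le m$ is exactly the intersection of the energy window \eqref{V-Lq-energy} with the speed window \eqref{V:speed}, re-expressed through the scaling relation \eqref{Serrin}, on which $2\le q_2\le\frac{q+m-1}{m-1}$ together with $q_1\le\frac{2m}{m-1}$ describes the common endpoint constraints.

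The main obstacle is the regime $q>m$, where $\lambda_q=2$ falls outside the range $1\le q\le m$ for which Lemma~\ref{P:W-p} is stated. Here I would invoke the speed estimate at the level $q=m$, for which $\lambda_m=2$; this requires controlling $\iint_{\Omega_T}|\nabla\rho^{(2m-1)/2}|^2$ and $\sup_t\int_\Omega\rho^m$. The gradient bound at level $r=m$ is supplied by the companion energy estimate Proposition~\ref{P:Lq-energy-r}, legitimate since $\max\{1,m-1\}\le m\le q$, while $\sup_t\int_\Omega\rho^m\le|\Omega|^{1-m/q}(\sup_t\int_\Omega\rho^q)^{m/q}$ by H\"older's inequality on the bounded domain. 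The drift enters only through \eqref{V-p}, which at level $m$ needs $V\in\mathcal{S}_{m,m}^{(q_1,q_2)}$; this follows from $V\in\mathcal{S}_{m,q}^{(q_1,q_2)}$ by the bounded-domain embedding of Remark~\ref{R:S-tildeS}(iii). This level-$m$ reduction explains why the upper endpoint of $q_2$ in \eqref{V-Lq-energy-speed} freezes at $\frac{2m-1}{m-1}$ (the value $\frac{q+m-1}{m-1}$ evaluated at $q=m$) once $q>m$. Part (i) is the identical argument specialized to $q=1\le m$, using Proposition~\ref{P:Lq-energy}(i) and the entropy $\int_\Omega\rho|\log\rho|$ in place of $\int_\Omega\rho^q$, with exponent $\lambda_1$.

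Finally, parts (iii) and (iv) run on the same scheme with the energy input replaced accordingly. For part (iii) I would use Proposition~\ref{P:Lq-energy}(iii) (control through $\|\nabla V\|$ in $\tilde{\mathcal S}$) for the energy bound, and before applying the speed estimate convert the $\tilde{\mathcal S}$-control into $\mathcal S$-control via Remark~\ref{R:S-tildeS}(iv), namely $\|V\|_{\mathcal S_{m,q}^{(\tilde q_1^\ast,\tilde q_2)}}\lesssim\|V\|_{\tilde{\mathcal S}_{m,q}^{(\tilde q_1,\tilde q_2)}}$, valid exactly on the $\tilde q_2$-range \eqref{tilde-q2}; condition \eqref{V-tilde-Lq-energy-speed} is then the intersection of this embedding window with the energy window \eqref{V-tilde-Lq} and, for $q>m$, the level-$m$ reduction above. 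For part (iv) the energy estimate is Proposition~\ref{P:Lq-energy}(iv), where $\nabla\cdot V\ge0$ forces the drift contribution to be nonpositive so the energy constant depends only on the data; nonetheless the $L^{\lambda_q}(\rho)$-bound for $V$ in \eqref{V-p} still demands $V\in\mathcal{S}_{m,q}^{(q_1,q_2)}$, which is why \eqref{V-Lq-divfree} reproduces the speed window \eqref{V:speed} (again frozen at level $q=m$ for $q>m$). In every case the finiteness of $|\Omega|$ absorbs all lower-order and constant terms into $C$, so adding the energy and speed estimates produces the claimed bound \eqref{L1-energy-speed} or \eqref{Lq-energy-speed}.
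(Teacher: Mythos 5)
Your proposal is correct and is precisely the argument the paper intends: the paper states Proposition~\ref{P:Energy-speed} without a separate proof, presenting it as the direct combination of the energy estimates in Proposition~\ref{P:Lq-energy} (and Proposition~\ref{P:Lq-energy-r} at level $r=m$) with the auxiliary speed bounds \eqref{W-p}--\eqref{V-p} of Lemma~\ref{P:W-p}, with the freezing of the speed estimate at $q=m$ for $q>m$ announced in Section~\ref{SS:Speed} and the class embeddings of Remark~\ref{R:S-tildeS}~(iii)--(iv) reconciling the $(q_1,q_2)$, resp.\ $(\tilde q_1,\tilde q_2)$, windows exactly as you describe. Your observation that the two estimates are decoupled, so the gradient term need not be absorbed and the bounds simply add, matches the structure of the paper's constants and requires no further justification.
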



The following estimates is based on embedding arguments because both the spatial and temporal domains are finite.

\begin{proposition}\label{P:Energy-embedding} (Embedding)
Let $m> 1$ and $q > 1$.  Suppose that $\rho$ is a weak solution of \eqref{E:Main}-\eqref{E:Main-bc-ic} with $\rho_{0}\in  L^{q}(\Omega) \cap \calC^{\alpha}(\overline{\Omega})$.

\begin{itemize}
\item[(i)]
For a given pair $(q_1, q_2)$ satisfying \eqref{V-Lq-energy}, there exists a pair of constants $(q_1^\ast, q_2^\ast)$ such that $q_1^\ast \in [\frac{d(q+m-1)}{q+(d-1)(m-1)},q_1] $ and $q_{2}^{\ast} \in [2, q_2]$ such that $V \in \mathcal{S}_{m, q^\ast}^{(q_1^\ast, q_2^{\ast})}$ satisfies \eqref{V-Lq-energy-speed} and the following estimate
\begin{equation}\label{Lq-energy-speed-embedding}
 \sup_{0 \leq t \leq T} \int_{\Omega\times \{t\}} \rho^{q^\ast}  \,dx
 + \iint_{\Omega_T} \left(\abs{\frac{\nabla \rho^m}{\rho}}^{\lambda_{q^\ast}}+|V|^{\lambda_{q^\ast}}\right ) \rho  \,dx\,dt \le C,
\end{equation}
holds where  $C = C (\abs{\Omega}, \, \|V\|_{\mathcal{S}_{m,q^{\ast}}^{(q_1^\ast, q^{\ast}_2)}}, \|\rho_0\|_{L^{q^\ast}(\Omega)})$.

\item[(ii)]
For a given pair $(\tilde{q}_1, \tilde{q}_2)$ satisfying \eqref{V-tilde-Lq}, there exists a pair $(\tilde{q}_1^\ast, \tilde{q}_2^\ast)$ such that $\tilde{q}_1^\ast \in [\frac{d+q_{m,d}}{2+q_{m,d}},\tilde{q}_1]$ and $\tilde{q}_2^\ast \in [\frac{2+q_{m,d}}{1+q_{m,d}},\tilde{q}_2]$ such that $ V \in \tilde{\mathcal{S}}_{m, q^\ast}^{(\tilde{q}_1^\ast, \tilde{q}_2^{\ast})}$ satisfies \eqref{V-tilde-Lq-energy-speed} and
the estimate \eqref{Lq-energy-speed-embedding} holds with $C = C (\|V\|_{\tilde{\mathcal{S}}_{m,\tilde{q}^{\ast}}^{(\tilde{q}_1^\ast, \tilde{q}^{\ast}_2)}}, \|\rho_0\|_{L^{q^\ast}(\Omega)})$.

\item[(iii)] Let $\nabla\cdot V \geq 0$. Assume that
\begin{equation}\label{V-Lq-divfree-embedding}
V\in \mathcal{S}_{m,q}^{(q_1, q_2)}\cap \calC^\infty(\overline{\Omega}_T) \quad \text{for}\quad
\begin{cases}
\frac{2+q_{m,d}}{1+q_{m,d}} < q_2 \leq \frac{p_1 m}{m-1}, & \text{ if }  d>2, \vspace{1 mm}\\
\frac{2+q_{m,2}}{1+q_{m,2}} < q_2 < \frac{p_1 m}{m-1}, & \text{ if }   d=2.
\end{cases}
\end{equation}
For a given pair $(q_1, q_2)$ satisfying \eqref{V-Lq-divfree-embedding}, there exists a pair of constants $(q_1^\ast, q_2^\ast)$ such that such that $q_1^\ast \in [\frac{2+q_{m,d}}{1+q_{m,d}},q_1] $ and $q_{2}^{\ast} \in [2, q_2]$ such that $V \in \mathcal{S}_{m, q^\ast}^{(q_1^\ast, q_2^{\ast})}$ satisfying \eqref{V-Lq-divfree} and the estimate \eqref{Lq-energy-speed-embedding} holds with
$C = C (\|V\|_{\mathcal{S}_{m,q^{\ast}}^{(q_1^\ast, q^{\ast}_2)}}, \, \|\rho_0\|_{L^{q^\ast}(\Omega)})$.
\end{itemize}
\end{proposition}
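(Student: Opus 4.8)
The plan is to deduce all three parts from the corresponding cases of Proposition~\ref{P:Energy-speed} by descending the integrability exponent from $q$ to a suitable $q^\ast \in [1,q]$ and exploiting that on the bounded cylinder $\Omega_T$ the scaling classes are nested, as recorded in Remark~\ref{R:S-tildeS}~(iii). By Remark~\ref{R:S-tildeS}~(i) it suffices to treat the scaling invariant case $V \in \mathcal{S}_{m,q}^{(q_1,q_2)}$ (resp. $\tilde{\mathcal{S}}_{m,q}^{(\tilde q_1,\tilde q_2)}$). First I would record the elementary embedding: since $\abs{\Omega}$ and $T$ are finite, H\"older's inequality gives $\|V\|_{L^{q_1^\ast, q_2^\ast}_{x,t}} \leq c\,\|V\|_{L^{q_1, q_2}_{x,t}}$ whenever $q_1^\ast \leq q_1$ and $q_2^\ast \leq q_2$. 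Consequently, if a triple $(q^\ast, q_1^\ast, q_2^\ast)$ with $q^\ast \leq q$, $q_1^\ast \leq q_1$, $q_2^\ast \leq q_2$ lies on the scaling line \eqref{Serrin} for $q^\ast$, then $V \in \mathcal{S}_{m,q^\ast}^{(q_1^\ast, q_2^\ast)}$ with $\|V\|_{\mathcal{S}_{m,q^\ast}^{(q_1^\ast, q_2^\ast)}} \leq c\,\|V\|_{\mathcal{S}_{m,q}^{(q_1,q_2)}}$.

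The core of part~(i) is then the geometric claim that such a triple can be chosen to satisfy the speed condition \eqref{V-Lq-energy-speed}. For $1 < q \leq m$ this is immediate with $q^\ast = q$: the energy range \eqref{V-Lq-energy}, together with the admissible $q_1$-interval from \eqref{energy00.5}, already forces $q_1 \leq \frac{2q}{m-1} \leq \frac{2m}{m-1}$ and $2 \leq q_2 \leq \frac{q+m-1}{m-1}$, which is exactly \eqref{V-Lq-energy-speed}. The substantive case is $q > m$, where the energy class admits pairs with $q_1 > \frac{2m}{m-1}$ or $q_2 > \frac{2m-1}{m-1}$; here I would lower $q^\ast$ (down to $m$ if necessary) and read off, from the scaling identity $\frac{d}{q_1^\ast} + \frac{2+q^\ast_{m,d}}{q_2^\ast} = 1 + q^\ast_{m,d}$ with $q^\ast_{m,d} = \frac{d(m-1)}{q^\ast}$, a point $(\tfrac{1}{q_1^\ast}, \tfrac{1}{q_2^\ast})$ on the $q^\ast$-line sitting inside the speed region $q_1^\ast \leq \frac{2m}{m-1}$, $2 \leq q_2^\ast \leq \frac{2m-1}{m-1}$ and still obeying $q_1^\ast \leq q_1$, $q_2^\ast \leq q_2$; this is precisely the movement $g \mapsto h$ depicted in Fig.~\ref{F:S:m2}. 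Tracking the endpoints of the line as $q^\ast$ varies yields the stated ranges $q_1^\ast \in [\frac{d(q+m-1)}{q+(d-1)(m-1)}, q_1]$ and $q_2^\ast \in [2, q_2]$.

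With $(q^\ast, q_1^\ast, q_2^\ast)$ fixed, I would conclude by invoking Proposition~\ref{P:Energy-speed}~(ii) at level $q^\ast$: since $\Omega$ is bounded, $\rho_0 \in L^q(\Omega) \subseteq L^{q^\ast}(\Omega)$, and by Proposition~\ref{P:Lq-energy-r} the solution $\rho$ carries the energy bound \eqref{Lq-energy-r} at the intermediate level $r = q^\ast$, so that $\rho \in L^\infty(0,T;L^{q^\ast}(\Omega))$ with $\nabla\rho^{(q^\ast+m-1)/2} \in L^2(\Omega_T)$; hence the hypotheses of Proposition~\ref{P:Energy-speed}~(ii) hold with $q$ replaced by $q^\ast$. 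This gives \eqref{Lq-energy-speed-embedding} with $\lambda_{q^\ast}$ in place of $\lambda_q$, the constant being controlled through $\|V\|_{\mathcal{S}_{m,q^\ast}^{(q_1^\ast,q_2^\ast)}} \leq c\,\|V\|_{\mathcal{S}_{m,q}^{(q_1,q_2)}}$ and $\|\rho_0\|_{L^{q^\ast}(\Omega)} \leq c\,\|\rho_0\|_{L^q(\Omega)}$. Parts~(ii) and~(iii) follow the identical three-step template: in~(ii) the H\"older nesting is applied to $\nabla V$, the reference conditions become \eqref{V-tilde-Lq}/\eqref{V-tilde-Lq-energy-speed}, and one finishes with Proposition~\ref{P:Energy-speed}~(iii); in~(iii) one keeps the sign hypothesis $\nabla \cdot V \geq 0$, replaces the conditions by \eqref{V-Lq-divfree-embedding}/\eqref{V-Lq-divfree}, and finishes with Proposition~\ref{P:Energy-speed}~(iv).

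The main obstacle throughout is the geometric bookkeeping of the second paragraph: verifying, by explicit computation with the scaling identity, that for every energy-admissible pair there is a level $q^\ast \leq q$ whose scaling line meets the speed region without increasing either index — equivalently, that the \emph{reachable} set $\{q_1^\ast \leq q_1,\ q_2^\ast \leq q_2\}$ and the speed region overlap on the $q^\ast$-line — and in checking the boundary cases $d = 2$ and $q = m$ (and, for~(ii), the Sobolev threshold $\tilde q_1 \in (1,d)$) separately, since there the admissible $q_2$-ranges degenerate to strict inequalities.
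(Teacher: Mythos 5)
Your proposal is correct and is essentially the argument the paper intends: the paper states Proposition~\ref{P:Energy-embedding} without a written proof, introducing it only by the remark that it ``is based on embedding arguments because both the spatial and temporal domains are finite,'' and your three steps --- H\"older nesting of the classes $\mathcal{S}_{m,q}^{(q_1,q_2)}$ (resp.\ $\tilde{\mathcal{S}}_{m,q}^{(\tilde q_1,\tilde q_2)}$) on the bounded cylinder as in Remark~\ref{R:S-tildeS}~(iii), the descent $q\mapsto q^{\ast}$ along scaling lines into the speed region (the movement $g\mapsto h$ of Fig.~\ref{F:S:m2} and Remark~\ref{R:T:ACweakSol}~(iii)), and the application of Propositions~\ref{P:Lq-energy-r} and~\ref{P:Energy-speed} at level $q^{\ast}$ --- are exactly that argument. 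Your added details, namely $\rho_0\in L^{q}(\Omega)\subseteq L^{q^{\ast}}(\Omega)$ on a bounded domain, the intermediate energy bound \eqref{Lq-energy-r} verifying the hypotheses of Proposition~\ref{P:Energy-speed} at level $q^{\ast}$, and the endpoint bookkeeping via \eqref{energy00.5}, correctly supply what the paper leaves implicit.
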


\subsection{Estimates of temporal and spatial derivatives.}\label{SS:Temporal}

Now with the initial data in $L^{q}_{x}$ for $q \geq 1$, we search further properties of the temporal and spatial derivatives of $\rho^q$ and $\rho^q$ itself. First, we recall a priori estimates in Proposition~\ref{P:Lq-energy} and remark the integrability of $\rho$ and $V\rho^q$.

Based on a priori estimates, Proposition~\ref{P:Lq-energy}, we remark the following first.
\begin{remark}\label{Remark : AL-2}
From the estimates in Proposition~\ref{P:Lq-energy}, it holds that $\rho \in L^{(q+m-1) + \frac{2q}{d}}_{x,t}$ by applying parabolic Sobolev embedding, that is,
 \begin{equation}\label{E:rho:q-1}
      \iint_{\Omega_T} \rho^{(q+m-1)+\frac{2q}{d}} \,dxdt
        \leq  \sup_{0 \leq t \leq T}  \int_{\Omega} \rho^{q} (\cdot, t)\,dx
 + \iint_{\Omega_T} \left|\nabla \rho^{\frac{q+m-1}{2}}\right|^2 \,dx\,dt \leq C ,
    \end{equation}
where the constant $C$ depends on $\Omega$, $V$, and $\rho_0$.
\end{remark}

We introduce two compactness arguments Proposition~\ref{P:AL1} for $1\leq q \leq m+1$ and Proposition~\ref{P:AL2} for $q \geq \max\{1, m-1\}$ that are used to show convergence of a sequence of regular solutions to a weak solution.

Before Proposition~\ref{P:AL1}, the following lemma informs further restrictions on $V$.
\begin{lemma}\label{L:V_compact_m+1}
 Let $m>1$ and $1\leq q \leq m+1$. Suppose that $ V\in \mathcal{S}_{m,q}^{(q_1, q_2)}$ satisfies
 \begin{equation}\label{V_compact_m+1}
 \begin{cases}
 \frac{1}{\gamma_1} - \frac{1}{q} \leq \frac{1}{q_1} \leq \frac{1}{\gamma_1} - \frac{d-2}{d(q+m-1)}, \quad
\frac{1}{\gamma_1} - \frac{1}{q+m-1} \leq \frac{1}{q_2} \leq \frac{1}{\gamma_1}, & \text{ if } d> 2, \vspace{1 mm} \\
\frac{1}{\gamma_1} - \frac{1}{q} \leq \frac{1}{q_1} < \frac{1}{\gamma_1}, \quad
\frac{1}{\gamma_1} - \frac{1}{q+m-1} < \frac{1}{q_2} \leq \frac{1}{\gamma_1}, & \text{ if } d = 2.
 \end{cases}
 \end{equation}
where
\begin{equation}\label{gamma_1}
\gamma_1 := \frac{d(q+m-1)+2q}{md+q}
\end{equation}
Then, it holds
\[
V\rho \in L_{x,t}^{\gamma_1}.
\]
Futhermore, the followin estimate holds
\begin{equation}\label{E:rho:q-2}
\left\|V\rho\right\|_{L_{x,t}^{\gamma_1}}
\leq \|V\|_{\mathcal{S}_{m,q}^{(q_1,q_2)}} \|\rho\|_{L^{r_1, r_2}_{x,t}}
\end{equation}
where the pair $(r_1, r_2)= (\frac{\gamma_1 q_1}{q_1 - \gamma_1}, \frac{\gamma_1 q_2}{q_2 - \gamma_1})$ satisfies \eqref{q-r1r2} in Lemma~\ref{P:L_r1r2}.
\end{lemma}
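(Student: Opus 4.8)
The plan is to obtain the bound \eqref{E:rho:q-2} directly from a mixed-norm Hölder inequality, and then to verify purely algebraically that the resulting exponent pair $(r_1,r_2)$ lands in the admissible range of Lemma~\ref{P:L_r1r2}. First I would write $L^{\gamma_1}_{x,t}=L^{\gamma_1,\gamma_1}_{x,t}$, apply Hölder in the spatial variable at a.e.\ fixed $t$, and then apply Hölder in time, using the conjugacy relations
\[
\frac{1}{\gamma_1}=\frac{1}{q_1}+\frac{1}{r_1},\qquad \frac{1}{\gamma_1}=\frac{1}{q_2}+\frac{1}{r_2},
\]
which is exactly the choice $r_1=\frac{\gamma_1 q_1}{q_1-\gamma_1}$, $r_2=\frac{\gamma_1 q_2}{q_2-\gamma_1}$ announced in the statement (note the hypothesis \eqref{V_compact_m+1} forces $q_1,q_2>\gamma_1$, so these are well defined and positive). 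This immediately yields $\|V\rho\|_{L^{\gamma_1}_{x,t}}\le \|V\|_{L^{q_1,q_2}_{x,t}}\,\|\rho\|_{L^{r_1,r_2}_{x,t}}$, so the only remaining task is to control the right-hand factor via Lemma~\ref{P:L_r1r2}.

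Next I would check that $(r_1,r_2)$ satisfies the scaling identity in \eqref{q-r1r2}. Using $\frac{1}{r_i}=\frac{1}{\gamma_1}-\frac{1}{q_i}$ together with the defining relation $\frac{d}{q_1}+\frac{2+q_{m,d}}{q_2}=1+q_{m,d}$ of $\mathcal{S}_{m,q}^{(q_1,q_2)}$, one computes
\[
\frac{d}{r_1}+\frac{2+q_{m,d}}{r_2}=\frac{d+2+q_{m,d}}{\gamma_1}-(1+q_{m,d}).
\]
Substituting $q_{m,d}=\frac{d(m-1)}{q}$ gives $d+2+q_{m,d}=\frac{d(q+m-1)+2q}{q}$ and $1+q_{m,d}+\frac{d}{q}=\frac{md+q}{q}$, so the definition $\gamma_1=\frac{d(q+m-1)+2q}{md+q}$ reduces the right-hand side to exactly $\frac{d}{q}$, which is the required identity. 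I expect this matching of $\gamma_1$ to the scaling line to be the heart of the lemma: the computation is routine, but the precise value of $\gamma_1$ is chosen exactly so that this balance holds, so it must be carried out with care.

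Finally I would translate the hypotheses \eqref{V_compact_m+1} into the range constraints of \eqref{q-r1r2}. Under $\frac{1}{r_1}=\frac{1}{\gamma_1}-\frac{1}{q_1}$, the two-sided bound on $1/q_1$ becomes $q\le r_1\le \frac{d(q+m-1)}{d-2}$ when $d>2$, and likewise the bound on $1/q_2$ becomes $q+m-1\le r_2\le\infty$; the $d=2$ case produces the corresponding strict inequalities, matching the branches of \eqref{q-r1r2}. Thus $(r_1,r_2)$ meets all conditions of Lemma~\ref{P:L_r1r2}, and since the standing regularity $\rho\in L^{\infty}(0,T;L^{q}(\Omega))$ with $\rho^{\frac{q+m-1}{2}}\in L^{2}(0,T;W^{1,2}(\Omega))$ is available from the a priori estimates (Proposition~\ref{P:Lq-energy}), that lemma gives $\|\rho\|_{L^{r_1,r_2}_{x,t}}<\infty$. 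Combining this with the Hölder bound yields both $V\rho\in L^{\gamma_1}_{x,t}$ and the estimate \eqref{E:rho:q-2}, completing the argument.
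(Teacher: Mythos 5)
Your proof is correct and is essentially the argument the paper intends: Lemma~\ref{L:V_compact_m+1} is the bounded-domain analogue of \cite[Lemma~4.17]{HKK}, proved exactly by the mixed-norm H\"older inequality with the conjugate pair $(r_1,r_2)$ followed by the scaling check against Lemma~\ref{P:L_r1r2}, and your computation $\frac{d}{r_1}+\frac{2+q_{m,d}}{r_2}=\frac{d+2+q_{m,d}}{\gamma_1}-(1+q_{m,d})=\frac{d}{q}$ together with the translation of \eqref{V_compact_m+1} into the ranges of \eqref{q-r1r2} is carried out correctly. One cosmetic caveat: at the endpoint $\frac{1}{q_2}=\frac{1}{\gamma_1}$ one has $q_2=\gamma_1$ rather than $q_2>\gamma_1$, so the formula $r_2=\frac{\gamma_1 q_2}{q_2-\gamma_1}$ must there be read as $r_2=\infty$ and H\"older applied with $L^\infty$ in time, which does not affect the argument.
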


We deliver two compactness results: Proposition~\ref{P:AL1} in case $1 \leq q \leq m+1$ and Proposition~\ref{P:AL2} in case $q \geq \max\{1, m+1\}$. 
The first proposition is following which is new and applicable to existence results in \cite{HKK} (see Appendix~\ref{Appendix:compact}). The restriction of $V$ is the intersected region for $q_1$ and $q_2$ that gives a priori estimates (one of the conditions of $V$ in Proposition~\ref{P:Lq-energy}) and \eqref{V_compact_m+1}.

\begin{proposition}\label{P:AL1}
Let $m >1$ and $1\leq q \leq m+1$. Suppose that $\rho$ is a regular solution of \eqref{E:Main}-\eqref{E:Main-bc-ic} with $\rho_{0}\in   L^{q}(\Omega) \cap \calC^{\alpha}(\overline{\Omega})$. Moreover, assume that $V$ satisfies \eqref{V_compact_m+1} and one of hypothesis in Proposition~\ref{P:Lq-energy}.
Then, it holds that
\begin{equation}\label{P:AL1:ass}
\left\| \partial_t \rho \right\|_{W_{x}^{-1, \frac{d(q+m-1)+2q}{md+q}} L_{t}^{\frac{d(q+m-1)+2q}{md+q}}} + \left\|\nabla \rho^m\right\|_{L_{x,t}^{\frac{d(q+m-1)+2q}{md+q}}} < C
\end{equation}
for the constant $C$ depending on $\Omega$, $V$, and $\rho_0$. Furthermore, the following holds
\begin{equation}\label{P:AL1:result}
\rho^m \in L_{x,t}^{p, \frac{d(q+m-1)+2q}{md+q}} \quad \text{ for } \quad p=\frac{d^2(q+m-1) + 2dq}{md(d-1)+d-2q}.
\end{equation}
\end{proposition}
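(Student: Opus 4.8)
The plan is to bound each component of the flux $\nabla\rho^m-V\rho$ in $L^{\gamma_1}_{x,t}$, read the time-derivative estimate directly off the equation, and then upgrade to higher spatial integrability by a Sobolev embedding. Throughout I write $\gamma_1$ as in \eqref{gamma_1} and freely use the output of Proposition~\ref{P:Lq-energy} together with its parabolic-Sobolev consequence \eqref{E:rho:q-1}: namely $\rho\in L^{(q+m-1)+\frac{2q}{d}}_{x,t}$, $\nabla\rho^{\frac{q+m-1}{2}}\in L^2_{x,t}$, and $\rho\in L^{\infty}(0,T;L^q(\Omega))$. Note first that $\gamma_1\le 2$ on the range $1\le q\le m+1$, with equality exactly at $q=m+1$.

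First I would estimate $\nabla\rho^m$. Writing $\nabla\rho^m=\frac{2m}{q+m-1}\,\rho^{\frac{m-q+1}{2}}\,\nabla\rho^{\frac{q+m-1}{2}}$ and applying H\"older's inequality with the split $\frac{1}{\gamma_1}=\frac{1}{a}+\frac12$ gives
\[
\|\nabla\rho^m\|_{L^{\gamma_1}_{x,t}}\le c\,\|\rho^{\frac{m-q+1}{2}}\|_{L^{a}_{x,t}}\,\|\nabla\rho^{\frac{q+m-1}{2}}\|_{L^{2}_{x,t}}.
\]
The second factor is bounded by the energy estimate. For the first, a direct computation of $a=(\frac{1}{\gamma_1}-\frac12)^{-1}$ gives $\frac{a(m-q+1)}{2}=(q+m-1)+\frac{2q}{d}$, so $\rho^{\frac{m-q+1}{2}}\in L^a_{x,t}$ is controlled precisely by the quantity bounded in \eqref{E:rho:q-1}; this uses $m-q+1\ge 0$, valid since $q\le m+1$. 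At the endpoint $q=m+1$ one has $\gamma_1=2$ and $\nabla\rho^m=\nabla\rho^{\frac{q+m-1}{2}}$, so the bound is immediate.

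Next, Lemma~\ref{L:V_compact_m+1} (through \eqref{E:rho:q-2}) furnishes $V\rho\in L^{\gamma_1}_{x,t}$, using precisely hypothesis \eqref{V_compact_m+1} so that the associated pair $(r_1,r_2)$ meets \eqref{q-r1r2}; the right-hand side of \eqref{E:rho:q-2} is finite by the energy bound. Hence the flux $\nabla\rho^m-V\rho\in L^{\gamma_1}_{x,t}$. Since $\rho$ solves \eqref{E:Main}, $\partial_t\rho=\nabla\cdot(\nabla\rho^m-V\rho)$ is the spatial divergence of an $L^{\gamma_1}_{x,t}$ vector field, so by duality $\|\partial_t\rho\|_{W^{-1,\gamma_1}_x L^{\gamma_1}_t}\le\|\nabla\rho^m-V\rho\|_{L^{\gamma_1}_{x,t}}$, which gives \eqref{P:AL1:ass}.

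Finally, for \eqref{P:AL1:result} I would invoke the spatial Sobolev embedding. Because $m\gamma_1<(q+m-1)+\frac{2q}{d}$ and $\Omega_T$ is bounded, \eqref{E:rho:q-1} also yields $\rho^m\in L^{\gamma_1}_{x,t}$; combined with $\nabla\rho^m\in L^{\gamma_1}_{x,t}$, this shows $\rho^m(\cdot,t)\in W^{1,\gamma_1}(\Omega)$ for a.e.\ $t$ with an $L^{\gamma_1}_t$-integrable norm. Applying $W^{1,\gamma_1}(\Omega)\hookrightarrow L^{p}(\Omega)$ with $p=\frac{d\gamma_1}{d-\gamma_1}$ pointwise in $t$ and then taking the $L^{\gamma_1}_t$ norm delivers $\rho^m\in L^{p,\gamma_1}_{x,t}$. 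The argument is mechanical once the cited estimates are in hand; the only genuine points of care are the exponent bookkeeping (verifying that $\frac{a(m-q+1)}{2}$ and $\frac{d\gamma_1}{d-\gamma_1}$ collapse to the stated exponents) and checking $\gamma_1<d$ so that $p$ is finite, which reduces to $md(d-1)+d-2q>0$ --- exactly the positivity of the denominator in \eqref{P:AL1:result}, valid for $d>2$ and for $d=2$ when $q<m+1$.
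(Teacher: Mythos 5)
Your proposal is correct, and its core coincides with the paper's own proof: the same factorization $\nabla\rho^m=\frac{2m}{m+q-1}\,\rho^{\frac{m-q+1}{2}}\nabla\rho^{\frac{q+m-1}{2}}$ combined with H\"older and \eqref{E:rho:q-1} (the paper records exactly your exponent $a$, in the form $\rho^{\frac{m-q+1}{2}}\in L_{x,t}^{\frac{2d(q+m-1)+4q}{d(m-q+1)}}$), the same appeal to Lemma~\ref{L:V_compact_m+1} under hypothesis \eqref{V_compact_m+1} to get $V\rho\in L^{\gamma_1}_{x,t}$, and the same duality reading of $\partial_t\rho=\nabla\cdot(\nabla\rho^m-V\rho)$ to obtain \eqref{P:AL1:ass}. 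The only divergence is the final step: for \eqref{P:AL1:result} the paper sets $X_0=\bigl\{\rho^q\in L^1_x \text{ and } \nabla\rho^{\frac{m+q-1}{2}}\in L^2_x\bigr\}$, $X_1=W^{-1,\gamma_1}_x$ and invokes the Aubin--Lions lemma, whereas you derive $\rho^m\in L^{\gamma_1}_t W^{1,\gamma_1}_x$ (checking $m\gamma_1<(q+m-1)+\tfrac{2q}{d}$ so that $\rho^m\in L^{\gamma_1}_{x,t}$ on the bounded domain) and apply the spatial embedding $W^{1,\gamma_1}(\Omega)\hookrightarrow L^{\frac{d\gamma_1}{d-\gamma_1}}(\Omega)$ slice-wise in $t$. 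Your route is the more transparent one for the stated integrability claim: Aubin--Lions is a compactness statement whose real role in the paper is to furnish strong convergence of the approximating sequence in the existence proofs, and it does not by itself produce the exponent $p$, which in either treatment is the Sobolev conjugate $p=\frac{d\gamma_1}{d-\gamma_1}$. You also make explicit two points the paper leaves implicit and which are worth keeping: the endpoint $q=m+1$, where $\gamma_1=2$ and $\nabla\rho^m=\nabla\rho^{\frac{q+m-1}{2}}$ so the H\"older step is vacuous, and the requirement $\gamma_1<d$, equivalently $md(d-1)+d-2q>0$, which holds for $d>2$ on the whole range but fails precisely at $d=2$, $q=m+1$, where the denominator in \eqref{P:AL1:result} vanishes and the formula for $p$ degenerates.
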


\begin{proof}
The energy estimates in \eqref{L1-energy} and \eqref{Lq-energy} in Proposition~\ref{P:Lq-energy} provide that $\rho^q \in  L^{1, \infty}_{x, t}$ and $\nabla \rho^{\frac{m+q-1}{2}} \in L^{2}_{x,t}$. Let us denote 
\[
X_0 := \left\{ \rho^q \in  L^{1}_{x}\ \text{ and } \  \nabla \rho^{\frac{m+q-1}{2}} \in L^{2}_{x} \right\}.
\]
Note that the restriction $q < m+1$ is equivalent to $m-q+1 >0$. The inequality \eqref{E:rho:q-1} yields that $\rho^{\frac{m-q+1}{2}} \in L_{x,t}^{\frac{2d(q+m-1)+4q}{d(m-q+1)}}$ which implies the following
\begin{equation}\label{AL1:01}
\nabla \rho^m = \frac{2m}{m+q-1} \rho^{\frac{m-q+1}{2}} \nabla \rho^{\frac{m+q-1}{2}}
\in L_{x,t}^{\frac{d(q+m-1)+2q}{md+q}}.
\end{equation}
By directly using \eqref{E:Main} that $\rho_t = \nabla \cdot (\nabla \rho^m - V\rho)$, we observe that
\[
\rho_t \in W_{x}^{-1, \frac{d(q+m-1)+2q}{md+q} }L_{t}^{ \frac{d(q+m-1)+2q}{md+q}}
\]
because of Lemma~\ref{L:V_compact_m+1} and \eqref{AL1:01} which holds \eqref{P:AL1:ass}. Now let
\[
X_1 := W_{x}^{-1, \frac{d(q+m-1)+2q}{md+q}}. 
\]
By applying the Aubin-Lions Lemma with the spaces $X_0$ and $X_1$, we obtain that $\rho \in L_{x}^{mp}$ where $p$ is given in \eqref{P:AL1:result}. In fact, this result directly imply \eqref{P:AL1:result} because $\frac{d(q+m-1)+2q}{md+q} \leq 2$. 
\end{proof}

Now we recall the second compactness result in case $q\geq \max\{1, m-1\}$ that are Lemma~\ref{L-compact} and Proposition~\ref{P:AL2} in \cite{HKK}.

\begin{lemma}\label{L-compact}\cite[Lemma~4.17]{HKK}
 Let $m>1$ and $q\geq \max\{1, m-1\}$. Suppose that $V \in \mathcal{S}_{m,q}^{(q_1, q_2)}$ satisfies
  \begin{equation}\label{V_compact_m-1}
 \begin{cases}
\frac{q}{d(q+m-1)+2q} \leq \frac{1}{q_1} \leq \frac{1}{\gamma_2} - \frac{q(d-2)}{d(q+m-1)}, \quad
\frac{1}{\gamma_2} - \frac{q}{q+m-1} \leq \frac{1}{q_2} \leq \frac{1}{\gamma_2}, & \text{ if } d> 2, \vspace{1 mm} \\
\frac{q}{d(q+m-1)+2q} \leq \frac{1}{q_1} < \frac{1}{\gamma_2}, \quad
\frac{1}{\gamma_2} - \frac{q}{q+m-1} < \frac{1}{q_2} \leq \frac{1}{\gamma_2}, & \text{ if } d = 2,
 \end{cases}
 \end{equation}
where
\begin{equation}\label{gamma_2}
\gamma_2 := \frac{d(q+m-1) + 2q}{d(q+m-1) + q}.
\end{equation}
Then, it holds
\[
V\rho^q \in L_{x,t}^{\gamma_2}.
\]
Futhermore, the followin estimate holds
\begin{equation}\label{E:rho:q-2}
\left\|V\rho^q\right\|_{L_{x,t}^{\gamma_2}}
\leq \|V\|_{\mathcal{S}_{m,q}^{(q_1,q_2)}} \|\rho\|_{L^{r_1, r_2}_{x,t}}^{q}
\end{equation}
where the pair $(r_1, r_2)= (\frac{\gamma_2 q q_1}{q_1 - \gamma_2}, \frac{ \gamma_2 q q_2}{q_2 - \gamma_2})$ satisfies \eqref{q-r1r2} in Lemma~\ref{P:L_r1r2}.
\end{lemma}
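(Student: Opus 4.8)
The plan is to obtain both conclusions as a direct consequence of a two–variable Hölder inequality combined with the interpolation estimate of Lemma~\ref{P:L_r1r2}, so that the only genuine work is the exponent bookkeeping. First I would record that, by Definition~\ref{D:Serrin}(ii), the scaling invariant norm is just the mixed Lebesgue norm, $\|V\|_{\mathcal{S}_{m,q}^{(q_1,q_2)}} = \|V\|_{L^{q_1,q_2}_{x,t}}$, so it suffices to bound $\|V\rho^q\|_{L^{\gamma_2}_{x,t}}$ by $\|V\|_{L^{q_1,q_2}_{x,t}}\|\rho\|_{L^{r_1,r_2}_{x,t}}^q$. Writing $\rho^q$ in the norm $L^{r_1/q,\,r_2/q}_{x,t}$, for which $\|\rho^q\|_{L^{r_1/q,r_2/q}_{x,t}} = \|\rho\|_{L^{r_1,r_2}_{x,t}}^q$, I would apply Hölder first on the spatial slice and then in time, using the exponent identities
\begin{equation*}
\frac{1}{\gamma_2} = \frac{1}{q_1} + \frac{q}{r_1}, \qquad \frac{1}{\gamma_2} = \frac{1}{q_2} + \frac{q}{r_2},
\end{equation*}
which are exactly what the prescribed choice $(r_1,r_2) = \bigl(\tfrac{\gamma_2 q q_1}{q_1-\gamma_2}, \tfrac{\gamma_2 q q_2}{q_2-\gamma_2}\bigr)$ is designed to satisfy. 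This yields the estimate \eqref{E:rho:q-2} at once, provided the right-hand side is finite.

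Finiteness of $\|\rho\|_{L^{r_1,r_2}_{x,t}}$ is precisely where Lemma~\ref{P:L_r1r2} enters, and the hypotheses are engineered to guarantee its applicability. The second step is therefore to check that $(r_1,r_2)$ satisfies the scaling identity \eqref{q-r1r2}. Using $\frac{1}{r_i} = \frac{1}{q}\bigl(\frac{1}{\gamma_2} - \frac{1}{q_i}\bigr)$ together with the scaling condition $\frac{d}{q_1} + \frac{2+q_{m,d}}{q_2} = 1+q_{m,d}$ built into $\mathcal{S}_{m,q}^{(q_1,q_2)}$, a short computation reduces \eqref{q-r1r2} to the single relation $\gamma_2 = \frac{d+2+q_{m,d}}{d+1+q_{m,d}}$, which is exactly the definition \eqref{gamma_2} after substituting $q_{m,d} = \frac{d(m-1)}{q}$. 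Thus the scaling part holds automatically, needing nothing beyond $V \in \mathcal{S}_{m,q}^{(q_1,q_2)}$.

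The third step is to translate the admissible ranges in \eqref{q-r1r2}, namely $q \leq r_1 \leq \frac{d(q+m-1)}{d-2}$ and $q+m-1 \leq r_2 \leq \infty$ for $d>2$ (with the obvious strict modification for $d=2$), into the stated constraints \eqref{V_compact_m-1}. Again from $\frac{1}{r_i} = \frac{1}{q}\bigl(\frac{1}{\gamma_2} - \frac{1}{q_i}\bigr)$, the bound $r_1 \leq \frac{d(q+m-1)}{d-2}$ becomes $\frac{1}{q_1} \leq \frac{1}{\gamma_2} - \frac{q(d-2)}{d(q+m-1)}$, the bound $r_2 \geq q+m-1$ becomes $\frac{1}{q_2} \geq \frac{1}{\gamma_2} - \frac{q}{q+m-1}$, and $r_2 \leq \infty$ becomes $\frac{1}{q_2} \leq \frac{1}{\gamma_2}$; the remaining lower bound on $1/q_1$ in \eqref{V_compact_m-1} keeps $r_1$ inside its window once the $V$-scaling couples $q_1$ to $q_2$. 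The hypothesis $q \geq \max\{1,m-1\}$ ensures $q+m-1 \leq 2q$, so that the prescribed $r_i$ are positive and the interpolation of Lemma~\ref{P:L_r1r2} is valid.

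Finally I would invoke Lemma~\ref{P:L_r1r2}, whose hypotheses \eqref{rho-space} are supplied by the energy estimates (e.g. Proposition~\ref{P:Lq-energy}), to conclude $\rho \in L^{r_1,r_2}_{x,t}$ with the quantitative bound \eqref{norm-q-r1r2}; combined with the Hölder step this gives $V\rho^q \in L^{\gamma_2}_{x,t}$ and the estimate \eqref{E:rho:q-2}. I expect the only delicate point to be the exponent algebra of the second and third steps, namely verifying that the single definition of $\gamma_2$ simultaneously forces the scaling identity \emph{and} that the two–sided constraints \eqref{V_compact_m-1} are equivalent to the admissibility window of Lemma~\ref{P:L_r1r2}. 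No analytic input beyond Hölder's inequality and the interpolation lemma is required.
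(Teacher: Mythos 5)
Your proposal is correct and follows what is essentially the proof of the cited result: the paper does not reprove Lemma~\ref{L-compact} but imports it from \cite[Lemma~4.17]{HKK}, and that proof is precisely your two-exponent H\"older inequality with $\frac{1}{\gamma_2}=\frac{1}{q_1}+\frac{q}{r_1}=\frac{1}{q_2}+\frac{q}{r_2}$ followed by the interpolation Lemma~\ref{P:L_r1r2}; your computation that the scaling relation for $V$ together with \eqref{q-r1r2} forces $\gamma_2=\frac{d+2+q_{m,d}}{d+1+q_{m,d}}$, which is \eqref{gamma_2} after substituting $q_{m,d}=\frac{d(m-1)}{q}$, is the same exponent bookkeeping.

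One correction to your third step, though it does not invalidate the proof: the constraints \eqref{V_compact_m-1} are \emph{sufficient} for, not equivalent to, the admissibility window of Lemma~\ref{P:L_r1r2}. The window condition $r_1\geq q$ translates via $\frac{1}{r_1}=\frac{1}{q}\bigl(\frac{1}{\gamma_2}-\frac{1}{q_1}\bigr)$ into $\frac{1}{q_1}\geq \frac{1}{\gamma_2}-1=-\frac{q}{d(q+m-1)+2q}$, which is vacuous for any positive $q_1$; the stated lower bound $\frac{1}{q_1}\geq \frac{q}{d(q+m-1)+2q}$, i.e.\ $q_1\leq d+2+q_{m,d}$, is strictly stronger and is \emph{not} produced by the H\"older-plus-interpolation argument, nor is it the image under the $V$-scaling of $\frac{1}{q_2}\leq\frac{1}{\gamma_2}$ (that image is the negative quantity $-\frac{1}{d+2+q_{m,d}}$). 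It is extra room built into the hypothesis for the setting in which the lemma is consumed, namely the compactness argument of Proposition~\ref{P:AL2}, so your proof goes through (stronger hypotheses, same conclusion), but the ``equivalence'' you flag as the delicate point should not be expected to hold exactly. Similarly, the positivity of the prescribed $r_i$ comes from $q_i>\gamma_2$, guaranteed by the upper bounds $\frac{1}{q_1}<\frac{1}{\gamma_2}$ and $\frac{1}{q_2}\leq\frac{1}{\gamma_2}$ (with $q_2=\gamma_2$ read as $r_2=\infty$), not from $q\geq\max\{1,m-1\}$; the latter hypothesis plays its role only where the lemma is applied, e.g.\ in writing $\nabla\rho^{q}=\frac{2q}{q+m-1}\,\rho^{\frac{q-m+1}{2}}\nabla\rho^{\frac{q+m-1}{2}}$ in Proposition~\ref{P:AL2}.
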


\begin{proposition}\label{P:AL2}\cite[Proposition~4.19]{HKK}
Let $m>1$ and $q\geq \max\{ 1, m-1 \}$.
Suppose that $\rho$ is a regular solution of \eqref{E:Main}-\eqref{E:Main-bc-ic} with $\rho_{0}\in  L^{q}(\Omega) \cap \calC^{\alpha}(\overline{\Omega})$. Moreover, assume that $V$ satisfies \eqref{V_compact_m-1} and one of hypothesis in Proposition~\ref{P:Lq-energy}.
Then, it holds that
\begin{equation}\label{rho_t_2}
\norm{\partial_t\rho^q}_{W_{x}^{-1, \frac{d(q+m-1)+2q}{d(q+m-1)+q} }L_{t}^{1}}
+\norm{\nabla  \rho^q}_{L_{x,t}^{\frac{d(q+m-1)+2q}{q(d+1)}}}<C,
\end{equation}
for the constant $C$ depending on $\Omega$, $V$, and $\rho_0$.
Furthermore, the followings holds
\begin{equation}\label{rho_2_2}
\rho^q \in L^{p, \frac{d(q+m-1)+2q}{q(d+1)}}_{x,t} \quad \text{for} \quad p= \frac{d^2(q+m+1)+2dq}{q(d^2-2) - d(m-1)}.
\end{equation}
\end{proposition}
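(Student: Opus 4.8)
The plan is to mirror the proof of Proposition~\ref{P:AL1}, but to carry the argument with the variable $\rho^q$ (rather than $\rho^m$), so that the drift flux is handled by Lemma~\ref{L-compact} instead of Lemma~\ref{L:V_compact_m+1}. The starting point is the energy estimate \eqref{Lq-energy} of Proposition~\ref{P:Lq-energy}, which under any of its hypotheses gives $\rho^q\in L^{1,\infty}_{x,t}$ and $\nabla\rho^{\frac{q+m-1}{2}}\in L^2_{x,t}$, together with the parabolic bound $\rho\in L^{(q+m-1)+\frac{2q}{d}}_{x,t}$ recorded in \eqref{E:rho:q-1}. Since $q\geq\max\{1,m-1\}$ we have $q-m+1\geq 0$, so the algebraic identities
\begin{equation*}
\nabla\rho^q=\tfrac{2q}{q+m-1}\,\rho^{\frac{q-m+1}{2}}\nabla\rho^{\frac{q+m-1}{2}},\qquad
\nabla\rho^{q+m-1}=2\,\rho^{\frac{q+m-1}{2}}\nabla\rho^{\frac{q+m-1}{2}}
\end{equation*}
are meaningful and will be used repeatedly.

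First I would establish the gradient bound in \eqref{rho_t_2}. Writing $\nabla\rho^q$ through the first identity and applying H\"older in $(x,t)$ with exponent pair $(a,2)$, $\frac1a=\frac{d(q-m+1)}{2(d(q+m-1)+2q)}$, the factor $\rho^{\frac{q-m+1}{2}}$ is controlled by $\rho\in L^{(q+m-1)+\frac{2q}{d}}_{x,t}$ and the factor $\nabla\rho^{\frac{q+m-1}{2}}$ by $L^2_{x,t}$; a direct computation of the exponents yields $\nabla\rho^q\in L^{\frac{d(q+m-1)+2q}{q(d+1)}}_{x,t}$. The same H\"older argument applied to the second identity gives $\nabla\rho^{q+m-1}\in L^{\gamma_2}_{x,t}$, with $\gamma_2$ as in \eqref{gamma_2}.

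Next, to bound $\partial_t\rho^q$ I would rewrite the equation, tested against $q\rho^{q-1}$, in divergence-plus-remainder form,
\begin{equation*}
\partial_t\rho^q=\nabla\cdot\Big[\tfrac{qm}{q+m-1}\nabla\rho^{q+m-1}-q\rho^q V\Big]
-\tfrac{4qm(q-1)}{(q+m-1)^2}\abs{\nabla\rho^{\frac{q+m-1}{2}}}^2+(q-1)\,V\cdot\nabla\rho^q.
\end{equation*}
The flux $\frac{qm}{q+m-1}\nabla\rho^{q+m-1}-q\rho^q V$ lies in $L^{\gamma_2}_{x,t}$: the first summand by the previous paragraph and the second by Lemma~\ref{L-compact}, which is exactly where the hypothesis \eqref{V_compact_m-1} enters; hence this term contributes to $W_x^{-1,\gamma_2}L_t^{\gamma_2}\subseteq W_x^{-1,\gamma_2}L_t^{1}$. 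For the two remaining terms I would note that $\abs{\nabla\rho^{\frac{q+m-1}{2}}}^2\in L^1_{x,t}$ by the energy estimate, and that $V\cdot\nabla\rho^q\in L^1_{x,t}$ by the same H\"older/interpolation bound that controls $\mathcal J_q$ in the proof of Proposition~\ref{P:Lq-energy} (namely $\|V\rho^{\frac{q-m+1}{2}}\|_{L^2_{x,t}}\,\|\nabla\rho^{\frac{q+m-1}{2}}\|_{L^2_{x,t}}<\infty$). Since the elementary inequality $\gamma_2<\frac{d}{d-1}$ holds (equivalently $d(m-1)+2q>0$), the embedding $L^1(\Omega)\hookrightarrow W_x^{-1,\gamma_2}$ is available and both terms belong to $W_x^{-1,\gamma_2}L_t^{1}$. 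Collecting the three contributions gives \eqref{rho_t_2}.

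Finally, \eqref{rho_2_2} follows from the gradient bound: having $\nabla\rho^q\in L^{\gamma}_{x,t}$ with $\gamma=\frac{d(q+m-1)+2q}{q(d+1)}$ and $\rho^q\in L^{\infty}_tL^1_x$, the Sobolev embedding $W^{1,\gamma}(\Omega)\hookrightarrow L^{\gamma^\ast}(\Omega)$ applied at a.e.\ $t$ (in the form of Lemma~\ref{T:pSobolev}, with the Aubin--Lions Lemma~\ref{AL} supplying the accompanying compactness) yields $\rho^q\in L^{\gamma^\ast,\gamma}_{x,t}$ with $\gamma^\ast=\frac{d\gamma}{d-\gamma}=\frac{d^2(q+m-1)+2dq}{q(d^2-2)-d(m-1)}$, the exponent $p$ of \eqref{rho_2_2}. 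I expect the main obstacle to be the $\partial_t\rho^q$ bound: unlike in Proposition~\ref{P:AL1}, testing with $q\rho^{q-1}$ produces the genuinely lower-order terms $\abs{\nabla\rho^{\frac{q+m-1}{2}}}^2$ and $V\cdot\nabla\rho^q$, and the argument hinges on the two facts that $\gamma_2$ stays below the Sobolev threshold $\frac{d}{d-1}$ (so these $L^1_{x,t}$ terms are admissible in $W_x^{-1,\gamma_2}$) and that \eqref{V_compact_m-1} is precisely tuned to place $\rho^q V$ in $L^{\gamma_2}_{x,t}$.
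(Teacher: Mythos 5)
Your proof is correct and is essentially the intended one: this proposition is imported from \cite{HKK}*{Proposition~4.19}, and your argument mirrors the template of Proposition~\ref{P:AL1} — the energy estimate of Proposition~\ref{P:Lq-energy} feeds the product-rule bounds $\nabla\rho^{q}\in L^{\gamma}_{x,t}$ and $\nabla\rho^{q+m-1}\in L^{\gamma_2}_{x,t}$, the divergence-plus-remainder identity for $\partial_t\rho^q$ is closed by Lemma~\ref{L-compact} (where \eqref{V_compact_m-1} enters) together with the embedding $L^1(\Omega)\hookrightarrow W^{-1,\gamma_2}_x$, valid since $\gamma_2<\tfrac{d}{d-1}$, and Sobolev embedding in $x$ yields the final integrability (only a cosmetic adjustment is needed under hypothesis (iii) of Proposition~\ref{P:Lq-energy}, where $(q-1)V\cdot\nabla\rho^q$ is more safely absorbed as $\nabla\cdot\bigl((q-1)\rho^q V\bigr)-(q-1)\rho^q\,\nabla\cdot V$, both pieces being controlled by Lemma~\ref{L-compact} and the case-(iii) energy computation, since the direct product bound is only established there after integration by parts). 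One point worth recording: your computation gives $p=\gamma^{\ast}=\frac{d^2(q+m-1)+2dq}{q(d^2-2)-d(m-1)}$, which shows that the exponent $q+m+1$ in the numerator of \eqref{rho_2_2} is a typographical slip for $q+m-1$, consistent with the exponent appearing in \eqref{P:AL1:result}.
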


\section{Existence of regular solutions }\label{splitting method}

In this section, we construct a regular solution of \eqref{E:Main}-\eqref{E:Main-bc-ic} when the initial data $\rho_0$ and the vector field $V$ are smooth enough.
For this, we exploit a splitting method in the Wasserstein space $\mathcal{P}_2(\Omega)$, and it turns out that our solution is in the class of absolutely continuous curves in $\mathcal{P}_2(\Omega)$.
For carrying the splitting method, it requires a priori estimates, propagation of compact support, and H\"{o}lder continuity of the following form of homogeneous PME:
\begin{equation}\label{H-PME1}
  \left\{
  \begin{array}{ll}
  \partial_t \varrho =   \Delta \varrho^m,  \quad \text{ in } \ \Omega_{T} := \Omega \times (0, T),  \vspace{2mm}\\
  \frac{\partial \varrho}{\partial \textbf{n}} = 0,  \text{ on } \partial \Omega \times (0, T), 
  \quad \text{and} \quad 
  \varrho(\cdot,0)=\varrho_0,  \text{ on } \ \Omega,
  \end{array}
  \right.
\end{equation}
for $\Omega \Subset \bbr^d$, $d\geq 2$, and $0 <T < \infty$, where $\varrho:\Omega_T\mapsto \bbr$ and the normal $\textbf{n}$ to $\partial \Omega$.

First, we deliver a priori estimates of \eqref{H-PME1} in the following lemma.
\begin{lemma}\label{P:H-PME:e}\cite[Lemma~5.1]{HKK}
Suppose that $\varrho$ is a nonnegative $L^q$-weak solution of \eqref{H-PME1} in Definition~\ref{D:weak-sol}.
\begin{itemize}
  \item[(i)] Let $\int_{\Omega} \varrho_0 (1+\log \varrho_0 ) \,dx < \infty$. Then there exists a positive constant $c=c(m,d,|\Omega|)$ such that
      \begin{equation}\label{P:H-PME:e1}
      \int_{\Omega}  \varrho (\cdot, T) \log \varrho(\cdot, T)  \,dx
      + \iint_{\Omega_T} \abs{\frac{\nabla \varrho^m}{\varrho}}^{\lambda_1} \varrho \,dx\,dt
      \leq
      \int_{\Omega}\varrho_0 \log \varrho_0  \,dx +
      c T  \int_{\Omega}\varrho_0   \,dx.
      \end{equation}
      Moreover, there exists $T^\ast = T^\ast (m,d,p)$ such that, for any positive integer $l=1,2,3,\cdots$,
        \begin{equation}\label{C:H-PME:e1}
      \int_{\Omega}  \varrho (\cdot, lT^\ast) \log \varrho(\cdot, l T^\ast)  \,dx
      + \int_{(l-1)T^\ast}^{lT^\ast}\int_{\Omega} \abs{\frac{\nabla \varrho^m}{\varrho}}^ {\lambda_1} \varrho \,dx\,dt
      \leq
      \int_{\Omega} \left[ \varrho_0 \log \varrho_0  + \varrho_0 \right] \,dx .
      \end{equation}
  \item[(ii)]Let $\varrho_0 \in L^{q}(\Omega)$ for $q>1$. Then, we have
  \begin{equation}\label{P:H-PME:e2}
      \int_{\Omega} \varrho^q(\cdot, T) \,dx + \frac{4mq(q-1)}{(m+q-1)^2} \iint_{\Omega_T} \left| \nabla \varrho^{\frac{m+q-1}{2}} \right|^2  \,dx\,dt = \int_{\Omega} \varrho_0^q \,dx.
          \end{equation}
\end{itemize}
\end{lemma}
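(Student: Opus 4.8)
The plan is to derive both estimates by the standard energy method: testing \eqref{H-PME1} against a suitable power or logarithm of $\varrho$, integrating by parts, and discarding the boundary contributions thanks to the no-flux condition. Since the drift is absent here, every term that in Proposition~\ref{P:Lq-energy} had to be absorbed against a drift now vanishes, so part (ii) comes out as an exact identity rather than an inequality. Throughout, the computations are formal because $\varrho^m$ degenerates where $\varrho=0$; I would make them rigorous by first replacing $\varrho$ with $\varrho+\delta$ (equivalently, testing against $\log(\varrho+\delta)$ or $q(\varrho+\delta)^{q-1}$), carrying out the identities, and passing to the limit $\delta\to0$ using the regularity built into Definition~\ref{D:regular-sol}.

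For part (ii), I would test \eqref{H-PME1} with $q\varrho^{q-1}$. The time term yields $\frac{d}{dt}\int_\Omega\varrho^q\,dx$, while integrating $\int_\Omega q\varrho^{q-1}\Delta\varrho^m\,dx$ by parts, with the boundary term vanishing by the Neumann condition in \eqref{H-PME1}, produces $-mq(q-1)\int_\Omega\varrho^{q+m-3}|\nabla\varrho|^2\,dx$. Rewriting $\varrho^{q+m-3}|\nabla\varrho|^2=\frac{4}{(m+q-1)^2}|\nabla\varrho^{\frac{m+q-1}{2}}|^2$ turns this into $-\frac{4mq(q-1)}{(m+q-1)^2}\int_\Omega|\nabla\varrho^{\frac{m+q-1}{2}}|^2\,dx$. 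Integrating the resulting identity over $[0,T]$ gives \eqref{P:H-PME:e2} exactly.

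For part (i) I would instead test with $\log\varrho$. Using mass conservation the time term is $\frac{d}{dt}\int_\Omega\varrho\log\varrho\,dx$, and integration by parts (again killing the boundary term) gives $-m\int_\Omega\varrho^{m-2}|\nabla\varrho|^2\,dx=-\frac{4}{m}\int_\Omega|\nabla\varrho^{\frac{m}{2}}|^2\,dx$; hence the entropy dissipation identity $\frac{d}{dt}\int_\Omega\varrho\log\varrho\,dx+\frac{4}{m}\int_\Omega|\nabla\varrho^{\frac m2}|^2\,dx=0$. To reach the speed term in \eqref{P:H-PME:e1} I would apply Lemma~\ref{P:W-p} with $q=1$ and $V\equiv0$, so that \eqref{V-p} is trivial and \eqref{W-p} bounds $\iint_{\Omega_T}|\nabla\varrho^m/\varrho|^{\lambda_1}\varrho$ by $\varepsilon\iint_{\Omega_T}|\nabla\varrho^{\frac m2}|^2\,dx\,dt$ plus a lower-order term $cT(\sup_t\int_\Omega\varrho\,dx)^{\frac{2\theta}{d(1-\theta)}}$ with $\theta$ as in Lemma~\ref{P:W-p}; by mass conservation $\sup_t\int_\Omega\varrho\,dx=\int_\Omega\varrho_0\,dx$, so this term is controlled by $cT\int_\Omega\varrho_0\,dx$. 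Integrating the dissipation identity over $[0,T]$ and taking $\varepsilon=\tfrac4m$, the gradient contribution is absorbed exactly against the dissipation, which yields \eqref{P:H-PME:e1}.

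Finally, for the iterated bound \eqref{C:H-PME:e1} the key observation is that the dissipation identity forces $t\mapsto\int_\Omega\varrho(\cdot,t)\log\varrho(\cdot,t)\,dx$ to be non-increasing, so $\int_\Omega\varrho(\cdot,lT^\ast)\log\varrho(\cdot,lT^\ast)\,dx\le\int_\Omega\varrho_0\log\varrho_0\,dx$ for every $l$. Applying the single-interval version of part (i) on $[(l-1)T^\ast,lT^\ast]$ with $\varrho(\cdot,(l-1)T^\ast)$ as initial datum, and using the dissipation identity on that interval to rewrite $\iint|\nabla\varrho^{\frac m2}|^2$ as the entropy drop over the interval, I would bound the left side of \eqref{C:H-PME:e1} by the entropy at $(l-1)T^\ast$ plus $cT^\ast\int_\Omega\varrho_0\,dx$; choosing $T^\ast=T^\ast(m,d,\cdot)$ small enough that $cT^\ast\le1$ then gives the stated right-hand side $\int_\Omega[\varrho_0\log\varrho_0+\varrho_0]\,dx$. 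The main obstacle is twofold: rigorously justifying the logarithmic and power test functions against the degeneracy of the porous medium nonlinearity, which I handle by the $\delta$-regularization and limit, and the bookkeeping in the iteration that trades the monotone entropy decay against the linear-in-$T^\ast$ speed term in order to keep the bound uniform in $l$.
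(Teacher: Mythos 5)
Your proposal is correct and follows essentially the same route as the paper's source for this lemma (\cite[Lemma~5.1]{HKK}, to which the paper defers): testing with $q\varrho^{q-1}$ gives the exact identity \eqref{P:H-PME:e2}, testing with $\log\varrho$ gives the entropy dissipation identity, the speed term is handled by Lemma~\ref{P:W-p} with $V\equiv 0$ and absorbed with $\varepsilon=\frac{4}{m}$, and the iterate \eqref{C:H-PME:e1} follows from entropy monotonicity plus choosing $T^\ast$ so that $cT^\ast\leq 1$. The only point to watch is that the lower-order term in \eqref{W-p} is $cT\bigl(\sup_t\int_\Omega\varrho\,dx\bigr)^{\frac{2\theta}{d(1-\theta)}}$ rather than literally $cT\int_\Omega\varrho_0\,dx$, which matches the stated bound under the paper's standing normalization $\|\varrho_0\|_{L^1(\Omega)}=1$ (or by tracking the mass-homogeneity), exactly as you indicate.
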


Second, it is well known that bounded weak solutions of \eqref{H-PME1} are H\"{o}lder continuous up to the initial boundary with quantitatively determined H\"{o}lder exponents depending on given data (for example, we refer  \cites{DB93, DGV12} and references therein). In \cite[Theorem~5.2, Appendix~A]{HKK} for \eqref{H-PME1} in $\mathbb{R}^d \times (0, T)$ without the Neumann lateral boundary condition, the H\"{o}lder exponent is more clearly specified as the minimum of initial H\"{o}lder exponent and the interior H\"{o}lder exponent. On the other hand, the lateral boundary given in \eqref{H-PME1} does not affect the uniform H\"{o}lder continuity, see \cite[Theorem~III.1.3, Section~III.13]{DB93}.

\begin{theorem}\label{T:Boundary_Holder}\cite[Theorem~5.2]{HKK}
Let $\varrho$ be a bounded nonnegative weak solution of \eqref{H-PME1} where the nonnegative initial data $\varrho_0$ is bounded and H\"{o}lder continuous with the exponent $\alpha_0 \in (0,1)$. Then there exists $\alpha^\ast = \alpha^\ast (m,d)\in (0,1)$ such that $\varrho$ is locally H\"{o}lder continuous in $\Omega_{T}$. Moreover, $\varrho$ is uniformly H\"{o}lder continuous on $\Omega_T$ with the exponent $\alpha = \min\{\alpha_0, \alpha^\ast\}$; that is, $\|\varrho\|_{\mathcal{C}^{\alpha}(\Omega_T)} \leq \gamma$ where $\gamma = \gamma ( m, |\Omega|, \|\varrho_0\|_{L^{\infty}(\Omega)})$.
\end{theorem}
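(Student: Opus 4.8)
The plan is to prove the result in three stages, treating interior, lateral-boundary, and initial-time regularity separately, and then to patch the resulting local estimates into a single uniform Hölder bound. Since \eqref{H-PME1} is the pure porous medium equation with no drift term, the full apparatus of DiBenedetto's intrinsic-scaling theory for degenerate parabolic equations applies without modification, and the only genuinely new point — already flagged in the text preceding the statement — is that the homogeneous Neumann condition does not degrade the exponent. Accordingly, the strategy reproduces \cite[Theorem~5.2]{HKK} and supplements it with the boundary observation of \cite[Theorem~III.1.3, Section~III.13]{DB93}.

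First I would establish interior Hölder continuity. Writing the equation in divergence form $\partial_t \varrho = \nabla\cdot(m\varrho^{m-1}\nabla\varrho)$, the diffusion coefficient $m\varrho^{m-1}$ degenerates precisely where $\varrho$ vanishes (recall $m>1$), so one cannot work with cylinders of fixed geometry. The standard remedy is intrinsic scaling: one works in cylinders $Q = B_r(x_0) \times (t_0-\theta r^2,\, t_0)$ whose time-scale $\theta$ is tuned to the local oscillation $\omega = \essosc_Q \varrho$ (roughly $\theta \sim \omega^{1-m}$), so that in the rescaled variables the equation is uniformly parabolic. Testing the weak formulation against truncations $(\varrho^m - k)_\pm$ yields Caccioppoli (energy) inequalities on these intrinsic cylinders; a De Giorgi–type iteration, organized around the two DiBenedetto alternatives (whether $\varrho$ stays away from its supremum or from its infimum on a set of large measure), then produces a quantitative reduction of oscillation $\essosc_{Q_{\sigma r}}\varrho \le \eta\,\essosc_{Q_r}\varrho$ with $\eta<1$. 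Iterating over a geometric sequence of shrinking intrinsic cylinders yields a Hölder modulus with exponent $\alpha^\ast=\alpha^\ast(m,d)$ depending only on $m$ and $d$.

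Next I would treat the lateral boundary $\partial\Omega\times(0,T)$ and the initial slice $\Omega\times\{0\}$. For the no-flux condition, since $\partial\Omega$ is smooth one flattens the boundary locally by a $C^\infty$ change of variables and then even-reflects the solution across the flattened portion; the homogeneous Neumann condition is exactly what makes the reflected function again a weak solution of a PME-type equation with the same structure, so the interior estimates of the previous step apply verbatim up to $\partial\Omega$ and yield the same exponent $\alpha^\ast$. For the initial regularity, the Hölder continuity of $\varrho_0$ with exponent $\alpha_0$ permits the construction of upper and lower barriers (equivalently, running the Caccioppoli iteration in cylinders touching $\{t=0\}$) that transport the initial modulus of continuity to short positive times; since the parabolic smoothing cannot beat the interior rate, the effective exponent up to $\{t=0\}$ is $\alpha=\min\{\alpha_0,\alpha^\ast\}$. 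The dependence of $\gamma$ is then read off from the quantities entering the energy estimates and barriers, namely $m$, $|\Omega|$, and $\|\varrho_0\|_{L^\infty(\Omega)}$, the last of which controls the intrinsic time-scale and hence all geometric constants.

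The main obstacle is the interplay between the degeneracy and the intrinsic geometry: because the time-scale $\theta$ of the cylinders depends on the a priori unknown local magnitude of $\varrho$, the reduction-of-oscillation step bifurcates into the two alternatives, and the structural constants must be tracked carefully so that the iteration closes with a single contraction factor $\eta<1$ independent of scale. Matching this intrinsic modulus with the fixed-geometry modulus coming from the initial data at $t=0$ — so that the exponents combine as $\min\{\alpha_0,\alpha^\ast\}$ rather than degrading — is the delicate bookkeeping point; everything else is a routine application of the cited degenerate-parabolic regularity theory, which is why the statement is quoted directly from \cite{HKK} with only the boundary remark appended.
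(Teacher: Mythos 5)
Your proposal is correct and is essentially the argument the paper relies on: the paper gives no independent proof of this statement, quoting \cite[Theorem~5.2, Appendix~A]{HKK} for the interior and initial-time analysis that produces the exponent $\alpha=\min\{\alpha_0,\alpha^\ast\}$ with $\alpha^\ast=\alpha^\ast(m,d)$ from intrinsic scaling, and \cite[Theorem~III.1.3, Section~III.13]{DB93} for the observation that the homogeneous Neumann condition does not degrade the uniform H\"{o}lder continuity, which is precisely the three-stage plan you describe. The only cosmetic deviation is at the lateral boundary, where you flatten and use even reflection while DiBenedetto's cited treatment derives the energy and logarithmic estimates directly in boundary cylinders using the conormal condition in the weak formulation; both are standard variants within the same theory and yield the same exponent and constant dependence.
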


Our main result in this section reads as follow.
\begin{proposition}\label{proposition : regular existence}
 Let  $\tilde{\alpha} \in (0, 1)$ be the constant in Theorem \ref{T:Boundary_Holder}.
 Assume $\alpha \in (0, \tilde{\alpha}]$ and
$$
 {\rho}_0\in\mathcal{P}_2(\Omega)\cap \mathcal{C}^\alpha(\Omega) \quad \mbox{and} \quad
V \in L^1(0,T; W^{2,\infty}(\Omega))\cap
\calC^\infty (\Omega_{T}),
$$
be such that $V\cdot\mathbf{n}=0$ on $\partial \Omega$. 
Then there exists an absolutely continuous curve ${\rho}\in AC(0,T;\mathcal{P}_2(\Omega))$ which is a solution of \eqref{E:Main}-\eqref{E:Main-bc-ic}
in the sense of distributions and satisfies the followings:
\begin{itemize}
\item[(i)] For all $0\leq s<t\leq T$, we have
\begin{equation}\label{eq2 : Theorem : Toy Fokker-Plank}
W_2(\rho(s),\rho(t))\leq C\sqrt{t-s } +\int_s^t \|V(\tau)\|_{L^\infty_x}
\,d\tau 
\end{equation}
where the constant $C=C\left (\|\rho_0\|_{L^m (\Omega)}, \,
\|V\|_{L^1(0,T; W^{1,\infty}(\Omega))} \right )$.

\item[(ii)] For all $x,y \in \Omega$ and $t\in [0, T]$, we have
\begin{equation*}
\left | \rho(x,t)-\rho(y,t)\right | \leq C |x-y|^\alpha, 
\end{equation*}
where the constant $C=C\left (\|\rho_0\|_{L^\infty (\Omega)}, \, \|V\|_{L^1(0,T; C^{1,\alpha}(\Omega))} \right )$.

\item[(iii)]  For $q \geq 1$
\begin{equation}\label{KK-Feb19-100}
\int_0^T \|\nabla \rho^{\frac{m+q-1}{2}}\|_{L^2_x}^2 dt \leq  C,
\end{equation}
where the constant $C=C\left (\|\rho_0\|_{L^{m+q-1}(\Omega)},~~
\|V\|_{L^1(0,T;W^{2,\infty}(\Omega))} \right )$.

\item[(iv)] For all $t\in [0, T]$ and $q\in[1, \infty]$, we have
\begin{equation}\label{KK-Nov24-100}
\|\rho(t)\|_{L^q_x} \leq \|\rho_0\|_{L^q(\Omega)}e^{\frac{q-1}{q}\int_0^T
\|\nabla \cdot V \|_{L^\infty_x} \,d\tau} , 
\end{equation}
where $\frac{q-1}{q}=1$ if $q=\infty$.
\end{itemize}
\end{proposition}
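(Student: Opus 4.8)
The plan is to construct $\rho$ by an \emph{operator-splitting} (fractional-step) scheme in the Wasserstein space $\mathcal{P}_2(\Omega)$, alternating the pure-transport flow generated by $V$ with the pure-diffusion flow of the homogeneous porous medium equation \eqref{H-PME1}, and then to pass to the limit as the step size tends to zero. Fix $N\in\mathbb{N}$, set $h=T/N$ and $t_k=kh$. Starting from $\rho^N(0)=\rho_0$, on each interval $[t_k,t_{k+1}]$ I would first advance by the continuity equation $\partial_t u+\nabla\cdot(Vu)=0$ using the push-forward flow $\Psi$ from \eqref{Flow on Wasserstein}, and then by the homogeneous equation \eqref{H-PME1}; interpolating the two half-steps in rescaled time produces a curve $\rho^N:[0,T]\to\mathcal{P}_2(\Omega)$ whose increment over each half-step is governed by one of the two separate flows. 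Since $\rho_0\in\mathcal{C}^\alpha(\Omega)$ and $V\in L^1(0,T;W^{2,\infty})\cap\mathcal{C}^\infty$, both substeps preserve the relevant regularity, so each $\rho^N$ is well defined and smooth enough to justify the computations below.

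Next I would collect the uniform-in-$N$ estimates. The transport substeps conserve mass and, by Lemma~\ref{Lemma : density relation on the flow}, multiply the $L^q$ norm by at most $\exp(\tfrac{q-1}{q}\int\|\nabla\cdot V\|_{L^\infty_x}\,d\tau)$, while the diffusion substeps are $L^q$-nonincreasing by \eqref{P:H-PME:e2}. As the transport substeps together span total time $T$, chaining these bounds over $k$ yields \eqref{KK-Nov24-100}, namely item (iv). For the uniform Hölder bound needed in (ii), I would combine Theorem~\ref{T:Boundary_Holder} (the diffusion substep keeps $\|\cdot\|_{\mathcal{C}^\alpha(\Omega_T)}$ bounded by a constant depending only on $m,|\Omega|$ and the $L^\infty$ norm) with Lemma~\ref{Lemma : Holder regularity on the flow}(i) (the transport substep preserves $\mathcal{C}^\alpha$). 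Similarly, the energy estimate \eqref{P:H-PME:e2} of Lemma~\ref{P:H-PME:e} together with Lemma~\ref{Lemma : Holder regularity on the flow}(ii) gives a uniform bound on $\int_0^T\|\nabla(\rho^N)^{\frac{m+q-1}{2}}\|_{L^2_x}^2\,dt$, from which \eqref{KK-Feb19-100} (item (iii)) follows by weak lower semicontinuity.

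For the modulus of continuity in $W_2$, I would estimate the two substeps separately. By Lemma~\ref{representation of AC curves} the transport substep, being the push-forward flow of $V$, has $W_2$-speed bounded by $\|V(t)\|_{L^2(\rho(t))}\le\|V(t)\|_{L^\infty_x}$ (as $\rho(t)$ is a probability measure), contributing $\int_s^t\|V\|_{L^\infty_x}\,d\tau$; the Lipschitz stability of $\Psi$ in Lemma~\ref{Corollary-4:Lipschitz} keeps the scheme controlled under composition. The diffusion substep is the $W_2$-gradient flow of $\tfrac{1}{m-1}\int\varrho^m$, whose dissipation (the $L^m$-energy estimate underlying Lemma~\ref{P:H-PME:e}, obtained by testing \eqref{H-PME1} with $\tfrac{m}{m-1}\varrho^{m-1}$) bounds $\int_0^T|\varrho'|^2\,dt$ by $C(\|\rho_0\|_{L^m(\Omega)})$, so Cauchy--Schwarz gives a $C\sqrt{t-s}$ contribution. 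Summing yields \eqref{eq2 : Theorem : Toy Fokker-Plank} uniformly in $N$. With a common modulus $\omega(s,t)=C\sqrt{t-s}+\int_s^t\|V\|_{L^\infty_x}\,d\tau$ and uniform tightness, Lemma~\ref{Lemma : Arzela-Ascoli} extracts a subsequence converging narrowly at every $t$ to a limit $\rho\in AC(0,T;\mathcal{P}_2(\Omega))$ satisfying (i), while the uniform Hölder bound and Arzelà--Ascoli on the compact set $\overline{\Omega}_T$ upgrade this to uniform convergence, giving (ii).

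The main obstacle is \emph{consistency}: showing that the splitting limit actually solves \eqref{E:Main}-\eqref{E:Main-bc-ic} in the sense of distributions. For a fixed $\varphi\in\mathcal{C}^\infty(\overline{\Omega}\times[0,T))$ I would write the weak formulation of each substep over $[t_k,t_{k+1}]$ and sum, so that the transport substeps assemble the term $\iint_{\Omega_T}\rho V\cdot\nabla\varphi$ and the diffusion substeps assemble $-\iint_{\Omega_T}\nabla\rho^m\cdot\nabla\varphi$, up to a splitting error. Controlling this error as $h\to0$ is the delicate point: it requires the uniform Hölder continuity of $\rho^N$, the uniform bound on $\nabla(\rho^N)^{\frac{m+q-1}{2}}$ (hence on $\nabla(\rho^N)^m$ via the $L^m$-type estimates), and the narrow and uniform convergence established above, in order to pass to the limit in the nonlinear diffusion term and to show that interchanging the two half-steps produces only an $o(1)$ discrepancy. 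Once the limit is identified as a weak solution, items (i)--(iv) follow from the lower-semicontinuity and convergence arguments already in place.
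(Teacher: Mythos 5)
Your proposal is correct and follows essentially the same route as the paper: a splitting scheme in $\mathcal{P}_2(\Omega)$ alternating the homogeneous PME flow with the push-forward flow $\Psi$ of $V$, with the uniform $L^q$, H\"{o}lder, gradient, and $W_2$-modulus bounds (your gradient-flow dissipation argument for the $C\sqrt{t-s}$ term is exactly what underlies Lemma~\ref{Lemma:AC-curve}), compactness via Lemma~\ref{Lemma : Arzela-Ascoli}, and a splitting-error estimate for consistency, which is precisely the content of Lemma~\ref{Lemma:solving ODE}. The only cosmetic difference is that you interpolate the two half-steps in rescaled time, whereas the paper composes the diffusion solution on each subinterval with the transport flow evaluated at the same physical time, but this does not change any of the estimates or the limit passage.
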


\subsection{Splitting method}
 In this subsection, we introduce the splitting method and construct two sequences of curves in
$\mathcal{P}_2(\Omega)$ which are approximate solutions of \eqref{E:Main}-\eqref{E:Main-bc-ic} where $\rho_0$ and $V$ satisfy  the assumptions in Proposition \ref{proposition : regular existence}. 

For each $n\in \mathbb{N},$ we define approximated solutions
$\varrho_n,~ \rho_n:[0,T]\mapsto \mathcal{P}_2(\Omega)$ as follows;
\begin{itemize}

\item
 For $t\in [0,\frac{T}{n}]$, we define $\varrho_n:[0,\frac{T}{n}]\mapsto \mathcal{P}_2(\Omega)$ as the solution of
\begin{equation*}\label{eq 1 : splitting}
  \left\{
  \begin{array}{ll}
  \partial_t \varrho_n =   \Delta (\varrho_n)^m, & \text{ in } \ \Omega \times (0, \frac{T}{n}],  \vspace{1mm}\\
  \frac{\partial\varrho_n}{\partial\mathbf{n}} = 0, & \text{ on } \partial \Omega \times (0, \frac{T}{n}],  \vspace{1mm}\\
  \varrho_n(\cdot,0)=\varrho_0, & \text{ on } \ \Omega.
  \end{array}
  \right.
\end{equation*}
We also define  $\rho_n:[0,\frac{T}{n}]\mapsto \mathcal{P}_2(\Omega)$ as follows
\begin{equation*}
\rho_n(t)= \Psi(t;0, \varrho_n(t)), \quad \forall ~ t \in [0, \frac T n].
\end{equation*}
Here, we recall \eqref{Flow on Wasserstein} for the definition of $\Psi$. 

\item For $t\in (\frac{T}{n} ,\frac{2T}{n}]$, we define $\varrho_n:(\frac{T}{n} ,\frac{2T}{n}]\mapsto \mathcal{P}_2(\Omega)$ as the solution of
\begin{equation*}\label{eq 3 : splitting}
  \left\{
  \begin{array}{ll}
  \partial_t \varrho_n =   \Delta (\varrho_n)^m, & \text{ in } \ \Omega \times (\frac{T}{n}, \frac{2T}{n}],  \vspace{1mm}\\
  \frac{\partial \varrho_n}{\partial \mathbf{n}} = 0, & \text{ on } \partial \Omega \times (\frac{T}{n}, \frac{2T}{n}],  \vspace{1mm}\\
 \varrho_n(\cdot, \frac Tn)=\rho_n( \frac T n), & \text{ on } \ \Omega.
  \end{array}
  \right.
\end{equation*}
We also define  $\rho_n:(\frac{T}{n} ,\frac{2T}{n}]\mapsto \mathcal{P}_2(\Omega)$ as follows
\begin{equation*}
\rho_n(t)= \Psi(t; T/n, \varrho_n(t)), \quad \forall ~ t \in (\frac{T}{n} ,\frac{2T}{n}].
\end{equation*}

\item In general, for each $i=1, \ldots, n-1$ and $t\in (\frac{iT}{n} ,\frac{(i+1)T}{n}]$, we define $\varrho_n:(\frac{iT}{n} ,\frac{(i+1)T}{n}]\mapsto \mathcal{P}_2(\Omega)$ as the solution of
\begin{equation*}\label{eq 3 : splitting}
  \left\{
  \begin{array}{ll}
  \partial_t \varrho_n =   \Delta (\varrho_n)^m, & \text{ in } \ \Omega \times (\frac{iT}{n}, \frac{(i+1)T}{n}],  \vspace{1mm}\\
  \frac{\partial \varrho_n}{\partial \mathbf{n}} = 0, & \text{ on } \partial \Omega \times (\frac{iT}{n}, \frac{(i+1)T}{n}],  \vspace{1mm}\\
 \varrho_n(\cdot, \frac{iT}{n})=\rho_n( \frac{iT}{n}), & \text{ on } \ \Omega.
  \end{array}
  \right.
\end{equation*}
We also define  $\rho_n:(\frac{iT}{n} ,\frac{(i+1)T}{n}]\mapsto \mathcal{P}_2(\Omega)$ as follows
\begin{equation*}
\rho_n(t)= \Psi(t; iT/n, \varrho_n(t)), \quad \forall ~ t \in (\frac{iT}{n} ,\frac{(i+1)T}{n}].
\end{equation*}
\end{itemize}

 Now, we investigate some properties useful for the proof of Proposition \ref{proposition : regular existence}.
\begin{lemma}\label{Lemma:AC-curve}\cite[Lemma 5.4]{HKK}
Let $V \in L^1(0,T : W^{1,\infty}(\Omega))$ with  $V\cdot\mathbf{n}=0$ on $\partial \Omega$      and $\rho_0 \in \mathcal{P}_2(\Omega) \cap  \mathcal{C}^\alpha(\Omega)$ for some $\alpha \in (0,1)$.
Suppose that $\rho_n,\varrho_n:[0,T]\mapsto \mathcal{P}_2(\Omega)$ are curves defined as above with the initial data $\varrho_n(0),\rho_n(0):=\rho_0 $.
Then, these curves satisfy the following properties;
\begin{itemize}
\item[(i)] For all $t\in [0,T],$ we have
\begin{equation}\label{eq29:Lemma:AC-curve}
\|\varrho_{n}(t)\|_{L^q_x}, \  \|\rho_{n}(t)\|_{L^q_x} \leq \|\rho_0\|_{L^q(\Omega)}
e^{\frac{q-1}{q}\int_0^T \|\nabla \cdot V\|_{L^\infty_x} \,d\tau}, \qquad \forall ~  q \geq 1.
\end{equation}

\item[(ii)] For all $s,t \in [0,T],$ we have
\begin{equation}\label{eq20:Lemma:AC-curve}
\begin{aligned}
W_2(\rho_{n}(s), \rho_{n}(t)) &\leq C \sqrt{t-s}+ \int_s^t \|V(\tau)\|_{L^\infty_x} \,d\tau  ,
\end{aligned}
\end{equation}
where $C= C(\int_0^T \|\nabla V\|_{L^\infty_x}  \,d\tau , \|\rho_0\|_{L^m (\Omega)})$

\item[(iii)] For all $t \in [0,T],$ we have
\begin{equation}\label{eq21:Lemma:AC-curve}
\begin{aligned}
W_2(\rho_{n}(t), {\varrho}_{n}(t)) \leq  \max_{\{i=0,1,\dots,n\}}\int_{\frac{iT}{n}}^{\frac{(i+1)T}{n}} \|V(\tau)\|_{L^\infty_x} \,d\tau.
\end{aligned}
\end{equation}

\end{itemize}
\end{lemma}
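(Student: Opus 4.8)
The plan is to handle the three assertions in increasing order of difficulty, exploiting the two elementary building blocks that the splitting construction supplies on each time slab $(t_i,t_{i+1}]$, $t_i:=iT/n$: an $L^q$ bound for each half-step, and a $W_2$-displacement bound for the transport half-step.

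For (i) I would argue by induction over the slabs. On each slab the diffusion half-step $\varrho_n$ solves the homogeneous PME \eqref{H-PME1}, so Lemma~\ref{P:H-PME:e}(ii) (together with mass conservation for $q=1$) gives $\|\varrho_n(t)\|_{L^q_x}\le \|\rho_n(t_i)\|_{L^q_x}$ for every $t$ in the slab, i.e.\ the $L^q$ norm does not increase under diffusion. The transport half-step is the push-forward $\rho_n(t)=\Psi(t;t_i,\varrho_n(t))$, for which the density bound \eqref{L^p relation} in Lemma~\ref{Lemma : density relation on the flow} yields $\|\rho_n(t)\|_{L^q_x}\le \|\varrho_n(t)\|_{L^q_x}\,e^{\frac{q-1}{q}\int_{t_i}^{t}\|\nabla\cdot V\|_{L^\infty_x}\,d\tau}$. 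Composing these two bounds across one slab and iterating from $\rho_n(0)=\rho_0$ produces \eqref{eq29:Lemma:AC-curve} for both $\rho_n$ and $\varrho_n$, the exponents adding up to the single integral over $[0,T]$.

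Next I would record two displacement estimates for the flow $\Psi$ of \eqref{Flow on Wasserstein}. Using the coupling induced by $x\mapsto(\psi(s;t_i,x),\psi(t;t_i,x))$ together with $|\psi(t;t_i,x)-\psi(s;t_i,x)|\le\int_s^t\|V(\tau)\|_{L^\infty_x}\,d\tau$, one gets $W_2(\Psi(s;t_i,\mu),\Psi(t;t_i,\mu))\le\int_s^t\|V(\tau)\|_{L^\infty_x}\,d\tau$ for any $\mu$; and Lemma~\ref{Corollary-4:Lipschitz} supplies the Lipschitz bound $W_2(\Psi(t;t_i,\mu),\Psi(t;t_i,\nu))\le e^{\int_{t_i}^t\mathrm{Lip}(V)\,d\tau}W_2(\mu,\nu)$. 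Assertion (iii) is then immediate: for $t$ in the slab $(t_i,t_{i+1}]$ we have $\varrho_n(t)=\Psi(t_i;t_i,\varrho_n(t))$ and $\rho_n(t)=\Psi(t;t_i,\varrho_n(t))$, so the first displacement estimate bounds $W_2(\rho_n(t),\varrho_n(t))$ by $\int_{t_i}^{t_{i+1}}\|V\|_{L^\infty_x}\,d\tau$, hence by the maximum in \eqref{eq21:Lemma:AC-curve}.

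For (ii) I would first write the PME as a continuity equation $\partial_t\varrho_n+\nabla\cdot(w_n\varrho_n)=0$ with $w_n=-\nabla\varrho_n^m/\varrho_n$; Lemma~\ref{representation of AC curves} and Cauchy--Schwarz then give, for $s,t$ in one slab, $W_2(\varrho_n(s),\varrho_n(t))\le\sqrt{t-s}\,\big(\int_s^t\!\int_\Omega\frac{|\nabla\varrho_n^m|^2}{\varrho_n}\,dx\,d\tau\big)^{1/2}$. Combining this with the two flow estimates above, the within-slab increment of $\rho_n$ splits (via the triangle inequality) into a diffusion part, controlled by the Lipschitz factor times $W_2(\varrho_n(\cdot),\varrho_n(\cdot))$, and a transport part $\int\|V\|_{L^\infty_x}$. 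Chaining over the slabs meeting $[s,t]$, the transport parts telescope to $\int_s^t\|V\|_{L^\infty_x}$, while the diffusion parts, after one more Cauchy--Schwarz across slabs, are controlled by $e^{\int_0^T\mathrm{Lip}(V)}\sqrt{t-s}\,(\sum_i E_i)^{1/2}$, with $E_i$ the slab-wise diffusion energies. The crux, and the step I expect to be the main obstacle, is that $\sum_iE_i=\int_0^T\!\int_\Omega\frac{|\nabla\varrho_n^m|^2}{\varrho_n}$ must be bounded \emph{uniformly in} $n$; naively each slab contributes an $O(1)$ amount and a direct sum would grow like $n$. To close this I would use the dissipation identity $\int_\Omega\frac{|\nabla\varrho_n^m|^2}{\varrho_n}\,dx=-\frac{1}{m-1}\frac{d}{dt}\int_\Omega\varrho_n^m\,dx$ on each slab and telescope, absorbing the jumps of $\int_\Omega\rho_n^m$ created at the slab interfaces by the transport step through the $L^m$ bound from (i) (using $1-e^{-\delta}\le\delta$). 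This bounds the total energy by a constant depending only on $\|\rho_0\|_{L^m(\Omega)}$ and $\int_0^T\|\nabla\cdot V\|_{L^\infty_x}$, which is exactly the dependence claimed for $C$ in \eqref{eq20:Lemma:AC-curve}.
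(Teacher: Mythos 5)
Your proposal is correct and follows essentially the same route as the paper's proof, which is deferred to \cite[Lemma~5.4]{HKK}: part (i) by alternating the $L^q$-decay of the homogeneous PME with the push-forward bound \eqref{L^p relation}, part (iii) by the flow-displacement coupling, and part (ii) by the slab-wise speed estimate for the PME (via Lemma~\ref{representation of AC curves} and Cauchy--Schwarz) combined with the Lipschitz distortion bound of Lemma~\ref{Corollary-4:Lipschitz}. In particular, you correctly identified and resolved the one genuinely delicate point, the uniform-in-$n$ bound on the total dissipation $\iint |\nabla \varrho_n^m|^2/\varrho_n$, by telescoping the $L^m$-energy identity (equivalently \eqref{P:H-PME:e2} with $q=m$) across slabs and absorbing the interface jumps through \eqref{L^p relation}, which is exactly how the cited proof proceeds.
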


\begin{lemma}\label{Lemma:H-curve}\cite[Lemma 5.5 and Lemma 5.6]{HKK}
Let $V \in L^1(0,T : W^{2,\infty}(\Omega)) \cap C^\infty(\overline{\Omega}_T)$ with $V\cdot\mathbf{n}=0$ on $\partial \Omega$ and $\rho_0 \in \mathcal{P}_2(\Omega)\cap \mathcal{C}^\alpha(\Omega)$ for some $\alpha \in (0,1)$.
Suppose that $\rho_n,\varrho_n:[0,T]\mapsto
\mathcal{P}_2(\Omega)$ are curves defined as in Lemma
\ref{Lemma:AC-curve} with the initial data
$\varrho_n(0),\rho_n(0):=\rho_0$. Then, we have
\begin{equation}\label{main : Lemma:H-curve}
\int_0^T \|\nabla \varrho_{n}^{\frac{m+q-1}{2}}\|_{L^2_x}^2 dt + \int_0^T \|\nabla (\rho_{n})^{\frac{m+q-1}{2}}\|_{L^2_x}^2 dt\leq C
\left(\|\rho_0\|_{L^{m+q-1}(\Omega)}, ~\| V\|_{ L^1(0,T : W^{2,\infty}(\Omega))}\right), \quad \forall \ q \geq 1.
\end{equation}
\end{lemma}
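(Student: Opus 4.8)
The plan is to bound the two integrals in \eqref{main : Lemma:H-curve} separately, using that on each splitting subinterval $I_i:=(\tfrac{iT}{n},\tfrac{(i+1)T}{n}]$ the curve $\varrho_n$ solves the homogeneous porous medium equation \eqref{H-PME1} with datum $\rho_n(\tfrac{iT}{n})$, while $\rho_n(t)=\psi(t;\tfrac{iT}{n},\cdot)_\#\varrho_n(t)$ is its pushforward under the flow of $V$. The key structural fact is that every exponential factor generated by the flow over a single subinterval is controlled by $\int_{I_i}\|\nabla V\|_{L^\infty_x}$ (resp. $\int_{I_i}\|\nabla\cdot V\|_{L^\infty_x}$), whose sum over $i$ is the finite quantity $\int_0^T\|\nabla V\|_{L^\infty_x}\le\|V\|_{L^1(0,T;W^{1,\infty})}$; this is what prevents any blow-up as $n\to\infty$.

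First I would estimate $\int_0^T\|\nabla\varrho_n^{\frac{m+q-1}{2}}\|_{L^2_x}^2\,dt$ by summing the energy identity \eqref{P:H-PME:e2} of Lemma~\ref{P:H-PME:e} over the subintervals. Writing $E_i:=\int_\Omega\rho_n^q(\cdot,\tfrac{iT}{n})\,dx$, that identity reads
\begin{equation*}
\int_\Omega \varrho_n^q\big(\cdot,\tfrac{(i+1)T}{n}^{-}\big)\,dx
+\frac{4mq(q-1)}{(m+q-1)^2}\int_{I_i}\!\!\int_\Omega\Big|\nabla\varrho_n^{\frac{m+q-1}{2}}\Big|^2\,dx\,dt
= E_i .
\end{equation*}
At the node $\tfrac{(i+1)T}{n}$ the $L^q$-functional is reset from $\int_\Omega\varrho_n^q(\cdot,\tfrac{(i+1)T}{n}^{-})\,dx$ to $E_{i+1}$, and Lemma~\ref{Lemma : density relation on the flow} bounds this inflation by the factor $e^{(q-1)\int_{I_i}\|\nabla\cdot V\|_{L^\infty_x}}$. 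Summing the identities and combining $1-e^{-x}\le x$ with $\sum_i\int_{I_i}\|\nabla\cdot V\|_{L^\infty_x}=\int_0^T\|\nabla\cdot V\|_{L^\infty_x}$, the telescoping leaves only $E_0=\|\rho_0\|_{L^q(\Omega)}^q$ plus a term proportional to $\int_0^T\|\nabla\cdot V\|_{L^\infty_x}$ times the uniform bound $\sup_i E_i$ supplied by \eqref{eq29:Lemma:AC-curve}, hence an $n$-independent bound for the first integral. For $q=1$ the same scheme applies with the entropy $\int_\Omega\varrho_n\log\varrho_n$ (bounded below since $\Omega$ is bounded) in place of $\int_\Omega\varrho_n^q$, the node jumps now governed by \eqref{Entropy relation}.

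Next I would transfer this bound to $\rho_n$ through the flow. For fixed $t\in I_i$, Lemma~\ref{Lemma : Holder regularity on the flow}(ii) with $a=\tfrac{m+q-1}{2}$ and Lebesgue exponent $2$ gives, by \eqref{eq5 : Sobolev},
\begin{equation*}
\Big\|\nabla\rho_n^{\frac{m+q-1}{2}}(t)\Big\|_{L^2_x}
\le e^{(a+2)\int_{iT/n}^{t}\|\nabla V\|_{L^\infty_x}}
\Big\{\Big\|\nabla\varrho_n^{\frac{m+q-1}{2}}(t)\Big\|_{L^2_x}
+a\,\|\varrho_n(t)\|_{L^{m+q-1}_x}^{\frac{m+q-1}{2}}\!\int_{iT/n}^{t}\|\nabla^2 V\|_{L^\infty_x}\Big\}.
\end{equation*}
Here the exponential is at most $e^{(a+2)\|V\|_{L^1(0,T;W^{1,\infty})}}$, the factor $\|\varrho_n(t)\|_{L^{m+q-1}_x}$ is bounded uniformly by \eqref{eq29:Lemma:AC-curve} (applied with exponent $m+q-1$), and $\int_{iT/n}^{t}\|\nabla^2 V\|_{L^\infty_x}\le\|V\|_{L^1(0,T;W^{2,\infty})}$. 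Squaring, integrating in $t$ over $[0,T]$, and inserting the first-integral bound then produces the uniform estimate for $\int_0^T\|\nabla\rho_n^{\frac{m+q-1}{2}}\|_{L^2_x}^2\,dt$; since $m+q-1\ge q$ and $\Omega$ is bounded, all $L^q$-norms above are dominated by $\|\rho_0\|_{L^{m+q-1}(\Omega)}$, so the final constant depends only on $\|\rho_0\|_{L^{m+q-1}(\Omega)}$ and $\|V\|_{L^1(0,T;W^{2,\infty})}$, as claimed.

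I expect the main obstacle to be the $n$-uniform bookkeeping across the splitting nodes: the controlling functional (the $L^q$-mass, or the entropy when $q=1$) is discontinuous at each $t=\tfrac{iT}{n}$ when one passes from $\varrho_n$ to its pushforward, so one must verify that the geometric inflation contributed by Lemma~\ref{Lemma : density relation on the flow} (resp. by \eqref{Entropy relation}) at each of the $n$ nodes telescopes into a single exponential of $\int_0^T\|\nabla\cdot V\|_{L^\infty_x}$ rather than accumulating a factor that degenerates as $n\to\infty$. The inequality $1-e^{-x}\le x$ together with the finiteness of $\int_0^T\|\nabla\cdot V\|_{L^\infty_x}$ is exactly what makes this work, and the $q=1$ endpoint requires the extra observation that the entropy is bounded below on the bounded domain $\Omega$.
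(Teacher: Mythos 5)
Your proposal is correct and follows essentially the same route as the proof this lemma is cited from (\cite[Lemmas~5.5 and 5.6]{HKK}, whose ingredients are exactly the ones quoted in this paper): the per-subinterval PME dissipation identity \eqref{P:H-PME:e2} (entropy dissipation for $q=1$), control of the node jumps via the Jacobian estimates \eqref{Entropy relation}--\eqref{L^p relation} so that the inflation telescopes into a single exponential of $\int_0^T\|\nabla\cdot V\|_{L^\infty_x}\,d\tau$, and the transfer of the gradient bound from $\varrho_n$ to the pushforward $\rho_n$ via Lemma~\ref{Lemma : Holder regularity on the flow}~(ii). Your telescoping bookkeeping with $1-e^{-x}\le x$ and the uniform $L^{m+q-1}$ bound \eqref{eq29:Lemma:AC-curve} are precisely what make the estimate $n$-independent, so the argument is complete.
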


\begin{lemma}\label{Lemma:solving ODE}\cite[Lemma 5.7]{HKK}
Let $\varphi \in \calC_c^\infty(\overline{\Omega} \times [0,T))$ and
$\rho_n,\varrho_n:[0,T]\mapsto \mathcal{P}_2(\Omega)$ be the
curves defined as in Lemma \ref{Lemma:AC-curve}. For any $s,~t \in
[0,T]$, we define
\begin{equation*}
\begin{aligned}
E_n &:= \int_{\Omega}\varphi(x,t) ~\rho_n(x,t)\,dx - \int_{\Omega}\varphi(x,s) ~\rho_n(x,s)\,dx\\
  &\quad  - \int_{s}^{t} \int_{\Omega}  ( \nabla   \varphi \cdot V) ~\rho_n \,dx \,d\tau
  - \int_{s}^{t}
\int_{\Omega}  \{ \partial_\tau \varphi ~ \varrho_n - \nabla \varphi \cdot \nabla (\varrho_n)^m \}\,dx \,d\tau.
\end{aligned}
\end{equation*}
Then,
\begin{equation*}
\begin{aligned}
|E_n| \leq  C\frac{T}{n}\|\varphi\|_{\calC^2} e^{C\int_{s}^{t}\|\nabla V\|_{L^\infty_x} \,d\tau}
 \int^{t}_{0} \|V (\tau)\|_{W^{1,\infty}_x} \,d\tau  \longrightarrow 0, \quad as \quad n\rightarrow \infty,
\end{aligned}
\end{equation*}
where $C$ is the same constant in Lemma \ref{Lemma:AC-curve}.
\end{lemma}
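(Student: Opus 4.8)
The plan is to exploit, on each splitting subinterval $[t_i,t_{i+1}]$ with $t_i:=iT/n$, the defining relation $\rho_n(\tau)=\Psi(\tau;t_i,\varrho_n(\tau))=\psi(\tau;t_i,\cdot)_\#\varrho_n(\tau)$ and to differentiate the corresponding change-of-variables representation in $\tau$. By the push-forward definition,
\[
\int_\Omega \varphi(x,\tau)\,\rho_n(x,\tau)\,dx=\int_\Omega \varphi(\psi(\tau;t_i,y),\tau)\,\varrho_n(y,\tau)\,dy .
\]
Differentiating the right-hand side and using $\partial_\tau\psi=V(\psi,\tau)$ together with the homogeneous equation $\partial_\tau\varrho_n=\Delta\varrho_n^m$ yields three contributions: a transport term which, after changing variables back, equals $\int_\Omega(\nabla\varphi\cdot V)\rho_n\,dx$; a time-derivative term $\int_\Omega\partial_\tau\varphi(\psi,\tau)\,\varrho_n\,dy$; and a diffusion term which, after integrating by parts and using the Neumann condition $\partial\varrho_n/\partial\mathbf{n}=0$ (whence $\partial\varrho_n^m/\partial\mathbf{n}=0$) to discard the boundary integral, equals $-\int_\Omega[\nabla\varphi(\psi,\tau)\,D_y\psi]\cdot\nabla_y\varrho_n^m\,dy$. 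Summing the fundamental-theorem-of-calculus identity over the subintervals covering $[s,t]$ (the partial first and last intervals are handled identically and contribute no more) and comparing with the definition of $E_n$, the transport term cancels the drift term of $E_n$ exactly, reducing $E_n$ to two ``flow-versus-identity'' error terms.

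The first error, from the time-derivative term, is $\int_s^t\int_\Omega[\partial_\tau\varphi(\psi(\tau;t_i,y),\tau)-\partial_\tau\varphi(y,\tau)]\,\varrho_n\,dy\,d\tau$, and the second, from the diffusion term, is $\int_s^t\int_\Omega[\nabla\varphi(y,\tau)-\nabla\varphi(\psi(\tau;t_i,y),\tau)\,D_y\psi]\cdot\nabla_y\varrho_n^m\,dy\,d\tau$. Both are small because the flow is close to the identity on a short subinterval. The two quantitative inputs are the displacement bound $|\psi(\tau;t_i,y)-y|\le\int_{t_i}^\tau\|V(\sigma)\|_{L^\infty_x}\,d\sigma$, obtained by integrating \eqref{ODE}, and the Jacobian bound $|D_y\psi(\tau;t_i,y)-I|\le e^{\int_{t_i}^\tau\|\nabla V(\sigma)\|_{L^\infty_x}\,d\sigma}-1$, obtained by a Gr\"onwall argument applied to the variational equation $\tfrac{d}{d\tau}D_y\psi=DV(\psi,\tau)\,D_y\psi$ (this is the differential counterpart of Lemma~\ref{Lemma : estimation 1: ODE} and Lemma~\ref{Lemma : Lipschitz of Jacobian}). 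Bounding the Lipschitz constants of $\nabla\varphi$ and $\partial_\tau\varphi$ by $\|\varphi\|_{\calC^2}$, the first-error integrand is $\lesssim\|\varphi\|_{\calC^2}\int_{t_i}^\tau\|V\|_{L^\infty_x}$, and since $\int_\Omega\varrho_n\,dy=1$ the explicit factor $T/n$ is produced by the elementary telescoping estimate $\sum_i\int_{t_i}^{t_{i+1}}\big(\int_{t_i}^\tau\|V\|_{W^{1,\infty}_x}\,d\sigma\big)\,d\tau\le\frac{T}{n}\int_0^t\|V\|_{W^{1,\infty}_x}\,d\sigma$.

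The second error is the crux. Its coefficient is again of order $\int_{t_i}^\tau\|V\|_{W^{1,\infty}_x}$, now carrying the extra factor $e^{\int_{t_i}^\tau\|\nabla V\|_{L^\infty_x}}$ from the Jacobian bound, but it is paired against $\nabla_y\varrho_n^m$, which is controlled only in $L^1$: via $\nabla\varrho_n^m=\tfrac{2m}{m+1}\varrho_n^{m/2}\nabla\varrho_n^{m/2}$ together with the uniform estimates $\sup_\tau\|\varrho_n(\tau)\|_{L^m_x}\lesssim\|\rho_0\|_{L^m(\Omega)}$ from Lemma~\ref{Lemma:AC-curve}(i) and $\int_0^T\|\nabla\varrho_n^{m/2}\|_{L^2_x}^2\le C$ from Lemma~\ref{Lemma:H-curve}, one obtains a bound for $\|\nabla\varrho_n^m\|_{L^1_{x,t}}$ uniform in $n$. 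The main obstacle is thus to organize the subinterval sum so that this weak gradient control pairs against the near-identity flow to yield the stated $T/n$: writing $\int_{t_i}^\tau\|V\|_{W^{1,\infty}_x}\,d\sigma\le\frac{T}{n}\sup_{[0,T]}\|V(\sigma)\|_{W^{1,\infty}_x}$ (legitimate here because $V$ is smooth, so $\tau\mapsto\|V(\tau)\|_{W^{1,\infty}_x}$ is bounded on $[0,T]$) and factoring out $e^{\int_s^t\|\nabla V\|_{L^\infty_x}}$, the second error is $\lesssim\frac{T}{n}\,\|\varphi\|_{\calC^2}\,e^{\int_s^t\|\nabla V\|_{L^\infty_x}}\,\|\nabla\varrho_n^m\|_{L^1_{x,t}}$. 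Collecting the two errors gives
\[
|E_n|\le C\frac{T}{n}\,\|\varphi\|_{\calC^2}\,e^{C\int_s^t\|\nabla V\|_{L^\infty_x}\,d\tau}\int_0^t\|V(\tau)\|_{W^{1,\infty}_x}\,d\tau,
\]
with $C$ as in Lemma~\ref{Lemma:AC-curve}, and letting $n\to\infty$ completes the proof. All differentiations under the integral and the integration by parts against $\Delta\varrho_n^m$ are justified by the H\"older and Sobolev regularity of $\varrho_n$ as a solution of the homogeneous PME (Theorem~\ref{T:Boundary_Holder}, Lemma~\ref{P:H-PME:e}), approximating by smooth solutions if necessary.
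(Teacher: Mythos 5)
Your proposal is correct and follows essentially the same route as the paper's proof (deferred to \cite[Lemma~5.7]{HKK}, whose splitting machinery the paper reproduces): on each subinterval $[\tfrac{iT}{n},\tfrac{(i+1)T}{n}]$ you differentiate the push-forward representation $\int_\Omega\varphi\,\rho_n\,dx=\int_\Omega\varphi(\psi(\tau;t_i,y),\tau)\,\varrho_n(y,\tau)\,dy$, cancel the drift term of $E_n$ exactly, and control the two flow-versus-identity errors by the displacement and Jacobian estimates (the differential counterparts of Lemma~\ref{Lemma : estimation 1: ODE} and Lemma~\ref{Lemma : Lipschitz of Jacobian}) together with the uniform bounds of Lemma~\ref{Lemma:AC-curve} and Lemma~\ref{Lemma:H-curve}, telescoping to produce the factor $\tfrac{T}{n}$. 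The only blemishes are minor: the identity should read $\nabla\varrho_n^m=2\varrho_n^{m/2}\nabla\varrho_n^{m/2}$ (equivalently $\nabla\varrho_n^m=\tfrac{2m}{m+1}\varrho_n^{(m-1)/2}\nabla\varrho_n^{(m+1)/2}$), and your replacement of $\int_{t_i}^\tau\|V\|_{W^{1,\infty}_x}\,d\sigma$ by $\tfrac{T}{n}\sup_\tau\|V(\tau)\|_{W^{1,\infty}_x}$ in the diffusion error slightly changes the form of the stated constant and is legitimate only because the drifts actually fed into Proposition~\ref{proposition : regular existence} are the approximations $V_n\in C_c^\infty(\overline{\Omega}\times[0,T))$, bounded in time---harmless, since only $E_n\to 0$ is used downstream.
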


\begin{lemma}\label{Lemma : equi-continuity}
Let $\tilde{\alpha}\in (0,1)$ be the constant in Theorem \ref{T:Boundary_Holder}. Suppose that $ \alpha \in (0, \tilde{\alpha}]$ and
\begin{equation*}
\rho_0 \in \mathcal{P}_2(\Omega) \cap \calC^\alpha (\Omega)  \quad \mbox{and} \quad V \in  L^1(0,T ; \calC^{1, \alpha}(\Omega))
\end{equation*}
with  $V\cdot\mathbf{n}=0$ on $\partial \Omega$.
Let $\rho_n,\varrho_n:[0,T]\mapsto \mathcal{P}_2(\Omega)$ be curves defined as in Lemma \ref{Lemma:AC-curve}.
Then,we have
 \begin{equation}\label{eq 2 : Holder of splitting}
|\varrho_n(x,t)-\varrho_n(y,t)| + |\rho_n(x,t)-\rho_n(y,t)|\leq C|x-y|^\alpha,  \qquad \forall ~ x, y \in \Omega , ~ t \in [0, T],
 \end{equation}
  where $C(\|\rho_0\|_{L^\infty(\Omega)},  \int_0^T \|\nabla V\|_{\calC^\alpha (\Omega)}  \,d\tau)$.
\end{lemma}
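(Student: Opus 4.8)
The plan is to control the spatial H\"older seminorms of $\varrho_n$ and $\rho_n$ on each of the subintervals $I_i:=[\tfrac{iT}{n},\tfrac{(i+1)T}{n}]$ and to organize the estimates as a forward induction in $i$ that produces bounds independent of $n$. The scheme alternates two operations, each of which has an associated H\"older estimate already at our disposal: on $I_i$ the curve $\varrho_n$ solves the homogeneous equation \eqref{H-PME1} with datum $\rho_n(\tfrac{iT}{n})$, to which Theorem~\ref{T:Boundary_Holder} applies, while $\rho_n(t)=\Psi(t;\tfrac{iT}{n},\varrho_n(t))$ is a push-forward along the flow of $V$, to which Lemma~\ref{Lemma : Holder regularity on the flow}~(i) applies. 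Since $\alpha\le\tilde\alpha=\alpha^\ast$, the exponent $\min\{\alpha,\alpha^\ast\}$ delivered by the diffusion step equals $\alpha$, so the exponent is preserved throughout.

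First I would fix a uniform $L^\infty$ bound. Letting $q\to\infty$ in \eqref{eq29:Lemma:AC-curve} gives
\begin{equation*}
\sup_{0\le t\le T}\bigl(\|\varrho_n(t)\|_{L^\infty_x}+\|\rho_n(t)\|_{L^\infty_x}\bigr)\le 2\|\rho_0\|_{L^\infty(\Omega)}\,e^{\int_0^T\|\nabla\cdot V\|_{L^\infty_x}\,d\tau}=:M,
\end{equation*}
uniformly in $n$. I would then run the induction. At $i=0$ the datum $\rho_0\in\calC^\alpha(\Omega)$ is given; Theorem~\ref{T:Boundary_Holder} bounds $\|\varrho_n(\cdot,t)\|_{\calC^\alpha(\Omega)}$ for $t\in I_0$ by a constant depending only on $m,|\Omega|$ and $M$, and Lemma~\ref{Lemma : Holder regularity on the flow}~(i) then bounds $\|\rho_n(\cdot,t)\|_{\calC^\alpha(\Omega)}$ in terms of that quantity and $\int_{0}^{t}\|\nabla V\|_{\calC^\alpha(\Omega)}\,d\tau$; in particular $\rho_n(\tfrac{T}{n})\in\calC^\alpha(\Omega)$, which furnishes an admissible datum for $I_1$. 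Because the constant produced by Theorem~\ref{T:Boundary_Holder} depends on the datum only through its $L^\infty$ norm --- itself bounded by $M$ --- the \emph{same} diffusion bound recurs at every step, so the H\"older norm of $\varrho_n$ is reset to an $M$-controlled value at each restart and the transport estimate only ever sees a $\calC^\alpha$ input of size $\lesssim\gamma(m,|\Omega|,M)$. Using $\int_{iT/n}^{t}\|\nabla V\|_{\calC^\alpha(\Omega)}\,d\tau\le\int_0^T\|\nabla V\|_{\calC^\alpha(\Omega)}\,d\tau$, this yields \eqref{eq 2 : Holder of splitting} with $C=C(\|\rho_0\|_{L^\infty(\Omega)},\int_0^T\|\nabla V\|_{\calC^\alpha(\Omega)}\,d\tau)$.

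The step I expect to be delicate is precisely the uniformity in $n$: a priori each of the $n$ transport steps may enlarge the H\"older constant, and a naive composition over $n\to\infty$ subintervals would compound these enlargements. The resolution rests on two features of the building blocks that must be used carefully. First, the diffusion estimate of Theorem~\ref{T:Boundary_Holder} has to be invoked in its $L^\infty$-controlled form, uniform in the interval length $T/n$ and independent of the incoming H\"older norm, so that each PME substep genuinely resets the seminorm rather than propagating it; this is what breaks any geometric accumulation. Second, the only quantity that is additive across the subintervals, $\sum_i\int_{I_i}\|\nabla V\|_{\calC^\alpha(\Omega)}\,d\tau$, telescopes to the finite total $\int_0^T\|\nabla V\|_{\calC^\alpha(\Omega)}\,d\tau$, so the transport contributions do not blow up either. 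Verifying that the constant in Theorem~\ref{T:Boundary_Holder} indeed does not degrade as $T/n\to 0$ --- rather than depending, say, on the distance from the initial slice --- is the crux on which the whole uniform bound hinges.
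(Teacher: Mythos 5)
Your proposal is correct and follows essentially the same route as the paper's proof: on each subinterval $[\tfrac{iT}{n},\tfrac{(i+1)T}{n}]$ the paper likewise applies Theorem~\ref{T:Boundary_Holder} (whose constant depends on the datum only through $\|\rho_n(iT/n)\|_{L^\infty_x}$, uniformly controlled via \eqref{eq29:Lemma:AC-curve} with $q=\infty$) to reset the H\"older seminorm of $\varrho_n$, then Lemma~\ref{Lemma : Holder regularity on the flow}~(i) for the push-forward step, with the transport contributions telescoping to $\int_0^T\|\nabla V\|_{\calC^\alpha(\Omega)}\,d\tau$. The uniformity-in-$n$ mechanism you flag as the crux is exactly the implicit reason the paper's iteration closes.
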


\begin{proof} 
 First, we suppose $t \in \left[0, \frac{T}{n}\right]$. 
Due to Theorem \ref{T:Boundary_Holder}, it holds
\begin{equation}\label{eq 1 : Lemma equi-cont}
|\varrho_n(x,t)-\varrho_n(y,t) |\leq C |x-y|^\alpha,  \quad \forall ~ x, ~y \in \Omega,
\end{equation}
where $C= C(\|\rho_0 \|_{L^\infty(\Omega)})$.
Next, due to Lemma \ref{Lemma : Holder regularity on the flow}, we have
\begin{equation}\label{eq 2 : Lemma equi-cont}
|\rho_n(x,t)-\rho_n(y,t) | \leq C |x-y|^{\alpha}, \quad   \forall ~x, ~ y \in \Omega,
\end{equation}
where $C= C \left(\text{sup}_{t \in \left[0, \frac{T}{n}\right ]} \|\varrho_n(t)\|_{\calC^\alpha (\Omega)}, \,\int_0^{T/n} \|\nabla V \|_{\calC^\alpha (\Omega)} \,d\tau \right)$.

We combine \eqref{eq 1 : Lemma equi-cont} and \eqref{eq 2 : Lemma equi-cont}, and exploit \eqref{eq29:Lemma:AC-curve} for $q=\infty$ to conclude
 \begin{equation}\label{eq 3 : Lemma equi-cont}
|\varrho_n(x,t)-\varrho_n(y,t)|+ |\rho_n(x,t)-\rho_n(y,t)|\leq C|x-y|^\alpha,  \qquad \forall ~x, ~ y \in \Omega ,
 \end{equation}
where $C(\|\rho_0\|_{L^\infty(\Omega)}, ~ \int_0^{T/n} \|\nabla V \|_{\calC^\alpha (\Omega)} \,dt)$.

For the general case $t \in \left [\frac{iT}{n}, \frac{(i+1)T}{n}\right ], ~ i=1, 2, \dots, n-1$, we repeat what we did in the first step and get
 \begin{equation}\label{eq 4 : Lemma equi-cont}
|\varrho_n(x,t)-\varrho_n(y,t)| +|\rho_n(x,t)-\rho_n(y,t)|\leq C|x-y|^\alpha, \qquad \forall ~ x, ~ y \in \Omega ,
 \end{equation}
where $C(\|\rho(iT/n)\|_{L^\infty_x},  \int_{iT/n}^{(i+1)T/n} \|\nabla V \|_{\calC^\alpha (\Omega)} \,d\tau)$. We combine
\eqref{eq 3 : Lemma equi-cont} and \eqref{eq 4 : Lemma equi-cont} with \eqref{eq29:Lemma:AC-curve} for $q=\infty$, and conclude \eqref{eq 2 : Holder of splitting}
which completes the proof.
\end{proof}

\subsection{Proof of Proposition \ref{proposition : regular existence}}
Since the proof is similar to the proof of Proposition 5.3 in \cite{HKK}, we just give a sketch of the proof. %
Let $\rho_n,\varrho_n:[0,T]\mapsto \mathcal{P}_2(\Omega)$ be as defined in the splitting method. Then, from Lemma \ref{Lemma:solving ODE}, we have
for any $\varphi \in \calC_c^\infty(\overline{\Omega} \times [0,T))$ and $s,~t \in [0,T]$,
\begin{equation}\label{eq3 : Theorem : Toy Fokker-Plank}
\begin{aligned}
& \int_{\Omega}\varphi(x,t) \rho_n(x,t)\,dx - \int_{\Omega}\varphi(x,s) \rho_n(x,s)\,dx\\
&= \int_{s}^{t} \int_{\Omega}   (\nabla   \varphi \cdot V) \rho_n \,dx  \,d\tau + \int_{s}^{t} \int_{\Omega}
\left\{ \partial_\tau\varphi  \varrho_n -  \nabla \varphi \cdot \nabla(\varrho_n)^m\right\}\,dx \,d\tau
 + E_n,
\end{aligned}
\end{equation}
with $|E_n| \longrightarrow 0 $ as $n \rightarrow \infty$.

 Recalling Lemma \ref{Lemma:AC-curve}, we know%
\begin{equation}\label{eq5 : Theorem : Toy Fokker-Plank}
W_2(\rho_n(s),\rho_n(t))\leq C\sqrt{t-s} + \int_s^t \|V(\tau)\|_{L^\infty_x}  \,d\tau
\end{equation}
and
\begin{equation}\label{eq6 : Theorem : Toy Fokker-Plank}
\begin{aligned}
W_2(\rho_n(t), {\varrho}_n(t)) \leq  \max_{\{i=0,1,\dots,n-1\}}\int_{\frac{iT}{n}}^{\frac{(i+1)T}{n}} \|V(\tau)\|_{L^\infty_x} \,d\tau,
\end{aligned}
\end{equation}
where $C=C\left(  \| \rho_0\|_{L^m(\Omega)},  \,  \,\|V\|_{L^1(0,T; W^{1,\infty}(\Omega))} \right) $.
Combining the estimates \eqref{eq5 : Theorem : Toy Fokker-Plank} and Lemma \ref{Lemma : Arzela-Ascoli}, there exist a subsequence (by abusing notation)
$\rho_n$ and a limit curve  $\rho :[0,T]\mapsto \mathcal{P}_2(\Omega)$ such that
\begin{equation}\label{eq11 : Theorem : Toy Fokker-Plank}
\rho_{n}(t) ~~ \mbox{ converges to} ~~\rho(t) ~~ \mbox{ in} ~ \mathcal{P}_2({\Omega}), \qquad \text{for all} ~~ t\in[0,T].
\end{equation}
 Due to \eqref{eq6 : Theorem : Toy Fokker-Plank}, we note that $\varrho_n(t)$ also converges to $\rho(t), \, \forall ~ t \in [0,T]$.
 This implies
\begin{equation}\label{eq 1 : proposition : regular existence}
\begin{aligned}
\int_{\Omega}\varphi(x,t) \rho_n(x,t)\,dx - \int_{\Omega}\varphi(x,s) \rho_n(x,s)\,dx \longrightarrow \int_{\Omega}\varphi(x,t) \rho(x,t)\,dx - \int_{\Omega}\varphi(x,s) \rho(x,s)\,dx,
\end{aligned}
\end{equation}
and
\begin{equation}\label{eq 2 : proposition : regular existence}
\begin{aligned}
\int_{s}^{t} \int_{\Omega}  \left[ (\nabla   \varphi \cdot V) \rho_n + \partial_\tau\varphi  \varrho_n \right] \,dx  \,d\tau
\longrightarrow \int_{s}^{t} \int_{\Omega}  \left[ (\nabla   \varphi\cdot V) + \partial_\tau\varphi  \right]\rho \,dx  \,d\tau,
\end{aligned}
\end{equation}
as $n \rightarrow \infty$. Furthermore,  from Lemma \ref{Lemma : equi-continuity}, $\rho_n$ and $\varrho_n$ are equi-continuous with respect to the space variable.
We combine this equi-continuity with  the uniform bound \eqref{eq29:Lemma:AC-curve} to claim that $\rho_n$ and  $\varrho_n$ actually pointwise converge to $\rho$.
Furthermore, \eqref{main : Lemma:H-curve} with $q=m+1$ implies that $\nabla\varrho_n$ is bounded in $L^2([0,T]\times\Omega)$ and hence it (up to a subsequence) has a weak limit  $L^2([0,T]\times\Omega)$.
Combining this with the pointwise convergence of $\varrho_n^m$ to $\rho^m$, we have
\begin{equation}\label{eq 3 : proposition : regular existence}
\int_{s}^{t} \int_{\Omega}  \nabla \varphi \cdot \nabla(\varrho_n )^m    \,dx \,d\tau \longrightarrow \int_{s}^{t} \int_{\Omega}  \nabla \varphi \cdot \nabla \rho^m  \,dx \,d\tau.
\end{equation}
Finally, we put \eqref{eq 1 : proposition : regular existence}, \eqref{eq 2 : proposition : regular existence} and \eqref{eq 3 : proposition : regular existence} into
\eqref{eq3 : Theorem : Toy Fokker-Plank} to get
\begin{equation}\label{eq 4 : proposition : regular existence}
 \int_{\Omega}\varphi(x,t) \rho(x,t) \,dx - \int_{\Omega}\varphi(x,s) \rho(x,s)\,dx
=\int_{s}^{t} \int_{\Omega}  \left [ \left(\partial_\tau\varphi  + V \cdot \nabla \varphi \right){\rho}
-\nabla  \varphi \cdot \nabla \rho^m   \right ] \,dx\,d\tau.
\end{equation}

Next, from Lemma \ref{Lemma : equi-continuity} and the pointwise convergence of $\rho_n$ and  $\varrho_n$, we have
\begin{equation*}
\left |\rho (x,t)-\rho(y,t)\right | \leq C |x-y|^\alpha, \quad \forall ~x,~y \in \Omega, \, t \in [0,T].
\end{equation*}
We note,  from \eqref{eq29:Lemma:AC-curve}
\begin{equation}\label{eq7 : Theorem : Toy Fokker-Plank}
\|\rho(t)\|_{L^q_x} \leq \|\rho_0\|_{L^q(\Omega)}e^{\frac{q-1}{q}\int_0^T \|\nabla \cdot V \|_{L^\infty_x} \,d\tau}, \quad \forall ~t\in[0,T],
\end{equation}
which gives \eqref{KK-Nov24-100}. 
From \eqref{eq11 : Theorem : Toy Fokker-Plank}  and \eqref{eq5 : Theorem : Toy Fokker-Plank}, we have \eqref{eq2 : Theorem : Toy Fokker-Plank}.
Note that \eqref{main : Lemma:H-curve} 
implies that $\nabla \rho_n^{\frac{m+q-1}{2}}$ (up to a subsequence)
has a weak limit $\eta \in L^2(\Omega\times[0,T])$ and
\begin{equation}\label{eq9 : Theorem : Toy Fokker-Plank}
\iint_{\Omega_T}|\eta(x,t) |^2\,dx \,dt \leq C,
\end{equation}
where $C=C\left(\|\rho_0 \|_{L^{m+q-1}(\Omega)},\,\|V\|_{L^1(0,T;W^{2,\infty}(\Omega)} \right)$.
Since  $\rho_n$ pointwise converges to $\rho$, we conclude $\eta = \nabla \rho^{\frac{m+q-1}{2}}$ and \eqref{KK-Feb19-100} follows from \eqref{eq9 : Theorem : Toy Fokker-Plank}.
This completes the proof.
\qed


\section{Existence of weak solutions}\label{Exist-weak}
In this section, we show the existence of $L^q$-weak solutions in Definition~\ref{D:weak-sol}.
 As mentioned before, since the critical case matters most and the strict-subcritical case is simpler, we only give proof for the critical cases.
\subsection{Existence for case: $V$ in sub-scaling classes}
\subsubsection{Proof of Theorem \ref{T:weakSol}}
{\it Proof of (i).} Let $\rho_0\in  \mathcal{P}(\Omega) $ with $\int_{\Omega}\rho_0 \log \rho_0 \,dx < \infty$ and
$V \in  \mathcal{S}_{m,1}^{(q_1,q_2)} $ be satisfying \eqref{T:weakSol:V_1}.
Then there exists a sequence of vector fields $V_n\in C^\infty_c(\overline{\Omega}\times[0,T)) $ with $V(t)\cdot \mathbf{n}=0$ on $\partial\Omega$ for all $t\in[0,T)$
such that
\begin{equation}\label{eq7 : Theorem-1}
\lim_{n \rightarrow \infty}\|V_n - V \|_{\mathcal{S}_{m,1}^{(q_1,q_2)} } =0.
\end{equation}
Using truncation, mollification and normalization, we have a sequence of functions
$\rho_{0,n}\in \calC_c^\infty(\Omega)\cap \mathcal{P}_2(\Omega)$ satisfying
\begin{equation}\label{eq9 : Theorem-1}
\lim_{n \rightarrow \infty} W_2(\rho_{0,n} , \rho_0) =0,
\quad \text{and} \quad  \lim_{n \rightarrow \infty} \int_{\Omega} \rho_{0,n} \log \rho_{0,n} \,dx = \int_{\Omega} \rho_0\log \rho_{0} \,dx.
\end{equation}
Exploiting Proposition \ref{proposition : regular existence}, we have $\rho_n\in AC(0,T; \mathcal{P}_2(\Omega))$ which is a regular solution of
\begin{equation}\label{eq5 : Theorem-1}
\begin{cases}
   \partial_t \rho_n =   \nabla \cdot( \nabla (\rho_n)^m  - V_n\rho_n),\\
   \rho_n(0)=\rho_{0,n}.
 \end{cases}
\end{equation}
Thus, $\rho_n$ satisfies
\begin{equation}\label{eq21 : Theorem-1}
\begin{aligned}
\iint_{\Omega_T}  \left [\partial_t\varphi + (V_n \cdot \nabla \varphi )\right ]\rho_n
 - \nabla(\rho_n)^m \cdot \nabla \varphi  \,dx\,dt
 =  - \int_{\Omega}\varphi(0) \rho_{0,n}\,dx,
\end{aligned}
\end{equation}
for each $\varphi\in C_c^\infty( \overline{\Omega} \times [0,T))$, and $\rho_n$ enjoys all properties in Proposition \ref{proposition :
regular existence}. Especially, we note
\begin{equation}\label{eq - regular}
\sup_{0 \leq t \leq T}\int_{\Omega \times \{ t\}}\rho_n \,dx + \iint_{\Omega_T} \left|\nabla \rho_{n}^{\frac{m}{2}} \right|^2 \,dx\,dt < \infty.
\end{equation}
 Since \eqref{T:weakSol:V_1} implies \eqref{V-L1-energy}, for $1<m \leq 2$, we exploit Proposition \ref{P:Lq-energy} (i) to have
\begin{equation}\label{eq17 : Theorem-1}
\sup_{0 \leq t \leq T}\int_{\Omega \times \{ t\}}\rho_n |\log \rho_n|  \,dx
+ \iint_{\Omega_T} \left|\nabla \rho_{n}^{\frac{m}{2}} \right|^2 \,dx\,dt <C,
\end{equation}
 where the constant $C=C \left(\|V_n\|_{\mathcal{S}_{m, 1}^{(q_1,q_2)}}, \int_{\Omega} \rho_{0,n} \log \rho_{0,n} \,dx\right)$.
 Actually, due to \eqref{eq7 : Theorem-1} and \eqref{eq9 : Theorem-1}, we may choose the constant
$C=C\left(\|V\|_{\mathcal{S}_{m, 1}^{(q_1,q_2)}}, \int_{\Omega} \rho_{0} \log \rho_{0} \,dx\right)$.

Now, we investigate the convergence of $\rho_n$. First of all, we note that \eqref{T:weakSol:V_1} implies \eqref{V_compact_m+1} for $q=1$.
Hence, we exploit Proposition \ref{P:AL1} with $q=1$ to have
\begin{equation}\label{eq26 : Theorem-1}
\| \partial_t\rho_n\|_{W_x^{-1,\frac{md+2}{md+1}}L_t^{\frac{md+2}{md+1}}} + \|\nabla \rho_n^m \|_{L_{x,t}^{\frac{md+2}{md+1}}}
 \leq C ,
\end{equation}
and 
$\rho_n$ (up to a subsequence)  converges to $\rho$ in $L_{x, t}^{mp, \frac{m(md+2)}{md+1}}$ for $p< \frac{md^2 +2d}{md(d-1)+d-2}$.
 From \eqref{eq26 : Theorem-1}, we note $\nabla \rho_n^m$ has a weak limit $\eta$ in $L_{x,t}^{\frac{md+2}{md+1}}$. Due to the strong convergence of $\rho_n$ to $\rho$, we have
$\eta=\nabla \rho^m$. We put these together and get
\begin{equation}\label{eq3 : Theorem-1}
\begin{aligned}
\iint_{\Omega_T}  \left[\partial_t\varphi \rho_n  - \nabla(\rho_n)^m \cdot \nabla \varphi\right ] \,dx\,dt \longrightarrow
\iint_{\Omega_T}  \left[\partial_t\varphi \rho  - \nabla \rho^m \cdot \nabla \varphi\right ] \,dx\,dt, ~\mbox{as} \ n\rightarrow \infty.
\end{aligned}
\end{equation}

Next, we exploit Lemma \ref{L:V_compact_m+1} to have
\begin{equation}\label{eq32 : Theorem-1}
\begin{aligned}
\iint_{\Omega_T}  (V_n \cdot \nabla \varphi  ) \rho_n \,dx\,dt
\longrightarrow  \iint_{\Omega_T}  (V \cdot \nabla \varphi ) \rho \,dx\,dt, \,~\mbox{as} \ n\rightarrow \infty,
\end{aligned}
\end{equation}
for any $\varphi \in \calC_c^\infty(\overline{\Omega} \times [0,T)) $. Indeed, we have
\begin{equation*}
\begin{aligned}
&\iint_{\Omega_T}  \big[(V_n \cdot \nabla \varphi ) \rho_n
- ( V \cdot \nabla \varphi ) \rho\big ] \,dx\,dt \\
&\quad = \iint_{\Omega_T}  \nabla \varphi \cdot \big (V_n- V\big )\rho_n  \,dx\,dt
+ \iint_{\Omega_T}  \nabla \varphi \cdot  V \bke{\rho_n- \rho}  \,dx\,dt
 = I + II.
\end{aligned}
\end{equation*}
 Let us first estimate $I$.  Due to \eqref{eq17 : Theorem-1}, we have Lemma \ref{P:L_r1r2}.
 Once $V$ satisfies \eqref{V_compact_m+1} with $q=1$, we exploit Lemma \ref{L:V_compact_m+1} to obtain
\begin{equation}\label{eq28 : Theorem-1}
\|(V_n-V)\rho_n \|_{L_{x,t}^{\frac{md+2}{md+1}}}
\leq \|V_n-V \|_{{\mathcal{S}}_{m,1}^{({q}_1, {q}_2)}} \| \rho_n\|_{L_{x,t}^{r_1, r_2}}
 \leq C 
\|V_n-V \|_{\mathcal{S}_{m,1}^{(q_1,q_2)}} ,
\end{equation}
where we used the fact that $\| \rho_n\|_{L_{x,t}^{r_1, r_2}} \leq C \left(\|\rho_n\|_{L_{x,t}^{1,\infty}},~ \|\nabla \rho_n^{\frac{m}{2}}\|_{L_{x,t}^{2,2}} \right)
=C\left(\|V\|_{\mathcal{S}_{m, 1}^{(q_1,q_2)}}, \int_{\Omega} \rho_{0} \log \rho_{0} \,dx\right)$
from Lemma \ref{P:L_r1r2} and \eqref{eq17 : Theorem-1}.
Hence, we have
\begin{equation}\label{eq13 : Theorem-1}
| I |\leq \|\nabla \varphi \|_{L_{x,t}^{md+2}} \|(V_n - V)\rho_n \|_{L_{x,t}^{\frac{md+2}{md+1}}}
\leq C \|V_n - V \|_{\mathcal{S}_{m,1}^{(q_1,q_2)}} \longrightarrow 0, \quad \mbox{as} \quad n\rightarrow \infty.
\end{equation}
 Also, the weak convergence of $\rho_n$ implies $II$ converges to $0$ as $n \rightarrow \infty$.  Hence, this with \eqref{eq13 : Theorem-1} implies \eqref{eq32 : Theorem-1}.
 We put \eqref{eq3 : Theorem-1}, \eqref{eq32 : Theorem-1} and $W_2(\rho_{0,n}, \rho_0) \rightarrow 0$ into \eqref{eq21 : Theorem-1}, and get
\begin{equation}\label{eq10 : Theorem-1}
\begin{aligned}
\iint_{\Omega_T}  \left [\partial_t\varphi + (V \cdot \nabla \varphi )\right ]\rho
 - \nabla\rho^m \cdot \nabla \varphi  \,dx\,dt
 =  - \int_{\Omega}\varphi(0) \rho_{0}\,dx,
\end{aligned}
\end{equation}
for each $\varphi\in C_c^\infty( \overline{\Omega} \times [0,T))$.

 Next, we prove \eqref{T:weakSol:E_1}.  From \eqref{eq17 : Theorem-1}, we have
\begin{equation}\label{eq24 : Theorem-1}
\| \nabla \rho_n^{\frac{m}{2}}\|_{L_{x,t}^2} \leq C.
\end{equation}
Using lower semi-continuity of $L^2-$norm with respect to the weak convergence and the fact that $\nabla \rho_n^{\frac{m}{2}}$ weakly converges to $\nabla \rho^{\frac{m}{2}}$
 in $L^2(\overline{\Omega}_{T})$,  we have
\begin{equation}\label{eq18 : Theorem-1}
\begin{aligned}
\|\nabla \rho^{\frac{m}{2}} \|_{L_{x,t}^{2}} 
& \leq C.
\end{aligned}
\end{equation}
 We remind that the entropy is lower semicontinuous with respect to the narrow convergence (refer \cite{ags:book}). We also note that the strong convergence of $\rho_n$ to $\rho$ implies
 $\rho_n(t)$ narrowly converges to $\rho(t)$ for a.e $t \in [0,T)$.
  Hence, from \eqref{eq17 : Theorem-1}, we have
\begin{equation}\label{eq15 : Theorem-1}
\begin{aligned}
\esssup_{0\leq t\leq T}\int_{\Omega\times \{t\}} \rho  \log \rho \,dx
  &\leq \liminf_{n \rightarrow \infty} \int_{\Omega\times \{t\}} \rho_n  \log \rho_n  \,dx
  \leq C.
\end{aligned}
\end{equation}
We also remind that there exists a constant $c>0$ (independent of $\rho$) such that
\begin{equation}\label{eq16 : Theorem-1}
\begin{aligned}
\int_{\Omega} | \min \{\rho \log \rho,~0 \} | \,dx \leq c \int_{\Omega}  \rho  \langle x \rangle^2  \, dx \leq c (1 + |\Omega|^2).
\end{aligned}
\end{equation}
Combining \eqref{eq15 : Theorem-1} and \eqref{eq16 : Theorem-1}, we have
\begin{equation}\label{eq20 : Theorem-1}
\begin{aligned}
\esssup_{0\leq t \leq T} &\int_{\Omega\times \{t\}} \rho  \abs{\log \rho}  \,dx \leq  C.
\end{aligned}
\end{equation}
We combine \eqref{eq18 : Theorem-1} and \eqref{eq20 : Theorem-1} to get \eqref{T:weakSol:E_1} which completes the proof for the case {\it (i)}.

 Before we end the proof of {\it (i)}, we remark that the proof works whenever approximate solutions satisfy the energy estimate and the compactness results in
 Proposition \ref{P:Lq-energy} and Proposition \ref{P:AL1}, respectively. These results follow if $V \in \mathcal{S}_{m,1}^{(q_1,q_2)}$  satisfies \eqref{V-L1-energy}
and \eqref{V_compact_m+1} with $q=1$. We note that the intersection of these two conditions is \eqref{T:weakSol:V_1}.

{\it Proof of (ii)} :
Suppose $\rho_0\in  \mathcal{P}(\Omega) \cap L^{q}(\Omega)$ and $V \in  \mathcal{S}_{m,q}^{(q_1,q_2)} $ satisfies \eqref{T:weakSol:V_q}.
Then there exists a sequence of vector fields $V_n\in C^\infty_c(\overline{\Omega}\times[0,T)) $ with $V(t)\cdot \mathbf{n}=0$ on $\partial\Omega$ for all $t\in[0,T)$
such that
\begin{equation}\label{eq7 : Theorem-2-a}
\lim_{n \rightarrow \infty}\|V_n - V \|_{\mathcal{S}_{m,q}^{(q_1,q_2)} } =0.
\end{equation}
Using truncation, mollification and normalization, we have a sequence of functions
$\rho_{0,n}\in \calC_c^\infty(\Omega)\cap \mathcal{P}(\Omega)$ satisfying
\begin{equation}\label{eq9 : Theorem-2-a}
\lim_{n \rightarrow \infty}\|\rho_{0,n} - \rho_0 \|_{L^q(\Omega)} =0.
\end{equation}
As in the proof of part {\it (i)}, exploiting Proposition \ref{proposition : regular existence},  we have $\rho_n\in AC(0,T; \mathcal{P}_2(\Omega))$ a regular  solution of \eqref{eq5 : Theorem-1} satisfying \eqref{eq21 : Theorem-1} 
and
\begin{equation}\label{eq - regular - q}
\sup_{0 \leq t \leq T}\int_{\Omega \times \{ t\}} (\rho_n)^q \,dx + \iint_{\Omega_T} \left|\nabla \rho_{n}^{\frac{q+m-1}{2}} \right|^2 \,dx\,dt < \infty.
\end{equation}

Since \eqref{T:weakSol:V_q} implies \eqref{V-Lq-energy}, from Proposition \ref{P:Lq-energy} (ii), we have
\begin{equation}\label{eq12 : Theorem-2-a}
\begin{aligned}
\sup_{0\leq t \leq T} &\int_{\Omega\times \{t\}}  (\rho_n)^q   \,dx
 + \frac{2qm(q-1)}{(q+m-1)^2} \iint_{\Omega_T} \left |\nabla \rho_n^{\frac{q+m-1}{2}}\right |^2  \,dx\,dt \leq C,
\end{aligned}\end{equation}
where $C= C( \|V_n\|_{\mathcal{S}_{m, q}^{(q_1,q_2)}}, \|\rho_{0,n}\|_{L^q(\Omega)} )$.
Due to \eqref{eq7 : Theorem-2-a} and \eqref{eq9 : Theorem-2-a}, we may choose
$C= C( \|V\|_{\mathcal{S}_{m, q}^{(q_1,q_2)}}, \|\rho_{0}\|_{L^q(\Omega)} )$.

For the convergence of $\rho_n$, we consider two cases depending the range of $m$ and $q$.
\begin{itemize}
\item {\it For the case } $1<m\leq 2, ~~ q>1$ : Note that if $V$ satisfies \eqref{T:weakSol:V_q} for $q>1$ then it satisfies \eqref{T:weakSol:V_1}. Hence, from the proof of (i),
we recall $\rho_n$ (up to a subsequence) converges to $\rho$ in $L_{x,t}^{mp, \frac{m(md+2)}{md+1}}$ for $p< \frac{md^2+2d}{md(d-1)+d-2}$ and this $\rho$ is a weak solution. That is, $\rho$
satisfies \eqref{eq10 : Theorem-1}.

\item {\it For the case } $m > 2, ~~ q \geq m-1$ : Note that if $\rho_0 \in L^q(\Omega)$ for $q\geq m-1$ with $m>2$ then $\rho_0 \in L^{m-1}(\Omega)$.
Hence, we focus on the case $q=m-1$. For the convergence of $\rho_n$, we follow similar steps as in the proof of (i). First, we exploit Proposition \ref{P:AL2} with $q=m-1$ and get
\begin{equation}\label{eq30 : Theorem-1}
\| \partial_t\rho_n^{m-1}\|_{W_x^{-1,\frac{2d+2}{2d+1}}L_t^{1}} + \|\nabla \rho_n^{m-1} \|_{L_{x,t}^{2}}
 \leq C ,
\end{equation}
and 
$\rho_n$ (up to a subsequence)  converges to $\rho$ in $L_{x, t}^{(m-1)p, (m-1)2}$.
Next, we note
\begin{equation}
\nabla \rho_n^m \sim \rho_n \nabla \rho_n^{m-1} \in L_{x,t}^{r,s}
\end{equation}
where $\frac{1}{r}=\frac{1}{(m-1)p}+\frac{1}{2}$ and $\frac{1}{s}=\big (\frac{1}{(m-1)2} +\frac{1}{2} \big)\frac{1}{r}$. We can check $r,~s >1$. Hence,
 $\nabla \rho_n^m$ has a weak limit $\eta$ in $L_{x,t}^{r,s}$. Due to the strong convergence of $\rho_n$ to $\rho$, we have
$\eta=\nabla \rho^m$. We put these together and have
\begin{equation}\label{eq31 : Theorem-1}
\begin{aligned}
\iint_{\Omega_T}  \left[\partial_t\varphi \rho_n  - \nabla(\rho_n)^m \cdot \nabla \varphi\right ] \,dx\,dt \longrightarrow
\iint_{\Omega_T}  \left[\partial_t\varphi \rho  - \nabla \rho^m \cdot \nabla \varphi\right ] \,dx\,dt, ~\mbox{as} \ n\rightarrow \infty.
\end{aligned}
\end{equation}
Also, from \eqref{eq30 : Theorem-1}, we note that $\rho_n$ is bounded in $L^{r_1, r_2}_{x,t}(\overline{\Omega}_T)$ where $(r_1, r_2)$ are satisfying \eqref{q-r1r2} with $q=m-1$.
%
Exploiting this, similar to \eqref{eq32 : Theorem-1}, we have
\begin{equation}\label{eq33 : Theorem-1}
\begin{aligned}
\iint_{\Omega_T}  (V_n \cdot \nabla \varphi  ) \rho_n
\,dx\,dt \longrightarrow  \iint_{\Omega_T}  (V \cdot \nabla
\varphi) \rho \,dx\,dt, ~~~\mbox{as}~~ n\rightarrow \infty,
\end{aligned}
\end{equation}
for any $\varphi \in \calC_c^\infty(\Omega \times [0,T)) $. The only difference is that, instead of \eqref{eq28 : Theorem-1} and \eqref{eq13 : Theorem-1}, we have
\begin{equation}\label{eq34 : Theorem-1}
\begin{aligned}
I &\leq \left(\iint_{\Omega_T} \left|(V_n-V) \rho_n^{m-1} \nabla \varphi \right| \,dxdt \right)^{\frac{1}{m-1}} \left(\iint_{\Omega_T} \abs{ (V_n-V)  \nabla \varphi} \,dxdt \right)^{1-\frac{1}{m-1}} \\
&\leq \left(\iint_{\Omega_T} \abs{ (V_n-V) \rho_n^{m-1}  \nabla \varphi }^{r} \,dxdt \right)^{\frac{1}{(m-1)r}} \left(\iint_{\Omega_T} \abs{\nabla \varphi} \,dxdt \right)^{\frac{1}{m-1}(1-\frac{1}{r})} \left(\iint_{\Omega_T} \abs{ (V_n-V) \nabla \varphi} \,dxdt \right)^{1-\frac{1}{m-1}} \\
& \leq C(\varphi) \|V_n-V\|_{\mathcal{S}_{m,m-1}^{(q_1,q_2)}}^{\frac{1}{m-1}} \|\rho_n\|_{L^{r_1, r_2}_{x,t}} \|V_n-V\|_{\mathcal{S}_{m,m-1}^{(q_1,q_2)}}^{1-\frac{1}{m-1}}\\
& \leq C(\varphi) \|V_n-V\|_{\mathcal{S}_{m,m-1}^{(q_1,q_2)}} \|\rho_n\|_{L^{r_1, r_2}_{x,t}} \longrightarrow 0, \quad \mbox{as}\quad n\rightarrow \infty,
\end{aligned}
\end{equation}
where we use Lemma \ref{P:L_r1r2} and Lemma \ref{L-compact}. 
\end{itemize}

Now, we investigate \eqref{T:weakSol:E_q}. In either case $1<m\leq 2, ~ g>1$ or $m\geq 2, ~ g \geq m-1$ above, we have a strong convergence of $\rho_n$ to $\rho$.
Using \eqref{eq12 : Theorem-2-a} and  the strong convergence of $\rho_n$, we follow similar arguments in the
proof of part {\it (i)} and have
\begin{equation}\label{eq24 : Theorem-2-a}
\begin{aligned}
\esssup_{0\leq t\leq T}\int_{\Omega \times \{t\}} \rho^q \,dx + \iint_{\Omega_T} \abs{\nabla \rho^{\frac{q+m-1}{2}}}^2 \,dxdt  \leq C.
\end{aligned}
\end{equation}
Finally, we combine \eqref{eq10 : Theorem-1} and \eqref{eq24 : Theorem-2-a} to complete the proof of (ii). We note that the assumption \eqref{T:weakSol:V_q} on $V$ satisfies
the conditions \eqref{V-Lq-energy} for energy estimate and the conditions for compactness results in Proposition \ref{P:AL1} and Proposition \ref{P:AL2}.
\qed

\subsubsection{Proof of Theorem \ref{T:ACweakSol}}
{\it Proof of (i).} 
Let $V_n$, $\rho_{0,n}$ and $\rho_n$ be the same as in the proof of (i) of Theorem \ref{T:weakSol}. If $V$ satisfies the assumptions for the energy estimate and the speed estimates,
that are Proposition \ref{P:Lq-energy} (i) and Proposition \ref{P:Energy-speed} (i), respectively. Then, similar to the proof of (i) in Theorem \ref{T:weakSol}, we have
\begin{equation}\label{eq11 : Theorem-2-a}
\sup_{0 \leq t \leq T}\int_{\Omega \times \{ t\}}\rho_n |\log \rho_n|  \,dx
+ \iint_{\Omega_T} \{ \left|\nabla \rho_{n}^{\frac{m}{2}} \right|^2 + \left (\left | \frac{\nabla \rho_n^m}{\rho_n}\right |^{\lambda_1} +|V_n|^{\lambda_1} \right )\rho_n \}\,dx\,dt <C,
\end{equation}
where the constant $C=C \big(\|V\|_{\mathcal{S}_{m, 1}^{(q_1,q_2)}}, \int_{\Omega} \rho_0 \log \rho_0 \,dx\big)$.
Now, we prove $\rho_n \in AC(0,T;\mathcal{P}_p(\Omega))$. For this, 
we rewrite \eqref{eq5 : Theorem-1} as follows
$$\partial_t \rho_n +  \nabla \cdot (w_n \rho_n )=0, \qquad \mbox{where}\quad
w_n:= -\frac{\nabla (\rho_n)^m}{\rho_n}+ V_n.$$
Then, \eqref{eq11 : Theorem-2-a} says
\begin{equation*}\label{eq25 : Theorem-1}
\sup_{0 \leq t \leq T}\int_{\Omega \times \{ t\}}\rho_n |\log \rho_n|  \,dx +\int_0^T  |w_{n}(t)|_{L^p(\rho_n(t))}^p \,dt <C.
\end{equation*}
We exploit Lemma \ref{representation of AC curves} and \eqref{eq11 : Theorem-2-a} to have $\rho_n \in AC(0,T;\mathcal{P}_p(\Omega))$ and
\begin{equation}\label{eq35 : Theorem-a}
\begin{aligned}
W_p(\rho_n(s),\rho_n(t)) &\leq \int_s^t  \|w_n(\tau)\|_{L^p(\rho_n(\tau))} \,d\tau \leq  C (t-s)^{\frac{p-1}{p}}, \qquad \forall ~ 0\leq s\leq t\leq T.
\end{aligned}
\end{equation}
Next, from  Lemma \ref{Lemma : Arzela-Ascoli} with  \eqref{eq35 : Theorem-a}, we note that there
exists a curve $\rho:[0,T] \mapsto \mathcal{P}_p(\Omega)$ such that
\begin{equation}\label{eq36 : Theorem-1}
\rho_n(t) \,\mbox{(up to a subsequence) \,narrowly converges to}\, \rho(t) \,\mbox{as} \, n \rightarrow \infty, \quad \forall \ 0\leq t \leq T.
\end{equation}
Due to the lower semicontinuity of $p$-Wasserstein distance $W_p(\cdot, \cdot)$ with respect to the narrow convergence, we have
\begin{equation}\label{eq37 : Theorem-1}
 W_p(\rho(s),\rho(t)) \leq   C (t-s)^{\frac{p-1}{p}},
\qquad \forall ~ 0\leq s\leq t\leq T,
\end{equation}
which concludes \eqref{T:ACweakSol:W_1}.

Furthermore, as we have seen in the proof of (i) in Theorem \ref{T:weakSol} if $V$ satisfies \eqref{V_compact_m+1} with $q=1$ for the compactness, then
the curve $\rho :[0,T] \mapsto \mathcal{P}_p(\Omega)$ is a solution of \eqref{E:Main}-\eqref{E:Main-bc-ic} and satisfies \eqref{eq15 : Theorem-1}.

Next, we investigate \eqref{T:ACweakSol:E_1}. From Lemma \ref{P:W-p}, once we have \eqref{eq15 : Theorem-1}, if $V$ satisfies \eqref{V:speed}, then we have
\begin{equation}\label{eq1 : Theorem-2-a}
 \iint_{\Omega_{T}} \left (\left | \frac{\nabla \rho^m}{\rho}\right |^{\lambda_1} +|V|^{\lambda_1} \right )\rho \,dx\,dt <C,
\end{equation}
where $C=C(\|\rho\|_{L_{x,t}^{1,\infty}},~ \|V\|_{\mathcal{S}_{m,1}^{(q_1,q_2)}})$. Combining \eqref{eq1 : Theorem-2-a} and \eqref{eq15 : Theorem-1}, we have \eqref{T:ACweakSol:E_1}.
This completes the proof.

In summary, if $V$ satisfies \eqref{V:speed} with $q=1$ for the speed estimate in addition to \eqref{T:weakSol:E_1} for the weak solution then the proof works.

 \noindent {\it For the case(ii)} : As in the proof of (i), if $V$ satisfies \eqref{T:weakSol:E_q} and \eqref{V-Lq-energy-speed} then we can complete the proof.
 We can check \eqref{T:ACweakSol:V_q} is the intersection of \eqref{T:weakSol:E_q} and \eqref{V:speed}.
\qed


\subsection{Existence for case: $\nabla\cdot V\geq 0$}
Here we prove theorems in Section~\ref{SS:divergence-free}: Theorem~\ref{T:weakSol_DivFree_1} and \ref{T:weakSol_DivFree_q}.

\subsubsection{Proof of Theorem \ref{T:weakSol_DivFree_1}}
Let $\rho_{0,n}$ and $V_n$ be as in the proof of Theorem \ref{T:weakSol} (i) except that $V_n$ further satisfies $\nabla \cdot V_n=0$.

 Compared to the proof of Theorem \ref{T:weakSol} (i). In this case, Proposition \ref{P:Lq-energy} (iv) says the energy estimates hold for all $(q_1, q_2)$.
Hence, once $V$ satisfies assumption \eqref{V_compact_m+1} with $q=1$ for the compactness argument, we complete the proof.%

Now, we claim that the condition  \eqref{T:weakSol_DivFree:V_1} is
equivalent to the assumption \eqref{V_compact_m+1} for $q=1$. Indeed, for the case $d>2$, \eqref{V_compact_m+1} for $q=1$ reads to
\begin{equation}\label{eq34 : Theorem-2-c}
\quad  \frac{md+1}{md+2}-\frac{1}{m}\leq \frac{1}{q_2}\leq \frac{md+1}{md+2} \quad {\mbox and}\quad \frac{d}{q_1} +\frac{2+q_{m,d}}{q_2}=1+q_{m,d},
\end{equation}
where $q_{m,d}=d(m-1)$. Let $m^*$ be as in \eqref{m_ast}. We note
\begin{equation}\label{eq35 : Theorem-2-c}
 0\leq  \frac{md+1}{md+2}-\frac{1}{m} \Longleftrightarrow m \geq m^*.
\end{equation}
Due to the fact $q_1, ~ q_2 >0$, for $1<m \leq m^*$, we have from \eqref{eq34 : Theorem-2-c}
\begin{equation}\label{eq36 : Theorem-2-c}
 0\leq \frac{1}{q_2}\leq \frac{md+1}{md+2},
\end{equation}
and, for $m \geq  m^*$, we have
\begin{equation}\label{eq37 : Theorem-2-c}
\quad  \frac{md+1}{md+2}-\frac{1}{m}\leq \frac{1}{q_2}\leq \frac{md+1}{md+2} .
\end{equation}
Similarly, for the case $d=2$, \eqref{V_compact_m+1} for $q=1$ reads to
\begin{equation}\label{eq38 : Theorem-2-c}
\quad  \frac{2m+1}{2m+2}-\frac{1}{m} < \frac{1}{q_2}\leq \frac{2m+1}{2m+2} \quad {\mbox and}\quad \frac{2}{q_1} +\frac{2+q_{m,2}}{q_2}=1+q_{m,2}.
\end{equation}
Hence, for $m=m^*$, we have
\begin{equation}\label{eq39 : Theorem-2-c}
0 < \frac{1}{q_2}\leq \frac{2m+1}{2m+2} ,
\end{equation}
for $m >m^*$
\begin{equation}\label{eq40 : Theorem-2-c}
 \frac{2m+1}{2m+2}-\frac{1}{m} < \frac{1}{q_2}\leq \frac{2m+1}{2m+2}
\end{equation}
and for $m <m^*$
\begin{equation}\label{eq41 : Theorem-2-c}
0 \leq \frac{1}{q_2}\leq \frac{2m+1}{2m+2}.
\end{equation}
Combining \eqref{eq36 : Theorem-2-c}, \eqref{eq37 : Theorem-2-c}, \eqref{eq39 : Theorem-2-c}, \eqref{eq40 : Theorem-2-c} and \eqref{eq41 : Theorem-2-c},
we have \eqref{T:weakSol_DivFree:V_1}. In summary, the range of $q_2$ in \eqref{T:weakSol_DivFree:V_1} is equivalent to
$\frac{d}{q_1}+\frac{2+d(m-1)}{q_2}=1+d(m-1)$ with $0 \leq \frac{1}{q_1} \leq l$ .
\qed

\subsubsection{Proof of Theorem \ref{T:weakSol_DivFree_q}}
Let $\rho_{0,n}$, $V_n$  and $\rho_n$ be as in the proof of Theorem \ref{T:weakSol} (ii) where $V_n$ further satisfies $\nabla \cdot V_n=0$.
Due to Proposition \ref{P:Lq-energy} (iv), for all  $(q_1,q_2)$ and $m>1$, we have
\begin{equation}\label{eq21 : Theorem-2-d}
\begin{aligned}
\iint_{\Omega_T}  \left [\partial_t\varphi + (V_n \cdot \nabla \varphi )\right ]\rho_n
 - \nabla(\rho_n)^m \cdot \nabla \varphi  \,dx\,dt
 =  - \int_{\Omega}\varphi(0) \rho_{0,n}\,dx,
\end{aligned}
\end{equation}
for each $\varphi\in C_c^\infty( \overline{\Omega} \times [0,T))$, and 
\begin{equation}\label{eq17 : Theorem-2-d}
\sup_{0\leq t \leq T} \int_{\Omega\times \{t\}}  (\rho_n)^q   \,dx
 +  \iint_{\Omega_T} \left |\nabla \rho_n^{\frac{q+m-1}{2}}\right |^2  \,dx\,dt \leq C,
\end{equation}
 where the constant $C=C( \|V\|_{\mathcal{S}_{m, q}^{(q_1,q_2)}}, \|\rho_{0}\|_{L^q(\Omega)} )$.

Now, we note $\int \rho_{0,n}\log \rho_{0,n}\, dx <\infty$ for all $n$.  Hence, we also have
\begin{equation}\label{eq18 : Theorem-2-d}
\sup_{0 \leq t \leq T}\int_{\Omega \times \{ t\}}\rho_n |\log \rho_n|  \,dx
+ \iint_{\Omega_T} \left|\nabla \rho_{n}^{\frac{m}{2}} \right|^2 \,dx\,dt <C,
\end{equation}
 where the constant $C=C( \|V\|_{\mathcal{S}_{m, q}^{(q_1,q_2)}}, \|\rho_{0}\|_{L^q(\Omega)} )$.

Once we have $V \in \mathcal{S}_{m,1}^{(\tilde{q}_1,\tilde{q}_2)}$ such that $(\tilde{q}_1,\tilde{q}_2)$ satisfies \eqref{V_compact_m+1} for $q=1$,
then we follow exactly the same step as in the proof of Theorem \ref{T:weakSol_DivFree_1}.
Then, we conclude $\rho_n$ (up to a subsequence) converges to $\rho$ in $L_{x,t}^{mp, \frac{m(md+2)}{md+1}}$ and $\rho$ is a $L^1$
weak solution of \eqref{E:Main}-\eqref{E:Main-bc-ic}.
Furthermore, from the convergence of $\rho_n$ to $\rho$ in $L_{x,t}^{mp, \frac{m(md+2)}{md+1}}$ and the estimate \eqref{eq17 : Theorem-2-d} which is independent of $n$,
we have
\begin{equation}\label{eq19 : Theorem-2-d}
\sup_{0\leq t \leq T} \int_{\Omega\times \{t\}}  (\rho_n)^q   \,dx
 +  \iint_{\Omega_T} \left |\nabla \rho_n^{\frac{q+m-1}{2}}\right |^2  \,dx\,dt \leq C.
\end{equation}

To complete the proof, we recall that \eqref{V_compact_m+1} for $q=1$ is equivalent to \eqref{T:weakSol_DivFree:V_1}, and we sketch that the assumptions
\eqref{T:weakSol_DivFree:V_q1} and \eqref{T:weakSol_DivFree:V_q2} imply
$V \in \mathcal{S}_{m,1}^{(\tilde{q}_1,\tilde{q}_2)}$ for some $(\tilde{q}_1,\tilde{q}_2)$ satisfying \eqref{T:weakSol_DivFree:V_1}.

{\it (i) For the case $1<m\leq m^*$ :} We first note that $V \in \mathcal{S}_{m,q}^{(q_1,q_2)}$ that is $\frac{d}{q_1}+\frac{2+q_{m,d}}{q_2}=1+q_{m,d}$ with $q_1, q_2 \geq 0$  implies
\begin{equation}\label{eq20 : Theorem-2-d}
0\leq \frac{1}{q_1} \leq \frac{1+q_{m,d}}{d}, \quad 0\leq \frac{1}{q_2}\leq \frac{1+q_{m,d}}{2+q_{m,d}} \Longrightarrow \frac{2+q_{m,d}}{1+q_{m,d}} \leq q_2\leq \infty .
\end{equation}

Since $\frac{2+d(m-1)}{1+d(m-1)} \leq \frac{2+q_{m,d}}{1+q_{m,d}}$ for all $q \geq 1$, recalling $\mathcal{S}_{m,q}^{(q_1,q_2)} \subset \mathcal{S}_{m,1}^{(\tilde{q}_1,q_2)}$
for some $\tilde{q}_1 \leq q_1$, we conclude that if $V \in \mathcal{S}_{m,q}^{(q_1,q_2)}$ satisfying \eqref{T:weakSol_DivFree:V_q1} then $V \in \mathcal{S}_{m,1}^{(\tilde{q}_1,q_2)}$
satisfying \eqref{T:weakSol_DivFree:V_1} for the case of $1<m\leq m^*$.

{\it (ii) For the case $m > m^*$ :} Let $q^*$ be such that $\frac{1+q^*_{m,d}}{d}=\frac{md+1}{md+2}-\frac{d-2}{md}(:=l)$. If $q\geq q^*$ then we have
$ \frac{1+q_{m,d}}{d} \leq \frac{1+q^*_{m,d}}{d}$. Hence $V \in \mathcal{S}_{m,q}^{(q_1,q_2)} \Rightarrow V\in \mathcal{S}_{m,1}^{(q_1,\tilde{q_2})}$
for some $\tilde{q}_2 \leq q_2$ and $\frac{1}{q_1}\leq l$. That is $q_2^{q,l} \leq q_2 \leq  \infty$ works.

If $1<q< q^*$ then only $0\leq \frac{1}{q_1} \leq l$ works. That means
\begin{equation}
1+q_{m,d}-ld \leq \frac{2+q_{m,d}}{q_2}\leq 1+q_{m,d}
\end{equation}
implies
\begin{equation}
\frac{2+q_{m,d}}{1+q_{m,d}}\leq q_2 \leq \frac{2+q_{m,d}}{1+q_{m,d}-ld}(=q_2^{q,u}).
\end{equation}

\qed


\subsubsection{Proof of Theorem \ref{T:ACweakSol_DivFree}}
{\it For the case (i)} : Similar to the argument in the proof of Theorem \ref{T:ACweakSol} (i), if $V$ satisfies the assumptions in Theorem \ref{T:weakSol_DivFree_1} and
Proposition \ref{P:Energy-speed} (iv) for $q=1$ then we conclude the result. Note that the condition \eqref{T:ACweakSol_DivFree:V_1} is the intersection of
\eqref{T:weakSol_DivFree:V_1} and \eqref{V-Lq-divfree} for $q=1$.

{\it For the case (ii)} : It is enough to say that we can check \eqref{T:ACweakSol_DivFree:V_q} implies \eqref{T:weakSol_DivFree:V_q1} and \eqref{T:weakSol_DivFree:V_q2} (for $1<m\leq m^*$ and $m>m^*$ respectively),
 and \eqref{V-Lq-divfree}. This completes the proof.

\noindent {\it For the case (iii)} : It follows from Proposition \ref{P:Energy-embedding} (iii). 
\qed

\subsection{Existence for case: $\nabla V$ in sub-scaling classes }
We prove theorems in Section~\ref{SS:tilde Serrin}: Theorem~\ref{T:weakSol_tilde} and \ref{T:ACweakSol_tilde}.

\subsubsection{Proof of Theorem \ref{T:weakSol_tilde}}
{\it For the case (i) :} Let 
 $V_n\in C^\infty_c(\overline{\Omega}\times[0,T)) $ with $V(t)\cdot \mathbf{n}=0$ on $\partial\Omega$ for all $t\in[0,T)$ be
such that
\begin{equation}\label{eq7 : T:weakSol_tilde}
\lim_{n \rightarrow \infty}\|V_n - V \|_{\tilde{\mathcal{S}}_{m,1}^{(\tilde{q}_1,\tilde{q}_2)} } =0.
\end{equation}
and  $\rho_{0,n}$ and $\rho_n$ be same as in Theorem \ref{T:weakSol} (i).
If $V$ satisfies \eqref{V-tilde-Lq} for $q=1$ then we exploit Proposition \ref{P:Lq-energy} (iii) for $q=1$ 
to have
\begin{equation}\label{eq17 : T:weakSol_tilde}
\sup_{0\leq t \leq T} \int_{\Omega\times \{t\}}  \rho_n | \log \rho_n|(\cdot,t)   \,dx
 +  \iint_{\Omega_T} \left |\nabla \rho_n^{\frac{m}{2}}\right |^2  \,dx\,dt \leq C,
\end{equation}
 where the constant $C=C(|\Omega|,  \|V\|_{\tilde{\mathcal{S}}_{m, 1}^{(\tilde{q}_1,\tilde{q}_2)}}, \int_{\Omega}\rho_0 \log \rho_0 \,dx )$.

 Next, we note that if $V$ satisfies \eqref{tilde-q2} then we have $V\in \mathcal{S}_{m,1}^{(q_1,q_2)}$ with
$\big(q_1, q_2\big):=\big(\frac{d\tilde{q}_1}{d-\tilde{q}_1},\tilde{q}_2\big)$. Hence, if $V$ satisfies both \eqref{tilde-q2} and \eqref{V-tilde-Lq} for $q=1$,
 that is
\begin{equation}\label{eq1 : T:weakSol_tilde}
V\in \tilde{\mathcal{S}}_{m,1}^{(\tilde{q}_1, \tilde{q}_2)}  \ \text{ for } \
\begin{cases}
 \frac{2+d(m-1)}{1+d(m-1)} < \tilde{q}_2 \leq \frac{m}{m-1},    &\text{ if } d \geq 2, \vspace{2 mm}\\
  \frac{2+d(m-1)}{1+d(m-1)} < \tilde{q}_2 < \frac{m}{m-1},   & \text{ if } d= 2.
 \end{cases}
\end{equation}
then we have that $V$ belongs to $\mathcal{S}_{m,1}^{(q_1,q_2)}$ where $q_2$ satisfies
the same range of $\tilde{q}_2$ as in \eqref{eq1 : T:weakSol_tilde} which is
\begin{equation}\label{eq2 : T:weakSol_tilde}
V\in {\mathcal{S}}_{m,1}^{({q}_1, {q}_2)}  \ \text{ for } \
\begin{cases}
 \frac{2+d(m-1)}{1+d(m-1)} < {q}_2 \leq \frac{m}{m-1},    &\text{ if } d \geq 2, \vspace{2 mm}\\
  \frac{2+d(m-1)}{1+d(m-1)} < {q}_2 < \frac{m}{m-1},   & \text{ if } d= 2.
 \end{cases}
\end{equation}
Note that \eqref{eq2 : T:weakSol_tilde} implies \eqref{V_compact_m+1} for $q=1$. Hence, for the rest of the proof, we follow exactly same step as in Theorem  \ref{T:weakSol} (i).

In summary, suppose $V$ satisfies \eqref{V-tilde-Lq} for $q=1$ in Proposition \ref{P:Lq-energy},
\eqref{tilde-q2} and \eqref{V_compact_m+1} for $q=1$ in Lemma \ref{L:V_compact_m+1}. Equivalently, $V$ satisfies \eqref{T:weakSol_tilde:V_1}. Then we can complete the proof.
%

{\it For the case (ii) :}  Let $\rho_{0,n},~ V_n$ and $\rho_n$ be same as before.

Suppose $V$ satisfies \eqref{V-tilde-Lq} then 
 we have
\begin{equation}\label{eq37 : T:weakSol_tilde}
\sup_{0\leq t \leq T} \int_{\Omega\times \{t\}}  (\rho_n)^q   \,dx
 +  \iint_{\Omega_T} \left |\nabla \rho_n^{\frac{q+m-1}{2}}\right |^2  \,dx\,dt \leq C,
\end{equation}
 where the constant $C=C( \|V\|_{\tilde{\mathcal{S}}_{m, q}^{(q_1,q_2)}}, \|\rho_{0}\|_{L^q(\Omega)} )$.
 As before, if $V$ further satisfies \eqref{tilde-q2} 
that is \begin{equation}\label{eq40 : T:weakSol_tilde}
V\in \tilde{\mathcal{S}}_{m,q}^{(\tilde{q}_1, \tilde{q}_2)}  \ \text{ for } \
\begin{cases}
 \frac{2+q_{m,d}}{1+q_{m,d}} < \tilde{q}_2 \leq \frac{q+m-1}{m-1},    &\text{ if } d \geq 2, \vspace{2 mm}\\
  \frac{2+q_{m,d}}{1+q_{m,d}} < \tilde{q}_2 < \frac{q+m-1}{m-1},   & \text{ if } d= 2.
 \end{cases}
\end{equation}
then we have that $V$ belongs to $\mathcal{S}_{m,q}^{(q_1,q_2)}$ where $q_2$ satisfies
the same range of $\tilde{q}_2$ as in \eqref{eq1 : T:weakSol_tilde} which is
then we have
\begin{equation}\label{eq41 : T:weakSol_tilde}
V\in {\mathcal{S}}_{m,q}^{({q}_1, {q}_2)}  \ \text{ for } \
\begin{cases}
 \frac{2+q_{m,d}}{1+q_{m,d}} < {q}_2 \leq \frac{q+m-1}{m-1},    &\text{ if } d \geq 2, \vspace{2 mm}\\
  \frac{2+q_{m,d}}{1+q_{m,d}} < {q}_2 < \frac{q+m-1}{m-1},   & \text{ if } d= 2.
 \end{cases}
\end{equation}

We note 
that \eqref{eq37 : T:weakSol_tilde} gives us
\begin{equation}\label{eq38 : T:weakSol_tilde}
\sup_{0 \leq t \leq T}\int_{\Omega \times \{ t\}}\rho_n |\log
\rho_n|  \,dx + \iint_{\Omega_T} \left|\nabla \rho_{n}^{\frac{q+m-1}{2}}
\right|^2 \,dx\,dt <C,
\end{equation}
where the constant $C=C( \|V\|_{\tilde{\mathcal{S}}_{m, q}^{(q_1,q_2)}}, \|\rho_{0}\|_{L^q(\Omega)} )$.

For the convergence of $\rho_n$, we can check that if $V\in  {\mathcal{S}}_{m,q}^{({q}_1, {q}_2)}$ satisfies \eqref{eq41 : T:weakSol_tilde} then
$V\in  {\mathcal{S}}_{m,1}^{({q}_1, \tilde{{q}}_2)}$ where $(q_1, \tilde{q}_2)$ satisfies \eqref{V_compact_m+1} for $q=1$. Hence, we have a compactness for $q=1$, and the rest of the
proof follows exactly the same as we did in the proof of Theorem \ref{T:weakSol_DivFree_q}.
\qed

\subsubsection{Proof of Theorem \ref{T:ACweakSol_tilde}}
{\it For the case (i)} : As in the proof of Theorem \ref{T:ACweakSol_DivFree} (i), if $V$ satisfies the assumptions in Theorem \ref{T:weakSol_tilde} (i) and
Proposition \ref{P:Energy-speed} (iii) for $q=1$ then we conclude the result. Note that the condition \eqref{T:weakSol_tilde:V_1} satisfies \eqref{V-tilde-Lq-energy-speed} for $q=1$.

\noindent{\it For the case (ii)} : Now, it is enough to say that we can check \eqref{T:ACweakSol_tilde:V_q} implies \eqref{T:weakSol_tilde:V_q} and
\eqref{V-tilde-Lq-energy-speed} for $q>11$
This completes the proof.

\noindent {\it For the case (iii)} : It follows from Proposition \ref{P:Energy-embedding} (ii). 
\qed




\appendix  \section{Figure supplements}\label{Appendix:fig}

\begin{itemize}
\item Fig.~\ref{F:S:1m2:more} ((Theorem~\ref{T:ACweakSol} for case $1<m \leq 2$, $q\geq 1$)): Refer Remark~\ref{R:T:ACweakSol}, and Fig.~\ref{F:S:1m2}.
As $d$ increases, the point $\textbf{b}$ approaches closer to the origin and $\textbf{A}$ may locate on the right-hand side of $\textbf{B}$ and $\textbf{b}$.

\begin{figure}
\centering

\begin{tikzpicture}[domain=0:16]


\draw (-0.5, -1) node[right] { \scriptsize (i). $\max\{2,\frac{2m}{(2m-1)(m-1)}\} < d \leq \frac{2m}{m-1}$.};

\fill[fill= lgray]
(0, 2)  -- (0.8,0) -- (2.35, 1.1)--(1.45, 2);

\fill[fill= gray]
(0.7, 2)--(0.7, 0.78) -- (1.97,0.78) -- (2.35, 1.1) -- (1.45, 2)--(0.7, 2);

\draw[->] (0,0) node[left] {\scriptsize $0$}
-- (5,0) node[right] {\scriptsize $\frac{1}{q_1}$};
\draw[->] (0,0) -- (0,4.5) node[left] { \scriptsize $\frac{1}{q_2}$};

\draw (0,4) node{\scriptsize $+$} node[left]{\scriptsize $1$} ;
\draw (4,0) node{\scriptsize $+$} ;

\draw[very thin]
(0, 2) -- (1.45, 2) ;
\draw[thick](1.45, 2) node{\scriptsize $\bullet$} node[right] {\scriptsize \textbf{E}};

\draw (0, 2) -- (1.45, 2);

\draw (0.8, 0) -- (2.35, 1.1);

\draw[thick] (2.35, 1.1) node{\scriptsize $\bullet$} node[right] {\scriptsize \textbf{D}};
\draw[very thin]
 (0.48, 0.78)
-- (1.92, 0.78) node{\scriptsize $\bullet$} node[right]{\scriptsize \textbf{C}} ;

\draw[thick](0.48, 0.78) circle(0.05) node[left] {\scriptsize \textbf{B}};

\draw (0.7, 0.78) node{\scriptsize $\bullet$} node[above]{\scriptsize \textbf{G}}  ;

\draw[very thin]
(0.7, 0.3)
 --(0.7, 2) node{\scriptsize $\bullet$} node[above] {\scriptsize \textbf{F}};

\draw[thick](0.7, 0.25) circle(0.05) node[left] {\scriptsize \textbf{A}};

\draw[thick, dotted] (0,2) node{\scriptsize $\times$}  node[left] {\scriptsize $\textbf{a}$}
 -- (0.8, 0)  node[below] {\scriptsize $\textbf{b}$};
\draw[thick] (0.8, 0) circle(0.05);

\draw
(0,3.4) node {\scriptsize $\times$} node[left] {\scriptsize $\frac{1}{p_1}$}
-- (3.5, 0) node {\scriptsize $\times$} node[below] {\scriptsize $\frac{1+d(m-1)}{d}$};
\draw (3.9, 0.3) node{\scriptsize $\mathcal{S}_{m,1}^{(q_1, q_2)}$};

\draw (0,2.7) node {\scriptsize $\times$} node[left] {\scriptsize $\frac{m+d(m-1)}{2m+d(m-1)}$}
    -- (2.7, 0) node {\scriptsize $\times$} node[below] {\scriptsize \textbf{c}};
\draw (2.7, 0.3) node{\scriptsize $\mathcal{S}_{m,m}^{(q_1, q_2)}$};



\draw[thin, dotted] (1.85, 0) node{\scriptsize $*$}  -- (1.85, 0.63);

\draw[thin, dotted] (2.35, 0) node{\scriptsize $*$}  -- (2.35, 1.1);
\draw[thin, dotted] (0, 1.1) node{\scriptsize $*$}  -- (2.35, 1.1);

\draw[thin, dotted] (1.45, 0) node{\scriptsize $*$}  -- (1.45, 2);


\draw(0.5, 4.5) node[right]{\scriptsize $\overline{\textbf{ab}}, \textbf{A}-\textbf{F}, \textbf{c}$: same as in Fig. 4-(i)};
\draw (0.5, 4) node[right] {\scriptsize $\mathcal{R} (\textbf{GCDEF})$: scaling invariant class of \eqref{T:ACweakSol:V_q} };
\draw (0.5, 3.5) node[right] {\scriptsize $\mathcal{R} (\textbf{GAbC})$: scaling invariant class of \eqref{T:weakSol:V_q} };
\draw (1, 3) node[right] {\scriptsize $\textbf{G} = (\frac{m-1}{2m}, \frac{m-1}{2m-1})$ };


\draw (7.5, -1) node[right] { \scriptsize (ii). $ d > \frac{2m}{m-1}$.};

\fill[fill= lgray]
(8, 2) -- (8.5, 0) -- (10.15, 1.3)--(9.45, 2);

\fill[fill= gray]
(8.75, 2)--(8.75, 1) -- (9.75,1) -- (10.15, 1.3) -- (9.45, 2)--(8.75, 2);

\draw[->] (8,0) node[left] {\scriptsize $0$}
-- (13,0) node[right] {\scriptsize $\frac{1}{q_1}$};
\draw[->] (8,0) -- (8,4.5) node[left] { \scriptsize $\frac{1}{q_2}$};

\draw (8,4) node{\scriptsize $+$} node[left]{\scriptsize $1$} ;
\draw (12,0) node{\scriptsize $+$} ;

\draw[very thin]
(8, 2) -- (9.45, 2) ;
\draw[thick](9.45, 2) node{\scriptsize $\bullet$} node[right] {\scriptsize \textbf{E}};

\draw (8, 2) -- (9.45, 2);

\draw (8.5, 0) -- (10.15, 1.3);

\draw[thick] (10.15, 1.3) node{\scriptsize $\bullet$} node[right] {\scriptsize \textbf{D}};
\draw[very thin]
 (8.25, 1)
-- (9.75, 1) node{\scriptsize $\bullet$} node[right]{\scriptsize \textbf{C}} ;

\draw[thick](8.25, 1) circle(0.05) node[below] {\scriptsize \textbf{B}};

\draw (8.75, 1) node{\scriptsize $\bullet$} node[above]{\scriptsize \textbf{G}}  ;
\draw (8.75, 0.2) node{\scriptsize $\bullet$} node[above]{\scriptsize \textbf{H}}  ;

\draw[very thin]
(8.75, 0)
 --(8.75, 2) node{\scriptsize $\bullet$} node[above] {\scriptsize \textbf{F}};

\draw[thick](8.75, 0) circle(0.05) node[below] {\scriptsize \textbf{A}};

\draw[thick, dotted] (8,2) node{\scriptsize $\times$} node[left] {\scriptsize $\textbf{a}$}
 -- (8.5, 0) node{\scriptsize $\times$}  node[below] {\scriptsize $\textbf{b}$};

\draw
(8,3.4) node {\scriptsize $\times$} node[left] {\scriptsize $\frac{1}{p_1}$}
-- (11.5, 0) node {\scriptsize $\times$} node[below] {\scriptsize $\frac{1+d(m-1)}{d}$};
\draw (11.9, 0.3) node{\scriptsize $\mathcal{S}_{m,1}^{(q_1, q_2)}$};

\draw (8,2.8) node {\scriptsize $\times$} node[left] {\scriptsize $\frac{m+d(m-1)}{2m+d(m-1)}$}
    -- (10.7, 0) node {\scriptsize $\times$} node[below] {\scriptsize \textbf{c}};
\draw (10.7, 0.3) node{\scriptsize $\mathcal{S}_{m,m}^{(q_1, q_2)}$};




\draw[thin, dotted] (10.35, 0) node{\scriptsize $*$}  -- (10.35, 1.1);

\draw[thin, dotted] (9.45, 0) node{\scriptsize $*$}  -- (9.45, 2);


\draw(8.5, 4.5) node[right]{\scriptsize $\overline{\textbf{ab}}, \textbf{A}-\textbf{F}, \textbf{c}$: same as in Fig. 4-(i)};
\draw (8.5, 4) node[right] {\scriptsize $\mathcal{R} (\textbf{GCDEF})$: scaling invariant class of \eqref{T:ACweakSol:V_q} };
\draw (8.5, 3.5) node[right] {\scriptsize $\mathcal{R} (\textbf{GHC})$: scaling invariant class of \eqref{T:weakSol:V_q}};

\draw (10, 3) node[right] {\scriptsize $\textbf{G} = (\frac{m-1}{2m}, \frac{m-1}{2m-1})$};
\draw (10, 2.5) node[right] {\scriptsize $\textbf{H} = (\frac{m-1}{2m}, \frac{(m-1)d-2m}{2m(d-2)})$ };

\end{tikzpicture}
\caption{\footnotesize Theorem~\ref{T:ACweakSol} for $1< m \leq 2$, $q>1$}
\label{F:S:1m2:more}
\end{figure}
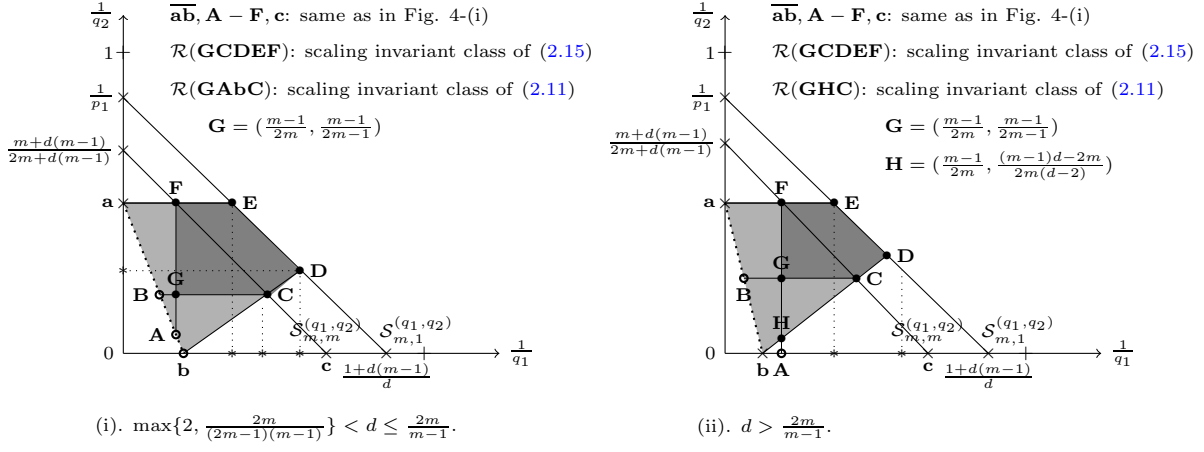


\item Fig.~\ref{F:S:m2:more} (Theorem~\ref{T:ACweakSol} for case $m>2$, $q\geq m-1$): Refer Remark~\ref{R:T:ACweakSol} and Fig.~\ref{F:S:m2}. The region $\mathcal{R}(\textbf{abD})$ is the range of $(q_1,q_2)$ satisfying \eqref{T:weakSol:V_q}, and the dark shaded region $\mathcal{R}(\textbf{GCDE})$ is for \eqref{T:ACweakSol:V_q}. When $d=2$, $\overline{\textbf{GCD}}$ is excluded from $\mathcal{R}(\textbf{GCDE})$.

\begin{figure}
\centering
\begin{tikzpicture}[domain=0:16]

\draw (-0.5, -1) node[right] { \scriptsize (i). $ 2 < d \leq \frac{2m}{m-1}$.};

\fill[fill= lgray]
(0, 2) -- (1,0) -- (2,2);


\fill[fill= gray]
(0.7, 2) -- (0.7,1.15) -- (1.58, 1.15) --(2,2);

\draw[->] (0,0) node[left] {\scriptsize $0$} -- (5,0) node[right] {\scriptsize $\frac{1}{q_1}$};
\draw[->] (0,0) -- (0,4.5) node[left] { \scriptsize $\frac{1}{q_2}$};

\draw (0,4) node{\scriptsize $+$} node[left] {\scriptsize $1$} ;
\draw (4,0) node{\scriptsize $+$};

\draw[thick](0,2)  circle(0.05) node[left] {\scriptsize \textbf{a}};

\draw (0, 2)-- (2, 2) ;

\draw[thick, dotted] (0,2)
 -- (1, 0)  node[below] {\scriptsize $\textbf{b}$};
\draw[thick] (1,0) circle(0.05);

\draw(2, 2) node{\scriptsize $\bullet$} node[above] {\scriptsize \textbf{D}};

\draw(0.7,2) node{\scriptsize $\bullet$} node[above]{\scriptsize \textbf{E}}
--(0.7, 0.6) node[left]{\scriptsize \textbf{A}} ;
\draw[thick] (0.7,0.6) circle(0.05);

\draw (0.43, 1.15) node[left]{\scriptsize \textbf{B}}
--(1.58, 1.15)node{\scriptsize $\bullet$}node[right]{\scriptsize \textbf{C}};
\draw[thick] (0.43, 1.15) circle(0.05);

\draw(0.7, 1.15) node{\scriptsize $\bullet$} node[above]{\scriptsize \textbf{G}};

\draw (1, 0)  -- (2, 2);


\draw (0,3.5) node {\scriptsize $\times$} node[left] {\scriptsize $\frac{1+d}{2+d}$}
-- (4.6, 0) node {\scriptsize $\times$} node[below] {\scriptsize $\frac{1+d}{d}$} ;
\draw (4.6, 0.5) node {\scriptsize $\mathcal{S}_{m,m-1}^{(q_1, q_2)}$} ;

\draw (0,2.7) node {\scriptsize $\times$} node[left] {\scriptsize $\frac{m+d(m-1)}{md}$}
 -- (2.8, 0) node {\scriptsize $\times$} node[below] {\scriptsize $\frac{m+d(m-1)}{2m+d(m-1)}$};
\draw (3.2, 0.3) node{\scriptsize $\mathcal{S}_{m,m}^{(q_1, q_2)}$};

\draw (0.5, 5) node[right] {\scriptsize $\overline{\textbf{ab}} = \mathcal{S}_{m, \infty}^{(q_1, q_2)}$, $\textbf{a} = (0, \frac 12)$, $\textbf{b} = (\frac 1d, 0)$ };
\draw (0.5, 4.5) node[right] {\scriptsize $\mathcal{R}(\textbf{GCDE})$: scaling invariant class of \eqref{T:ACweakSol:V_q} };
\draw (0.5, 4) node[right] {\scriptsize $\mathcal{R}(\textbf{GAbC}))$: scaling invariant class of \eqref{T:weakSol:V_q} };

\draw (0.5, 3.5) node[right] {\scriptsize $\textbf{A} = (\frac{m-1}{2m}, \frac{d+m(2-d)}{4m})$, $\textbf{B} = (\frac{1}{d(2m-1)}, \frac{m-1}{2m-1})$};
\draw (1, 3) node[right] {\scriptsize $\textbf{C} = (\frac{1+d(m-1)}{d(2m-1)}, \frac{m-1}{2m-1})$, $\textbf{D} = (\frac 12, \frac 12)$ };
\draw (2, 2.5) node[right] {\scriptsize  $\textbf{E} = (\frac{m-1}{2m}, \frac 12)$, $\textbf{G}= (\frac{m-1}{2m}, \frac{m-1}{2m-1}) $};


\draw (7.5, -1) node[right] { \scriptsize (ii). $d >\frac{2m}{m-1}$.};

\fill[fill= lgray]
(8, 2)-- (8.6, 0) -- (10,2);

\fill[fill= gray]
(8.85, 2) -- (8.85,1.27) -- (9.5, 1.27) --(10,2);

\draw[->] (8,0) node[left] {\scriptsize $0$} -- (13,0) node[right] {\scriptsize $\frac{1}{q_1}$};
\draw[->] (8,0) -- (8,4.5) node[left] { \scriptsize $\frac{1}{q_2}$};

\draw (8,4) node{\scriptsize $+$} node[left] {\scriptsize $1$} ;
\draw (12,0) node{\scriptsize $+$};

\draw (8, 2) node[left] {\scriptsize \textbf{a}} -- (10, 2) ;

\draw[thick, dotted] (8,2) node{\scriptsize $\times$} node[left] {\scriptsize $\textbf{a}$}
 -- (8.6, 0)node{\scriptsize $\times$} node[below] {\scriptsize $\textbf{b}$};

\draw(10, 2) node{\scriptsize $\bullet$} node[above] {\scriptsize \textbf{D}};

\draw(8.85,2) node{\scriptsize $\bullet$} node[above]{\scriptsize \textbf{E}}
--(8.85, 0) node[below]{\scriptsize \textbf{A}} ;
\draw[thick] (8.85, 0) circle(0.05);

\draw (8.23, 1.27) node[below]{\scriptsize \textbf{B}}
--(9.5, 1.27)node{\scriptsize $\bullet$}node[right]{\scriptsize \textbf{C}};
\draw[thick] (8.23, 1.27) circle(0.05);

\draw(8.85, 1.27) node{\scriptsize $\bullet$} node[above]{\scriptsize \textbf{G}};

\draw(8.85, 0.38) node{\scriptsize $\bullet$}node[right]{\scriptsize \textbf{H}};

\draw (8.6, 0)  -- (10, 2);


\draw (8,3.5) node {\scriptsize $\times$} node[left] {\scriptsize $\frac{1+d}{2+d}$}
-- (12.6, 0) node {\scriptsize $\times$} node[below] {\scriptsize $\frac{1+d}{d}$} ;
\draw (12.5, 1) node {\scriptsize $\mathcal{S}_{m,m-1}^{(q_1, q_2)}$} ;

\draw (8,2.9) node {\scriptsize $\times$} node[left] {\scriptsize $\frac{m+d(m-1)}{md}$}
 -- (10.7, 0) node {\scriptsize $\times$} node[below] {\scriptsize $\frac{m+d(m-1)}{2m+d(m-1)}$};
\draw (11.2, 0.5) node{\scriptsize $\mathcal{S}_{m,m}^{(q_1, q_2)}$};

\draw (8.5, 4.5) node[right] {\scriptsize $\overline{\textbf{ab}}, \textbf{B}-\textbf{E}$: same as in Fig. 2-(i)};
\draw (8.5, 4) node[right] {\scriptsize $\mathcal{R}(\textbf{GCDE})$: scaling invariant class of \eqref{T:ACweakSol:V_q} };
\draw (8.5, 3.5) node[right] {\scriptsize $\mathcal{R}(\textbf{GHC})$: scaling invariant class of \eqref{T:weakSol:V_q} };
\draw (9, 3) node[right] {\scriptsize $\textbf{A}= (\frac{m-1}{2m},0)$  };
\draw (9.5, 2.5) node[right] {\scriptsize $\textbf{G}, \textbf{H}$: same as in Fig. 1-(iii) };

\end{tikzpicture}
\caption{\footnotesize Theorem~\ref{T:ACweakSol} for $m>2$ and $q\geq m-1$}
\label{F:S:m2:more}
\end{figure}
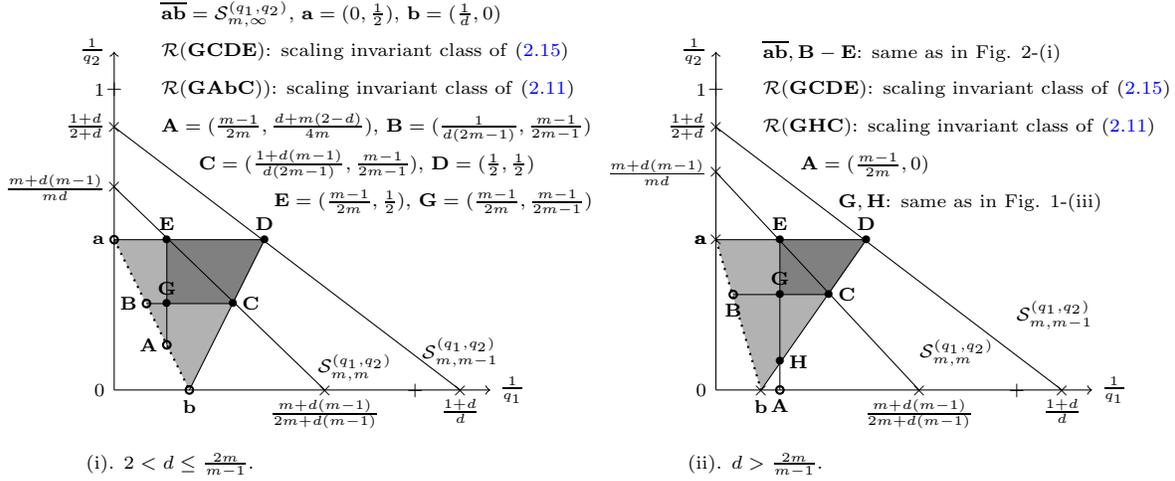

\end{itemize}

\section{Compactness arguments on $\mathbb{R}^d$}\label{Appendix:compact}

The existence of weak solutions of  \eqref{E:Main} on $\mathbb{R}^{d}\times (0, T)$ is studied in \cite{HKK}. Regarding the compactness arguments in \cite[Proposition~4.19]{HKK} (which is same as Proposition~\ref{P:AL2}), we obtain Proposition~\ref{P:AL1} newly that can improve results in \cite[Theorem~2.11 \& Theorem~2.15]{HKK} in case $1 < q \leq m+1$. 
The compactness argument in Proposition~\ref{P:AL1} which accompanied with the range of $(q_1, q_2)$ for $V \in \mathcal{S}_{m,q}^{(q_1, q_2)}$ satisfying \eqref{V_compact_m+1} improve the existence results either in case $\nabla \cdot V \geq 0$ (including the divergence-free case) or $V \in \tilde{\mathcal{S}}_{m,q}^{\tilde{q}_1, \tilde{q}_2}$ for all $q>1$. In particular, if $\nabla \cdot V \geq 0$ and  $1 < q \leq m+1$, we have the following existence theorem (cf. \cite[Theorem~2.11]{HKK}).

\begin{theorem}\label{T:unbounded_divfree}
Let $m>1$ and $1<q \leq m+1$. Suppose that $\rho_0 \in \mathcal{P}(\mathbb{R}^d) \cap L^{q}(\mathbb{R}^d)$. Furthermore, assume that $\nabla \cdot V \geq 0$ and 
\begin{equation}\label{V_unbounded_divfree}
V\in \mathfrak{S}_{m,q}^{(q_1,q_2)} \quad \text{for} \quad
\begin{array}{l l}
\begin{cases}
 \frac{2+q_{m,d}}{1+q_{m,d}} \leq q_2 \leq \left[\frac{1}{\gamma_1} - \frac{1}{q+m-1}\right]^{-1},
 & \text{if }  1<q\leq \overline{q} , \vspace{1 mm} \\
\gamma_1 \leq q_2 \leq \left[\frac{1}{\gamma_1} - \frac{1}{q+m-1}\right]^{-1} ,
& \text{if }  \overline{q} < q \leq m+1, \vspace{1 mm} \\
\end{cases}
\text{ and } d>2,\vspace{2 mm} \\
\begin{cases}
 \frac{2+q_{m,d}}{1+q_{m,d}} \leq q_2 < \left[\frac{1}{\gamma_1} - \frac{1}{q+m-1}\right]^{-1},
 & \text{if }   1<q \leq \overline{q}, \vspace{1 mm} \\
  \gamma_1 \leq q_2 < \left[\frac{1}{\gamma_1} - \frac{1}{q+m-1}\right]^{-1},
 & \text{if }   \overline{q} < q \leq m+1, \vspace{1 mm} 
\end{cases}
 \text{ and } d=2,
\end{array}
\end{equation}
where 
\begin{equation}\label{overline_q}
\gamma_1 = \frac{d(q+m-1) + 2q}{md+q}, \ \text{ and } \ \overline{q} = \frac{1}{2}\left( \sqrt{d^2(m-1)^2 + 4} -d(m-1)+2\right).
\end{equation}
Then, there exists a nonnegative $L^q$-weak solution of  \eqref{E:Main} on $\mathbb{R}^{d}\times (0, T)$ in \cite[Definition~2.1]{HKK} that holds \eqref{T:weakSol:E_q} where $C=C(\|\rho_0\|_{L^{q}(\mathbb{R}^d)})$. 
\end{theorem}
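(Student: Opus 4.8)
The plan is to run the same approximation-and-compactness scheme as in Section~\ref{Exist-weak}, the sole new ingredient being the compactness statement Proposition~\ref{P:AL1}, which is available throughout $1\le q\le m+1$. As usual (Remark~\ref{R:S-tildeS}~(i)) it suffices to treat the critical case $V\in\mathcal{S}_{m,q}^{(q_1,q_2)}$, the strictly sub-scaling case being easier. Two features simplify matters here. First, because $\nabla\cdot V\ge 0$ the drift enters the energy identity with a favorable sign, so the a priori bound \eqref{T:weakSol:E_q} is free of any norm of $V$; consequently hypothesis \eqref{V_unbounded_divfree} is needed \emph{only} to make the compactness condition \eqref{V_compact_m+1} hold at the exponent $q$. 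Second, since we invoke Proposition~\ref{P:AL1} directly at $q$, no spatial embedding of the drift class is required, which is exactly what lets the whole-space statement improve \cite{HKK}. The proof therefore reduces to (a) checking that \eqref{V_unbounded_divfree} is precisely \eqref{V_compact_m+1} read on the scaling surface, and (b) the limit passage.

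For (a), the $q_2$-bounds in \eqref{V_compact_m+1}, namely $\frac{1}{\gamma_1}-\frac{1}{q+m-1}\le\frac{1}{q_2}\le\frac{1}{\gamma_1}$, read as $\gamma_1\le q_2\le\big[\tfrac{1}{\gamma_1}-\tfrac{1}{q+m-1}\big]^{-1}$, while the $q_1$-bounds are the same constraint rewritten through the scaling relation $\frac{d}{q_1}+\frac{2+q_{m,d}}{q_2}=1+q_{m,d}$. Membership in $\mathcal{S}_{m,q}^{(q_1,q_2)}$ forces $q_2\ge\frac{2+q_{m,d}}{1+q_{m,d}}$ (the value attained as $1/q_1\to0$), so the effective lower endpoint is $\max\{\gamma_1,\frac{2+q_{m,d}}{1+q_{m,d}}\}$. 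Writing $a:=d(m-1)$ and using $md=a+d$, the equality $\gamma_1=\frac{2q+a}{q+a}$ clears to the quadratic $q^2+(a-2)q-a=0$, whose positive root is exactly $\overline q$ of \eqref{overline_q}; a sign check (e.g. $d=3,m=2,q=1$ gives $\gamma_1=\tfrac87<\tfrac54$) shows $\gamma_1<\frac{2+q_{m,d}}{1+q_{m,d}}$ for $1<q\le\overline q$ and $\gamma_1\ge\frac{2+q_{m,d}}{1+q_{m,d}}$ for $\overline q<q\le m+1$. This is precisely the dichotomy of \eqref{V_unbounded_divfree}, so Lemma~\ref{L:V_compact_m+1} and Proposition~\ref{P:AL1} apply at $q$.

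For (b), mollify $V$ into $V_n\in C_c^\infty$ with $\nabla\cdot V_n\ge0$ and $V_n\to V$ in $\mathcal{S}_{m,q}^{(q_1,q_2)}$, and choose $\rho_{0,n}\in C_c^\infty\cap\mathcal{P}(\mathbb{R}^d)$ with $\rho_{0,n}\to\rho_0$ in $L^q$. The whole-space analogue of Proposition~\ref{proposition : regular existence} (from \cite{HKK}) furnishes regular solutions $\rho_n$. Since $\nabla\cdot V_n\ge0$, Proposition~\ref{P:Lq-energy}~(iv) yields $\sup_t\int\rho_n^q+\iint|\nabla\rho_n^{\frac{q+m-1}{2}}|^2\le C(\|\rho_0\|_{L^q})$ uniformly in $n$. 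Lemma~\ref{L:V_compact_m+1} then bounds $V_n\rho_n$ in $L^{\gamma_1}_{x,t}$ and Proposition~\ref{P:AL1} bounds $\partial_t\rho_n$ and $\nabla\rho_n^m$, whence Aubin--Lions gives a subsequence with $\rho_n\to\rho$ strongly. One passes to the limit in the weak formulation exactly as in the proof of Theorem~\ref{T:weakSol}~(i): strong convergence handles the parabolic term (identifying the weak limit of $\nabla\rho_n^m$ with $\nabla\rho^m$), the splitting $V_n\rho_n-V\rho=(V_n-V)\rho_n+V(\rho_n-\rho)$ handles the drift, and lower semicontinuity of the $L^2$-norm and of $\int\rho^q$ delivers \eqref{T:weakSol:E_q}.

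The main obstacle is that the Aubin--Lions lemma requires a compact Sobolev embedding, which fails on the unbounded $\mathbb{R}^d$. This is resolved precisely as for Proposition~\ref{P:AL2} in \cite{HKK}: run the compactness on each ball $B_R$, where the embedding is compact, extract a diagonal subsequence, and use mass conservation $\|\rho_n(t)\|_{L^1}=1$ together with the uniform energy control to obtain tightness, ruling out escape of mass to infinity and upgrading local strong convergence to the global convergence used above. Apart from this localization every estimate is the verbatim whole-space transcription of the bounded-domain computations, the hypothesis $\nabla\cdot V\ge0$ removing all dependence on $\|V\|$ from the energy bound.
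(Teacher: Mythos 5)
Your proposal is correct and is essentially the proof the paper intends: Theorem~\ref{T:unbounded_divfree} is presented in Appendix~\ref{Appendix:compact} precisely as the combination of the energy estimate of Proposition~\ref{P:Lq-energy}~(iv) (the sign of $\nabla\cdot V$ making the bound independent of any norm of $V$) with the new compactness result Proposition~\ref{P:AL1} applied directly at the exponent $q$, and your exponent arithmetic is right — writing $a=d(m-1)$, the equality $\gamma_1=\frac{2q+a}{q+a}$ does reduce to $q^2+(a-2)q-a=0$, whose positive root is $\overline q$ in \eqref{overline_q}, so \eqref{V_unbounded_divfree} is exactly \eqref{V_compact_m+1} restricted to the scaling surface, with the dichotomy at $\overline q$ deciding whether $\gamma_1$ or $\frac{2+q_{m,d}}{1+q_{m,d}}$ is the binding lower endpoint (and one checks $f(m+1)=m^2-1+am>0$, so $\overline q<m+1$ and both cases occur). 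One small correction: your claim that mass conservation plus the uniform energy control yields tightness on $\mathbb{R}^d$ is false as stated (translating bumps conserve both), but it is also unnecessary — since test functions are compactly supported, the local strong convergence from the ball-by-ball diagonal argument already suffices for the limit passage in every term, and \eqref{T:weakSol:E_q} follows for the limit by lower semicontinuity without any tightness.
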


\begin{remark}
\begin{itemize}
\item[(i)] Furthermore, by applying Proposition~\ref{P:AL1}, the existence results are extended for all $q>1$ in \cite[Theorem~2.13, Theorem~2.15, \& Theorem~2.16]{HKK}. 


\item[(ii)] If $1< m < m^\ast$ with $m^\ast$ in \eqref{m_ast} and $1< q \leq \tilde{q}$ with
\begin{equation}\label{tilde_q}
\tilde{q} = \frac{1}{2} \left( \sqrt{d^2(m-1)^2 - 2d(m^2-1)+(3-m)^2} - d(m-1) + (3-m) \right), 
\end{equation}
then the range of $q_2$ allows $\infty$. 

\item[(iii)] Fig.~\ref{F:compact1} illustrates \eqref{V_unbounded_divfree} when $ 1 < m \leq m^\ast$  for  $m^\ast$ in \eqref{m_ast} and $1 < q \leq m+1$. Fig.~\ref{F:compact2} illustrates \eqref{V_unbounded_divfree} when $ m > m^\ast$  for  $m^\ast$ in \eqref{m_ast} and $1 < q \leq m+1$. 

\end{itemize}
\end{remark}

\begin{figure}
\centering
\begin{tikzpicture}[domain=0:16]


\fill[fill= lgray]
(0.57, 2)--(2, 0.5) -- (4.5, 0) -- (5.25, 0) -- (0, 3.6) -- (0,3) -- (0.57,2);

\draw[->] (0,0) node[left] {\scriptsize $0$}
-- (5.5,0) node[right] {\scriptsize $\frac{1}{q_1}$};
\draw[->] (0,0) -- (0,4.5) node[left] { \scriptsize $\frac{1}{q_2}$};

\draw (0,4) node{\scriptsize $+$} node[left]{\scriptsize $1$} ;



\draw[dotted] (0,2) node {\scriptsize $\times$} node[left] {\scriptsize $\frac{1}{2}$}
-- (1.1, 0) node {\scriptsize $\times$} node[below] {\scriptsize $\frac{1}{d}$};

\draw
(-1,4.3) -- (6, -0.5) ;
\draw (5.25, 0) node{\scriptsize $\bullet$} node[above] {\scriptsize $\textbf{E}$};
\draw (0, 3.62) node{\scriptsize $\bullet$} node[left] {\scriptsize $\textbf{F}$} ;
\draw (4.7, 0.8) node{\scriptsize $\mathcal{S}_{m,1}^{(q_1, q_2)}$};


\draw (0,2.6) node {\scriptsize $\times$}  node[left]{\scriptsize $\frac{(m+1)+d(m-1)}{2(m+1)+d(m-1)}$}
    -- (2.5, 0) node {\scriptsize $\times$}  node[below]{\scriptsize $\frac{m+1 + d(m-1)}{d(m+1)}$};
\draw[thick] (2, 0.5)  node {\scriptsize $\bullet$} node[below] {\scriptsize \textbf{C}};
\draw (1.15, 0.8) node{\scriptsize $\mathcal{S}_{m,m+1}^{(q_1, q_2)}$};

\draw[dotted] (0,2) -- (0.57,2);
\draw (0.57,2) node{\scriptsize $\bullet$} node[below] {\scriptsize \textbf{H}};

\draw[dotted] (0, 0.5) node{\scriptsize $\times$} node[left]{\scriptsize $\frac{m-1}{2m}$} -- (2, 0.5) ;

\draw (0.57, 2) -- (-0.7, 4.2);
\draw (2, 0.5) -- (6, -0.3);

\draw[thick] (0,3) node {\scriptsize $\bullet$} node[left]{\scriptsize \textbf{G}} -- (3.6, 0) ;
\draw (2.2, 0.8) node{\scriptsize $\mathcal{S}_{m,\overline{q}}^{(q_1, q_2)}$};

\draw[thick] (0,3.3) -- (4.5, 0) node{\scriptsize $\bullet$} node[below] {\scriptsize $\textbf{D}$};
\draw (3.25, 0.8) node{\scriptsize $\mathcal{S}_{m,\tilde{q}}^{(q_1, q_2)}$};




\draw (7, 4) node[right] {\scriptsize $\mathcal{R} (\textbf{CDEFGH})$: scaling invariant class of \eqref{T:weakSol_DivFree:V_q1}};

\draw (7, 3.5) node[right] {\scriptsize \textbf{C}, \textbf{H} : on $\mathcal{S}_{m, m+1}^{(q_1, q_2)}$ where };
\draw (8, 3) node[right] {\scriptsize $\textbf{C} = (\frac{2+d(m-1)}{2md}, \frac{m-1}{2m})$, $\textbf{H} = (\frac{m-1}{2(m+1)} , \frac{1}{2})$. };
\draw (7, 2.5) node[right] {\scriptsize \textbf{E}, \textbf{F} : on $\mathcal{S}_{m, 1}^{(q_1, q_2)}$ where };
\draw (8, 2) node[right] {\scriptsize $\textbf{E} = (\frac{1+d(m-1)}{d}, 0)$, $\textbf{F} = (0 , \frac{1+d(m-1)}{2+d(m-1)})$. };
\draw (7, 1.5) node[right] {\scriptsize $\textbf{G} = (0, \frac{\overline{q} + d(m-1)}{2 \overline{q} + d(m-1)})$ on $\mathcal{S}_{m,\overline{q}}^{(q_1, q_2)}$ for $\overline{q}$ in \eqref{overline_q} }  ;
\draw (7, 1) node[right] {\scriptsize $\textbf{D} = (\frac{ \tilde{q} + d(m-1)}{d \tilde{q}}, 0)$ on $\mathcal{S}_{m, \tilde{q}}^{(q_1, q_2)}$  for $\tilde{q}$ in \eqref{tilde_q}. } ;
\end{tikzpicture}
\caption{\footnotesize Theorem~\ref{T:unbounded_divfree} for $ 1< m \leq m^\ast$, $ 1 < q \leq m+1$, $d>2$ .}
\label{F:compact1}
\end{figure}

\begin{figure}
\centering
\begin{tikzpicture}[domain=0:16]


\fill[fill= lgray]
(0.57, 2)--(1.63, 0.9) -- (4.7, 0.4) -- (0, 3.6) -- (0,3) -- (0.57,2);

\draw[->] (0,0) node[left] {\scriptsize $0$}
-- (5.5,0) node[right] {\scriptsize $\frac{1}{q_1}$};
\draw[->] (0,0) -- (0,4.5) node[left] { \scriptsize $\frac{1}{q_2}$};

\draw (0,4) node{\scriptsize $+$} node[left]{\scriptsize $1$} ;



\draw[dotted] (0,2) node {\scriptsize $\times$} node[left] {\scriptsize $\frac{1}{2}$}
-- (1.1, 0) node {\scriptsize $\times$} node[below] {\scriptsize $\frac{1}{d}$};

\draw
(-1,4.3) -- (6, -0.5) ;
\draw (4.7, 0.4) node{\scriptsize $\bullet$} node[above] {\scriptsize $\textbf{E}$};
\draw (0, 3.62) node{\scriptsize $\bullet$} node[left] {\scriptsize $\textbf{F}$} ;
\draw (4.2, 1.2) node{\scriptsize $\mathcal{S}_{m,1}^{(q_1, q_2)}$};


\draw (0,2.6) node {\scriptsize $\times$}  node[left]{\scriptsize $\frac{(m+1)+d(m-1)}{2(m+1)+d(m-1)}$}
    -- (2.5, 0) node {\scriptsize $\times$}  node[below]{\scriptsize $\frac{m+1 + d(m-1)}{d(m+1)}$};
\draw[thick] (1.63, 0.9)  node {\scriptsize $\bullet$} node[below] {\scriptsize \textbf{C}};
\draw (0.8, 1.2) node{\scriptsize $\mathcal{S}_{m,m+1}^{(q_1, q_2)}$};

\draw[dotted] (0,2) -- (0.57,2);
\draw (0.57,2) node{\scriptsize $\bullet$} node[below] {\scriptsize \textbf{H}};

\draw[dotted] (0, 0.9) node{\scriptsize $\times$} node[left]{\scriptsize $\frac{m-1}{2m}$} -- (1.63, 0.9) ;

\draw (0.57, 2) -- (-0.7, 4.2);
\draw (1.63, 0.9) -- (4.7, 0.4);

\draw[thick] (0,3) node {\scriptsize $\bullet$} node[left]{\scriptsize \textbf{G}} -- (3.6, 0) ;
\draw (1.9, 1.2) node{\scriptsize $\mathcal{S}_{m,\overline{q}}^{(q_1, q_2)}$};

\draw[thick] (0,3.3) -- (4.5, 0) ;
\draw (3.75, 0.55) node{\scriptsize $\bullet$} node[below] {\scriptsize $\textbf{D}$};
\draw (2.8, 1.2) node{\scriptsize $\mathcal{S}_{m,\tilde{q}}^{(q_1, q_2)}$};




\draw (7, 4) node[right] {\scriptsize $\mathcal{R} (\textbf{CDEFGH})$: scaling invariant class of \eqref{T:weakSol_DivFree:V_q1}};

\draw (7, 3.5) node[right] {\scriptsize \textbf{C}, \textbf{H} : on $\mathcal{S}_{m, m+1}^{(q_1, q_2)}$ where };
\draw (8, 3) node[right] {\scriptsize $\textbf{C} = (\frac{2+d(m-1)}{2md}, \frac{m-1}{2m})$, $\textbf{H} = (\frac{m-1}{2(m+1)} , \frac{1}{2})$. };
\draw (7, 2.5) node[right] {\scriptsize \textbf{E}, \textbf{F} : on $\mathcal{S}_{m, 1}^{(q_1, q_2)}$ where };
\draw (8, 2) node[right] {\scriptsize $\textbf{E} = (\frac{1+d(m-1)}{d}, 0)$, $\textbf{F} = (0 , \frac{1+d(m-1)}{2+d(m-1)})$. };
\draw (7, 1.5) node[right] {\scriptsize $\textbf{G} = (0, \frac{\overline{q} + d(m-1)}{2 \overline{q} + d(m-1)})$ on $\mathcal{S}_{m,\overline{q}}^{(q_1, q_2)}$ for $\overline{q}$ in \eqref{overline_q} }  ;
\draw (7, 1) node[right] {\scriptsize $\textbf{D} = (\frac{1}{\gamma_1} - \frac{d-2}{d(q+m-1)}, \frac{1}{\gamma_1}-\frac{1}{q+m-1})$  } ;
\draw (8, 0.5) node[right] {\scriptsize on $\mathcal{S}_{m, \tilde{q}}^{(q_1, q_2)}$  for $\tilde{q}$ in \eqref{tilde_q} and $\gamma_1$ in \eqref{gamma_1} . } ;

\end{tikzpicture}
\caption{\footnotesize Theorem~\ref{T:unbounded_divfree} for $ m > m^\ast$, $ 1 < q \leq m+1$, $d>2$ .}
\label{F:compact2}
\end{figure}





\bibliographystyle{amsplain}

\begin{bibdiv}
\begin{biblist}

\bib{ags:book}{book}{
   author={Ambrosio, L.},
   author={Gigli, N.},
   author={Savar\'{e}, G.},
   title={Gradient flows in metric spaces and in the space of probability
   measures},
   series={Lectures in Mathematics ETH Z\"{u}rich},
   edition={2},
   publisher={Birkh\"{a}user Verlag, Basel},
   date={2008},
}

\bib{CHKK17}{article}{
   author={Chung, Y.},
   author={Hwang, S.},
   author={Kang, K.},
   author={Kim, J.},
   title={H\"{o}lder continuity of Keller-Segel equations of porous medium type
   coupled to fluid equations},
   journal={J. Differential Equations},
   volume={263},
   date={2017},
   number={4},
   pages={2157--2212},
}

\bib{CKK14}{article}{
   author={Chung, Y.},
   author={Kang, K.},
   author={Kim, J.},
 TITLE = {Global existence of weak solutions for a
 {K}eller-{S}egel-fluid model with nonlinear diffusion},
   JOURNAL = {J. Korean Math. Soc.},
  FJOURNAL = {Journal of the Korean Mathematical Society},
    VOLUME = {51},
 YEAR = {2014},
    NUMBER = {3},
 PAGES = {635--654},
}

\bib{DB93}{book}{
   author={DiBenedetto, E.},
   title={Degenerate parabolic equations},
   series={Universitext},
   publisher={Springer-Verlag, New York},
   date={1993},
   pages={xvi+387},
}

\bib{DGV12}{book}{
   author={DiBenedetto, E.},
   author={Gianazza, U.},
   author={Vespri, V.},
   title={Harnack's inequality for degenerate and singular parabolic
   equations},
   series={Springer Monographs in Mathematics},
   publisher={Springer, New York},
   date={2012},
  }
  
 \bib{HKK}{article}{
    AUTHOR = {Hwang, S.},
    AUTHOR = {Kang, K.},
    Author = {Kim, H. K.},
     TITLE = {Existence of weak solutions for porous medium equation with a
              divergence type of drift term},
   JOURNAL = {Calc. Var. Partial Differential Equations},
  FJOURNAL = {Calculus of Variations and Partial Differential Equations},
    VOLUME = {62},
      YEAR = {2023},
    NUMBER = {4},
}

\bib{HZ21}{article}{
   author={Hwang, S.},
   author={Zhang, Y. P.},
   title={Continuity results for degenerate diffusion equations with $L_t^p
   L_x^q$ drifts},
   journal={Nonlinear Anal.},
   volume={211},
   date={2021},
   pages={112413},
}

\bib{KK-SIMA}{article}{
   author={Kang, K.},
   author={Kim, H. K.},
   title={Existence of weak solutions in Wasserstein space for a chemotaxis
   model coupled to fluid equations},
   journal={SIAM J. Math. Anal.},
   volume={49},
   date={2017},
   number={4},
   pages={2965--3004},
}

\bib{KZ18}{article}{
   author={Kim, I.},
   author={Zhang, Y. P.},
   title={Regularity properties of degenerate diffusion equations with
   drifts},
   journal={SIAM J. Math. Anal.},
   volume={50},
   date={2018},
   number={4},
   pages={4371--4406},
}

\bib{Santambrosio15}{book}{
   author={Santambosio, F.},
   title={Optimal transport for applied mathematicians. Calculus of variations, PDEs, and modeling },
   series={Progress in Nonlinear Differential Equations and Their Applications},
   volume={87},
   publisher={Birkh$\ddot{a}$user},
   date={2015},
}

\bib{Show97}{book}{
   author={Showalter, R. E.},
   title={Monotone operators in Banach space and nonlinear partial
   differential equations},
   series={Mathematical Surveys and Monographs},
   volume={49},
   publisher={American Mathematical Society, Providence, RI},
   date={1997},
}

\bib{Sim87}{article}{
   author={Simon, J.},
   title={Compact sets in the space $L^p(0,T;B)$},
   journal={Ann. Mat. Pura Appl. (4)},
   volume={146},
   date={1987},
   pages={65--96},
}

\bib{TW12}{article}{
    AUTHOR = {Tao, Y.},
    AUTHOR = {Winkler, M.},
 TITLE = {Global existence and boundedness in a
 {K}eller-{S}egel-{S}tokes model with arbitrary porous medium
 diffusion},
   JOURNAL = {Discrete Contin. Dyn. Syst.},
  FJOURNAL = {Discrete and Continuous Dynamical Systems. Series A},
    VOLUME = {32},
 YEAR = {2012},
    NUMBER = {5},
 PAGES = {1901--1914},
}

\bib{Vaz07}{book}{
   author={V\'{a}zquez, J. L.},
   title={The porous medium equation},
   series={Oxford Mathematical Monographs},
   note={Mathematical theory},
   publisher={The Clarendon Press, Oxford University Press, Oxford},
   date={2007},
}

\bib{V}{book}{
   author={Villani, C.},
   title={Optimal transport},
   series={Grundlehren der Mathematischen Wissenschaften [Fundamental
   Principles of Mathematical Sciences]},
   volume={338},
   note={Old and new},
   publisher={Springer-Verlag, Berlin},
   date={2009},
}

\bib{WX15}{article}{
    AUTHOR = {Wang, Y},
    author={Xiang, Z.},
 TITLE = {Global existence and boundedness in a higher-dimensional
 quasilinear chemotaxis system},
   JOURNAL = {Z. Angew. Math. Phys.},
  FJOURNAL = {Zeitschrift f\"{u}r Angewandte Mathematik und Physik. ZAMP.
 Journal of Applied Mathematics and Physics. Journal de
 Math\'{e}matiques et de Physique Appliqu\'{e}es},
    VOLUME = {66},
 YEAR = {2015},
    NUMBER = {6},
 PAGES = {3159--3179},
}

\bib{W18}{article}{
    AUTHOR = {Winkler, M.},
 TITLE = {Global existence and stabilization in a degenerate
 chemotaxis-{S}tokes system with mildly strong diffusion
 enhancement},
   JOURNAL = {J. Differential Equations},
  FJOURNAL = {Journal of Differential Equations},
    VOLUME = {264},
 YEAR = {2018},
    NUMBER = {10},
 PAGES = {6109--6151},
}

   \end{biblist}
\end{bibdiv}

\end{document}